\newcommand{\dataset}{{\cal D}}
\newcommand{\fracpartial}[2]{\frac{\partial #1}{\partial  #2}}
\newcommand{\ud}{\mathrm{d}}
\newtheorem{thm}{Theorem}
\newtheorem{thma}{Theorem}
\newtheorem{lem}{Lemma}
\newtheorem{lema}{Lemma}
\newtheorem{cor}{Corollary}
\newtheorem{prop}{Proposition}
\newtheorem{defn}{Definition}
\newtheorem{rem}{Remark}
\newtheorem{assum}{Assumption}
\newtheorem{assuma}{Assumption}
\begin{document}

\title{Contraction methods for continuous optimization}

\author{\name Xiaopeng Luo \email luo.permenant@gmail.com\\
	\name Xin Xu \email xu.permenant@gmail.com\\
\addr Department of Control and Systems Engineering\\
	School of Management and Engineering\\
    Nanjing University\\
	Nanjing, 210093, China\\
	\addr Department of Chemistry\\
	Princeton University\\
	Princeton, NJ 08544, USA}

\editor{}

\maketitle

\begin{abstract}
 Motivated by the grid search method and Bayesian optimization, we introduce the concept of contractibility and its applications in model-based optimization. First, a basic framework of contraction methods is established to construct a nonempty closed set sequence that contracts from the initial domain to the set of global minimizers. Then, from the perspective of whether the contraction can be carried out effectively, relevant conditions are introduced to divide all continuous optimization problems into three categories: (i) logarithmic time contractible, (ii) polynomial time contractible, or (iii) noncontractible. For every problem from the first two categories, there exists a contraction sequence that converges to the set of all global minimizers with linear convergence; for any problem from the last category, we discuss possible troubles caused by contraction. Finally, a practical algorithm is proposed with high probability bounds for convergence rate and complexity. It is shown that the contractibility contributes to practical applications and can also be seen as a complement to smoothness for distinguishing the optimization problems that are easy to solve.
\end{abstract}

\begin{keywords}
  Continuous optimization, Contraction methods, Categories, Convergence, Computational complexity
\end{keywords}

\section{Introduction}
\label{CM:s1}

For a possibly nonlinear and nonconvex continuous function $f:\Omega\subset\mathbb{R}^n\to\mathbb{R}$ with the global minima $f^*$ and the set of all global minimizers $X^*$ in $\Omega$, we consider the constrained optimization problem
\begin{equation}\label{CM:eq:COP}
\min_{x\in\Omega}f(x),
\end{equation}
where $\Omega$ is a (not necessarily convex) closed domain with $\max_{x,y\in\Omega}\|x-y\|_2\leqslant1$ (which implies $\mu(\Omega)=\int_{\Omega}\ud x\leqslant1$); and especially, assume that observing $f$ is costly. When $f$ is cheap to evaluate, there are many feasible methods, such as genetic algorithms \citep{MitchellM1996M_GeneticAlgorithm}, evolution strategies \citep{SchwefelH1995_ES}, differential evolution \citep{StornR1997M_DifferentialEvolution} and simulated annealing \citep{KirkpatrickS1983M_SimulatedAnnealing}. But when $f$ is expensive, we have to pay more attention to how to maximize the use of information obtained.

Bayesian optimization (BO), which is a sequential model-based approach and the model is often obtained using a Gaussian process, is a typical strategy to use existing information. It was first introduced by \citet{MockusJ1974M_BO} and later popularized by \citet{JonesD1998M_BO}. The BO method applies the Gaussian process to construct a model for generating the acquisition functions. The acquisition function, which trade-offs exploration and exploitation, is used to determine the next candidate point. Theoretical results on the convergence behaviour of BO is provided in \citet{BullA2011T_ConvergenceRatesOfBO}. Various extensions have been suggested by further authors, and a recent review can be found in \citet{ShahriariB2016R_BO}.

However, BO always updates the model on the original domain and uses all the historical samples \citep{JonesD1998M_BO,JonesD1998M_EGOpt,
KleijnenJ2012M_BO}, the computational cost of modeling will continue to increase. To overcome this, we first introduce the concept of \emph{contractibility}, which extends from the characteristics of the level sets. Actually, the problem \eqref{CM:eq:COP} is closely related to the $u$-sublevel set \citep{RockafellarR1970M_ConvexAnalysis,
LinQ2018_LSM,AravkinA2019_LSM}, i.e.,
\begin{equation}\label{CM:eq:LS}
E(u)=\{x\in\Omega:f(x)\leqslant u\},~~u\in\Big[f^*,\max_{x\in\Omega}f(x)\Big].
\end{equation}
Its boundary set $\partial E(u)=\{x\in\Omega:f(x)=u\}$ is a contour surface and $E(u)$ contracts monotonically from $\Omega$ to $X^*$ when $u$ continuously decreases from $\max_{x\in\Omega}f(x)$ to $f^*$. More precisely, let $\max_{x\in\Omega}f(x)\geqslant u^{(k)}>u^{(k+1)}>f^*$, then
\begin{equation*}
  \Omega\supseteq E(u^{(k)})\supset E(u^{(k+1)})\supset X^*.
\end{equation*}
In fact, a sequence that satisfies a similar inclusion relation can be fully independent of the concept of level set. Hence, we tend to use the key inclusion relation to define a contraction sequence or contraction sets. It emphasizes that any global minimizer cannot be excluded in the reduction of sets; otherwise, the global convergence will not be guaranteed.
\begin{defn}[Contraction sets]\label{CM:defn:CS}
For problem \eqref{CM:eq:COP}, a contraction sequence $\{D^{(k)}\}_{k\in\mathbb{N}_0}$ is a sequence of decreasing nonempty closed sets satisfying $\Omega\supseteq D^{(k)}\supset D^{(k+1)}\supset X^*$; and further, we call $\{D^{(k)}\}_{k\in\mathbb{N}_0}$ a \textbf{strictly} contraction sequence if they also satisfy
\begin{equation}\label{CM:eq:SCS}
\max_{x\in D^{(k+1)}}f(x)<\max_{x\in D^{(k)}}f(x).
\end{equation}
\end{defn}

Now, we can translate the original optimization problem into a construction problem of a strictly contraction sequence $\{D^{(k)}\}_{k\in\mathbb{N}_0}$ with $D^{(0)}=\Omega$. Each contraction set $D^{(k+1)}$ could be sequentially determined by an approximation model constructed from samples on $D^{(k)}$. Then, a \emph{contraction method} (CM) is defined as a model-based approach to construct such a contraction sequence. Obviously, CM helps control the computational cost of modelling, since it updates the model on the gradually decreasing contracted sets and uses only those samples that are located in the contracted sets. Due to this characteristic, a CM might also be viewed as a special case of classical branch and bound techniques \citep{LawlerE1966R_Branch&Bound,TornA1989M_GlobalOptimization}, which is aimed to sequentially guarantee $X^*\subset D^{(k+1)}$ and exclude $\left\{D^{(k)}-D^{(k+1)}\right\}_k$.

More importantly, we further discuss what conditions can ensure that the contraction could be carried out effectively. Some conditions, especially \emph{independent of smoothness}, are introduced to divide all continuous optimization problems into three reasonable categories. Therefore, the contractibility can be seen as a complement to smoothness for distinguishing the continuous optimization problems that are easy to solve.

\subsection{Related Work}

Here we briefly discuss the relationships between the contractibility and existing ideas, some of which are related to the contraction strategies, some are related to the conditions that should be met during contraction, and others are related to the generation of new samples in the contraction set. It is not difficult to see that, although the concept of contractibility has not been considered formally and completely before, some prototypes of similar thinking have been already used in related fields.

\textbf{Grid search and random search.} Most machine learning algorithms come with some hyperparameters that control their behavior. And automatic hyperparameter optimization is one of the problems we are discussing due to its high computational cost. Generally, grid search is the most widely used strategy when there are three or fewer hyperparameters, because its computational cost increases
exponentially with the number of hyperparameters \citep{Goodfellow2016M}. In one dimension, when the number of grid nodes is doubled, the upper bound of the search accuracy is stably reduced to one half of the previous amount. But in general, grid search has the advantage of finding more accurate solutions at the cost of much higher computation time \citep{ReifM2012_HyperParaOpt}. Random search is a convenient and more effective alternative to grid search in multi-dimensional cases \citep{BergstraJ2012M_RS}. A practical trick they share is to recursively refine the search by reducing the search space based on the results of the previous run. Since the finer discrete search space is often centered on the results of the previous run, this local refinement trick is easy to implement but a bit too simple. It can be somewhat seen as an informal form of contraction strategy. In a strict sense, of course, a satisfactory strategy should include an essential requirement to ensure that at least one specific optimal solution is always covered by the next search space. Although slightly different, this requirement can almost be considered the contraction condition mentioned above.

\textbf{Bayesian optimization.} As a similar model-based approach, we have mentioned that BO has a characteristic of relying on all historical samples in modeling \citep{JonesD1998M_BO, ShahriariB2016R_BO}. This is one of the motivations for us to introduce contractibility. But here we focuses on the posterior distribution of model prediction in BO, which is related to the contraction condition we shall consider later. First, a BO method requires a choice of Gaussian process (GP) prior related to a reproducing-kernel Hilbert space (RKHS). Then for a selected prior and given historical sample set, the posterior can be established to determine the next candidate point, so that the minimum can be found with theoretical guarantee for an arbitrary function in its RKHS \citep{Srinivas2010_BO,BullA2011T_ConvergenceRatesOfBO,
Scarlett2017_LowerBoundonBO}. However, from a different perspective, instead of using the posterior to select the candidate point, but gradually excluding domains that do not contain the optimal solution, one could establish a contraction strategy and corresponding contraction conditions. And it is worth pointing out that, we do not continue the way from a GP prior to a posterior, but introduce the hierarchical low-frequency dominant function (HLFDF) as a fundamental assumption from the characteristics of contraction, then establish the relevant contraction condition by cross validation (CV). The HLFDFs do not require a bounded RKHS norm and cover many Lipschitz continuous or even H\"{o}lder continuous functions that commonly arise in machine learning, while the CV strategy avoids the requirement to choose a GP prior as well as the smoothness requirement for many priors in practice.

\textbf{Level Set Estimation.} The level set estimation (LSE) algorithm is mainly developed to determine the set of points, for which an unknown function takes value above or below some given threshold level, from a fixed discrete search space \citep{Gotovos2013_LSE}. The main idea is the same as BO: a GP prior is selected and then the corresponding posterior is established from all historical samples by the Bayesian inference to deal with this underlying classification problem. \cite{Bogunovic2016_LSE&BO} discussed the connection between BO and LSE in a unified way. For continuous search spaces, the LSE problems are also considered in communities such as reliability engineering \citep{AzzimontiD2021_LSE}. Analogous to LSE, an algorithm of gradually reducing a fixed finite discrete search space is proposed based on the Bayesian inference with a GP prior \citep{deFreitas2012_contraction}, and then further expanded to tree-based approaches \citep{Wang2014_contraction&tree,ShekharS2018A_contraction&tree}. Since these methods mentioned above also use GP priors and all historical samples, they do not gain a better convergence rate and lower model training costs than BO, except that they do reduce searches when optimizing the model established already. Furthermore, although these ideas are a little bit close to the proposed contraction method, there are still important things left: (i) these algorithms still depend on all historical samples; (ii) the abstract inclusion relationship of contraction sets has not been formally extracted; (iii) these assumptions include GP priors, some of which imply the uniqueness of the global minimum; and (iv) most algorithms lack a flexible sampling strategy so that they rely on a fixed discrete lattice. The first one mainly leads to a monotonic increase in training costs, while the latter three limit further development and the scope of various applications. In addition, one of the main reasons for restricting the development of contractibility may be the GP assumption itself, that is, correlation supports the viewpoint that points outside a contraction set can reveal more information about the objective function in the set. However, from the approximation theory of functions in bandlimited or reproducing-kernel Hilbert spaces, except near the boundary, points outside a certain domain have no effect on the approximation accuracy for any given function within this domain, because the accuracy depends only and essentially on the density of the samples \citep{NarcowichF2004A_BandLimited&RBF,BonamiaA2017T_SpectralDecayofBandlimited}. Therefore, from the basic assumption to inference method, there are significant differences between the proposed framework and existing ideas based on Gaussian processes.

\textbf{Heuristic search.} Not limited to model-based methods, heuristic global optimization algorithms \citep{RechenbergI1973B_EvolutionsStrategie,
KirkpatrickS1983M_SimulatedAnnealing,MitchellM1996M_GeneticAlgorithm,
StornR1997M_DifferentialEvolution} also attempt to trade-off exploration and exploitation in a fully different way. Obviously, sufficient exploration provides a guarantee of convergence while exploitation of limited knowledge is the key to improve efficiency. However, in fact, every exploitation on the basis of inadequate information may reduce efficiency or even cause trouble. Although there is a certain randomness in ensuring the adequacy of exploration, heuristic strategies are also used for hyperparameter optimization due to its simplicity and ease of implementation \citep{ReifM2012_HyperParaOpt}. Some of them sometimes have surprising performance, but they converge more slowly than model-based methods in many cases. Nevertheless, they do not require additional computational costs to maintain a sequential model. As mentioned above, the additional cost for the proposed framework is not monotonically increasing as long as the contraction can be executed. Therefore, the competition between contraction methods and heuristic algorithms also needs to consider the computational complexity of the objective function and the multiple costs of repeated runs in order to ensure global convergence.

\subsection{Contributions}

Our main contributions in this work are as follows:
\begin{enumerate}
  \item We formally introduce the concept of contractibility and the definition of contraction sets, so that any continuous optimization problem over an arbitrary closed domain can be described as a construction problem of contraction sets. The proposed contraction sets are developed from and include the level sets, but are fully independent of them.
  \item We propose a framework for constructing contraction sets and further establish a basic convergence conclusion (Theorem \ref{CM:thm:Conv}) to ensure that every sequence constructed by this framework is a sequence of contraction sets. Furthermore, the strong convergence conditions are also introduced and guarantee that the constructed sequence enjoys a linearly convergent upper bound (Theorems \ref{CM:thm:SConv} and \ref{CM:thm:SConv2}).
  \item From the perspective of contraction, relevant conditions are introduced to divide all continuous optimization problems into three categories. For every problem from the first two categories related to HLFDFs, we show that there is a contraction sequence that satisfies the expected complexity (Theorems \ref{CM:thm:LTC} and \ref{CM:thm:PTC}). And for any problem from the last category, we discuss possible troubles caused by contraction and consider the complexity bounds for model-based methods (Theorems \ref{CM:thm:sSmooth} and \ref{CM:thm:HC}). The CM is independent of any Gaussian process prior, that is, the objective does not require a bounded RKHS norm. Moreover, we show that the model required in the method can be satisfactorily constructed by kernel-based interpolations (Lemma \ref{CM:lem:HLFDF&KI}).
  \item Due to the requirement to expand an existing quasi-uniform sample set in any closed set, we consider a sampling strategy, then analyze why this strategy can continuously generate quasi-uniform samples and what quality they can maintain (Theorem \ref{CM:thm:QU}). In addition, we also discuss the problem of spatial discretization related to sampling.
  \item Based on the contraction framework, an algorithm is developed with high probability bounds for convergence rate and complexity (Theorems \ref{CM:thm:LTC2} and \ref{CM:thm:PTC2}). The running process of this algorithm is also the process of identifying the characteristics of the objective, i.e., if the contractions are continuously executed on an objective, then it must belong to the HLFDF class. Generally, most black-box optimization problems with expensive function evaluations will benefit from the proposed algorithm, unless no contraction is performed at all. Numerical comparisons demonstrate the expected benefits.
\end{enumerate}

\subsection{Paper Organization}

The remainder of the paper is organized as follows. The next section first establishes a basic framework of CMs. Then three assumptions and some lemmas related to the contractibility are introduced in details in Section \ref{CM:s3}. And these conditions, especially the hierarchical low-frequency dominant property, allow us to divide all possible continuous problems into three categories in Section \ref{CM:s4}. Since a practical algorithm needs to expand an existing quasi-uniform sample set in any closed domain, we further discuss the theory of sampling strategy in Section \ref{CM:s5}. In Section \ref{CM:s6}, a specific contraction algorithm is considered in detail, and we demonstrate the benefits of the algorithm by several numerical experiments and comparisons in Section \ref{CM:s7}. Finally, we draw some conclusions in Section \ref{CM:s8}.

\section{Framework of contraction methods}
\label{CM:s2}

Due to the potential requirements of modeling, we start with the concept of quasi-uniformity. For any given closed subdomain $D\subset\Omega$, a fixed sample set $\chi=\{\chi_i\}_{i=1}^N$ over $D$ is called quasi-uniform with uniformity constant $\tau>0$, if the inequality
\begin{equation}\label{CM:eq:h&q}
  \frac{1}{\tau}q_\chi\leqslant h_{D,\chi}\leqslant\tau q_\chi
\end{equation}
holds, where
\begin{equation}\label{CM:eq:h}
  h_{D,\chi}:=\sup_{x\in D}\min_{\chi_i\in\chi}\|x-\chi_i\|_2
\end{equation}
is called the fill distance of $\chi$ with respect to $D$ describing the geometric relation of the set $\chi$ to the bounded domain $D$, and
\begin{equation}\label{CM:eq:q}
  q_\chi:=\frac{1}{2}\min_{\chi_i\neq\chi_j}\|\chi_i-\chi_j\|_2
\end{equation}
is called the separation distance of $\chi$. The uniformity constant $\tau$ provides a measure of how uniformly points in $\chi$ are distributed in $D$. When $n=1$, that is, $D$ is an interval, $\tau=1$ means that the point set is an equidistant grid of nodes. And in all other cases, $\tau>1$. When the size $N$ is fixed, the smaller $\tau$ means a smaller $h_{D,\chi}$ as well as a larger $q_\chi$. Recall that the fill distance and the separation distance are two fundamental contributory factors for standard error and stability estimates for multivariate interpolants \citep{WuZ1993A_ErrorEstimatesRBF,
SchabackR1995A_ErrorEstimates&ConditionNumbersRBF,WendlandH2005B_ScatteredData}.

\subsection{Method}
As mentioned above, we want to establish a model-based optimization method to construct a sequence of decreasing nonempty closed sets containing all global minimizers. Regardless of how to obtain the approximate models and quasi-uniform samples, the framework can be formally described as follows.

\begin{defn}\label{CM:defn:D}
For problem \eqref{CM:eq:COP} and any $\omega\in(0,1]$, a model-based sequence $\{D^{(k)}\}_{k\in\mathbb{N}_0}$ is defined recursively by $D^{(0)}=\Omega$ and
\begin{equation}\label{CM:eq:D}
D^{(k+1)}=\left\{x\in D^{(k)}:\mathcal{A}^{(k)}f(x)\leqslant u^{(k)}\right\},
~~\forall u^{(k)}\in\Big[f_{\chi^{(k)}}^*,f_{\chi^{(k)}}^{**}\Big],
\end{equation}
where $\chi^{(k)}= \{\chi^{(k)}_i\}$ are quasi-uniformly distributed over $D^{(k)}$ with the size $N^{(k)}$ and the inheritance relationship $\chi^{(k)}\cap\chi^{(k+1)}=\chi^{(k)}\cap D^{(k+1)}$, the relevant data values $f_{\chi^{(k)}}=\{f(\chi^{(k)}_i)\}$ with $f_{\chi^{(k)}}^*=\min(f_{\chi^{(k)}})$ and $f_{\chi^{(k)}}^{**}=\max(f_{\chi^{(k)}})$, and $\mathcal{A}^{(k)}f$ is an approximation of $f$ w.r.t. the data pairs $(\chi^{(k)},f_{\chi^{(k)}})$ such that the following error bound condition holds, i.e.,
\begin{equation}\label{CM:eq:DC}
\max_{x\in D^{(k)}}\left|\mathcal{A}^{(k)}f(x)-f(x)\right|
\leqslant\omega\Big(u^{(k)}-f_{\chi^{(k)}}^*\Big).
\end{equation}
\end{defn}
\begin{rem}
The inheritance relationship $\chi^{(k)}\cap\chi^{(k+1)}=\chi^{(k)}\cap D^{(k+1)}$ guarantees that any point in $\chi^{(k)}$ is preserved if it is located in $D^{(k+1)}$. In other words, $\chi^{(k+1)}$ is further expanded from $\chi^{(k)}\cap D^{(k+1)}$; that is, $\chi^{(k+1)}/(\chi^{(k)}\cap D^{(k+1)})$ is the newly added sample set at step $k$. This is the way that the CM uses historical samples.
\end{rem}
\begin{rem}
Obviously, for all $c\in(0,100)$, $u^{(k)}=\textrm{prctile} \big(f_{\chi^{(k)}},c\big)$ is a feasible choice, where $\textrm{prctile}\big(f_{\chi^{(k)}}, c\big)$ is the percentile of $f_{\chi^{(k)}}$ for the percentage $c$.
\end{rem}
\begin{rem}
The bound parameter $\omega$ is used to enhance convergence, see Theorem \ref{CM:thm:SConv}. And the condition \eqref{CM:eq:DC} is the key for ensuring a sufficient exploration and issuing a judgment on the conversion of exploration to exploitation. Statistical methods allow us to estimate the model errors in a sense of probability, and we will discuss this in Subsection \ref{CM:s6:EME}; noting that a statistical estimator will lead to a high probability bound in each contraction, we will also consider how to make the union bound hold with a fixed probability.
\end{rem}

Let $\mu(S)=\int_S\mathrm{d}t$ denote the $n$-dimensional Lebesgue measure of $S\subset\mathbb{R}^n$, then we say that the ratio
\begin{equation*}
\lambda^{(k+1)}=\frac{\mu(D^{(k+1)})}{\mu(D^{(k)})}
\end{equation*}
is the $(k+1)$th contraction factor. If $u^{(k)}$ is chosen as $\textrm{prctile} \big(f_{\chi^{(k)}},c\big)$, under Assumption \ref{CM:ass:CR} (see Section \ref{CM:s3}), the sequence $\{D^{(k)}\}_{k\in\mathbb{N}_0}$ has a contraction factor $\lambda^{(k+1)}=\frac{c}{100}$ in expectation for a percentage $c\in(0,100)$. Obviously, a large factor leads to a slow contraction, but fewer function evaluations are required in each step, and vice versa. A typical choice is the median-type, i.e., $c=50$; for \emph{illustrative examples} see Subsection \ref{CM:s2:3}.

\subsection{Convergence}
Now we show that every $\{D^{(k)}\}$ constructed by Definition \ref{CM:defn:D} is a contraction sequence.
\begin{thm}[Convergence]\label{CM:thm:Conv}
Suppose the sequence $\{D^{(k)}\}_{k\in\mathbb{N}_0}$ is constructed as Definition \ref{CM:defn:D} for problem \eqref{CM:eq:COP}. Then, for any possible choices of $u^{(k)}$ and $\omega$, if $f$ is not a constant, then for all $k\in\mathbb{N}_0$,
\begin{equation*}
 \Omega\supseteq D^{(k)}\supset D^{(k+1)}\supset X^*,
\end{equation*}
that is, $\{D^{(k)}\}_{k\in\mathbb{N}_0}$ is a contraction sequence, as defined in Definition \ref{CM:defn:CS}.
\end{thm}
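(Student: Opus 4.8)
The plan is to argue by induction on $k$, establishing at each step that $D^{(k+1)}$ is a nonempty closed set with $X^*\subseteq D^{(k+1)}\subsetneq D^{(k)}$; the base case is immediate, since $D^{(0)}=\Omega$ is nonempty, closed and contains $X^*$, and $\Omega$ is compact (being closed with diameter at most $1$). Assuming the claim through level $k$, I would verify the required properties for $D^{(k+1)}$ in turn.

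\textbf{Closedness and the inclusion $D^{(k+1)}\subseteq D^{(k)}$.} The latter is immediate from the defining formula \eqref{CM:eq:D}, which carves $D^{(k+1)}$ out of $D^{(k)}$. For closedness, note that $\mathcal{A}^{(k)}f$ is continuous (the models in use are kernel-based interpolants, cf.\ Lemma \ref{CM:lem:HLFDF&KI}), so $\{x:\mathcal{A}^{(k)}f(x)\le u^{(k)}\}$ is closed; intersecting with the closed set $D^{(k)}$ keeps $D^{(k+1)}$ closed.

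\textbf{Preservation of global minimizers (the core).} This is where the definition does its real work. For any $x^*\in X^*$ we have $f(x^*)=f^*\le f^*_{\chi^{(k)}}$, since $f^*_{\chi^{(k)}}$ is a minimum of $f$ over the sample set $\chi^{(k)}\subset\Omega$. Using the upper half of \eqref{CM:eq:DC} together with $\omega\in(0,1]$ and $u^{(k)}\ge f^*_{\chi^{(k)}}$, I would chain
\begin{equation*}
\mathcal{A}^{(k)}f(x^*)\le f(x^*)+\omega\bigl(u^{(k)}-f^*_{\chi^{(k)}}\bigr)\le f^*_{\chi^{(k)}}+\bigl(u^{(k)}-f^*_{\chi^{(k)}}\bigr)=u^{(k)}.
\end{equation*}
Hence $x^*\in D^{(k+1)}$, giving $X^*\subseteq D^{(k+1)}$; since $f$ is continuous on the compact set $\Omega$ we have $X^*\neq\emptyset$, so this simultaneously yields nonemptiness of $D^{(k+1)}$.

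\textbf{Properness (the main obstacle).} It remains to exclude at least one point of $D^{(k)}$, i.e.\ to produce $x\in D^{(k)}$ with $\mathcal{A}^{(k)}f(x)>u^{(k)}$. My plan is to evaluate the \emph{lower} half of \eqref{CM:eq:DC} at a maximizer $\tilde{x}$ of $f$ over the compact set $D^{(k)}$, obtaining $\mathcal{A}^{(k)}f(\tilde{x})\ge M^{(k)}-\omega(u^{(k)}-f^*_{\chi^{(k)}})$ with $M^{(k)}:=\max_{x\in D^{(k)}}f(x)$, and then to verify $M^{(k)}>u^{(k)}+\omega(u^{(k)}-f^*_{\chi^{(k)}})$. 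This is the delicate step: non-constancy of $f$ guarantees $M^{(k)}>f^*$, but not automatically the stronger strict inequality once $u^{(k)}$ is pushed all the way up to the sample maximum $f^{**}_{\chi^{(k)}}$ with $\omega$ large. I expect the argument to rely on $u^{(k)}$ lying in its effective range strictly below $f^{**}_{\chi^{(k)}}$ --- exactly the situation for the feasible percentile choice $u^{(k)}=\mathrm{prctile}(f_{\chi^{(k)}},c)$ with $c<100$ --- so that the reserve $M^{(k)}-u^{(k)}$ dominates the error budget $\omega(u^{(k)}-f^*_{\chi^{(k)}})$; alternatively, if $\mathcal{A}^{(k)}f$ interpolates the data, the sample attaining $f^{**}_{\chi^{(k)}}>u^{(k)}$ is itself excluded, settling properness directly. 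Resolving this boundary case cleanly, and thereby closing the induction, is the part I anticipate needing the most care.
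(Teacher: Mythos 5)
Your core argument is exactly the paper's: decompose $\mathcal{A}^{(k)}f=f+\varepsilon^{(k)}$, use \eqref{CM:eq:DC} with $\omega\in(0,1]$ and $f(x^*)\leqslant f^*_{\chi^{(k)}}$ to chain $\mathcal{A}^{(k)}f(x^*)\leqslant f^*_{\chi^{(k)}}+\bigl(u^{(k)}-f^*_{\chi^{(k)}}\bigr)=u^{(k)}$, hence $X^*\subseteq D^{(k+1)}$. The paper runs the identical computation for the slightly larger set $S^{(k)}=\{x\in D^{(k)}:f(x)\leqslant f^*_{\chi^{(k)}}\}$, concluding $D^{(k+1)}\supseteq S^{(k)}\supseteq X^*$; your additional checks of closedness and nonemptiness (via $X^*\neq\emptyset$ on the compact $\Omega$) are left implicit there.

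The only real divergence is the ``properness'' step you flag as the main obstacle and leave open. Be aware that the paper does not resolve it either: its proof simply asserts ``It is clear that $D^{(k)}\supset D^{(k+1)}$'' and never establishes strictness of either inclusion, so in effect the symbols $\supset$ in the theorem are being read as $\supseteq$. Your observation is in fact a correct criticism of the statement itself: for the admissible boundary choice $u^{(k)}=f^{**}_{\chi^{(k)}}$, the error budget $\omega\bigl(u^{(k)}-f^*_{\chi^{(k)}}\bigr)$ can swallow the reserve $\max_{x\in D^{(k)}}f(x)-u^{(k)}$, and one can have $D^{(k+1)}=D^{(k)}$, so strict inclusion cannot hold ``for any possible choices of $u^{(k)}$ and $\omega$.'' So do not regard that paragraph as a gap in your proof relative to the paper's; modulo the non-strict reading of the inclusions, your argument is complete and is the same proof.
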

\begin{proof}
The convergence proceeds by induction on $k$. First, $\Omega=D^{(0)}\supset X^*$ is trivial since $f$ is not a constant on $\Omega$. Assume that $D^{(k)}\supset X^*$, then $D^{(k)}\supset S^{(k)}\supset X^*$, where
\begin{equation*}
 S^{(k)}=\left\{x\in D^{(k)}:f(x)\leqslant f^*_{\chi^{(k)}}\right\};
\end{equation*}
now we will show that $D^{(k)}\supset D^{(k+1)}\supset S^{(k)}$.

It is clear that $D^{(k)}\supset D^{(k+1)}$, so we need to prove that $D^{(k+1)}\supset S^{(k)}$. By Definition \ref{CM:defn:D}, the approximate model can be decomposed into
\begin{equation*}
 \mathcal{A}^{(k)}f(x)=f(x)+\varepsilon^{(k)}(x),
\end{equation*}
and since the error bound condition, it follows that
\begin{equation*}
 |\varepsilon^{(k)}(x)|\leqslant\omega\big(u^{(k)}-f_{\chi^{(k)}}^*\big)
 \leqslant u^{(k)}-f_{\chi^{(k)}}^*,~~\omega\in(0,1].
\end{equation*}
Hence, for any $x'\in S^{(k)}$, we have $f(x')\leqslant f^*_{\chi^{(k)}}$, which can be further rewritten as
\begin{equation*}
 \mathcal{A}^{(k)}f(x')=f(x')+\varepsilon^{(k)}(x')
 \leqslant f^*_{\chi^{(k)}}+|\varepsilon^{(k)}(x')|,
\end{equation*}
and further,
\begin{equation*}
 \mathcal{A}^{(k)}f(x')\leqslant f^*_{\chi^{(k)}}+u^{(k)}-f_{\chi^{(k)}}^*=u^{(k)},
\end{equation*}
therefore, $x'\in D^{(k+1)}$, that is, $D^{(k+1)}\supset S^{(k)}$, and the proof is complete.
\end{proof}

Theorem \ref{CM:thm:Conv} does not guarantee that the upper bound of $f$ on $D^{(k)}$, say, $\max_{x\in D^{(k)}}f(x)$, is strictly monotonically decreasing. In fact, by choosing suitable $u^{(k)}$ and $\omega\in(0,1]$, the corresponding convergence can be further enhanced. First, the following lemma gives an upper bound of the optimality gap on $D^{(k+1)}$.
\begin{lem}[Upper bound]\label{CM:lem:SConv}
Suppose the sequence $\{D^{(k)}\}_{k\in\mathbb{N}_0}$ is constructed as Definition \ref{CM:defn:D} for problem \eqref{CM:eq:COP}. Then, for any possible choices of $u^{(k)}$ and $\omega$, if $f$ is not a constant, then for all $k\in\mathbb{N}_0$,
\begin{equation*}
 \max_{x\in D^{(k+1)}}\big[f(x)-f^*\big]\leqslant(1+\omega)\big(u^{(k)}-f^*\big).
\end{equation*}
\end{lem}
\begin{rem}
This lemma also reflects the effect of the error bound parameter $\omega$, which makes the error bound condition relatively independent of $D^{(k+1)}$. Note that both the error bound condition and $D^{(k+1)}$ depend on $u^{(k)}$ and $\mathcal{A}^{(k)}f$, but the condition also relies on $\omega$, and $D^{(k+1)}$ is independent of $\omega$.
\end{rem}
\begin{proof}
From Definition \ref{CM:defn:D} and $f^*\leqslant f_{\chi^{(k)}}^*$, the approximate model can be decomposed into
\begin{equation*}
 \mathcal{A}^{(k)}f(x)=f(x)+\varepsilon^{(k)}(x)~~\textrm{with}~~
 |\varepsilon^{(k)}(x)|\leqslant\omega\big(u^{(k)}-f_{\chi^{(k)}}^*\big)
 \leqslant\omega\big(u^{(k)}-f^*\big),
\end{equation*}
then for all $t\in D^{(k+1)}$, since
\begin{equation*}
 \mathcal{A}^{(k)}f(t)\leqslant u^{(k)},
\end{equation*}
it follows that
\begin{equation*}
 f(t)=\mathcal{A}^{(k)}f(t)-\varepsilon^{(k)}(t)
 \leqslant u^{(k)}+\omega\big(u^{(k)}-f^*\big),
\end{equation*}
subtracting $f^*$ from both sides, this yields
\begin{equation*}
 f(t)-f^*\leqslant u^{(k)}-f^*+\omega\big(u^{(k)}-f^*\big)
 \leqslant(1+\omega)\big(u^{(k)}-f^*\big),
\end{equation*}
noting that the last inequality holds for all $t\in D^{(k+1)}$, we obtain
\begin{equation*}
 \max_{x\in D^{(k+1)}}[f(x)-f^*]\leqslant(1+\omega)\big(u^{(k)}-f^*\big),
\end{equation*}
as claimed.
\end{proof}

In the following, we establish the first strong convergence conclusion. It not only gives the conditions for strong convergence, but also reflects the relationship between $u^{(k)}$ and $\omega$ in a strong convergence behavior and their respective roles. Notice that the following condition \eqref{CM:eq:CondSC1} depends explicitly on the unknown $f^*$ and $\max_{x\in D^{(k)}}f(x)$, but this will be released in Theorem \ref{CM:thm:SConv2}.

\begin{thm}[Strong convergence]\label{CM:thm:SConv}
Suppose the sequence $\{D^{(k)}\}_{k\in\mathbb{N}_0}$ is constructed as Definition \ref{CM:defn:D} for problem \eqref{CM:eq:COP} and $f$ is not a constant. Then, if there exists a $0<q<\infty$ such that
\begin{equation}\label{CM:eq:CondSC1}
 u^{(k)}-f^*\leqslant\frac{1}{1+q}\max_{x\in D^{(k)}}[f(x)-f^*],
\end{equation}
and the parameter $\omega\in(0,1]$ satisfies
\begin{equation}\label{CM:eq:CondSC2}
 0<\omega<q,
\end{equation}
then for all $k\in\mathbb{N}_0$,
\begin{equation}\label{CM:eq:LCR}
 \max_{x\in D^{(k+1)}}[f(x)-f^*]\leqslant
 \frac{1+\omega}{1+q}\max_{x\in D^{(k)}}[f(x)-f^*].
\end{equation}
Here, \eqref{CM:eq:CondSC1} and \eqref{CM:eq:CondSC2} are called the strong convergence conditions, and \eqref{CM:eq:LCR} is called linear convergence with factor $\frac{1+\omega}{1+q}<1$.
\end{thm}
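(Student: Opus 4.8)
The plan is to combine the optimality-gap bound from Lemma \ref{CM:lem:SConv} with the structural assumption \eqref{CM:eq:CondSC1}. Lemma \ref{CM:lem:SConv} already yields, for any admissible $u^{(k)}$ and $\omega\in(0,1]$, the estimate
\begin{equation*}
 \max_{x\in D^{(k+1)}}\big[f(x)-f^*\big]\leqslant(1+\omega)\big(u^{(k)}-f^*\big).
\end{equation*}
So the bulk of the work is already done; the theorem is essentially a one-line chaining argument, and the main task is merely to verify that substituting \eqref{CM:eq:CondSC1} into this bound produces the claimed contraction factor.

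First I would invoke Lemma \ref{CM:lem:SConv} to get the displayed inequality above. Then I would use the hypothesis \eqref{CM:eq:CondSC1}, namely $u^{(k)}-f^*\leqslant\frac{1}{1+q}\max_{x\in D^{(k)}}[f(x)-f^*]$, to bound the factor $\big(u^{(k)}-f^*\big)$ on the right-hand side. Chaining the two gives
\begin{equation*}
 \max_{x\in D^{(k+1)}}\big[f(x)-f^*\big]\leqslant(1+\omega)\cdot\frac{1}{1+q}\max_{x\in D^{(k)}}\big[f(x)-f^*\big]
 =\frac{1+\omega}{1+q}\max_{x\in D^{(k)}}\big[f(x)-f^*\big],
\end{equation*}
which is exactly \eqref{CM:eq:LCR}. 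The induction over $k$ is immediate since the argument holds at each step independently; no base case beyond $k=0$ is needed because the bound is stated per-step.

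Finally I would check that the stated factor is genuinely a contraction, i.e. $\frac{1+\omega}{1+q}<1$. This is where condition \eqref{CM:eq:CondSC2}, $0<\omega<q$, enters: it guarantees $1+\omega<1+q$ and hence the factor is strictly less than one, justifying the name ``linear convergence.'' I do not anticipate a genuine obstacle here, since both ingredients are available; the only point requiring a moment's care is confirming that the two hypotheses are compatible — one must have $q>\omega>0$ and simultaneously $\omega\leqslant1$, so the regime $q>0$ must be large enough to admit some valid $\omega$, which it always does since any $\omega\in(0,\min\{q,1\})$ works. The interpretive remark that \eqref{CM:eq:CondSC1} controls how aggressively $u^{(k)}$ is chosen relative to the current worst-case gap, while \eqref{CM:eq:CondSC2} ensures the model-error budget $\omega$ stays below that aggressiveness level $q$, is worth stating but is not part of the formal proof.
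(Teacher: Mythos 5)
Your proposal is correct and follows exactly the paper's own argument: chain Lemma \ref{CM:lem:SConv} with hypothesis \eqref{CM:eq:CondSC1} to obtain \eqref{CM:eq:LCR}, then use \eqref{CM:eq:CondSC2} only to certify that the factor $\frac{1+\omega}{1+q}$ is strictly below one. Nothing is missing; the paper's proof is the same one-line chaining you describe.
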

\begin{proof}
According to \eqref{CM:eq:CondSC2}, we have $\frac{1+\omega}{1+q}<1$, then it follows from Lemma \ref{CM:lem:SConv} and \eqref{CM:eq:CondSC1} that
\begin{equation*}
 \max_{x\in D^{(k+1)}}[f(x)-f^*]\leqslant(1+\omega)\big(u^{(k)}-f^*\big)
 \leqslant\frac{1+\omega}{1+q}\max_{x\in D^{(k)}}[f(x)-f^*],
\end{equation*}
as claimed.
\end{proof}

The conclusion above is not only convenient for theoretical analysis in Section \ref{CM:s4} but also generalized for practical applications. Before doing this, let us extend our model-based sequence to the case of unfixed bound parameter, i.e., $\omega_k$.

\begin{defn}\label{CM:defn:D2}
For problem \eqref{CM:eq:COP}, possible sequences $\{u^{(k)}\}_{k\in\mathbb{N}_0}$ and $\{\omega_k\}_{k\in\mathbb{N}_0}$, a model-based sequence $\{D^{(k)}\}_{k\in\mathbb{N}_0}$ is defined recursively by $D^{(0)}=\Omega$ and
\begin{equation}\label{CM:eq:D2}
D^{(k+1)}=\left\{x\in D^{(k)}:\mathcal{A}^{(k)}f(x)\leqslant u^{(k)}\right\},
\end{equation}
where, except $\omega_k$, the remaining notations come from Definition \ref{CM:defn:D}, and $\mathcal{A}^{(k)}f$ is a model of $f$ w.r.t. the data pairs $(\chi^{(k)},f_{\chi^{(k)}})$ such that the upper bound of model error
\begin{equation}\label{CM:eq:DC2}
\max_{x\in D^{(k)}}\big|\mathcal{A}^{(k)}f(x)-f(x)\big|
\leqslant\omega_k\big(u^{(k)}-f_{\chi^{(k)}}^*\big).
\end{equation}
\end{defn}

Note that Lemma \ref{CM:lem:SConv} also holds for the sequence given by the above definition. Further, the following theorem, which does not rely on any unknown information, may be helpful to make a trade-off between $u^{(k)}$ and $\omega_k$, e.g., choose a suitable $\omega_k$ for a certain $u^{(k)}$ or vice versa, in each contraction.

\renewcommand{\thethma}{2a}
\begin{thma}[Strong convergence]\label{CM:thm:SConv2}
For problem \eqref{CM:eq:COP}, possible sequences $\{u^{(k)}\}$ and $\{\omega_k\}$, suppose a sequence $\{D^{(k)}\}_{k\in\mathbb{N}_0}$ is constructed as Definition \ref{CM:defn:D2}, $f$ is not a constant, and there exists a sequence of positive real numbers $\{q_k\}$ such that
\begin{equation}\label{CM:eq:CondSCa1}
 u^{(k)}-\min\big(A_k^*,f_{\chi^{(k)}}^*\big)
 \leqslant\frac{1}{1+q_k}\big(f_{\chi^{(k)}}^{**}-f_{\chi^{(k)}}^*\big),
 ~\textrm{where}~A_k^*=\!\min_{x\in D^{(k)}}\!\mathcal{A}^{(k)}f(x).
\end{equation}
If the parameter $\omega_k$ satisfies
\begin{equation}\label{CM:eq:CondSCa2}
 (1+\omega_k)^2<1+q_k,
\end{equation}
then for all $k\in\mathbb{N}_0$,
\begin{equation}\label{CM:eq:LCRa}
 \max_{x\in D^{(k+1)}}[f(x)-f^*]\leqslant
 \frac{(1+\omega_k)^2}{1+q_k}\max_{x\in D^{(k)}}[f(x)-f^*].
\end{equation}
\end{thma}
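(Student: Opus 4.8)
The plan is to reduce everything to Lemma~\ref{CM:lem:SConv}, which (as noted just above) remains valid for sequences from Definition~\ref{CM:defn:D2} and yields $\max_{x\in D^{(k+1)}}[f(x)-f^*]\leqslant(1+\omega_k)\big(u^{(k)}-f^*\big)$. Writing $M_k=\max_{x\in D^{(k)}}f(x)$, it therefore suffices to prove that the computable condition \eqref{CM:eq:CondSCa1} forces the unknown-dependent bound
\[
 u^{(k)}-f^*\leqslant\frac{1+\omega_k}{1+q_k}\big(M_k-f^*\big),
\]
since combining this with Lemma~\ref{CM:lem:SConv} produces the factor $(1+\omega_k)^2/(1+q_k)$ in \eqref{CM:eq:LCRa}, while \eqref{CM:eq:CondSCa2} guarantees it is strictly less than one. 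The extra factor $1+\omega_k$ relative to Theorem~\ref{CM:thm:SConv} is precisely the price of replacing the unknown $f^*$ and $M_k$ by the observable quantities $f^*_{\chi^{(k)}}$, $f^{**}_{\chi^{(k)}}$ and $A_k^*$.

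First I would record the elementary relations linking the observable data to the true values. Since $\chi^{(k)}\subset D^{(k)}$ and $X^*\subset D^{(k)}$ (the contraction property, obtained exactly as in Theorem~\ref{CM:thm:Conv}), we have $\min_{x\in D^{(k)}}f(x)=f^*$, hence $f^*\leqslant f^*_{\chi^{(k)}}$ and $f^{**}_{\chi^{(k)}}\leqslant M_k$, so that $f^{**}_{\chi^{(k)}}-f^*_{\chi^{(k)}}\leqslant M_k-f^*$. Abbreviating the error bound \eqref{CM:eq:DC2} by $\delta_k=\omega_k\big(u^{(k)}-f^*_{\chi^{(k)}}\big)$, the next step is $A_k^*\leqslant f^*+\delta_k$: choosing any global minimizer $x^*\in X^*\subset D^{(k)}$, the error bound gives $A_k^*\leqslant\mathcal{A}^{(k)}f(x^*)\leqslant f(x^*)+\delta_k=f^*+\delta_k$, and therefore $\min\big(A_k^*,f^*_{\chi^{(k)}}\big)-f^*\leqslant\delta_k$.

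The decisive trick is to control $\delta_k$ through \eqref{CM:eq:CondSCa1} itself, instead of the crude estimate $u^{(k)}-f^*_{\chi^{(k)}}\leqslant u^{(k)}-f^*$, which would only deliver the weaker factor $1/[(1-\omega_k)(1+q_k)]$. Because $\min\big(A_k^*,f^*_{\chi^{(k)}}\big)\leqslant f^*_{\chi^{(k)}}$, the left-hand side of \eqref{CM:eq:CondSCa1} dominates $u^{(k)}-f^*_{\chi^{(k)}}$, giving
\[
 u^{(k)}-f^*_{\chi^{(k)}}\leqslant u^{(k)}-\min\big(A_k^*,f^*_{\chi^{(k)}}\big)\leqslant\frac{1}{1+q_k}\big(f^{**}_{\chi^{(k)}}-f^*_{\chi^{(k)}}\big),
\]
whence $\delta_k\leqslant\frac{\omega_k}{1+q_k}\big(f^{**}_{\chi^{(k)}}-f^*_{\chi^{(k)}}\big)\leqslant\frac{\omega_k}{1+q_k}\big(M_k-f^*\big)$. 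Finally I would assemble the pieces through the splitting
\[
 u^{(k)}-f^*=\big[u^{(k)}-\min\big(A_k^*,f^*_{\chi^{(k)}}\big)\big]+\big[\min\big(A_k^*,f^*_{\chi^{(k)}}\big)-f^*\big],
\]
bounding the first bracket by $\tfrac{1}{1+q_k}(M_k-f^*)$ via \eqref{CM:eq:CondSCa1} together with $f^{**}_{\chi^{(k)}}-f^*_{\chi^{(k)}}\leqslant M_k-f^*$, and the second by $\delta_k$, to reach $u^{(k)}-f^*\leqslant\frac{1+\omega_k}{1+q_k}(M_k-f^*)$ exactly.

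I expect the main obstacle to lie in this tight handling of $\delta_k$: the naive bound $u^{(k)}-f^*_{\chi^{(k)}}\leqslant u^{(k)}-f^*$ reintroduces $u^{(k)}-f^*$ on the right and forces a division by $1-\omega_k$, which destroys the claimed rate. Recognising that \eqref{CM:eq:CondSCa1} already caps $u^{(k)}-f^*_{\chi^{(k)}}$ (through $\min(A_k^*,f^*_{\chi^{(k)}})\leqslant f^*_{\chi^{(k)}}$) is what makes the estimate close at the stated factor $(1+\omega_k)^2/(1+q_k)$. A secondary point requiring care is the justification of $\min_{x\in D^{(k)}}f(x)=f^*$, i.e.\ that the inclusion $X^*\subset D^{(k)}$ of Theorem~\ref{CM:thm:Conv} persists for Definition~\ref{CM:defn:D2} sequences; this in turn relies on $\omega_k\leqslant1$, implicitly inherited from Definition~\ref{CM:defn:D}.
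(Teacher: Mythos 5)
Your proof is correct and follows essentially the same route as the paper's: both rest on Lemma \ref{CM:lem:SConv}, on the bound $A_k^*-f^*\leqslant\omega_k\big(u^{(k)}-f^*_{\chi^{(k)}}\big)$ obtained by evaluating the model error at a global minimizer $x^*\in X^*\subset D^{(k)}$, on replacing both $A_k^*$ and $f^*_{\chi^{(k)}}$ by $\min\big(A_k^*,f^*_{\chi^{(k)}}\big)$, and finally on \eqref{CM:eq:CondSCa1} together with $f^{**}_{\chi^{(k)}}-f^*_{\chi^{(k)}}\leqslant\max_{x\in D^{(k)}}[f(x)-f^*]$. The only difference is bookkeeping: you first distill the intermediate inequality $u^{(k)}-f^*\leqslant\frac{1+\omega_k}{1+q_k}\max_{x\in D^{(k)}}[f(x)-f^*]$ (applying \eqref{CM:eq:CondSCa1} separately to the two brackets of your splitting) and then invoke the lemma, whereas the paper substitutes $-f^*\leqslant-A_k^*+\omega_k\big(u^{(k)}-f^*_{\chi^{(k)}}\big)$ directly into the lemma's bound and collects terms into $(1+\omega_k)^2\big[u^{(k)}-\min\big(A_k^*,f^*_{\chi^{(k)}}\big)\big]$ before applying \eqref{CM:eq:CondSCa1} once — the two computations are identical in substance.
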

\begin{proof}
For a certain $k$ and any $x^*\in X^*\subset D^{(k)}$, it is clear that
\begin{equation*}
  A_k^*=\min_{x\in D^{(k)}}\mathcal{A}^{(k)} f(x)\leqslant\mathcal{A}^{(k)}f(x^*),
\end{equation*}
subtracting $f^*$ from both sides, we get
\begin{equation*}
  A_k^*-f^*\leqslant\mathcal{A}^{(k)}f(x^*)-f^*
  \leqslant\big|\mathcal{A}^{(k)}f(x^*)-f^*\big|
  \leqslant\max_{x\in D^{(k)}}\big|\mathcal{A}^{(k)}f(x)-f(x)\big|.
\end{equation*}
Together with the upper bound of model error, that is,
\begin{equation*}
 \max_{x\in D^{(k)}}\big|\mathcal{A}^{(k)}f(x)-f(x)\big|
 \leqslant\omega_k\big(u^{(k)}-f_{\chi^{(k)}}^*\big),
\end{equation*}
it holds that
\begin{equation*}
 A_k^*-f^*\leqslant\omega_k\big(u^{(k)}-f_{\chi^{(k)}}^*\big),
\end{equation*}
that is,
\begin{equation*}
 -f^*\leqslant-A_k^*+\omega_k\big(u^{(k)}-f_{\chi^{(k)}}^*\big).
\end{equation*}
Then from Lemma \ref{CM:lem:SConv} and the inequality above, we obtain
\begin{align*}
 \max_{x\in D^{(k+1)}}[f(x)-f^*]\leqslant&~(1+\omega_k)\big(u^{(k)}-f^*\big) \\
 \leqslant&~(1+\omega_k)\left[u^{(k)}-A_k^* +\omega_k\big(u^{(k)}-f_{\chi^{(k)}}^*\big)\right] \\
 \leqslant&~(1+\omega_k)\big(u^{(k)}-A_k^*\big)+
 (\omega_k+\omega_k^2)\big(u^{(k)}-f_{\chi^{(k)}}^*\big) \\
 \leqslant&~(1+\omega_k)^2\big[u^{(k)}-\min\big(A_k^*,f_{\chi^{(k)}}^*\big)\big].
\end{align*}
Together with \eqref{CM:eq:CondSCa1}, we get
\begin{equation*}
  \max_{x\in D^{(k+1)}}[f(x)-f^*]\leqslant\frac{(1+\omega_k)^2}{1+q_k}
  \big(f_{\chi^{(k)}}^{**}-f_{\chi^{(k)}}^*\big)\leqslant
  \frac{(1+\omega_k)^2}{1+q_k}\max_{x\in D^{(k)}}[f(x)-f^*],
\end{equation*}
and \eqref{CM:eq:CondSCa2} ensured the factor $\frac{(1+\omega_k)^2}{1+q_k}<1$, so the desired result follows.
\end{proof}

\subsection{Median-type method and two illustrative examples}
\label{CM:s2:3}

Inspired by the traditional halving method, we also consider the median-type method, i.e., $u^{(k)}=\textrm{prctile}\big(f_{\chi^{(k)}},c_k\big)$ with a fixed $c_k=\bar{c}=50$, which leads to a contraction factor $\lambda^{(k+1)}=\frac{1}{2}$ in expectation under Assumption \ref{CM:ass:CR}. In this case, the median-type sequence $\{D^{(k)}\}_{k\in\mathbb{N}_0}$ can be rewritten by $D^{(0)}=\Omega$ and
\begin{equation}\label{CM:eq:DMed}
D^{(k+1)}=\left\{x\in D^{(k)}:\mathcal{A}^{(k)}f(x)\leqslant \textrm{Median}f_{\chi^{(k)}}\right\},
\end{equation}
where the $k$th model $\mathcal{A}^{(k)}f$ satisfies
\begin{equation*}
\max_{x\in D^{(k)}}\left|\mathcal{A}^{(k)}f(x)-f(x)\right|< \omega\left(\textrm{Median}f_{\chi^{(k)}}-f_{\chi^{(k)}}^*\right),
~~\forall\omega\in(0,1].
\end{equation*}

As an illustrative example, Figure \ref{CM:fig:A1} visually shows how the median-type algorithm (see Section \ref{CM:s5}) compresses the original domain $\Omega$ to $X^*$ step by step; and three important pieces of information that can be obtained from Figure \ref{CM:fig:A1} are: (i) the sample size required for each model is greatly reduced due to the domain contractions; and (ii) none of the minimizers will be missed for the cases of multiple global minima, which verifies the conclusion established by Theorem \ref{CM:thm:Conv}; and (iii) this median-type method enjoys linear convergence, which verifies the conclusions established by Theorems \ref{CM:thm:SConv} and \ref{CM:thm:SConv2}.

\begin{figure}[tbhp]
\centering
\subfigure{\includegraphics[width=0.325\textwidth]{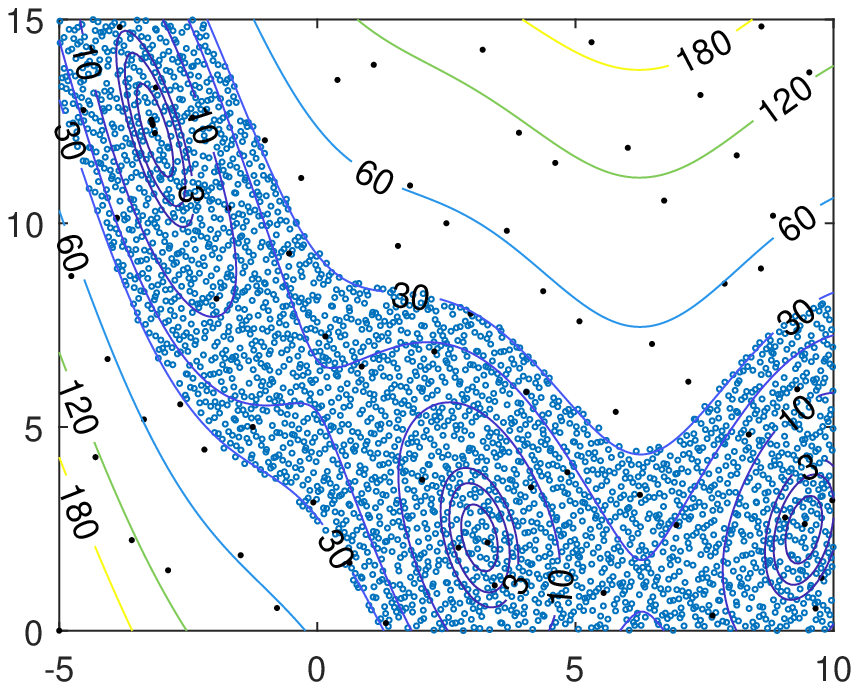}}
\subfigure{\includegraphics[width=0.325\textwidth]{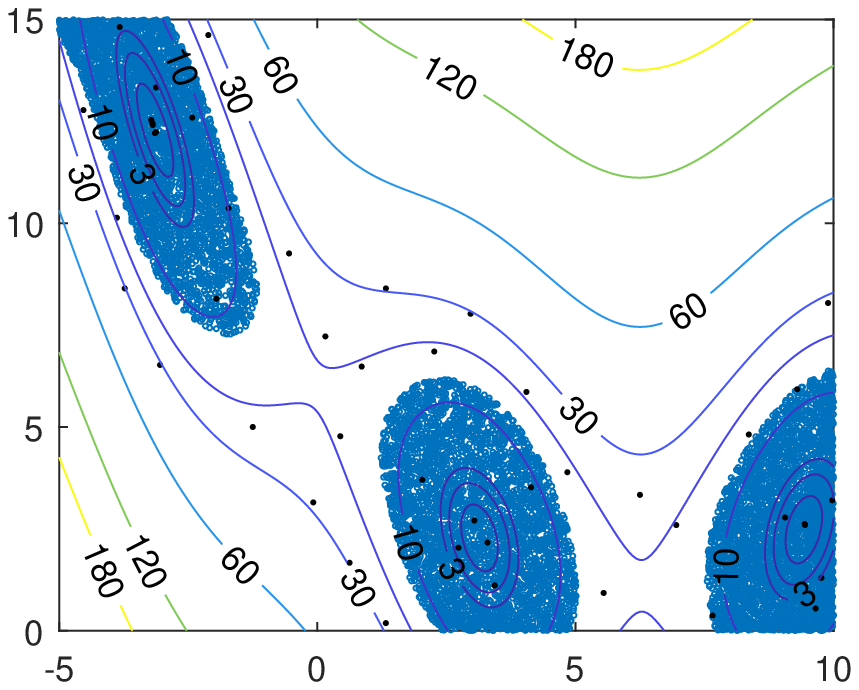}}
\subfigure{\includegraphics[width=0.325\textwidth]{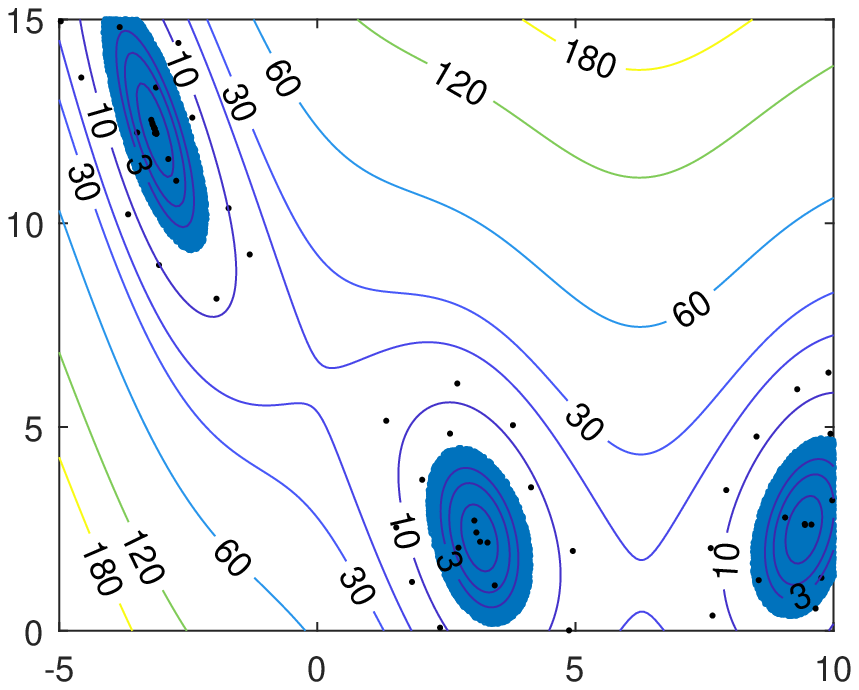}}\\
\subfigure{\includegraphics[width=0.325\textwidth]{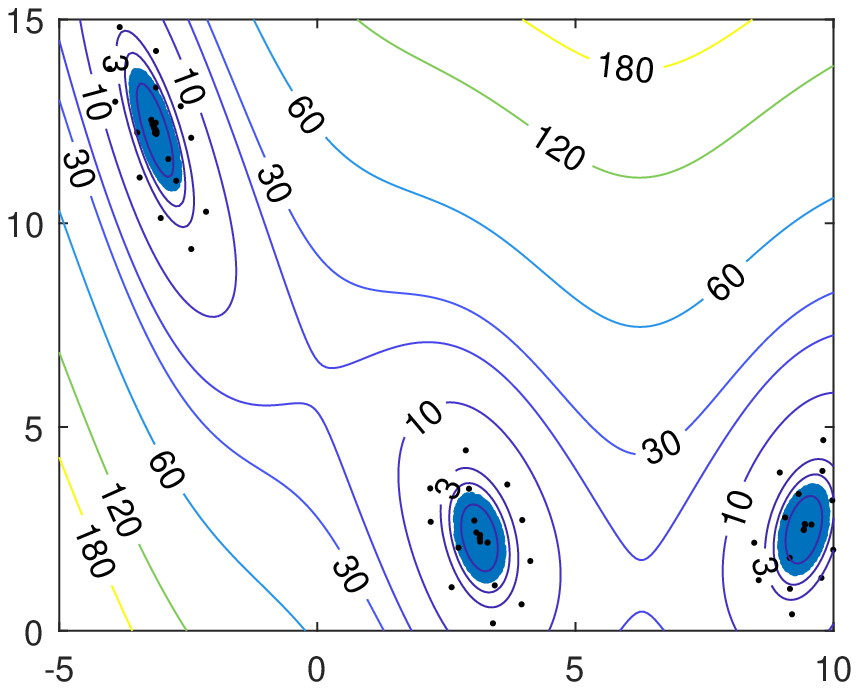}}
\subfigure{\includegraphics[width=0.325\textwidth]{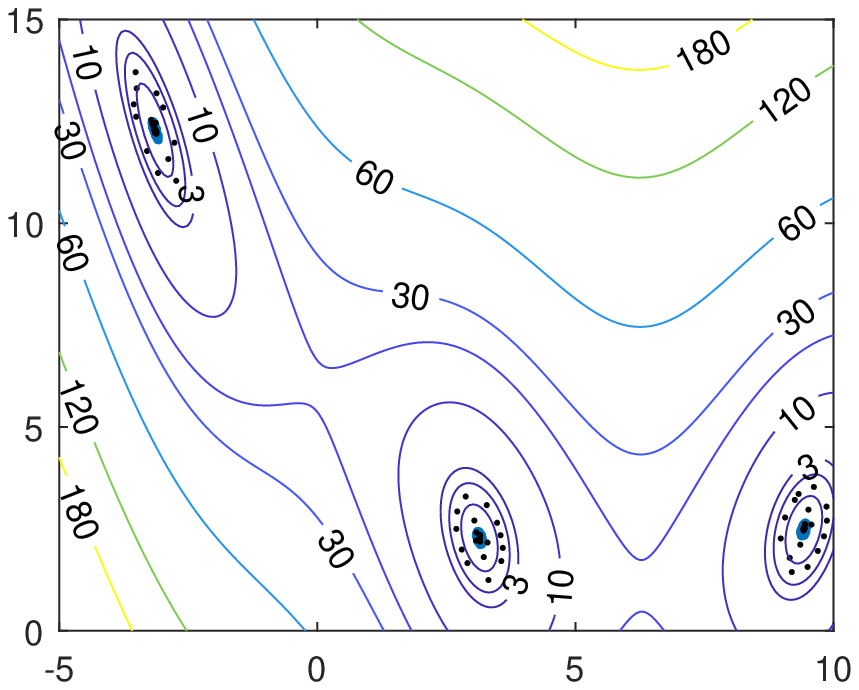}}
\subfigure{\includegraphics[width=0.325\textwidth]{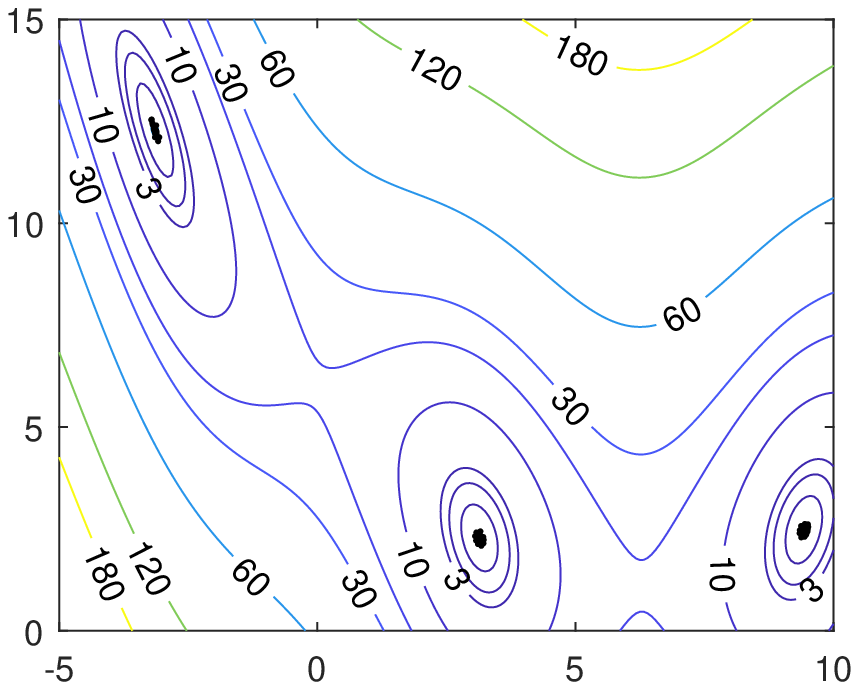}}\\
\subfigure{\includegraphics[width=0.325\textwidth]{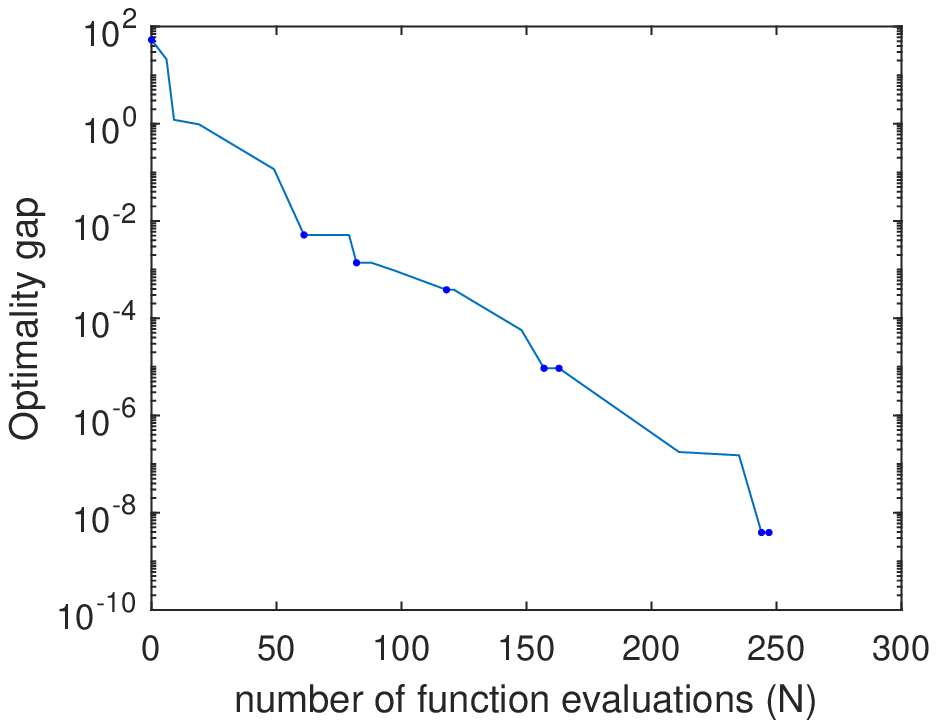}}
\subfigure{\includegraphics[width=0.325\textwidth]{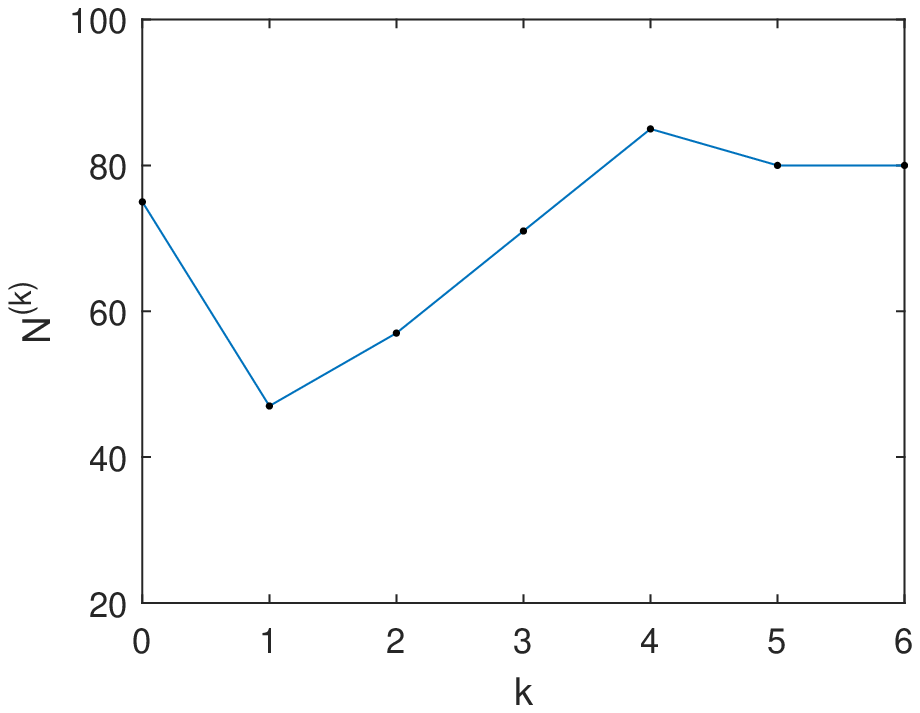}}
\subfigure{\includegraphics[width=0.325\textwidth]{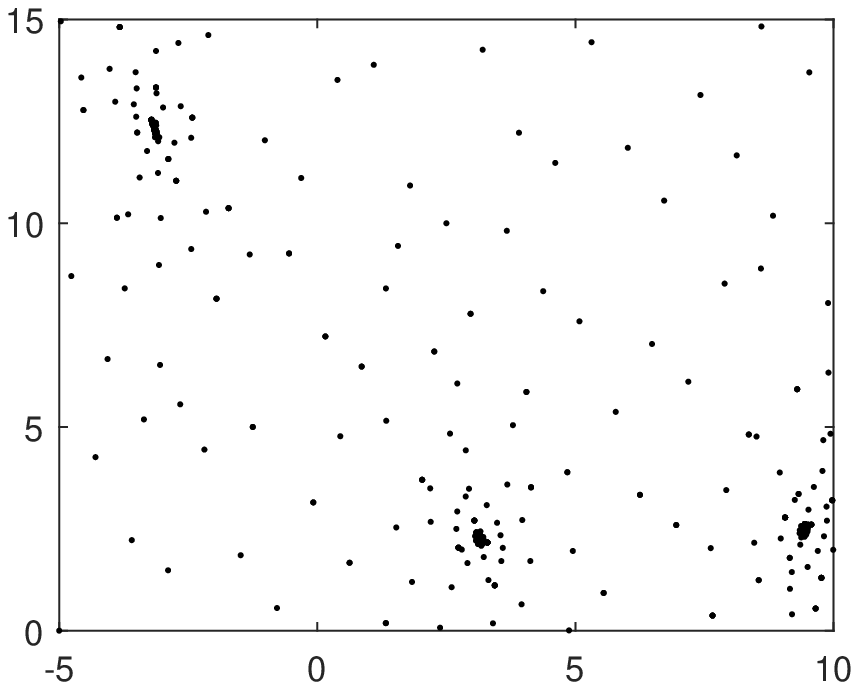}}
\caption{Performance of the median-type method shown in Algorithm \ref{CM:alg:CM} for the function $f(x_1,x_2)=(x_2-\frac{5.1} {4\pi^2}x_1^2+\frac{5}{\pi}x_1-6)^2 +10(1-\frac{1}{8\pi})\cos(x_1)+10, x_1\in[-5,10], x_2\in[0,15]$, where the multiple global minima on the domain $\Omega=[-5,10]\times[0,15]$ are located at $(-\pi,12.275)$, $(\pi,2.275)$ and $(3\pi,2.475)$. And the parameter setting for Algorithm \ref{CM:alg:CM} (see Section \ref{CM:s6} for details) are $K=7,m=2,\textrm{minIterInner}=1,\omega=1$, $c_k=\bar{c}=50$ and $t_k=\bar{t}=3.5$ for $k=0,1,\cdots,K-1$. Upper and middle rows: the first six contractions; for example, the upper left plot indicates that the first contraction from $D^{(0)}=\Omega$ to $D^{(1)}$, the objective function is shown by contour lines with some suitable level marks, the samples used to build the first model on $D^{(0)}$ are visible as dots in black, the random candidate points, i.e., $\mathcal{T}$ generating by the reflected random walk (Algorithm \ref{CM:alg:CPS}), over the contracted subdomain $D^{(1)}$, are visible as circledots in blue; the remaining five plots are similar. Obviously, each model $\mathcal{A}^{(k)}f$ only depends on the corresponding sample set $\chi^{(k)}$ and all points in $\chi^{(k)}$ are inside $D^{(k)}$. Lower left: the convergence plot about the current best $f_{\textrm{best}}^*$, where the optimality gap is defined as $f_{\textrm{best}}^*-f^*$. Lower middle: the sample size used in each model. Lower right: $x$ trace after all seven contractions.}
\label{CM:fig:A1}
\end{figure}

Figure \ref{CM:fig:A1} may cause an illusion that these contraction sets are usually very close to some of the level sets; in fact, a contraction set can be, but does not have to be, a level set, as shown in Figure \ref{CM:fig:A2}. In the early stages of contraction, they can differ greatly from the level set, but in the later stage, due to the stronger error bound conditions, they will almost be equal to certain level sets. The difference in performance of the algorithms in Figures \ref{CM:fig:A1} and \ref{CM:fig:A2} is because the former uses a large confidence parameter and the latter applies a small one. When two algorithms with different confidence parameters converge, the one with small parameter usually has a lower computational cost. Because the confidence parameters mainly affect the probability of convergence of algorithms, as shown in Section \ref{CM:s6}.

\begin{figure}[tbhp]
\centering
\subfigure{\includegraphics[width=0.325\textwidth]{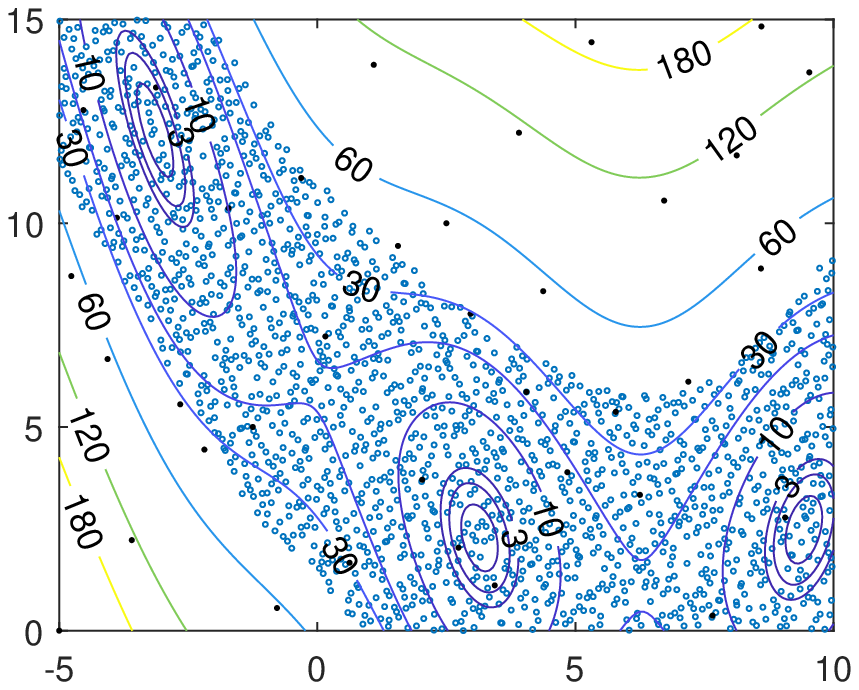}}
\subfigure{\includegraphics[width=0.325\textwidth]{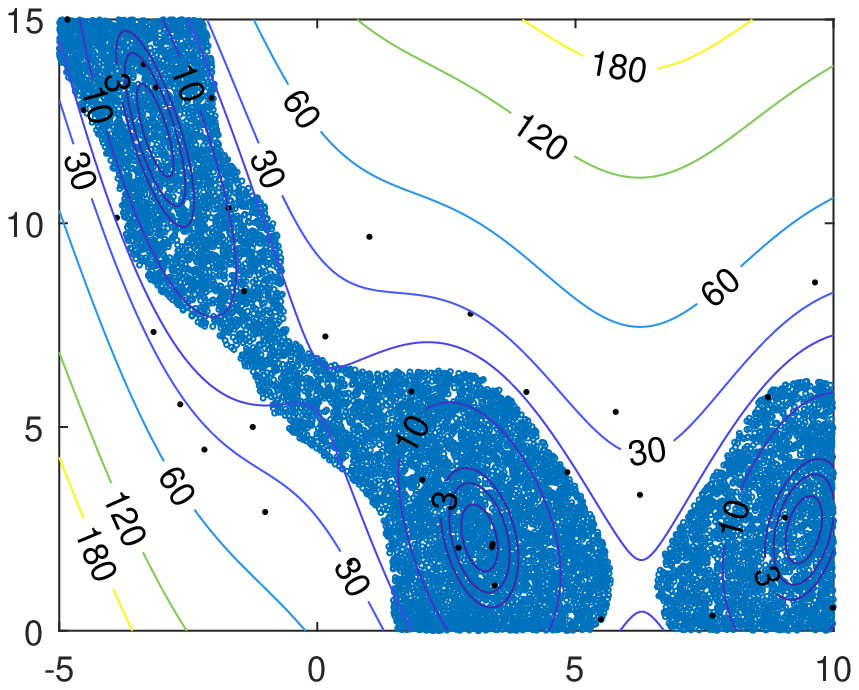}}
\subfigure{\includegraphics[width=0.325\textwidth]{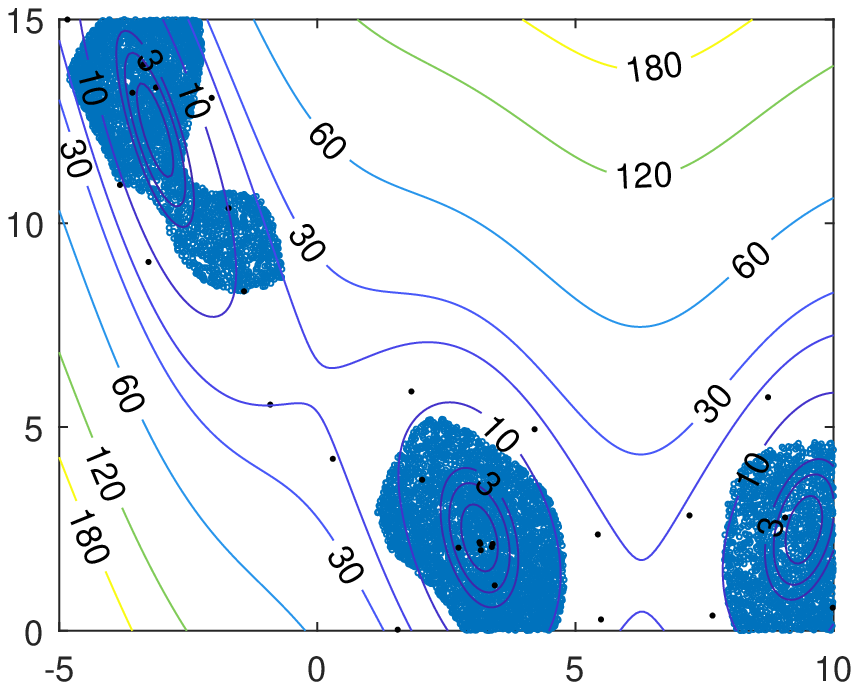}}\\
\subfigure{\includegraphics[width=0.325\textwidth]{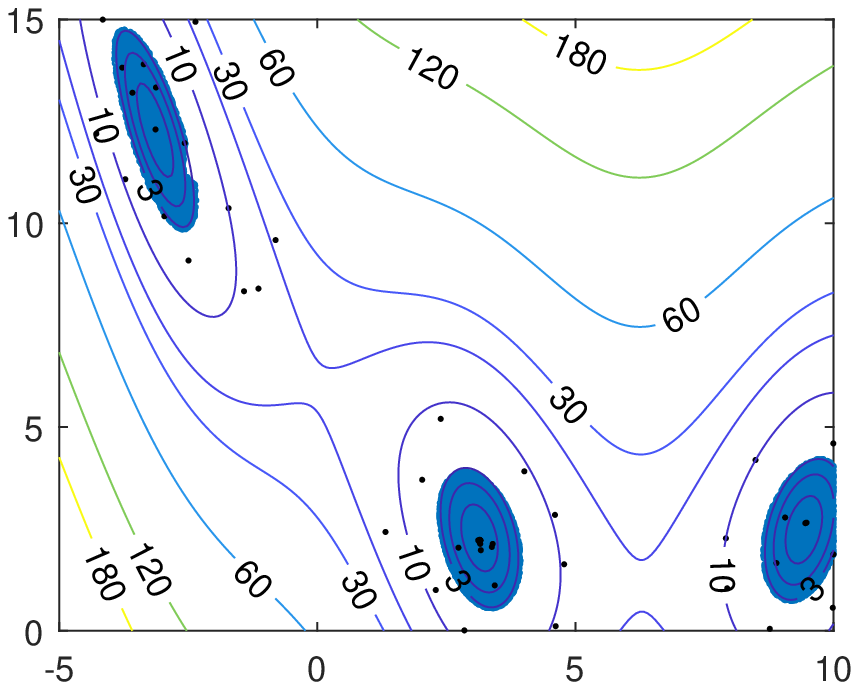}}
\subfigure{\includegraphics[width=0.325\textwidth]{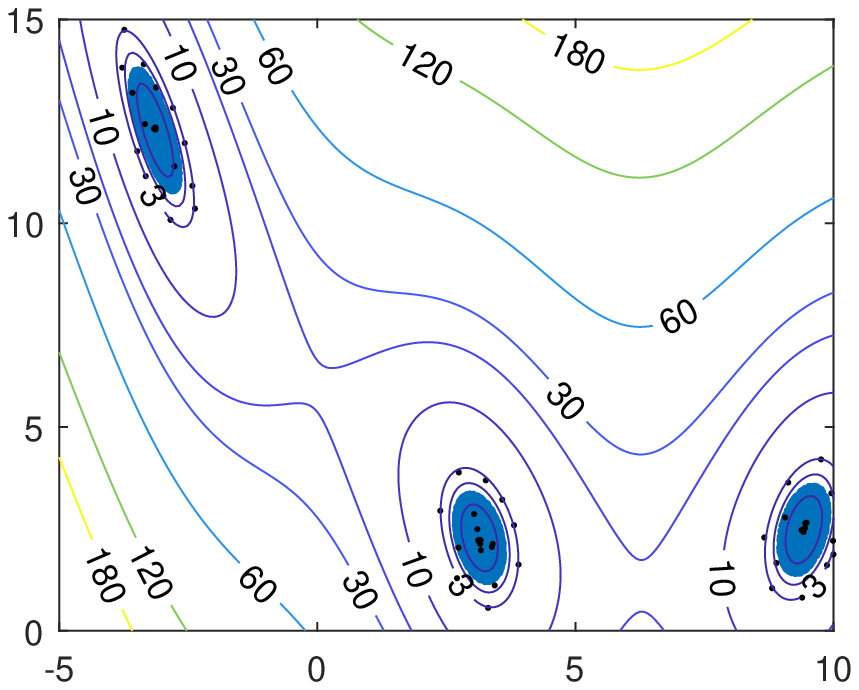}}
\subfigure{\includegraphics[width=0.325\textwidth]{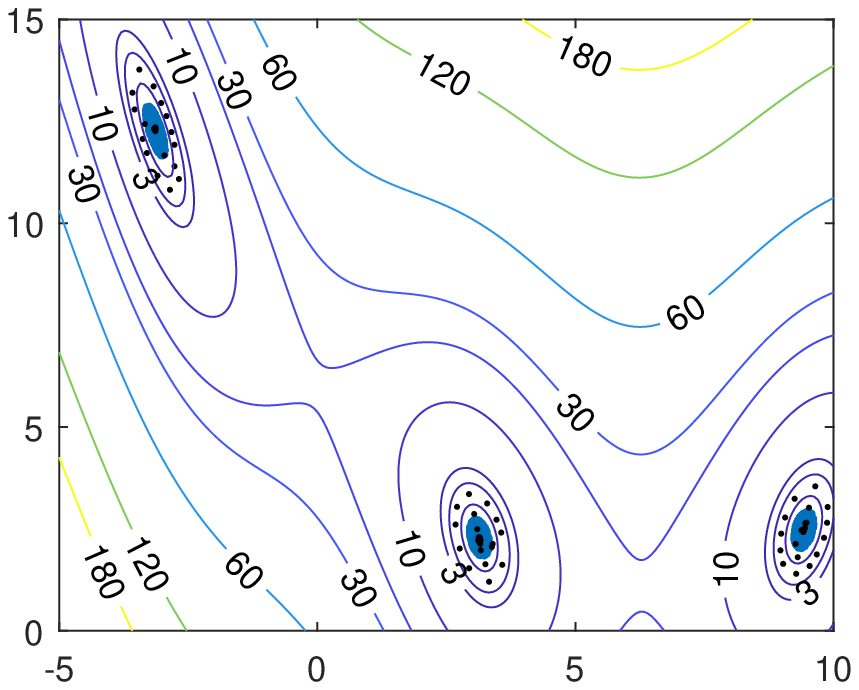}}\\
\subfigure{\includegraphics[width=0.325\textwidth]{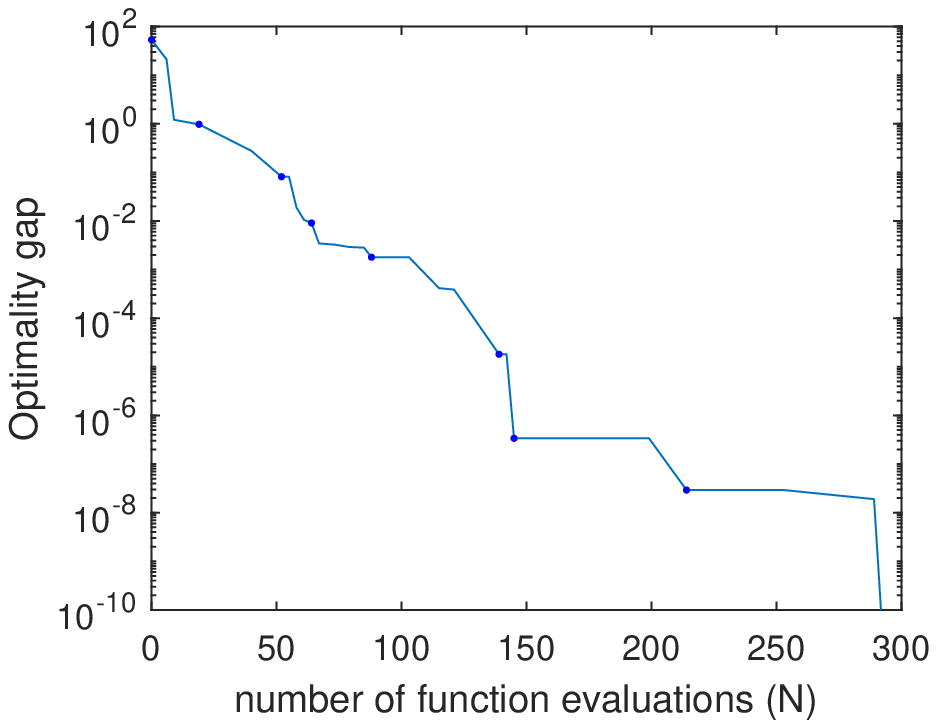}}
\subfigure{\includegraphics[width=0.325\textwidth]{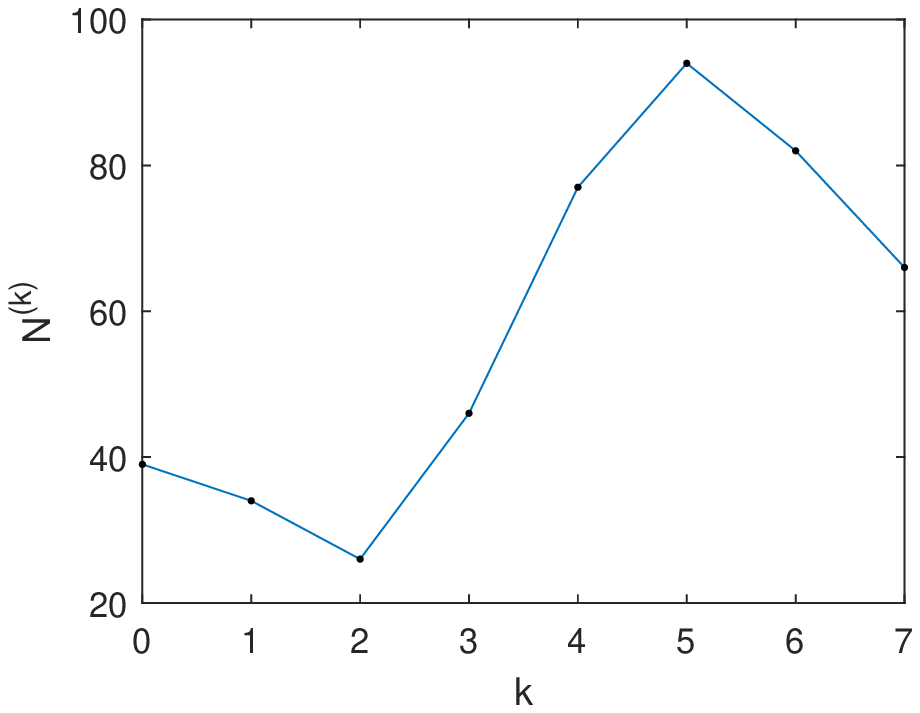}}
\subfigure{\includegraphics[width=0.325\textwidth]{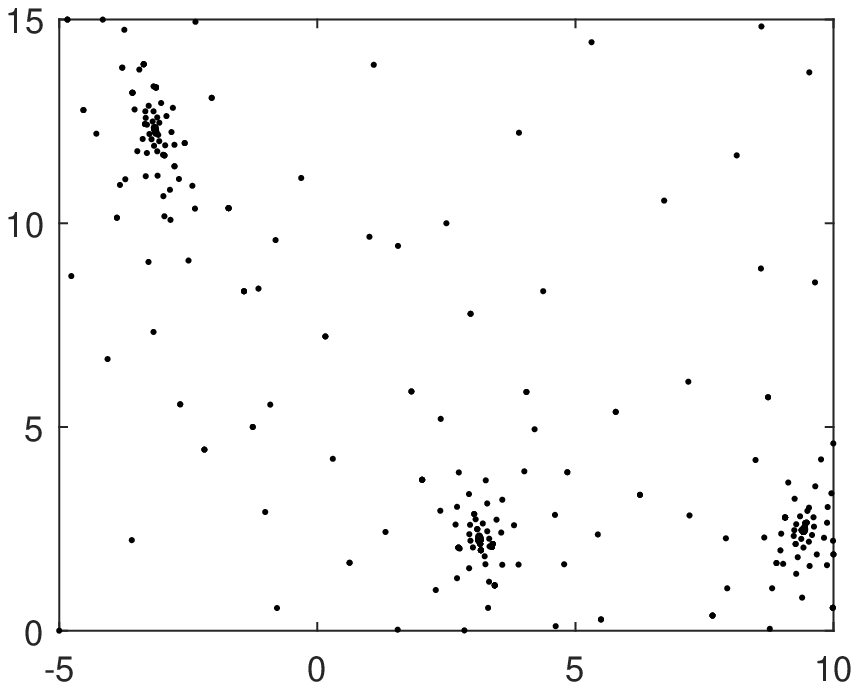}}
\caption{Performance of the median-type method with different confidence parameter for the function given in Figure \ref{CM:fig:A1}. The parameter setting for Algorithm \ref{CM:alg:CM} are $K=8,m=2,\textrm{minIterInner}=1,\omega=1$, $c_k=\bar{c}=50$ and $t_k=\bar{t}=1.75$ for $k=0,1,\cdots,K-1$. Upper and middle rows: the first six contractions and the expression of the plots is consistent with Figure \ref{CM:fig:A1}. It is shown that a contraction set can be, but does not have to be, a level set. Lower left: the convergence plot about the current best $f_{\textrm{best}}^*$, where the optimality gap is defined as $f_{\textrm{best}}^*-f^*$. Compared to the lower-left plot in Figure \ref{CM:fig:A1}, the smaller confidence parameter have almost no significant impact on convergence. Lower middle: the sample size used in each model. Compared to the lower-middle plot in Figure \ref{CM:fig:A1}, when the algorithm uses a small confidence parameter, the number of samples used by the models in the early stage is significantly lower. This helps control the total computational cost, but it is at the expense of reducing the probability of convergence. Lower right: $x$ trace after all seven contractions.}
\label{CM:fig:A2}
\end{figure}

Of course, there is another informal reason to support the use of median-type methods. For fixed $c$ and $\omega$, consider $u^{(k)}=\textrm{prctile} \big(f_{\chi^{(k)}},c\big)$. Assume that \eqref{CM:eq:CondSC1} holds and $\frac{1}{1+q}=\frac{c}{100}$, then $c=50$ implies $q=1$ according to Theorem \ref{CM:thm:SConv}. Hence, every $\omega\in(0,1)$ satisfies the second strong convergence condition $\omega<q$, and the linear convergence factor is
\begin{equation*}
  \frac{1+\omega}{1+q}=\frac{1+\omega}{2}\in\Big(\frac{1}{2},1\Big)
\end{equation*}
Similarly, assume that \eqref{CM:eq:CondSCa1} holds and $\frac{1}{1+q}=\frac{c}{100}$, then $c=25$ implies $q=3$ by Theorem \ref{CM:thm:SConv2}, then every $\omega\in(0,1)$ satisfies the condition $(1+\omega)^2<1+q$, and the linear factor is
\begin{equation*}
  \frac{(1+\omega)^2}{1+q}=\frac{(1+\omega)^2}{4}.
\end{equation*}
Hence, to be conservative, the lower-quartile method is also recommended in practice. More flexibly, for a fixed $c_k$, each $q_k$ can be given as
\begin{equation*}
  q_k=\frac{\textrm{prctile}\big(f_{\chi^{(k)}},c_k\big)
 -\min\big(A_k^*,f_{\chi^{(k)}}^*\big)}
 {f_{\chi^{(k)}}^{**}-f_{\chi^{(k)}}^*}-1,
\end{equation*}
so one can obtain the upper bound of $\omega_k$ and the linear factor for the $k$th contraction.

\section{Assumptions and lemmas related to the contractibility}
\label{CM:s3}

In order to categorise continuous optimization problems from the perspective of whether the contraction can be carried out effectively, we shall introduce three assumptions related to the contractibility and some useful lemmas in this section.

For convenience of theoretical analysis, we temporarily assumed that we could choose
\begin{equation}\label{CM:eq:SDMed}
u^{(k)}=M_{\Omega,f}^{(k)}:=\textrm{prctile}\left(f(\xi),2^{-k}\cdot100\%\right),
\end{equation}
where $\xi$ is a uniformly distributed random variable on $\Omega$. Correspondingly, for $k\in\mathbb{N}_0$, the error bound condition is
\begin{equation*}
\max_{x\in D^{(k)}}\big|\mathcal{A}^{(k)}f(x)-f(x)\big|<\omega
\big(M_{\Omega,f}^{(k)}-f^*\big),~~\forall\omega\in(0,1],
\end{equation*}
and the strong convergence conditions of Theorem \ref{CM:thm:SConv} are
\begin{equation*}
 M_{\Omega,f}^{(k)}-f^*\leqslant\frac{1}{1+q}
 \max_{x\in D^{(k)}}[f(x)-f^*]~~\textrm{and}~~\omega<q.
\end{equation*}
Note that this choice \eqref{CM:eq:SDMed} is not necessary in practice and will finally be released.

\subsection{Hierarchical low-frequency dominant functions}
\label{CM:s3:1}

\subsubsection{Motivation and concepts}

The efficiency of the contractions can be guaranteed if one can quickly sketch out the overall landscape of the valley in each step and the relevant deviation will not be large enough to dig one or more deep holes in the highlands. This requires that the low-frequency components of $f$ always play a dominant role on every subset $D^{(k)}$. This meaningful observation prompted us to impose certain restrictions on the Fourier transform of $f$.

\begin{assum}[Hierarchical low-frequency dominant function]
\label{CM:ass:HLFD}
The function $f$ is a $(\rho,p)$-type hierarchical low-frequency dominant function (HLFDF), where $\rho,p>0$, that is, $f\in L(\mathbb{R}^n)$ and there exist $p_1,p_2>0$ such that for any $j=1,2,\cdots$, it holds that
\begin{equation}\label{CM:eq:HLFDF1}
\int_{\|t\|_2>2^{\frac{j-1}{n}}\rho}|\hat{f}(t)|\mathrm{d}t
<(1+p_1)\!\int_{2^{\frac{j-1}{n}}\rho<\|t\|_2\leqslant2^{\frac{j}{n}}\rho}
\!|\hat{f}(t)|\mathrm{d}t~~\textrm{with}~~p_1\leqslant p,
\end{equation}
and
\begin{equation}\label{CM:eq:HLFDF2}
\int_{\|t\|_2>2^{\frac{j-1}{n}}\rho}|\hat{f}(t)|^2\mathrm{d}t
<(1+p_2^2)\!\int_{2^{\frac{j-1}{n}}\rho<\|t\|_2\leqslant2^{\frac{j}{n}}\rho}
\!|\hat{f}(t)|^2\mathrm{d}t~~\textrm{with}~~
\Big(\frac{p_2^2}{1+p_2^2}\Big)^{\frac{1}{2}}\!\!\leqslant\!\frac{p}{1+p},
\end{equation}
where $\hat{f}$ is the Fourier transform of $f$.
\end{assum}
\begin{rem}
It seems weird to consider $L_2$ here, however, this is to establish a connection with the reproducing-kernel Hilbert function. And it is worth noting that, Assumption \ref{CM:ass:HLFD} does not require $\hat{f}$ to decay very quickly; as a univariate instance, for all $\omega>0$ and $\rho>0$, $t^{-1-\omega}$ satisfies the conditions \eqref{CM:eq:HLFDF1} and \eqref{CM:eq:HLFDF2} with $p_1=1/(2^\omega-1)$ and $p_2=\sqrt{1/(2^{1+2\omega}-1)}$. This shows that an HLFDF is not necessarily differentiable. So this makes the CM applicable to Lipschitz continuous, or even H\"{o}lder continuous objective functions.
\end{rem}

Our discussion below is related to the Paley-Wiener space of bandlimited functions, i.e., $\mathcal{PW}_{B_2(\sigma)}$, which is defined by
\begin{equation*}
  \mathcal{PW}_{B_2(\sigma)}:=\left\{f\in L_2(\mathbb{R}^n):\textrm{supp}(\hat{f})
  \subseteq B_2(\sigma)\right\},~~\textrm{for any}~~\sigma>0,
\end{equation*}
where $\hat{f}$ is the Fourier transform of $f$ and $B_2(\sigma):= \{t\in\mathbb{R}^n:\|t\|_2\leqslant\sigma\}$ is the $2$-ball in $\mathbb{R}^n$ having center $0$ and radius $\sigma$. Since $B_2(\sigma)\subset B_\infty(\sigma):= \{t\in\mathbb{R}^n:\|t\|_\infty\leqslant\sigma\}$, it holds that
\begin{equation*}
  \mathcal{PW}_{B_2(\sigma)}\subset\mathcal{PW}_{B_\infty(\sigma)}:=\left\{f\in L_2(\mathbb{R}^n):\textrm{supp}(\hat{f})\subseteq B_\infty(\sigma)\right\}.
\end{equation*}
Furthermore, we use $C_0\cap L_2:=C_0(\mathbb{R}^n)\cap L_2(\mathbb{R}^n)$ as a natural class of functions that includes all $\mathcal{PW}_{B_2(\sigma)}$, which is a Banach space by employing the norm
\begin{equation*}
  \|f\|_{C_0\cap L_2}=\max\big(\|f\|_\infty,\|f\|_{L_2}\big).
\end{equation*}
Notice that if $\hat{f}\in L_1(\mathbb{R}^n)\cap L_2(\mathbb{R}^n)$, then $\|f\|_\infty\leqslant\|\hat{f}\|_{L_1}$ and $\|f\|_{L_2}=\|\hat{f}\|_{L_2}$, and then
\begin{equation*}
  \|f\|_{C_0\cap L_2}\leqslant\|\hat{f}\|_{L_1\cap L_2}:=
  \max\big(\|\hat{f}\|_{L_1(\mathbb{R}^n)},\|\hat{f}\|_{L_2(\mathbb{R}^n)}\big).
\end{equation*}

In the following, we shall see that every HLFDF belongs to $C_0(\mathbb{R}^n)\cap L_2(\mathbb{R}^n)$, and further, its bandlimited components have a good approximation property which is related to the sampling density (Lemma \ref{CM:lem:HLFDF}). More importantly, a bandlimited approximation can be sufficiently constructed by kernel-based interpolation (Lemma \ref{CM:lem:HLFDF&KI}) so that the error bound condition of CMs is easy to be satisfied, as we expected at the beginning of this section.

\subsubsection{Bandlimited component of HLFDFs}

For $j\in\mathbb{N}_0$ and $\rho>0$, we define the $(j,\rho)$-bandlimited component of $f\in L(\mathbb{R}^n)$ by
\begin{equation}\label{CM:eq:BC}
f_\rho^{(j)}(x)=\int_{\|t\|_2\leqslant2^{\frac{j}{n}}\rho}
\hat{f}(t)e^{2\pi\mathrm{i}x^{\mathrm{T}}t}\mathrm{d}t,
\end{equation}
where $x^{\mathrm{T}}t=\sum_{i=1}^nx_it_i$ is the inner product of two vectors $x$ and $t$; then we have
\begin{equation*}
  f_\rho^{(j)}\in\mathcal{PW}_{B_2(2^{j/n}\rho)}\subset
  \mathcal{PW}_{B_\infty(2^{j/n}\rho)}.
\end{equation*}
According to the Nyquist-Shannon sampling theorem, $f_\rho^{(j)}$ can be completely reconstructed by its samples corresponding to a sampling density of $2^j\rho^n/\pi^n$. And further, the following lemma indicates the characteristic property of HLFDFs: there exist a class of bandlimited approximations such that the corresponding approximation error bounds are reduced by a factor of $p/(1+p)$ every time the number of function evaluations doubles. Actually, this feature implies the effectiveness of contractions.

\begin{lem}\label{CM:lem:HLFDF}
For any $j\in\mathbb{N}_0$, if $f$ is a $(\rho,p)$-type HLFDF and $f_\rho^{(j)}$ is a bandlimited component defined as \eqref{CM:eq:BC}, then $f\in C_0(\mathbb{R}^n)\cap L_2(\mathbb{R}^n)$, $\hat{f}\in L(\mathbb{R}^n)\cap L_2(\mathbb{R}^n)$ and
\begin{equation*}
 \big\|f-f_\rho^{(j)}\big\|_{C_0\cap L_2}
 <\left(\frac{p}{1+p}\right)^j\big\|\hat{f}\big\|_{L_1\cap L_2}.
\end{equation*}
\end{lem}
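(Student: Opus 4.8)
The plan is to reduce both parts of the $C_0\cap L_2$ norm to tail integrals of $\hat f$ over the exterior regions $T_j:=\{t:\|t\|_2>2^{j/n}\rho\}$ and then extract geometric decay directly from the two HLFDF conditions. Setting $g:=f-f_\rho^{(j)}$, the definition \eqref{CM:eq:BC} gives $\hat g=\hat f\cdot\mathbf{1}_{T_j}$, so the two elementary bounds recalled just before the lemma yield $\|g\|_\infty\leqslant\|\hat g\|_{L_1}=\int_{T_j}|\hat f|$, while Plancherel gives $\|g\|_{L_2}=\|\hat g\|_{L_2}=\big(\int_{T_j}|\hat f|^2\big)^{1/2}$. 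Hence it suffices to bound the two scalar tails $b_j:=\int_{T_j}|\hat f|$ and $c_j:=\int_{T_j}|\hat f|^2$.

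The key step is a telescoping recursion. Since $T_{j-1}$ is the disjoint union of the annulus $A_j:=\{2^{(j-1)/n}\rho<\|t\|_2\leqslant2^{j/n}\rho\}$ and $T_j$, condition \eqref{CM:eq:HLFDF1} reads $b_{j-1}<(1+p_1)(b_{j-1}-b_j)$, which rearranges to $b_j<\frac{p_1}{1+p_1}\,b_{j-1}$. Iterating from $b_0\leqslant\|\hat f\|_{L_1}$, and using that $x\mapsto x/(1+x)$ is increasing together with $p_1\leqslant p$, gives $b_j<\big(\frac{p}{1+p}\big)^j\|\hat f\|_{L_1}$. The identical telescoping applied to \eqref{CM:eq:HLFDF2} produces $c_j<\frac{p_2^2}{1+p_2^2}\,c_{j-1}$, hence $c_j<\big(\frac{p_2^2}{1+p_2^2}\big)^j\|\hat f\|_{L_2}^2$; taking square roots and invoking the hypothesis $\big(\frac{p_2^2}{1+p_2^2}\big)^{1/2}\leqslant\frac{p}{1+p}$ gives $\|g\|_{L_2}=c_j^{1/2}<\big(\frac{p}{1+p}\big)^j\|\hat f\|_{L_2}$. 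Taking the maximum of the two estimates and recalling $\|\hat f\|_{L_1\cap L_2}=\max(\|\hat f\|_{L_1},\|\hat f\|_{L_2})$ yields exactly the claimed bound.

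It remains to secure the membership statements and the finiteness that the recursion tacitly uses. Because $f\in L(\mathbb{R}^n)$, its transform $\hat f$ is bounded, hence integrable and square-integrable on the \emph{bounded} annulus $A_1$; the $j=1$ instances of \eqref{CM:eq:HLFDF1}--\eqref{CM:eq:HLFDF2} therefore force $b_0$ and $c_0$ to be finite. Adding the contribution of the central ball $\{\|t\|_2\leqslant\rho\}$, on which $\hat f$ is again bounded, gives $\hat f\in L(\mathbb{R}^n)\cap L_2(\mathbb{R}^n)$. Fourier inversion then identifies $f$ almost everywhere with a continuous function vanishing at infinity, so $f\in C_0(\mathbb{R}^n)$, and Plancherel gives $f\in L_2(\mathbb{R}^n)$; together $f\in C_0(\mathbb{R}^n)\cap L_2(\mathbb{R}^n)$.

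The only genuinely delicate point is this finiteness bookkeeping: the recursion $b_j<\frac{p_1}{1+p_1}b_{j-1}$ is meaningful only once $b_0<\infty$ is known, so I would establish finiteness of the $j=1$ tails from boundedness of $\hat f$ on the bounded annulus $A_1$ \emph{before} running the geometric iteration. Everything else is a routine telescoping of the two conditions, with the monotonicity of $x/(1+x)$ and the explicit $L_2$ hypothesis doing the work of collapsing $p_1$ and $p_2$ into the single rate $p/(1+p)$.
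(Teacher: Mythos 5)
Your proposal is correct and follows essentially the same route as the paper's proof: both reduce the $C_0\cap L_2$ bound to the $L_1$ and $L_2$ tail integrals of $\hat f$ over $\{\|t\|_2>2^{j/n}\rho\}$, telescope the annulus conditions \eqref{CM:eq:HLFDF1}--\eqref{CM:eq:HLFDF2} into the geometric decay $b_j<\frac{p_1}{1+p_1}b_{j-1}$ (the paper phrases this as the ratio bound $R_\rho^{(j+1)}/R_\rho^{(j)}\leqslant\frac{p}{1+p}$, which is the same algebra), and obtain finiteness from boundedness of $\hat f$. If anything, your finiteness bookkeeping via the $j=1$ condition and the bounded central ball is slightly more careful than the paper's, which asserts $R_\rho^{(0)}\leqslant(1+p)I_\rho^{(0)}$ even though the hypothesis is only stated for $j\geqslant1$.
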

\begin{proof}
It is clear that
\begin{equation*}
  \max\left(\frac{p_1}{1+p_1},\sqrt{\frac{p_2^2}{1+p_2^2}}\right)
  \leqslant\frac{p}{1+p},
\end{equation*}
and we need to prove that
\begin{equation*}
 \big\|f-f_\rho^{(j)}\big\|_\infty
 <\left(\frac{p_1}{1+p_1}\right)^j\big\|\hat{f}\big\|_{L_1(\mathbb{R}^n)}
 ~~\textrm{and}~~\big\|f-f_\rho^{(j)}\big\|_{L_2}
 <\left(\frac{p_2^2}{1+p_2^2}\right)^{\frac{j}{2}}
 \big\|\hat{f}\big\|_{L_2(\mathbb{R}^n)}.
\end{equation*}

First, let $R_\rho^{(j)}=\sum_{i=j}^\infty I_\rho^{(i)}$, where
\begin{equation*}
 I_\rho^{(0)}=\int_{\|t\|_2\leqslant\rho} |\hat{f}(t)|\mathrm{d}t~~\textrm{and}~~
 I_\rho^{(j)}=\int_{2^{\frac{j-1}{n}}\rho<\|t\|_2
 \leqslant2^{\frac{j}{n}}\rho}|\hat{f}(t)|\mathrm{d}t
 ~~\textrm{for all}~j\in\mathbb{N},
\end{equation*}
then the condition \eqref{CM:eq:HLFDF1} can be rewritten as
\begin{equation*}
  R_\rho^{(j)}<(1+p)I_\rho^{(j)},~~\textrm{or equivalently},
  ~~R_\rho^{(j+1)}<pI_\rho^{(j)}.
\end{equation*}
Since $f\in L(\mathbb{R}^n)$, we have $|\hat{f}(t)|\leqslant\|f\|_{ L_1(\mathbb{R}^n)}<\infty$, and then
\begin{equation*}
  I_\rho^{(0)}=\int_{\|t\|_2\leqslant\rho} |\hat{f}(t)|\mathrm{d}t<\infty.
\end{equation*}
Therefore, it follows that $\|\hat{f}\|_{ L_1(\mathbb{R}^n)}=R_\rho^{(0)} \leqslant(1+p)I_\rho^{(0)}<\infty$, that is, $\hat{f}\in L(\mathbb{R}^n)$. Moreover, we further have the decay ratio
\begin{equation*}
 \frac{R_\rho^{(j+1)}}{R_\rho^{(j)}}
 =\frac{R_\rho^{(j+1)}}{I_\rho^{(j)}+R_\rho^{(j+1)}}
 \leqslant\frac{R_\rho^{(j+1)}}{R_\rho^{(j+1)}/p+R_\rho^{(j+1)}}=\frac{p}{1+p}.
\end{equation*}
Hence, by noting that
\begin{equation*}
 \frac{R_\rho^{(j+1)}}{\|\hat{f}\|_{ L_1(\mathbb{R}^n)}}
 =\frac{R_\rho^{(j+1)}}{I_\rho^{(0)}+R_\rho^{(1)}}
 <\frac{R_\rho^{(j+1)}}{R_\rho^{(1)}}
 =\frac{R_\rho^{(2)}}{R_\rho^{(1)}}\frac{R_\rho^{(3)}}{R_\rho^{(2)}}
 \cdots\frac{R_\rho^{(j+1)}}{R_\rho^{(j)}}
 \leqslant\left(\frac{p}{1+p}\right)^j,
\end{equation*}
the error bound can also be rewritten as
\begin{equation*}
 \big\|f-f_\rho^{(j)}\big\|_\infty\leqslant
 \big\|\hat{f}-\hat{f}_\rho^{(j)}\big\|_{ L_1(\mathbb{R}^n)}
 =R_\rho^{(j+1)}<\left(\frac{p}{1+p}\right)^j
 \big\|\hat{f}\big\|_{ L_1(\mathbb{R}^n)}.
\end{equation*}

Similarly, we can prove that $\hat{f}\in L_2(\mathbb{R}^n)$ and
\begin{equation*}
 \big\|f-f_\rho^{(j)}\big\|_{L_2}^2
 =\big\|\hat{f}-\hat{f}_\rho^{(j)}\big\|_{L_2(\mathbb{R}^n)}^2
 <\left(\frac{p_2^2}{1+p_2^2}\right)^j
 \big\|\hat{f}\big\|_{L_2(\mathbb{R}^n)}^2,
\end{equation*}
so the proof is complete.
\end{proof}

\subsubsection{Kernel-based interpolation of HLFDFs}

Now we focus on how to obtain a satisfactory approximation by kernel-based interpolation. Here is an outline of our idea: first, there exists a sufficiently accurate bandlimited function that interpolates any HLFDF on a certain set of points; second, this bandlimited function can be fully constructed by kernel-based interpolation.

Lemma \ref{CM:lem:HLFDF} shows that every HLFDF belongs to $C_0(\mathbb{R}^n)\cap L_2(\mathbb{R}^n)$, hence, as an extension of Theorem 3.5 in \cite{NarcowichF2004A_BandLimited&RBF}, for any HLFDF $f$, we can find a sufficiently accurate bandlimited function which interpolates $f$ on a fixed set of points.

\begin{lem}\label{CM:lem:HLFDFinterpolating}
Suppose $\Omega$ is defined in problem \eqref{CM:eq:COP}, $\chi=\{\chi_1,\chi_2,\cdots,\chi_N\}$ is quasi-uniformly distributed over $D\subset\Omega$ with the separation distance $q_\chi=\tau'\cdot C^{-\frac{1}{n}}2^{-\frac{j}{n}}\pi/\rho$ for a certain $\tau'\leqslant1$, i.e., corresponding to a sampling density of $C2^j\rho^n/\pi^n$ for a certain $C\geqslant1$, and $\sigma$ is chosen so that
\begin{equation}\label{CM:eq:kpara}
  \sigma\geqslant\sigma_0:=\frac{48\tau'}{q_\chi}\left[\frac{\pi}{18}
  \Gamma^2\left(\frac{n+2}{2}\right)\right]^{\frac{1}{n+1}}
  =48\frac{C^\frac{1}{n}2^\frac{j}{n}\rho}{\pi}\left[\frac{\pi}{18}
  \Gamma^2\left(\frac{n+2}{2}\right)\right]^{\frac{1}{n+1}},
\end{equation}
where $\Gamma$ is the Gamma function. If $f$ is a $(\rho,p)$-type HLFDF, there exists $g_\sigma\in \mathcal{PW}_{B_2(\sigma)}$ such that
\begin{equation*}
  f|_\chi=g_\sigma|_\chi~~\textrm{and}~~
  \big\|f-g_\sigma\big\|_{C_0\cap L_2}
  <9\left(\frac{p}{1+p}\right)^{j+1}\big\|\hat{f}\big\|_{L_1\cap L_2}.
\end{equation*}
\end{lem}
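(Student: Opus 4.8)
The plan is to deduce the statement from two ingredients already available in the excerpt: the geometric bandlimited-approximation estimate of Lemma \ref{CM:lem:HLFDF}, and a bandlimited-interpolation theorem of Narcowich--Ward type (the cited Theorem~3.5), which the lemma explicitly declares itself an extension of. Conceptually, Lemma \ref{CM:lem:HLFDF} guarantees that $f$ lives in $C_0\cap L_2$ and is approximated by its bandlimited components with geometrically decaying error; the Narcowich--Ward machinery then upgrades such an \emph{approximant} into a genuine \emph{interpolant} $g_\sigma$ that matches $f$ exactly on $\chi$, at the price of enlarging the bandwidth to any $\sigma\geqslant\sigma_0$ and multiplying the error by a fixed constant. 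The threshold $\sigma_0$ of \eqref{CM:eq:kpara} is precisely the oversampling bound under which this upgrade is stable.

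First I would record the output of Lemma \ref{CM:lem:HLFDF}: $f\in C_0(\mathbb{R}^n)\cap L_2(\mathbb{R}^n)$, $\hat f\in L_1(\mathbb{R}^n)\cap L_2(\mathbb{R}^n)$, and for each level $m$ a component $f_\rho^{(m)}\in\mathcal{PW}_{B_2(2^{m/n}\rho)}$ with $\|f-f_\rho^{(m)}\|_{C_0\cap L_2}<(p/(1+p))^{m}\|\hat f\|_{L_1\cap L_2}$; this family is the benchmark against which the interpolation error will be measured. Next I would set up the interpolation problem inside $\mathcal{PW}_{B_2(\sigma)}$: among all bandlimited functions of bandwidth $\sigma$ agreeing with $f$ on $\chi$, select the one $g_\sigma$ of minimal $L_2$-norm. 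This affine set is nonempty and the minimizer is unique once the point-evaluation functionals $g\mapsto g(\chi_i)$ form a Riesz sequence on $\mathcal{PW}_{B_2(\sigma)}$, and it is exactly the separation hypothesis $q_\chi=\tau'C^{-1/n}2^{-j/n}\pi/\rho$ together with $\sigma\geqslant\sigma_0$ that secures this Marcinkiewicz--Zygmund-type stability.

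The technical heart is then to invoke the $C_0\cap L_2$ extension of Narcowich--Ward Theorem~3.5, which bounds $\|f-g_\sigma\|_{C_0\cap L_2}$ by a universal constant times the best bandlimited approximation error at the bandwidth fixed by the sampling density $C2^j\rho^n/\pi^n$. The threshold $\sigma_0$ then decomposes transparently: the factor $48\tau'/q_\chi$ converts the separation distance of $\chi$ into a bandwidth, while the bracket $[\tfrac{\pi}{18}\Gamma^2(\tfrac{n+2}{2})]^{1/(n+1)}$ is the Euclidean unit-ball volume constant entering their packing estimate (recall $\Gamma(\tfrac{n+2}{2})=\pi^{n/2}/\mu(B_2(1))$). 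Feeding the decay estimate of the previous paragraph into this interpolation bound yields $\|f-g_\sigma\|_{C_0\cap L_2}<9\,(p/(1+p))^{j+1}\|\hat f\|_{L_1\cap L_2}$, in which the explicit constant $9$ and the exponent $j+1$ are the combined output of the Narcowich--Ward interpolation constant and the oversampling margin $C\geqslant1$ built into the sampling density.

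The main obstacle is the faithful extension of Narcowich--Ward Theorem~3.5 itself, which is stated for a single norm and for genuinely bandlimited targets, whereas here (i) $f$ is only \emph{approximately} bandlimited and lies in the combined space $C_0\cap L_2$, and (ii) both $\|\cdot\|_\infty$ and $\|\cdot\|_{L_2}$ must be controlled by the \emph{same} interpolant simultaneously. Carrying the underlying Bernstein and Marcinkiewicz--Zygmund inequalities through in both norms, confirming that the minimal-norm interpolant realizes the asserted error in each, and checking that the ball-volume constants assemble into exactly the stated $\sigma_0$ and the clean factor $9$, is where the genuine bookkeeping lies; beyond adapting the cited theorem there is no conceptual gap, but the simultaneous two-norm control and the constant tracking are delicate.
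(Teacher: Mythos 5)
Your route is essentially the paper's: combine the decay estimate of Lemma \ref{CM:lem:HLFDF} with the band-limited interpolation theorem of \cite{NarcowichF2004A_BandLimited&RBF}, and conclude by a bandwidth comparison. The one substantive discrepancy is your final paragraph, which flags as the ``main obstacle'' something the paper never has to prove: Proposition 3.4 and Theorem 3.5 of \cite{NarcowichF2004A_BandLimited&RBF}, as invoked in the paper, are already stated for an arbitrary $f\in C_0(\mathbb{R}^n)\cap L_2(\mathbb{R}^n)$ (not for genuinely band-limited targets) and already deliver the simultaneous two-norm bound $\|f-g_\sigma\|_{C_0\cap L_2}\leqslant 9\,\mathrm{dist}_{C_0\cap L_2}\big(f,\mathcal{PW}_{B_2(\sigma)}\big)$, with the constant $9$ and the threshold $\sigma_0$ of \eqref{CM:eq:kpara} built in; Lemma \ref{CM:lem:HLFDFinterpolating} is an ``extension'' of that theorem only in the sense that it specializes it to HLFDFs. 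So the delicate bookkeeping you defer (Bernstein/Marcinkiewicz--Zygmund estimates in two norms, minimal-norm interpolants, volume constants) is exactly what the citation absorbs, and re-deriving it is unnecessary. Conversely, the one step you leave implicit is the step the paper actually carries out: since the bracket in \eqref{CM:eq:kpara} exceeds $1$ and $48/\pi>2$, one has $\sigma\geqslant\sigma_0>48C^{1/n}2^{j/n}\rho/\pi>2^{(j+1)/n}\rho$ for every $n$ and $C\geqslant1$, hence $\mathcal{PW}_{B_2(2^{(j+1)/n}\rho)}\subset\mathcal{PW}_{B_2(\sigma)}$ and $\mathrm{dist}_{C_0\cap L_2}\big(f,\mathcal{PW}_{B_2(\sigma)}\big)\leqslant\big\|f-f_\rho^{(j+1)}\big\|_{C_0\cap L_2}$, which combined with Lemma \ref{CM:lem:HLFDF} is precisely what produces the exponent $j+1$. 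Making this monotonicity argument explicit, and citing rather than reconstructing the interpolation theorem, turns your sketch into the paper's proof.
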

\begin{proof}
According to Lemma \ref{CM:lem:HLFDF}, we have $f\in C_0(\mathbb{R}^n)\cap L_2(\mathbb{R}^n)$, then from Proposition 3.4 and Theorem 3.5 in \cite{NarcowichF2004A_BandLimited&RBF}, for any $\sigma\geqslant\sigma_0$, we get the existence of $g_\sigma\in\mathcal{PW}_{B_2(\sigma)}$ for which $f|_\chi=g_\sigma|_\chi$ and
\begin{equation*}
  \big\|f-g_\sigma\big\|_{C_0\cap L_2}
  \leqslant9~\textrm{dist}_{C_0\cap L_2}\big(f,\mathcal{PW}_{B_2(\sigma)}\big).
\end{equation*}
Notice that for all $n\in\mathbb{N}$,
\begin{equation*}
  \left[\frac{\pi}{18}\Gamma^2\left(\frac{n+2}{2}\right)
  \right]^{\frac{1}{n+1}}>1,
\end{equation*}
it follows that
\begin{equation*}
  \sigma\geqslant\sigma_0>48C^\frac{1}{n}2^\frac{j}{n}\rho/\pi
  >2^\frac{j+1}{n}\rho,~~\textrm{for every}~~n\in\mathbb{N},
\end{equation*}
since the bandlimited component $f_\rho^{(j+1)}$ defined by \eqref{CM:eq:BC} belongs to $\mathcal{PW}_{B_2(2^{(j+1)/n}\rho)}$, so we further obtain
\begin{equation*}
  \textrm{dist}_{C_0\cap L_2}\big(f,\mathcal{PW}_{B_2(\sigma)}\big)
  \leqslant\textrm{dist}_{C_0\cap L_2}\big(f,\mathcal{PW}_{B_2(2^{(j+1)/n}\rho)}\big)
  \leqslant\|f-f_\rho^{(j+1)}\|_{C_0\cap L_2},
\end{equation*}
together with Lemma \ref{CM:lem:HLFDF}, the desired result follows.
\end{proof}

A bandlimited function is also a reproducing-kernel Hilbert function with respect to the Gaussian kernel, i.e., $\phi_\sigma(x)=e^{-\sigma^2\|x\|_2^2}$ with its Fourier transform $\hat{\phi}_\sigma(t)= \frac{\sqrt{\pi}}{\sigma}e^{-\|t\|_2^2/(4\sigma^2)}$. Therefore, a bandlimited function $g_{\sigma'}$ with its Fourier transform supported in $\{t\in\mathbb{R}^n:\|t\|_2\leqslant\sigma'\}$ can be effectively reconstructed by the Gaussian kernel based interpolation with parameter $\sigma\geqslant\frac{\sigma'}{2}$.
\begin{lem}\label{CM:lem:BLS&RKHS}
Suppose that $\mathcal{N}_\sigma$ is a reproducing-kernel Hilbert function space with the Gaussian kernel $\phi_\sigma(x)= e^{-\sigma^2\|x\|_2^2}$, i.e.,
\begin{equation*}
  \mathcal{N}_\sigma=:\left\{f\in L_2(\mathbb{R}^n):
  \|f\|_{\mathcal{N}_{G_\sigma}}^2=\int_{\mathbb{R}^n}
  \frac{|\hat{s}(t)|^2}{\hat{\phi}_\sigma(t)}\ud t
  =\frac{\sigma}{\sqrt{\pi}}\int_{\mathbb{R}^n}|\hat{s}(t)|^2
  e^{\frac{\|t\|_2^2}{4\sigma^2}}\ud t<\infty\right\}.
\end{equation*}
Then, for any $\sigma'\leqslant2\sigma$ and $g\in\mathcal{PW}_{B_2(\sigma')}$,
\begin{equation*}
  \|g\|_{\mathcal{N}_{\sigma}}<\sqrt{2\sigma}\|\hat{g}\|_{ L_2(\mathbb{R}^n)}.
\end{equation*}
\end{lem}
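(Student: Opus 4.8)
The plan is to exploit the compact spectral support of $g$ directly inside the reproducing-kernel norm formula supplied in the statement, so that the only analytic input is an elementary pointwise bound on the Gaussian weight. Since $g\in\mathcal{PW}_{B_2(\sigma')}$, its Fourier transform $\hat g$ vanishes outside the ball $B_2(\sigma')=\{t:\|t\|_2\leqslant\sigma'\}$, and consequently the integral defining $\|g\|_{\mathcal{N}_\sigma}^2$ effectively runs over that ball only. On $B_2(\sigma')$ the weight $e^{\|t\|_2^2/(4\sigma^2)}$ attains its largest value on the bounding sphere, and the hypothesis $\sigma'\leqslant 2\sigma$ is precisely what keeps that maximum controlled.

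Concretely, first I would use the support restriction to write
\[
 \|g\|_{\mathcal{N}_\sigma}^2
 =\frac{\sigma}{\sqrt{\pi}}\int_{\|t\|_2\leqslant\sigma'}
 |\hat g(t)|^2\,e^{\|t\|_2^2/(4\sigma^2)}\,\mathrm{d}t .
\]
On this domain one has $\|t\|_2^2/(4\sigma^2)\leqslant(\sigma')^2/(4\sigma^2)\leqslant 1$ because $\sigma'\leqslant 2\sigma$, hence $e^{\|t\|_2^2/(4\sigma^2)}\leqslant e$. Pulling this uniform bound out of the integral and recognizing what remains as $\|\hat g\|_{L_2(\mathbb{R}^n)}^2$ yields
\[
 \|g\|_{\mathcal{N}_\sigma}^2
 \leqslant\frac{\sigma e}{\sqrt{\pi}}\,\big\|\hat g\big\|_{L_2(\mathbb{R}^n)}^2 .
\]

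It then only remains to check the numerical constant: $e/\sqrt{\pi}\approx 1.534<2$, so $\tfrac{\sigma e}{\sqrt{\pi}}<2\sigma$, and taking square roots gives the claimed strict bound $\|g\|_{\mathcal{N}_\sigma}<\sqrt{2\sigma}\,\|\hat g\|_{L_2(\mathbb{R}^n)}$. The point worth flagging is the source of \emph{strictness}: it comes entirely from the gap $e/\sqrt{\pi}<2$ and is therefore uniform over all admissible $g$, rather than from the support argument itself (the weight bound $e^{\|t\|_2^2/(4\sigma^2)}\leqslant e$ can be approached as the spectral mass of $g$ concentrates near $\|t\|_2=\sigma'=2\sigma$). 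There is no genuine obstacle here — the lemma is essentially a one-line weighted $L_2$ estimate, and the only care needed is bookkeeping the constant $\sigma/\sqrt{\pi}$ from the norm definition together with the exponential bound.
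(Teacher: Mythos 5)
Your proposal is correct and follows essentially the same route as the paper's own proof: restrict the norm integral to the support ball $B_2(\sigma')$, bound the weight $e^{\|t\|_2^2/(4\sigma^2)}\leqslant e$ using $\sigma'\leqslant 2\sigma$, and conclude via $e/\sqrt{\pi}<2$. The paper compresses all of this into a single displayed inequality, so your write-up is just a more explicit version of the same argument (including correctly identifying the constant gap $e/\sqrt{\pi}<2$ as the source of strictness, modulo the trivial degenerate case $g\equiv 0$ which the paper also ignores).
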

\begin{proof}
Since $g\in\mathcal{PW}_{B_2(\sigma')}$ implies that its Fourier transform $\hat{g}$ is supported in
\begin{equation*}
  B_2(\sigma')=\{t\in\mathbb{R}^n:\|t\|_2\leqslant\sigma'\},
\end{equation*}
it follows that
\begin{equation*}
  \|g\|_{\mathcal{N}_{\sigma}}^2=\frac{\sigma}{\sqrt{\pi}}\int_{\mathbb{R}^n}
  |\hat{g}(t)|^2e^{\frac{\|t\|_2^2}{4\sigma^2}}\ud t
  =\frac{\sigma}{\sqrt{\pi}}\int_{\|t\|_2\leqslant\sigma'}
  |\hat{g}(t)|^2e^{\frac{\|t\|_2^2}{4\sigma^2}}\ud t
  <2\sigma\|\hat{g}\|_{L_2(\mathbb{R}^n)}^2,
\end{equation*}
so the desired result follows.
\end{proof}

Now we shall show that, for an appropriate sample set, the kernel-based interpolation of an HLFDF has a similar approximation property to its bandlimited components.
\begin{lem}\label{CM:lem:HLFDF&KI}
Suppose $\Omega$ is defined in problem \eqref{CM:eq:COP}, $\chi=\{\chi_1,\chi_2,\cdots,\chi_N\}$ is quasi-uniformly distributed over $D\subset\Omega$ with the separation distance $q_\chi=\tau'\cdot C^{-\frac{1}{n}}2^{-\frac{j}{n}}\pi/\rho$ and the fill distance $h_{D,\chi}=\tau''\cdot C^{-\frac{1}{n}}2^{-\frac{j}{n}}\pi/\rho$ for $\tau'\leqslant1\leqslant\tau''$, i.e., corresponding to a sampling density of $C2^j\rho^n/\pi^n$. Then, if $f$ is a $(\rho,p)$-type HLFDF, there exists $C\geqslant1$ and Gaussian kernel interpolant $\mathcal{I}_\chi f$ such that for all $j\in\mathbb{N}_0$,
\begin{equation*}
  \big\|\mathcal{I}_\chi f-f\big\|_{L_\infty(D)}
  <9\left(\frac{p}{1+p}\right)^j\big\|\hat{f}\big\|_{L_1\cap L_2}.
\end{equation*}
\end{lem}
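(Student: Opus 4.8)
The plan is to chain the three preceding lemmas through one triangle inequality. First I would apply Lemma~\ref{CM:lem:HLFDFinterpolating} with $\sigma=\sigma_0$ to obtain a band-limited interpolant $g:=g_{\sigma_0}\in\mathcal{PW}_{B_2(\sigma_0)}$ that agrees with $f$ on $\chi$ and obeys $\|f-g\|_{C_0\cap L_2}<9\left(\frac{p}{1+p}\right)^{j+1}\|\hat f\|_{L_1\cap L_2}$. Since the Gaussian interpolant depends only on the data $(\chi,f|_\chi)$ and $g|_\chi=f|_\chi$, one has $\mathcal{I}_\chi f=\mathcal{I}_\chi g$, so I would split
\[
\|\mathcal{I}_\chi f-f\|_{L_\infty(D)}\leqslant\|\mathcal{I}_\chi g-g\|_{L_\infty(D)}+\|g-f\|_{L_\infty(D)} .
\]
The second term is handled by the first step; writing $9\left(\frac{p}{1+p}\right)^{j+1}=\frac{9p}{1+p}\left(\frac{p}{1+p}\right)^{j}$, it suffices to push the interpolation term $\|\mathcal{I}_\chi g-g\|_{L_\infty(D)}$ below $\frac{9}{1+p}\left(\frac{p}{1+p}\right)^{j}\|\hat f\|_{L_1\cap L_2}$, whereupon the two contributions sum to exactly $9\left(\frac{p}{1+p}\right)^{j}\|\hat f\|_{L_1\cap L_2}$.

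For the interpolation term I would pass to the native space of the Gaussian kernel. Choosing the kernel parameter $\sigma$ with $\sigma_0\leqslant 2\sigma$, Lemma~\ref{CM:lem:BLS&RKHS} gives $g\in\mathcal{N}_\sigma$ with $\|g\|_{\mathcal{N}_\sigma}<\sqrt{2\sigma}\,\|\hat g\|_{L_2}$, while Plancherel and the first step bound $\|\hat g\|_{L_2}=\|g\|_{L_2}\leqslant\|\hat f\|_{L_2}+\|f-g\|_{L_2}\leqslant 10\,\|\hat f\|_{L_1\cap L_2}$. Because $\mathcal{I}_\chi g$ is the minimum-norm interpolant of $g$ in $\mathcal{N}_\sigma$, the standard reproducing-kernel error identity yields $\|\mathcal{I}_\chi g-g\|_{L_\infty(D)}\leqslant\|P_{\sigma,\chi}\|_{L_\infty(D)}\,\|g\|_{\mathcal{N}_\sigma}$, where $P_{\sigma,\chi}$ denotes the power function of $\phi_\sigma$ on $\chi$. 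The whole estimate therefore reduces to controlling the single product $\|P_{\sigma,\chi}\|_{L_\infty(D)}\sqrt{2\sigma}$ by the fill distance $h_{D,\chi}$ and the available sampling density.

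The hard part is exactly this last bound, and it is where the band-limited structure must be used. Rescaling by $x\mapsto\sigma x$ turns $P_{\sigma,\chi}$ into the power function of the unit Gaussian on the nodes $\sigma\chi$, whose fill distance is $\sigma\,h_{D,\chi}$; under the prescribed density one checks that $\sigma\,h_{D,\chi}$ is of constant order (a \emph{stationary} scaling) while $\sqrt{2\sigma}\sim (C2^{j})^{1/(2n)}$ grows, so one cannot make the term small merely by sending $h_{D,\chi}\to0$ at a fixed kernel. I expect the resolution to require the sharp, super-algebraic decay of the Gaussian power function in the fill distance together with the fact that $g$ is band-limited to $\|t\|_2\leqslant\sigma_0\leqslant2\sigma$ --- essentially a reconstruction estimate for band-limited functions inside $\mathcal{N}_\sigma$ in the spirit of Theorem~3.5 in \cite{NarcowichF2004A_BandLimited&RBF} --- so that the exponential gain in the power function overwhelms both the $\sqrt{2\sigma}$ factor and the slowly decaying geometric target $\left(\frac{p}{1+p}\right)^{j}$. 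Fixing one $C\geqslant1$, depending only on $p$ and $n$, for which the resulting constant stays below $\tfrac{9}{1+p}$ \emph{uniformly in} $j$, rather than only asymptotically, is the delicate endpoint; the remaining bookkeeping in the triangle inequality is routine.
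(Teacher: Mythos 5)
Your skeleton coincides exactly with the paper's proof: Lemma~\ref{CM:lem:HLFDFinterpolating} for a band-limited $g$ with $g|_\chi=f|_\chi$, the identity $\mathcal{I}_\chi f=\mathcal{I}_\chi g$ by uniqueness of the interpolant, the native-space bound $\|g\|_{\mathcal{N}_\sigma}<10\sqrt{2\sigma}\,\|\hat f\|_{L_1\cap L_2}$ via Lemma~\ref{CM:lem:BLS&RKHS}, and the same $\frac{9}{1+p}+\frac{9p}{1+p}=9$ bookkeeping in the triangle inequality. But the proposal stops exactly where the lemma's content lies: you never prove the bound $\|\mathcal{I}_\chi g-g\|_{L_\infty(D)}<\frac{9}{1+p}\left(\frac{p}{1+p}\right)^{j}\|\hat f\|_{L_1\cap L_2}$; you only state what you ``expect'' would deliver it. Worse, the mechanism you propose cannot work as stated, for the reason you yourself identify: by the rescaling $x\mapsto\sigma x$, the power function $P_{\sigma,\chi}$ on $D$ is the unit-Gaussian power function on the nodes $\sigma\chi$, whose fill distance is $\sigma h_{D,\chi}$; since \eqref{CM:eq:kpara} forces $\sigma\gtrsim C^{1/n}2^{j/n}\rho$ while $h_{D,\chi}\propto C^{-1/n}2^{-j/n}/\rho$, the product $\sigma h_{D,\chi}$ is of constant order, independent of both $j$ and $C$. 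So the power function has no decay in $j$ at all in this regime --- there is no ``exponential gain'' to harvest, and band-limitedness of $g$ does not change the power function, which depends only on the kernel and the nodes.

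The paper closes this step by a different device: it invokes the sampling inequalities of \cite{RiegerC2010_RBFSamplingInequalities} (their Theorems 3.5 and 7.5) on a regular superdomain $D'\supset D$ with $h'=h_{D',\chi}=C'h_{D,\chi}$, giving $\|\mathcal{I}_\chi g_\sigma-g_\sigma\|_{L_\infty(D')}\leqslant e^{c\log(h')/\sqrt{h'}}\|g_\sigma\|_{\mathcal{N}_\sigma}$, where the decay is in the \emph{absolute} fill distance $h'\propto C^{-1/n}2^{-j/n}$, not the scaled one. Since $e^{c\log t/\sqrt t}$ decays faster than any polynomial, one gets $e^{c\log(h')/\sqrt{h'}}<(h')^{sn}=\left(\frac{C'\tau''\pi}{C^{1/n}\rho}\right)^{ns}2^{-sj}$ for $h'$ small; choosing the integer $s$ so that $2^{-s}<\frac{p}{1+p}$ kills the geometric rate in $j$, and choosing $C$ large enough makes the $j$-independent prefactor beat $\frac{9}{(1+p)\,10\sqrt{2\sigma}}$. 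That explicit conversion of super-algebraic decay in $h'$ into the factor $2^{-sj}$, together with the freedom to enlarge the oversampling constant $C$, is the ingredient your proposal is missing. (Your stationary-scaling observation does, incidentally, point at the thinnest spot of the paper's own argument --- the constant $c$ in the cited sampling inequality depends on the kernel, and here the kernel parameter $\sigma$ grows with $j$ --- but flagging a difficulty and ``expecting'' its resolution is not a proof of the bound.)
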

\begin{rem}
This lemma provides a constructive interpolation equivalent to the bandlimited component $f_\rho^{(j)}$. On the entire $\mathbb{R}^n$, from the Nyquist-Shannon sampling theorem, $f_\rho^{(j)}$ can be fully reconstructed by its samples corresponding to a sampling density of $2^{j}\rho^n/\pi^n$; and on the bounded domain $D\subset\mathbb{R}^n$, the restriction of $f_\rho^{(j)}$ to $D$, denoted by $f_\rho^{(j)}|D$, can also be reconstructed by samples over $D$ with size $C2^{j}\mu(D)\rho^n/\pi^n$ for a certain $C>1$. Specifically, if $D$ is a cube, $f_\rho^{(j)}|D$ is closely related to a threshold value $2^{j}\mu(D)\rho^n/\pi^n$ and the prolate spheroidal functions $\psi_i(x)=\psi_i(x; 2^{\frac{j}{n}}\rho,D)$ which are the relevant eigenfunctions of the time and frequency limiting operator $Q=Q(2^{\frac{j}{n}}\rho,D)$ \citep{LandauH1961A_PSWFii, LandauH1962A_PSWFiii,SlepianD1964A_PSWFiv,SlepianD1976A_Onbandwidth, SlepianD1961A_PSWFi}. More clearly, if we denote a prolate series up to and including the $N$th term by
\begin{equation*}
\mathcal{S}_Nf(x)=\sum_{i=1}^{N}\psi_i(x)\int_Df(x)\psi_i(x)\mathrm{d}x,
\end{equation*}
then we have a super-exponential decay rate of the error bound
\begin{equation*}
\max_{x\in D}\left|f_\rho^{(j)}(x)-\mathcal{S}_Nf_\rho^{(j)}(x)\right|
\end{equation*}
as soon as $N$ goes beyond the plunge region around the threshold value $2^{j}\mu(D)\rho^n/\pi^n$ \citep{BoydJ2003T_BandlimitedFunction&PSF, BonamiaA2017T_SpectralDecayofBandlimited}. This also supports that $f_\rho^{(j)}$ can be fully constructed by samples over $D$ with size $C2^{j}\mu(D)\rho^n/\pi^n$; however, of course, our kernel-based approach is constructive and much simpler.
\end{rem}
\begin{proof}
By Lemma \ref{CM:lem:HLFDFinterpolating}, there exist $\sigma>2^\frac{j+1}{n}\rho$ and $g_\sigma\in\mathcal{PW}_{B_2(\sigma)}$ such that $f|_\chi=g_\sigma|_\chi$ and
\begin{equation}\label{CM:eq:HLFDF&KIa}
  \big\|f-g_\sigma\big\|_{C_0\cap L_2}
  <9\left(\frac{p}{1+p}\right)^{j+1}\big\|\hat{f}\big\|_{L_1\cap L_2},
\end{equation}
then for all $j\in\mathbb{N}_0$,
\begin{align*}
  \|\hat{g}_\sigma\|_{L_2}=\|g_\sigma\|_{L_2}=&
  \|g_\sigma-f+f\|_{L_2} \\
  \leqslant&\|g_\sigma-f\|_{L_2} +\|f\|_{L_2}
  <9\|\hat{f}\|_{L_1\cap L_2}+\|\hat{f}\|_{L_2}
  <10\|\hat{f}\|_{L_1\cap L_2},
\end{align*}
together with Lemma \ref{CM:lem:BLS&RKHS}, it follows that
\begin{equation*}
  \|g_\sigma\|_{\mathcal{N}_{\sigma}}<10\sqrt{2\sigma}\|\hat{f}\|_{L_1\cap L_2}.
\end{equation*}
Since the domain $D\subset\Omega$ is bounded, there are $C'>1$ and a regular domain $D'\supset D$ with $h'=h_{D',\chi}=C'h_{D,\chi}=C'\tau''\cdot C^{-\frac{1}{n}}2^{-\frac{j}{n}}\pi/\rho$, then according to Theorems 3.5 and 7.5 in \cite{RiegerC2010_RBFSamplingInequalities}, it follows that there exists a $c>0$ such that
\begin{align*}
  \big\|\mathcal{I}_\chi g_\sigma-g_\sigma\big\|_{L_\infty(D)}
  \leqslant&\big\|\mathcal{I}_\chi g_\sigma-g_\sigma\big\|_{L_\infty(D')} \\
  \leqslant&e^{c\log(h')/\sqrt{h'}}\|g_\sigma\|_{\mathcal{N}_{\sigma}} \\
  \leqslant&10\sqrt{2\sigma}e^{c\log(h')/\sqrt{h'}}\|\hat{f}\|_{L_1\cap L_2},
\end{align*}
where $\mathcal{I}_\chi g_\sigma$ is the Gaussian kernel interpolant associated with $\chi$ and kernel parameter $\sigma$. Since $e^{c\log(t)/\sqrt{t}}$ decays faster than any polynomial as $t\to0$, for any $s\in\mathbb{N}$, there is $t_s>0$ such that for all $0\leqslant t<t_s\leqslant1$,
\begin{equation*}
  e^{c\log(t)/\sqrt{t}}<t^{sn},
\end{equation*}
so there exists $C>C_s>1$ such that $h'<t_s\leqslant 1$ and
\begin{equation*}
  10\sqrt{2\sigma}e^{c\log(h')/\sqrt{h'}}<10\sqrt{2\sigma}(h')^{sn}.
\end{equation*}
Note that $\max_{x,y\in\Omega}\|x-y\|\leqslant1$, then $h'<1$ for every $j\in\mathbb{N}_0$, it holds that $\frac{C'\tau''\pi}{C^{1/n}\rho}<1$; let $s$ is an integers satisfying
\begin{equation*}
  (h')^{sn}=\left(\frac{C'\tau''\pi}{C^{1/n}\rho}\right)^{ns}
  \left(\frac{1}{2^{s}}\right)^j
  <\frac{1}{10\sqrt{2\sigma}}\frac{9}{1+p}\left(\frac{p}{1+p}\right)^j,
  ~\forall j\in\mathbb{N}_0,
\end{equation*}
or more specifically,
\begin{equation*}
  \left(\frac{C'\tau''\pi}{C^{1/n}\rho}\right)^{ns}
  <\frac{1}{10\sqrt{2\sigma}}\frac{9}{1+p}~~\textrm{and}~~
  \frac{1}{2^{s}}<\frac{p}{1+p},
\end{equation*}
then
\begin{equation*}
  10\sqrt{2\sigma}e^{c\log(h')/\sqrt{h'}}<\frac{9}{1+p}\left(\frac{p}{1+p}\right)^j,
\end{equation*}
thus, we have
\begin{equation}\label{CM:eq:HLFDF&KIb}
  \big\|\mathcal{I}_\chi g_\sigma-g_\sigma\big\|_{L_\infty(D)}
  <\frac{9}{1+p}\left(\frac{p}{1+p}\right)^j\|\hat{f}\|_{L_1\cap L_2}.
\end{equation}
Moreover, $f|_\chi=g_\sigma|_\chi$, so the uniqueness of the Gaussian kernel interpolant implies that
\begin{equation}\label{CM:eq:HLFDF&KIc}
  \mathcal{I}_\chi g_\sigma=\mathcal{I}_\chi f~~\Rightarrow~~
  \big\|\mathcal{I}_\chi f-\mathcal{I}_\chi g_\sigma\big\|_{L_\infty(D)}=0.
\end{equation}

Therefore, it follows from \eqref{CM:eq:HLFDF&KIa}-\eqref{CM:eq:HLFDF&KIc} that
\begin{align*}
  \big\|\mathcal{I}_\chi f-f\big\|_{L_\infty(D)}
  \leqslant&\big\|\mathcal{I}_\chi f-\mathcal{I}_\chi g_\sigma\big\|_{L_\infty(D)}
  +\big\|\mathcal{I}_\chi g_\sigma-g_\sigma\big\|_{L_\infty(D)}
  +\big\|g_\sigma-f\big\|_{L_\infty(D)} \\
  <&\frac{9}{1+p}\left(\frac{p}{1+p}\right)^j\|\hat{f}\|_{L_1\cap L_2}
  +9\left(\frac{p}{1+p}\right)^{j+1}\big\|\hat{f}\big\|_{L_1\cap L_2} \\
  \leqslant&9\left(\frac{p}{1+p}\right)^j\big\|\hat{f}\big\|_{L_1\cap L_2},
\end{align*}
and the proof is complete.
\end{proof}

\subsection{Tempered functions}
\label{CM:s3:2}

Now we introduce the tempered and weak tempered conditions. And we will further show that, under Assumption \ref{CM:ass:HLFD}, any one of these two conditions can guarantee that there exists a Gaussian kernel interpolant $\mathcal{I}_\chi f$ w.r.t. a suitable sample set $\chi^{(k)}$ on $D^{(k)}$ such that both the error bound condition and the strong convergence condition are satisfied (Lemmas \ref{CM:lem:TF} and \ref{CM:lem:TFa}). Moreover, it is worth noting that the weak condition, as well as Lemma \ref{CM:lem:TFa}, always holds with high probability for all continuous functions (Proposition \ref{CM:prop:TF}).

The tempered condition can be stated as follows.
\begin{assum}[Tempered function]
\label{CM:ass:T}
The function $f$ is a $(p,q)$-type tempered function, that is, for all $k\geqslant0$ and a certain $p>0$, there exists $q>0$ such that
\begin{equation}\label{CM:eq:TF}
\frac{p}{1+p}\left(M_{\Omega,f}^{(k)}-f^*\right)<M_{\Omega,f}^{(k+1)}-f^*
<\frac{1}{1+q}\left(M_{\Omega,f}^{(k)}-f^*\right),
\end{equation}
where $M_{\Omega,f}^{(k)}=\textrm{prctile}\left(f(\xi),2^{-k}\cdot100\%\right)$ and $\xi$ is a uniformly distributed random variable on $\Omega$.
\end{assum}

Any function that satisfies this condition requires its percentiles to decrease steadily, neither too fast nor too slow. Further, we have the following result.
\begin{lem}\label{CM:lem:TF}
Under Assumptions \ref{CM:ass:HLFD} and \ref{CM:ass:T}, suppose $\{D^{(k)}\}$ is defined by Definition \ref{CM:defn:D} with \eqref{CM:eq:SDMed}, in which, $\chi^{(k)}$ is quasi-uniformly distributed over $D^{(k)}\subset\Omega$ with
\begin{equation*}
  q_{\chi^{(k)}}=\tau'\cdot C^{-\frac{1}{n}} 2^{-\frac{k+s}{n}}\pi/\rho
  ~~\textrm{and}~~
  h_{D,\chi^{(k)}}=\tau''\cdot C^{-\frac{1}{n}}2^{-\frac{k+s}{n}}\pi/\rho
  ~~\textrm{for}~~\tau'\leqslant1\leqslant\tau'',
\end{equation*}
where the constant $C$ is as in Lemma \ref{CM:lem:HLFDF&KI}. Then, for all $k\in\mathbb{N}_0$ and $\omega<q$, there exist a unique integer $s\geqslant1$ and a kernel parameter $\sigma>2^{\frac{k+s+1}{n}}\rho$ such that Gaussian kernel interpolant $\mathcal{I}_{\chi^{(k)}} f$ satisfies the error bound condition
\begin{equation*}
 \|\mathcal{I}_{\chi^{(k)}} f-f\|_{L_\infty(D^{(k)})}
 <\omega\left(M_{\Omega,f}^{(k)}-f^*\right)
\end{equation*}
with the strong convergence condition
\begin{equation*}
  M_{\Omega,f}^{(k)}-f^*\leqslant\frac{1}{1+q}\max_{x\in D^{(k)}}[f(x)-f^*].
\end{equation*}
\end{lem}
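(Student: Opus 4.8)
The plan is to verify the two requirements of the statement separately: the error bound condition, which is the technical heart and the place where Lemma \ref{CM:lem:HLFDF&KI} is used, and the strong convergence condition, which I expect to be a direct consequence of the tempered Assumption \ref{CM:ass:T}. Throughout I work in the idealized setting \eqref{CM:eq:SDMed}, so that $u^{(k)}=M_{\Omega,f}^{(k)}$ and the sampling schedule is the one prescribed in the hypothesis.

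For the error bound, I would first observe that the prescribed distances $q_{\chi^{(k)}}=\tau'C^{-1/n}2^{-(k+s)/n}\pi/\rho$ and $h_{D,\chi^{(k)}}=\tau''C^{-1/n}2^{-(k+s)/n}\pi/\rho$ are exactly the hypotheses of Lemma \ref{CM:lem:HLFDF&KI} with $j=k+s$. Hence there is a Gaussian kernel interpolant $\mathcal{I}_{\chi^{(k)}}f$ and a kernel parameter $\sigma>2^{(k+s+1)/n}\rho$ with
$$\big\|\mathcal{I}_{\chi^{(k)}}f-f\big\|_{L_\infty(D^{(k)})}<9\left(\frac{p}{1+p}\right)^{k+s}\big\|\hat f\big\|_{L_1\cap L_2}.$$
The target on the right-hand side of the error bound condition is $\omega(M_{\Omega,f}^{(k)}-f^*)$, and the lower inequality in the tempered condition \eqref{CM:eq:TF} telescopes to $M_{\Omega,f}^{(k)}-f^*>(p/(1+p))^k(M_{\Omega,f}^{(0)}-f^*)$. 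The decisive point is that the interpolation error and this lower bound on the optimality gap decay at the \emph{same} geometric rate $p/(1+p)$, so the factor $(p/(1+p))^k$ cancels and the sufficient inequality
$$9\left(\frac{p}{1+p}\right)^{k+s}\big\|\hat f\big\|_{L_1\cap L_2}\leqslant\omega\left(\frac{p}{1+p}\right)^{k}\big(M_{\Omega,f}^{(0)}-f^*\big)$$
reduces to the single, $k$-independent requirement $9(p/(1+p))^s\|\hat f\|_{L_1\cap L_2}\leqslant\omega(M_{\Omega,f}^{(0)}-f^*)$. Since $p/(1+p)<1$, $M_{\Omega,f}^{(0)}-f^*=\max_\Omega f-f^*>0$ (as $f$ is non-constant), and $\|\hat f\|_{L_1\cap L_2}<\infty$ by Lemma \ref{CM:lem:HLFDF}, I would \emph{define} $s$ to be the smallest integer $\geqslant1$ satisfying this requirement; this yields the unique $s$ and, through Lemma \ref{CM:lem:HLFDF&KI}, the admissible range $\sigma>2^{(k+s+1)/n}\rho$ of the kernel parameter.

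For the strong convergence condition I would invoke the upper inequality of \eqref{CM:eq:TF}, namely $M_{\Omega,f}^{(k)}-f^*<\frac{1}{1+q}(M_{\Omega,f}^{(k-1)}-f^*)$, and combine it with the relation $\max_{x\in D^{(k)}}f(x)\geqslant M_{\Omega,f}^{(k-1)}$. The latter reflects the level-set structure of the idealized scheme: since $\mathcal{A}^{(k-1)}f$ interpolates $f$ exactly at $\chi^{(k-1)}$, every sample with value at most $M_{\Omega,f}^{(k-1)}=u^{(k-1)}$ survives the contraction into $D^{(k)}$, while $M_{\Omega,f}^{(k-1)}$ is precisely the median of $f$ over $D^{(k-1)}\approx E(M_{\Omega,f}^{(k-2)})$, so $D^{(k)}$ retains points whose $f$-values reach $M_{\Omega,f}^{(k-1)}$. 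Feeding this into the tempered upper bound gives $M_{\Omega,f}^{(k)}-f^*<\frac{1}{1+q}\max_{x\in D^{(k)}}[f(x)-f^*]$, which is the stated condition, and $\omega<q$ supplies the remaining hypothesis of Theorem \ref{CM:thm:SConv}.

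The main obstacle is the uniform-in-$k$ choice of $s$: a priori the required sampling offset could grow with $k$, which would be incompatible with the fixed-$s$ schedule assumed in the statement. What rescues it is the exact matching of the two geometric rates — the $p/(1+p)$ decay of the bandlimited (hence kernel) approximation error coming from the HLFDF Assumption \ref{CM:ass:HLFD}, and the $p/(1+p)$ floor on the contraction of the gap coming from the tempered Assumption \ref{CM:ass:T} — so that one offset suffices for all steps simultaneously. A secondary delicate point is justifying $\max_{x\in D^{(k)}}f(x)\geqslant M_{\Omega,f}^{(k-1)}$ without losing a model-error margin; here it is the \emph{exactness} of the interpolation at the nodes, rather than the uniform error bound, that avoids an $\omega$-loss and keeps the same $q$.
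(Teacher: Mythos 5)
Your proposal follows the paper's own proof essentially step for step: the same application of Lemma \ref{CM:lem:HLFDF&KI} with $j=k+s$, the same definition of the unique offset $s\geqslant1$ from the $k$-independent inequality $9\left(\tfrac{p}{1+p}\right)^{s}\|\hat f\|_{L_1\cap L_2}<\omega\big(M_{\Omega,f}^{(0)}-f^*\big)$, and the same telescoping of the lower tempered inequality so that the matched geometric rates $(p/(1+p))^{k}$ cancel, while for the strong convergence condition the paper likewise just combines the upper tempered inequality with the relation between $\max_{x\in D^{(k)}}f(x)$ and $M_{\Omega,f}^{(k-1)}$ (asserted there as an unproven equality, where you argue the one-sided bound $\geqslant$ that actually suffices). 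The proposal is correct and takes essentially the same route as the paper.
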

\begin{proof}
Notice that $\max_{x\in D^{(k)}}f(x)=M_{\Omega,f}^{(k-1)}$, the second inequality obviously holds from the Assumption \ref{CM:ass:T}. We now prove the first inequality.

For a fixed $\omega$, there is the unique integer $s\geqslant1$ such that
\begin{equation}\label{CM:eq:TFC1}
 9\left(\frac{p}{1+p}\right)^s\|\hat{f}\|_{L_1\cap L_2}
 <\omega\left(M_{\Omega,f}^{(0)}-f^*\right)
 \leqslant9\left(\frac{p}{1+p}\right)^{s-1}\|\hat{f}\|_{L_1\cap L_2}.
\end{equation}
Moreover, under Assumption \ref{CM:ass:T}, $f$ is a $(p,q)$-type tempered function on $\Omega$, then
\begin{equation}\label{CM:eq:TFC2}
 \left(\frac{p}{1+p}\right)^k\left(M_{\Omega,f}^{(0)}-f^*\right) <M_{\Omega,f}^{(k)}-f^*;
\end{equation}
meanwhile, under Assumption \ref{CM:ass:HLFD}, $f$ is a $(\rho,p)$-type HLFDF, so it follows from Lemma \ref{CM:lem:HLFDF&KI} that for every $k\geqslant0$ and kernel parameters as in Lemma \ref{CM:lem:HLFDFinterpolating}, the Gaussian kernel interpolant $\mathcal{I}_{\chi^{(k)}} f$ satisfies
\begin{equation}\label{CM:eq:TFC3}
 \|\mathcal{I}_{\chi^{(k)}} f-f\|_{L_\infty(D^{(k)})}
 <9\left(\frac{p}{1+p}\right)^{k+s}\|\hat{f}\|_{L_1\cap L_2}.
\end{equation}
Hence, it follows from \eqref{CM:eq:TFC1}-\eqref{CM:eq:TFC3} that
\begin{equation*}
 \|\mathcal{I}_{\chi^{(k)}} f-f\|_{L_\infty(D^{(k)})}
 <\omega\left(M_{\Omega,f}^{(k)}-f^*\right),
\end{equation*}
as desired.
\end{proof}

In the next section, Lemma \ref{CM:lem:TF} will be used to guarantee the linear convergence and control the bound of the number of function evaluations per contraction as a constant value.

We now introduce the weak tempered condition.
\renewcommand{\theassuma}{A2a}
\begin{assuma}[Weak tempered function]
\label{CM:ass:Ta}
The function $f$ is a $q$-type tempered function, that is, for all $k\geqslant0$, there exists $q>0$ such that
\begin{equation}\label{CM:eq:TFa}
\frac{q}{1+q}\left(M_{\Omega,f}^{(k)}-f^*\right)<M_{\Omega,f}^{(k+1)}-f^*
<\frac{1}{1+q}\left(M_{\Omega,f}^{(k)}-f^*\right).
\end{equation}
\end{assuma}
\begin{rem}\label{CM:rem:qp}
For a fixed $p>0$ from Assumption \ref{CM:ass:HLFD}, when $q\geqslant p$, we have $\frac{p}{1+p}\leqslant\frac{q}{1+q}$, then
\begin{equation*}
\frac{p}{1+p}\left(M_{\Omega,f}^{(k)}-f^*\right)\leqslant
\frac{q}{1+q}\left(M_{\Omega,f}^{(k)}-f^*\right)<M_{\Omega,f}^{(k+1)}-f^*
<\frac{1}{1+q}\left(M_{\Omega,f}^{(k)}-f^*\right),
\end{equation*}
which is actually Assumption \ref{CM:ass:T}, so this weak version is used to treat the case for $q<p$.
\end{rem}

\renewcommand{\thelema}{6a}
\begin{lema}\label{CM:lem:TFa}
Under Assumptions \ref{CM:ass:HLFD} and \ref{CM:ass:Ta}, suppose $\{D^{(k)}\}$ is defined by Definition \ref{CM:defn:D} with \eqref{CM:eq:SDMed}, in which, $\chi^{(k)}$ is quasi-uniformly distributed over $D^{(k)}\subset\Omega$ with
\begin{equation*}
  q_{\chi^{(k)}}=\tau'\cdot C^{-\frac{1}{n}} 2^{-\frac{kl+s}{n}}\pi/\rho
  ~~\textrm{and}~~
  h_{D,\chi^{(k)}}=\tau''\cdot C^{-\frac{1}{n}}2^{-\frac{kl+s}{n}}\pi/\rho
  ~~\textrm{for}~~\tau'\leqslant1\leqslant\tau'',
\end{equation*}
where the constant $C$ is as in Lemma \ref{CM:lem:HLFDF&KI}. Then, for all $k\in\mathbb{N}_0$ and $\omega<q<p$, there are unique natural numbers $s$, $l>1$ and kernel parameter $\sigma>2^{\frac{kl+s+1}{n}}\rho$ such that Gaussian kernel interpolant $\mathcal{I}_{\chi^{(k)}} f$ satisfies the error bound condition
\begin{equation*}
 \|\mathcal{I}_{\chi^{(k)}} f-f\|_{L_\infty(D^{(k)})}
 <\omega\left(M_{\Omega,f}^{(k)}-f^*\right)
\end{equation*}
with the strong convergence condition
\begin{equation*}
  M_{\Omega,f}^{(k)}-f^*\leqslant\frac{1}{1+q}\max_{x\in D^{(k)}}[f(x)-f^*].
\end{equation*}
\end{lema}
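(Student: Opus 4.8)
The plan is to re-run the proof of Lemma~\ref{CM:lem:TF} almost verbatim, the single genuinely new device being an acceleration exponent $l>1$ in the sampling density that compensates for the weaker lower bound supplied by Assumption~\ref{CM:ass:Ta}. The strong convergence (second) inequality costs nothing new: exactly as in Lemma~\ref{CM:lem:TF} one has $\max_{x\in D^{(k)}}f(x)=M_{\Omega,f}^{(k-1)}$, so the upper half of \eqref{CM:eq:TFa} at index $k-1$ reads $M_{\Omega,f}^{(k)}-f^*<\frac{1}{1+q}\big(M_{\Omega,f}^{(k-1)}-f^*\big)=\frac{1}{1+q}\max_{x\in D^{(k)}}[f(x)-f^*]$, which is precisely the claim.

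For the error bound (first) inequality I would fix two integers. The integer $s\geqslant1$ is chosen by the same bracketing as in Lemma~\ref{CM:lem:TF}, namely $9\big(\tfrac{p}{1+p}\big)^{s}\|\hat f\|_{L_1\cap L_2}<\omega\big(M_{\Omega,f}^{(0)}-f^*\big)\leqslant 9\big(\tfrac{p}{1+p}\big)^{s-1}\|\hat f\|_{L_1\cap L_2}$. The new integer $l$ is taken to be the smallest one satisfying $\big(\tfrac{p}{1+p}\big)^{l}\leqslant\tfrac{q}{1+q}$, i.e.\ $l=\lceil \log(\tfrac{q}{1+q})/\log(\tfrac{p}{1+p})\rceil$. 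Because the hypothesis $q<p$ forces $\tfrac{q}{1+q}<\tfrac{p}{1+p}<1$, this ceiling is strictly larger than $1$, which is exactly why the weak case genuinely needs $l>1$ (and, consistently with Remark~\ref{CM:rem:qp}, why the regime $q\geqslant p$ degenerates back to $l=1$ and to Lemma~\ref{CM:lem:TF}).

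It then remains to chain three estimates. Iterating the lower half of \eqref{CM:eq:TFa} gives $M_{\Omega,f}^{(k)}-f^*>\big(\tfrac{q}{1+q}\big)^{k}\big(M_{\Omega,f}^{(0)}-f^*\big)$; applying Lemma~\ref{CM:lem:HLFDF&KI} with sampling-density exponent $j=kl+s$ gives $\|\mathcal{I}_{\chi^{(k)}}f-f\|_{L_\infty(D^{(k)})}<9\big(\tfrac{p}{1+p}\big)^{kl+s}\|\hat f\|_{L_1\cap L_2}$; and factoring out $9\big(\tfrac{p}{1+p}\big)^{s}\|\hat f\|_{L_1\cap L_2}$, then bounding $\big(\tfrac{p}{1+p}\big)^{kl}=\big(\big(\tfrac{p}{1+p}\big)^{l}\big)^{k}\leqslant\big(\tfrac{q}{1+q}\big)^{k}$, then invoking the choice of $s$, and finally the percentile lower bound, yields $\|\mathcal{I}_{\chi^{(k)}}f-f\|_{L_\infty(D^{(k)})}<\omega\big(M_{\Omega,f}^{(k)}-f^*\big)$. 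The kernel-parameter requirement $\sigma>2^{(kl+s+1)/n}\rho$ is precisely what Lemma~\ref{CM:lem:HLFDFinterpolating} demands for $j=kl+s$, so it is automatically compatible, and the uniqueness of $s$ and $l$ follows from their defining minimality.

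I expect the only real obstacle to be pinning down $l$ and verifying the exponent chain $\big(\tfrac{p}{1+p}\big)^{kl}\leqslant\big(\tfrac{q}{1+q}\big)^{k}$. This is the heart of the matter: geometric over-sampling at rate $l$ forces the interpolation error to decay like $\big(\tfrac{p}{1+p}\big)^{kl}$, fast enough to out-run the merely $\big(\tfrac{q}{1+q}\big)^{k}$ decay that Assumption~\ref{CM:ass:Ta} guarantees for the percentile gap. Everything else is a bookkeeping repeat of Lemma~\ref{CM:lem:TF}.
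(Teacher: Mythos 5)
Your proposal is correct and follows essentially the same route as the paper's own proof: the same bracketing choice of $s$, the same definition of $l$ (your minimality condition is equivalent to the paper's two-sided bracketing $\big(\tfrac{p}{1+p}\big)^{l}\leqslant\tfrac{q}{1+q}<\big(\tfrac{p}{1+p}\big)^{l-1}$), and the identical chain combining Lemma \ref{CM:lem:HLFDF&KI} at density exponent $kl+s$ with $\big(\tfrac{p}{1+p}\big)^{kl}\leqslant\big(\tfrac{q}{1+q}\big)^{k}$ and the iterated lower half of \eqref{CM:eq:TFa}.
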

\begin{proof}
The second inequality holds from Assumption \ref{CM:ass:Ta} and $\max_{x\in D^{(k)}}f(x)=M_{\Omega,f}^{(k-1)}$. We now prove the first inequality. For a fixed $\omega$, there is the unique integer $s\geqslant1$ such that
\begin{equation}\label{CM:eq:TFCa1}
 9\left(\frac{p}{1+p}\right)^s\|\hat{f}\|_1
 <\omega\left(M_{\Omega,f}^{(0)}-f^*\right)
 \leqslant9\left(\frac{p}{1+p}\right)^{s-1}\|\hat{f}\|_{L_1\cap L_2}.
\end{equation}
Moreover, under Assumption \ref{CM:ass:Ta}, $f$ is a $q$-type tempered function on $\Omega$ with $q<p$ (as mentioned in Remark \ref{CM:rem:qp}), there is a unique integer $l>1$ such that
\begin{equation*}
 \left(\frac{p}{1+p}\right)^l\leqslant\frac{q}{1+q}
 <\left(\frac{p}{1+p}\right)^{l-1},
\end{equation*}
then
\begin{equation}\label{CM:eq:TFCa2}
 \left(\frac{p}{1+p}\right)^{kl}\left(M_{\Omega,f}^{(0)}-f^*\right)\leqslant
 \left(\frac{q}{1+q}\right)^k\left(M_{\Omega,f}^{(0)}-f^*\right)
 <M_{\Omega,f}^{(k)}-f^*;
\end{equation}
meanwhile, under Assumption \ref{CM:ass:HLFD}, $f$ is a $(\rho,p)$-type HLFDF, so it follows from Lemma \ref{CM:lem:HLFDF&KI} that for every $k\geqslant0$ and kernel parameters as in Lemma \ref{CM:lem:HLFDFinterpolating}, the interpolant $\mathcal{I}_{\chi^{(k)}} f$ satisfies
\begin{equation}\label{CM:eq:TFCa3}
 \|\mathcal{I}_{\chi^{(k)}} f-f\|_{L_\infty(D^{(k)})}
 <9\left(\frac{p}{1+p}\right)^{kl+s}\|\hat{f}\|_{L_1\cap L_2}.
\end{equation}
Hence, from \eqref{CM:eq:TFCa1}-\eqref{CM:eq:TFCa3}, the error bound condition $\|\mathcal{I}_{\chi^{(k)}} f-f\|_{L_\infty (D^{(k)})}<\omega(M_{\Omega,f}^{(k)}-f^*)$ holds as desired.
\end{proof}

Lemma \ref{CM:lem:TFa} will be used to guarantee the linear convergence and control the growth of the bound of the number of function evaluations per contraction not exceeding the exponential order. In the following, we will show that for all continuous functions, the weak tempered condition holds with high probability.

\begin{prop}\label{CM:prop:TF}
If $\Omega$ is a compact set and $f$ is continuous and not a constant on $\Omega$, then for any $\epsilon>0$, there must exists a $q=q(\epsilon)\in(0,1)$ such that
\begin{equation}\label{CM:eq:TFW}
 \frac{q}{1+q}\left(M_{\Omega,f}^{(k)}-f^*\right)<M_{\Omega,f}^{(k+1)}-f^*
<\frac{1}{1+q}\left(M_{\Omega,f}^{(k)}-f^*\right)
\end{equation}
holds for every $k\geqslant0$ with probability at least $1-\epsilon$, where $M_{\Omega,f}^{(k)}=\textrm{prctile} \left(f(\xi),2^{-k}\cdot100\%\right)$ and $\xi$ is a uniformly distributed random variable on $\Omega$.
\end{prop}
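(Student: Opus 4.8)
The plan is to track the normalized gaps $a_k := M_{\Omega,f}^{(k)} - f^*$ and to translate the double inequality \eqref{CM:eq:TFW} into a two-sided bound on the consecutive ratios $r_k := a_{k+1}/a_k$. Writing $G(u) := \mu(\{x\in\Omega : f(x)\le u\})/\mu(\Omega)$ for the cumulative distribution of $f(\xi)$, the definition of $M_{\Omega,f}^{(k)}$ as the $2^{-k}\cdot 100\%$ percentile gives $G(M_{\Omega,f}^{(k)})=2^{-k}$, so that $\Pr[f(\xi)\le M_{\Omega,f}^{(k)}]=2^{-k}$. First I would record the structural facts: $a_0=\max_{\Omega}f-f^*>0$ because $f$ is continuous, nonconstant and $\Omega$ is compact; the sequence $a_k$ is non-increasing; and $a_k\downarrow 0$, since for every $u>f^*$ the open set $\{f<u\}$ is nonempty and hence of positive measure, forcing $G^{-1}(0^+)=f^*$. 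With $a_k\in(0,a_0]$, inequality \eqref{CM:eq:TFW} is exactly $r_k\in I_q:=\bigl(\tfrac{q}{1+q},\tfrac{1}{1+q}\bigr)$, and the key elementary observation is that $I_q$ is symmetric about $\tfrac12$ (its endpoints summing to $1$) and widens to all of $(0,1)$ as $q\downarrow 0$. Thus the whole task reduces to keeping the relevant ratios bounded away from both $0$ and $1$ and then selecting $q$ small enough.

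The probability enters only to amputate the infinite tail of scales. Because $\xi$ is uniform, it lands in the dyadic shell $\{M_{\Omega,f}^{(k+1)}<f(\xi)\le M_{\Omega,f}^{(k)}\}$ with probability $2^{-k}-2^{-(k+1)}=2^{-(k+1)}$, and more importantly $\Pr[f(\xi)\le M_{\Omega,f}^{(K)}]=2^{-K}$. Given $\epsilon>0$ I would fix the cutoff $K=K(\epsilon):=\lceil\log_2(1/\epsilon)\rceil$, so that $2^{-K}\le\epsilon$; then the event $\{f(\xi)>M_{\Omega,f}^{(K)}\}$, on which $\xi$ probes only the shallow scales $k<K$, has probability at least $1-\epsilon$. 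The point that distinguishes this argument from a single-scale estimate is that on this event one must control the \emph{entire} finite ladder $r_0,r_1,\dots,r_{K-1}$ at once, not merely the one scale at which $\xi$ happens to fall; the role of $\epsilon$ is solely to render the cofinitely many deep scales negligible.

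To secure the finite ladder I would argue that each of $r_0,\dots,r_{K-1}$ lies strictly inside $(0,1)$. Strict decrease $a_{k+1}<a_k$ (equivalently $r_k<1$) follows from the strict monotonicity of $G$ across the distinct dyadic levels $2^{-k}>2^{-(k+1)}$: since $\{u_1<f<u_2\}$ is open and nonempty whenever $(u_1,u_2)$ meets the range of $f$, $G$ is strictly increasing there, giving $M_{\Omega,f}^{(k+1)}<M_{\Omega,f}^{(k)}$. Positivity $r_k>0$ is immediate from $a_{k+1}>0$ and $a_k\le a_0<\infty$. Over the finite index set $\{0,\dots,K-1\}$ we therefore obtain $0<r_{\min}:=\min_k r_k$ and $r_{\max}:=\max_k r_k<1$, and it remains only to choose $q=q(\epsilon)\in(0,1)$ with $\tfrac{q}{1+q}<r_{\min}$ and $\tfrac{1}{1+q}>r_{\max}$, i.e.\ any $q<\min\!\bigl(\tfrac{r_{\min}}{1-r_{\min}},\,\tfrac{1}{r_{\max}}-1,\,1\bigr)$; this range is nonempty precisely because $r_{\min}>0$ and $r_{\max}<1$. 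With this $q$, \eqref{CM:eq:TFW} holds simultaneously for every $k<K$, hence for every scale reached by $\xi$ on the probability-$(1-\epsilon)$ event, which is the stated conclusion.

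I expect the genuine obstacle to be the simultaneity over all scales itself: no single $q\in(0,1)$ can satisfy \eqref{CM:eq:TFW} for \emph{all} $k\in\mathbb{N}_0$ deterministically, since the percentiles of a general continuous $f$ may decay sub-geometrically (forcing $r_k\to1$) or exhibit recurring ``cliffs'' (forcing $r_k\to0$); this is exactly why the conclusion must be probabilistic, the cutoff $K(\epsilon)$ converting the infinite ladder into a finite one. The delicate technical point is the strict-decrease step $M_{\Omega,f}^{(k+1)}<M_{\Omega,f}^{(k)}$ at the shallow scales: a level set $\{f=c\}$ of positive measure straddling two dyadic levels would place an atom in the law of $f(\xi)$, collapse consecutive percentiles, and force $r_k=1$ at a shallow scale that cannot be absorbed into $\epsilon$. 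Handling this cleanly is where continuity of $f$ together with the strict monotonicity of $G$ on the range of $f$ is essential, and it is the step I would treat most carefully.
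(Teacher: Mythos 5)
Your route is genuinely different from the paper's, so it is worth saying what each does. The paper works on the sublevel sets $E^{(k)}$, observes that $M_{\Omega,f}^{(k)}$ is the median of $f(\xi^{(k)})$ for $\xi^{(k)}$ uniform on $E^{(k)}$, and obtains each side of \eqref{CM:eq:TFW} from the Mallows median--mean inequality $|a^{(k+1)}-m^{(k+1)}|\leqslant d^{(k+1)}$ combined with the Chebyshev--Cantelli tail bound, arriving at $q=\sqrt{\epsilon}/(C+C\sqrt{1-\epsilon})$ where $C$ is the standard-deviation-to-median ratio on the sublevel set; there the probability $1-\epsilon$ quantifies, for each $k$, the fraction of $E^{(k+1)}$ on which $f-f^*$ may exceed $\frac{1+q}{q}m^{(k+1)}$. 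You instead discard all scales $k\geqslant K(\epsilon)=\lceil\log_2(1/\epsilon)\rceil$ via the event $\{f(\xi)>M_{\Omega,f}^{(K)}\}$ and control the finite ladder $r_0,\dots,r_{K-1}$ deterministically. Note that this changes what is proved: on your event the inequalities at depth $k\geqslant K$ are not made true, they are declared irrelevant, so what you actually establish is ``\eqref{CM:eq:TFW} for all $k<K(\epsilon)$,'' a weaker reading of ``holds for every $k\geqslant0$ with probability at least $1-\epsilon$'' than the per-scale distributional reading underlying the paper's proof (which, to be fair, is itself loose: its $C$ depends on $k$ and no union bound over $k$ is taken).

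The genuine gap is in the finite-ladder step itself: you claim each $r_k$ lies strictly inside $(0,1)$ for $k<K$, justified by strict monotonicity of $G$ on the range of $f$. That justification fails, because percentile collapse is caused not by flat stretches of $G$ but by \emph{atoms} in the law of $f(\xi)$, i.e., jumps of $G$, which strict monotonicity does not exclude. A continuous nonconstant $f$ with a plateau $\{f=c\}$ of positive measure straddling two dyadic levels (so that $G(c^-)<2^{-(k+1)}\leqslant 2^{-k}\leqslant G(c)$) forces $M_{\Omega,f}^{(k)}=M_{\Omega,f}^{(k+1)}=c$ and $r_k=1$ at a shallow scale that your cutoff cannot absorb; a minimum basin $\{f=f^*\}$ of relative mass at least $2^{-k}$ gives $a_k=0$ and $r_{k-1}=0$, and in that case even your probability computation $P[f(\xi)\leqslant M_{\Omega,f}^{(K)}]=2^{-K}$ breaks down, since $G(M_{\Omega,f}^{(K)})$ can stay bounded away from zero (mass $1/2$ at the minimum leaves your event with probability $1/2$ for every $K$). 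You correctly flag the plateau case as ``the step to treat most carefully,'' but the proof never closes it --- and under the stated hypotheses it cannot be closed, since atomlessness of the law of $f(\xi)$ is exactly the content of Assumption \ref{CM:ass:CR}, which this proposition deliberately avoids assuming. The paper's mechanism sidesteps the collapse by never comparing raw percentiles directly: it passes through means and variances on $E^{(k+1)}$ (assuming $0<m^{(k+1)}$ and $0<d^{(k+1)}$) so that near-degenerate configurations are absorbed into the constant $C$ and hence into $q(\epsilon)$. Your proposal has no analogous mechanism at the shallow scales, and that, rather than the dyadic cutoff, is the missing idea.
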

\begin{proof}
Consider the sublevel set sequence $\{E^{(k)}\}$ defined by \eqref{CM:eq:SLMed}, let $\xi^{(k)}$ be a uniformly distributed random variable on $E^{(k)}$, then the desired inequality can be rewritten as
\begin{equation*}
  \frac{q}{1+q}\left(\textrm{Median}f(\xi^{(k)})-f^*\right)<
  \textrm{Median}f(\xi^{(k+1)})-f^*
  <\frac{1}{1+q}\left(\textrm{Median}f(\xi^{(k)})-f^*\right).
\end{equation*}
Let $m^{(k)}=\textrm{Median}[f(\xi^{(k)})-f^*]$, $a^{(k)}=\textrm{Mean}[f(\xi^{(k)})-f^*]$ and $d^{(k)}=\sqrt{\textrm{Var}[f(\xi^{(k)})-f^*]}$ for convenience. Proving the first inequality is equivalent to showing that on each $E^{(k+1)}$, the upper bound of $f$, i.e., $m^{(k)}$, is less than $\frac{1+q}{q}$ of the median of $f$, i.e., $m^{(k+1)}$, with probability at least $1-\epsilon$.
	
Since $f$ is continuous and not a constant on $D^{(k+1)}\subseteq\Omega$, we have $0<m^{(k+1)}<\infty$ and $0<d^{(k+1)}<\infty$ and there exists a $C>0$ such that $d^{(k+1)}=Cm^{(k+1)}$. First, the distance between the median and the mean is bounded by standard deviation \citep{MallowsC1991_Median&Mean}, i.e.,
\begin{equation*}
 m^{(k+1)}-d^{(k+1)}\leqslant a^{(k+1)}\leqslant m^{(k+1)}+d^{(k+1)};
\end{equation*}
and it follows from the Chebyshev-Cantelli inequality that
\begin{equation*}
 P\left(f(\xi^{(k+1)})-f^*>a^{(k+1)}+\frac{\sqrt{1-\epsilon}}
 {\sqrt{\epsilon}}d^{(k+1)}\right)\leqslant\epsilon.
\end{equation*}
So it holds that
\begin{align*}
 \textrm{prctile}\left(f(\xi^{(k+1)})-f^*,(1-\epsilon)\cdot100\%\right)
 <&a^{(k+1)}+\frac{\sqrt{1-\epsilon}}{\sqrt{\epsilon}}d^{(k+1)} \\
 \leqslant&m^{(k+1)}+\left(1+\frac{\sqrt{1-\epsilon}}
 {\sqrt{\epsilon}}\right)d^{(k+1)} \\
 =&\frac{\sqrt{\epsilon}+C+C\sqrt{1-\epsilon}}{\sqrt{\epsilon}}m^{(k+1)},
\end{align*}
or equivalently, $\frac{q}{1+q}m^{(k)}<m^{(k+1)}$ holds for all $k\geqslant0$ with probability at least $1-\epsilon$, where $q=\sqrt{\epsilon}/(C+C\sqrt{1-\epsilon})$.
	
Similarly, by considering a translation $f(x)-f^*-m^{(k)}$, the second inequality $m^{(k+1)}<\frac{1}{1+q}m^{(k)}$ also holds for every $k\geqslant0$ with probability at least $1-\epsilon$. And it is clear that these two inequalities only make sense simultaneous when $q<1$.
\end{proof}

\subsection{Critical regular functions}
\label{CM:s3:3}

First, we introduce the last assumption on the objective function. This condition is not an essential requirement, but for convenience of theoretical analysis.

\begin{assum}[Critical regular function]
\label{CM:ass:CR}
The function $f$ is a critical regular, that is, the set of all critical points of $f:\Omega\to\mathbb{R}$ has a zero $n$-dimensional Lebesgue measure, where a critical point is a $x\in\Omega$ where the gradient is undefined or is equal to zero.
\end{assum}

This critical regular condition guarantees that the contraction factor is equal to $1/2$ in expectation for the median-type method. Especially, consider a special sublevel set sequence
\begin{equation}\label{CM:eq:SLMed}
 E^{(k+1)}=\left\{x\in\Omega:f(x)\leqslant M_{\Omega,f}^{(k)}\right\},
\end{equation}
where $M_{\Omega,f}^{(k)}$ is as in \eqref{CM:eq:SDMed}; then under Assumption \ref{CM:ass:CR}, we have $\mu(E^{(k+1)})=\frac{1}{2}\mu(E^{(k)})$ and
\begin{equation}\label{CM:eq:SLMedV}
 \mu(E^{(k)})=\frac{1}{2^k}\mu(\Omega).
\end{equation}

A set of critical points with a large non-zero measure may cause the contraction factors to be too large or too small in some iterations. Some functions that do not meet Assumption \ref{CM:ass:CR} might be difficult to solve, however, they can be excluded by Assumption \ref{CM:ass:HLFD}. In contrast, the hierarchical low-frequency dominant condition is an essential requirement for effective contractions.

Finally, we establish an upper bound on the measure of $D^{(k)}$ defined by \eqref{CM:eq:SDMed}. With the help of Assumption \ref{CM:ass:HLFD}, it can limit the computational complexity of each kernel-based approximation on $D^{(k)}$ in the next section.
\begin{lem}\label{CM:lem:muDB}
Under Assumptions \ref{CM:ass:T} (or \ref{CM:ass:Ta}) and \ref{CM:ass:CR}, suppose $\{D^{(k)}\}$ is defined by Definition \ref{CM:defn:D} with \eqref{CM:eq:SDMed} and $\omega<q$. Then, for all $k\in\mathbb{N}$, it follows that
\begin{equation*}
  \mu(D^{(k)})\leqslant2^{-(k-1)}\mu(\Omega).
\end{equation*}
\end{lem}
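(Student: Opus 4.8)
The plan is to prove the stronger set inclusion $D^{(k)}\subseteq E^{(k-1)}$ for every $k\in\mathbb{N}$, adopting the convention $E^{(0)}=\Omega$, after which the measure bound is immediate: by \eqref{CM:eq:SLMedV} we have $\mu(E^{(k-1)})=2^{-(k-1)}\mu(\Omega)$, so $\mu(D^{(k)})\leqslant\mu(E^{(k-1)})=2^{-(k-1)}\mu(\Omega)$. The case $k=1$ is trivial, since $D^{(1)}\subseteq D^{(0)}=\Omega=E^{(0)}$, so all the work sits in the pointwise estimate for $k\geqslant2$, and Assumption \ref{CM:ass:CR} enters only through \eqref{CM:eq:SLMedV} (it guarantees the percentiles $M_{\Omega,f}^{(k)}$ are well defined with $\mu(\{f\leqslant M_{\Omega,f}^{(j)}\})$ halving exactly, because the critical set, and in particular $X^*$, is a null set).

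For $k\geqslant2$ I would fix an arbitrary $x\in D^{(k)}$ and turn membership into a pointwise bound on $f(x)$, mirroring the proof of Lemma \ref{CM:lem:SConv}. Since $D^{(k)}\subseteq D^{(k-1)}$ (Definition \ref{CM:defn:D}, or Theorem \ref{CM:thm:Conv}), the defining relation \eqref{CM:eq:D} with the idealized choice \eqref{CM:eq:SDMed} gives $\mathcal{A}^{(k-1)}f(x)\leqslant u^{(k-1)}=M_{\Omega,f}^{(k-1)}$, while the error bound condition attached to \eqref{CM:eq:SDMed} gives $|\mathcal{A}^{(k-1)}f(x)-f(x)|<\omega\big(M_{\Omega,f}^{(k-1)}-f^*\big)$. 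Adding these yields $f(x)-f^*<(1+\omega)\big(M_{\Omega,f}^{(k-1)}-f^*\big)$.

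The next step trades one level of the hierarchy for the factor of two hidden in the target exponent. Under Assumption \ref{CM:ass:T} (or its weak form \ref{CM:ass:Ta}; only the right-hand inequality is needed, and it is common to both) we have $M_{\Omega,f}^{(k-1)}-f^*<\frac{1}{1+q}\big(M_{\Omega,f}^{(k-2)}-f^*\big)$. Because $\omega<q$ forces $\frac{1+\omega}{1+q}<1$, and because $M_{\Omega,f}^{(k-2)}>f^*$ (again from the null-set property in Assumption \ref{CM:ass:CR}, which keeps the relevant percentile strictly above $f^*$), I obtain $f(x)-f^*<\frac{1+\omega}{1+q}\big(M_{\Omega,f}^{(k-2)}-f^*\big)<M_{\Omega,f}^{(k-2)}-f^*$, i.e. $f(x)<M_{\Omega,f}^{(k-2)}$. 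As $x$ was arbitrary, this is exactly $D^{(k)}\subseteq\{y\in\Omega:f(y)\leqslant M_{\Omega,f}^{(k-2)}\}=E^{(k-1)}$, and \eqref{CM:eq:SLMedV} closes the argument.

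I expect the main obstacle to be index bookkeeping rather than any analytic difficulty. The appearance of the exponent $-(k-1)$ instead of $-k$ is forced: the error bound inflates the effective threshold from $M_{\Omega,f}^{(k-1)}$ up to $M_{\Omega,f}^{(k-1)}+\omega\big(M_{\Omega,f}^{(k-1)}-f^*\big)$, so one cannot contain $D^{(k)}$ in the sharp level set at $M_{\Omega,f}^{(k-1)}$ and must instead descend to the coarser level $M_{\Omega,f}^{(k-2)}$, spending precisely one factor of two. The points requiring care are therefore: confirming $M_{\Omega,f}^{(k-2)}>f^*$ so the chain of strict inequalities is legitimate, handling $k=1$ separately, and recording that only the upper tempered inequality is invoked, so the conclusion holds under either Assumption \ref{CM:ass:T} or \ref{CM:ass:Ta}.
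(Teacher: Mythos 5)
Your proposal is correct and follows essentially the same route as the paper's proof: both establish the inclusion $D^{(k)}\subset E^{(k-1)}$ by bounding $f$ pointwise on $D^{(k)}$ via the upper bound of Lemma \ref{CM:lem:SConv} (which you inline rather than cite), then applying the upper tempered inequality and $\omega<q$ to descend to the level $M_{\Omega,f}^{(k-2)}$, and finally invoking \eqref{CM:eq:SLMedV}. Your explicit treatment of the $k=1$ case and the remark that only the right-hand tempered inequality is used are minor refinements of bookkeeping, not a different argument.
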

\begin{proof}
According to Definition \ref{CM:defn:D} and \eqref{CM:eq:SDMed}, for any $x'\in D^{(k)}$, it follows from Lemma \ref{CM:lem:SConv} that
\begin{equation*}
  f(x')-f^*\leqslant(1+\omega)\big(M_{\Omega,f}^{(k-1)}-f^*\big),
\end{equation*}
according to Assumption \ref{CM:ass:T} (or \ref{CM:ass:Ta}), it holds that
\begin{equation*}
  M_{\Omega,f}^{(k-1)}-f^*\leqslant\frac{1}{1+q}\big(M_{\Omega,f}^{(k-2)}-f^*\big),
\end{equation*}
thus, it follows from $\omega<q$ that
\begin{equation*}
  f(x')-f^*\leqslant\frac{1+\omega}{1+q}\big(M_{\Omega,f}^{(k-2)}-f^*\big)
  \leqslant M_{\Omega,f}^{(k-2)}-f^*,
\end{equation*}
that is, $f(x')\leqslant M_{\Omega,f}^{(k-2)}$, therefore, $x'\in E^{(k-1)}$, that is, $D^{(k)}\subset E^{(k-1)}$, where $E^{(k)}$ is defined as \eqref{CM:eq:SLMed}, i.e.,
\begin{equation*}
  E^{(k+1)}=\left\{x\in\Omega:f(x)\leqslant M_{\Omega,f}^{(k)}\right\},
\end{equation*}
by noting that $\mu(E^{(k-1)})=2^{-(k-1)}\mu(\Omega)$, we obtain the desired result.
\end{proof}

\section{Categories of continuous optimization problems}
\label{CM:s4}

According to the contractibility, all the possible continuous optimization problems can be divided into the following three categories: logarithmic time contractible, polynomial time contractible, or noncontractible.

\subsection{Logarithmic time contractible}

\begin{defn}
 The problem \eqref{CM:eq:COP} is said to be logarithmic time contractible if there exist three suitable parameters $\rho,p,q>0$ such that $f$ satisfies Assumptions \ref{CM:ass:HLFD}, \ref{CM:ass:T} and \ref{CM:ass:CR}.
\end{defn}

For such a problem, the key to obtaining logarithmic time efficiency comes from two reasons: (i) there is a certain approximation $\mathcal{A}^{(k)}f$ such that both the error bound condition and the strong convergence condition are satisfied; (ii) the approximation $\mathcal{A}^{(k)}f$ can be fully constructed by some samples on $D^{(k)}$ and the size of these samples does not exceed a certain fixed upper bound. According to these three points, we can prove the following theorem.
\begin{thm}\label{CM:thm:LTC}
Suppose the problem \eqref{CM:eq:COP} is logarithmic time contractible with parameters $(\rho,p,q)$ and $\{D^{(k)}\}$ is defined by Definition \ref{CM:defn:D} with \eqref{CM:eq:SDMed}, in which, $\chi^{(k)}$ is quasi-uniformly distributed over $D^{(k)}\subset\Omega$ with
\begin{equation*}
  q_{\chi^{(k)}}=\tau'\cdot C^{-\frac{1}{n}} 2^{-\frac{k+s}{n}}\pi/\rho
  ~~\textrm{and}~~
  h_{D,\chi^{(k)}}=\tau''\cdot C^{-\frac{1}{n}}2^{-\frac{k+s}{n}}\pi/\rho
  ~~\textrm{for}~~\tau'\leqslant1\leqslant\tau'',
\end{equation*}
and $\mathcal{A}^{(k)}f$ is given by Gaussian kernel interpolant $\mathcal{I}_{\chi^{(k)}}f$ with parameter satisfying \eqref{CM:eq:kpara}. Then, for any $\omega<q$ and $\epsilon>0$, there exist $C>1$ and $K\geqslant1$ such that after $K$ contractions, it holds that the upper bound
\begin{equation*}
 \max_{x\in D^{(K)}}[f(x)-f^*]<\epsilon,
\end{equation*}
with the linear convergence rate
\begin{equation*}
 \max_{x\in D^{(k)}}[f(x)-f^*]<\left(\frac{1+\omega}{1+q}\right)^k
 \max_{x\in\Omega}[f(x)-f^*],
\end{equation*}
the total number of function evaluations $\mathcal{O}\big(N_{\Omega,f} \cdot\log_{\frac{1+\omega}{1+q}}\epsilon\big)$, and the total time complexity $\mathcal{O}\big(N_{\Omega,f}^3\cdot\log_{\frac{1+\omega}{1+q}}\epsilon\big)$, where $N_{\Omega,f}=\mathcal{O}\big(2^{s+1}\rho^n/\pi^n\big)$ and $s$ is the unique integer such that
\begin{equation*}
9\left(\frac{p}{1+p}\right)^s\|\hat{f}\|_{L_1\cap L_2}
<\omega\Big(\max_{x\in\Omega}f(x)-f^*\Big)\leqslant9
\left(\frac{p}{1+p}\right)^{s-1}\|\hat{f}\|_{L_1\cap L_2}.
\end{equation*}
\end{thm}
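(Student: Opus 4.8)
The plan is to assemble the theorem from three results already established in this excerpt: Lemma \ref{CM:lem:TF}, which supplies a concrete Gaussian kernel interpolant meeting both the error bound and the strong convergence conditions; Theorem \ref{CM:thm:SConv}, which converts those two conditions into a one-step linear contraction of the optimality gap; and Lemma \ref{CM:lem:muDB}, which controls the Lebesgue measure of each $D^{(k)}$. First I would observe that, since problem \eqref{CM:eq:COP} is logarithmic time contractible, $f$ simultaneously satisfies Assumptions \ref{CM:ass:HLFD} and \ref{CM:ass:T}, and the prescribed separation and fill distances $q_{\chi^{(k)}}, h_{D,\chi^{(k)}}$ are exactly those required by Lemma \ref{CM:lem:TF}. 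Invoking that lemma gives, for each $k$, a kernel parameter $\sigma > 2^{(k+s+1)/n}\rho$ (consistent with \eqref{CM:eq:kpara}) such that $\mathcal{I}_{\chi^{(k)}} f$ satisfies $\|\mathcal{I}_{\chi^{(k)}} f - f\|_{L_\infty(D^{(k)})} < \omega(M_{\Omega,f}^{(k)} - f^*)$ together with $M_{\Omega,f}^{(k)} - f^* \leqslant \frac{1}{1+q}\max_{x\in D^{(k)}}[f(x)-f^*]$. I would also remark that the integer $s$ appearing here is precisely the one in the statement, because $M_{\Omega,f}^{(0)} = \textrm{prctile}(f(\xi),100\%) = \max_{x\in\Omega}f(x)$.

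With both conditions in hand and $\omega < q$, Theorem \ref{CM:thm:SConv} yields the one-step contraction $\max_{x\in D^{(k+1)}}[f(x)-f^*] < \frac{1+\omega}{1+q}\max_{x\in D^{(k)}}[f(x)-f^*]$ with factor $\frac{1+\omega}{1+q} < 1$. Iterating from $D^{(0)} = \Omega$ immediately gives the claimed linear rate $\max_{x\in D^{(k)}}[f(x)-f^*] < (\frac{1+\omega}{1+q})^k \max_{x\in\Omega}[f(x)-f^*]$. Solving $(\frac{1+\omega}{1+q})^K\max_{x\in\Omega}[f(x)-f^*] < \epsilon$ for the exponent then produces $K = \mathcal{O}(\log_{\frac{1+\omega}{1+q}}\epsilon)$ and the first upper bound $\max_{x\in D^{(K)}}[f(x)-f^*] < \epsilon$, treating the initial gap as a constant absorbed into the $\mathcal{O}$.

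The crux, and what I expect to be the main obstacle, is bounding the per-contraction sample size by the single constant $N_{\Omega,f}$. The prescribed separation distance $q_{\chi^{(k)}} = \tau' C^{-1/n} 2^{-(k+s)/n}\pi/\rho$ corresponds to a sampling density $C\,2^{k+s}\rho^n/\pi^n$ that \emph{doubles} at every step. For a quasi-uniform set the point count scales like density times volume, $N^{(k)} \approx C\,2^{k+s}(\rho^n/\pi^n)\,\mu(D^{(k)})$, and here I would make the count rigorous by relating the separation distance to $\mu(D^{(k)})$ through the uniformity constant $\tau$ (a packing/covering estimate). Invoking Lemma \ref{CM:lem:muDB}, $\mu(D^{(k)}) \leqslant 2^{-(k-1)}\mu(\Omega) \leqslant 2^{-(k-1)}$, so the two powers of two cancel and $N^{(k)} = \mathcal{O}(2^{s+1}\rho^n/\pi^n) = N_{\Omega,f}$, uniformly in $k$. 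This exact cancellation between the geometrically refining density and the geometrically shrinking domain is the heart of the logarithmic-time guarantee; the delicate part is the quasi-uniform point-count estimate rather than anything analytic.

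Finally, the complexity bounds follow by summing over the $K = \mathcal{O}(\log_{\frac{1+\omega}{1+q}}\epsilon)$ contractions. By the inheritance relationship $\chi^{(k)}\cap\chi^{(k+1)} = \chi^{(k)}\cap D^{(k+1)}$, each step adds at most $N_{\Omega,f}$ fresh samples, so the total number of function evaluations is $\mathcal{O}(N_{\Omega,f}\cdot\log_{\frac{1+\omega}{1+q}}\epsilon)$; and each step's Gaussian kernel interpolation solves a dense linear system of size at most $N_{\Omega,f}$ at cost $\mathcal{O}(N_{\Omega,f}^3)$, giving total time complexity $\mathcal{O}(N_{\Omega,f}^3\cdot\log_{\frac{1+\omega}{1+q}}\epsilon)$, as claimed.
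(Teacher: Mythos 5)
Your proposal is correct and follows essentially the same route as the paper's own proof: Lemma \ref{CM:lem:TF} supplies the interpolant meeting both conditions, Theorem \ref{CM:thm:SConv} is iterated for the linear rate, and Lemma \ref{CM:lem:muDB} gives the cancellation $2^{k+s}\cdot 2^{-(k-1)}=2^{s+1}$ that caps each $N^{(k)}$ by $N_{\Omega,f}$, after which the complexity bounds follow by summation over the $K=\mathcal{O}\big(\log_{\frac{1+\omega}{1+q}}\epsilon\big)$ contractions. The only cosmetic difference is the justification of the per-step $\mathcal{O}(N_{\Omega,f}^3)$ cost (you cite a dense linear solve, the paper cites GMRES), which does not change the argument.
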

\begin{proof}
From Lemma \ref{CM:lem:TF}, for any $k\in\mathbb{N}_0$, since $\chi^{(k)}$ is quasi-uniformly distributed w.r.t. a sampling density of $C2^{k+s}\rho^n/\pi^n$, there exist suitable kernel parameters as in \eqref{CM:eq:kpara} such that Gaussian kernel interpolant $\mathcal{I}_{\chi^{(k)}}f$ satisfies the error bound condition
\begin{equation*}
\|\mathcal{I}_{\chi^{(k)}}f-f\|_{L_\infty(D^{(k)})}
<\omega\big(M_{\Omega,f}^{(k)}-f^*\big)
\end{equation*}
with the strong convergence condition
\begin{equation*}
  M_{\Omega,f}^{(k)}-f^*\leqslant\frac{1}{1+q}\max_{x\in D^{(k)}}[f(x)-f^*].
\end{equation*}
So it follows from the strong convergence Theorem \ref{CM:thm:SConv} that
\begin{equation*}
 \max_{x\in D^{(k)}}[f(x)-f^*]\leqslant
 \left(\frac{1+\omega}{1+q}\right)^k\max_{x\in\Omega}[f(x)-f^*]
 =\left(\frac{1+\omega}{1+q}\right)^k(f^{**}-f^*).
\end{equation*}

According to $\mu(\Omega)\leqslant1$ and Lemma \ref{CM:lem:muDB}, i.e.,
\begin{equation*}
\mu(D^{(k)})\leqslant2^{-(k-1)}\mu(\Omega)=2^{-(k-1)},
\end{equation*}
there is $N_{\Omega,f}=\mathcal{O}(2^{s+1}\rho^n/\pi^n)$ such that
\begin{equation*}
N^{(k)}=C\mu(D^{(k)})2^{k+s}\rho^n/\pi^n\leqslant
C2^{s+1}\rho^n/\pi^n\leqslant N_{\Omega,f}.
\end{equation*}
Then, for a fixed accuracy $\epsilon>0$, there exists a $K>0$ such that
\begin{equation*}
 \left(\frac{1+\omega}{1+q}\right)^K(f^{**}-f^*)<\epsilon
 \leqslant\left(\frac{1+\omega}{1+q}\right)^{K-1}(f^{**}-f^*),
\end{equation*}
hence, after $K$ contractions, one gets the approximate solution set $D^{(K)}$ with an error bound
\begin{equation*}
 \max_{x\in D^{(K)}}[f(x)-f^*]\leqslant\epsilon,
\end{equation*}
and the total number of function evaluations is less than
\begin{equation*}
 \sum_{k=0}^{K-1}N^{(k)}\leqslant\sum_{k=0}^{K-1}N_{\Omega,f}
 =\mathcal{O}\left(N_{\Omega,f}\cdot\log_{\frac{1+\omega}{1+q}}\epsilon\right),
\end{equation*}
further, since the Gaussian kernel interpolant $\mathcal{I}_{\chi^{(k)}}f$ can be computed by GMRES \citep{Saad1986A_GMRES} in $\mathcal{O}(N^{(k)})^2$ iterations, even if the model is updated every time a sample is added, the complexity of each contraction still does not exceed $\mathcal{O}(N_{\Omega,f}^3)$, so the total time complexity is less than
\begin{equation*}
 \sum_{k=0}^{K-1}N^3_{\Omega,f}=\mathcal{O}
 \left(N^3_{\Omega,f}\cdot\log_{\frac{1+\omega}{1+q}}\epsilon\right),
\end{equation*}
taking a logarithmic time for any desired accuracy $\epsilon$.
\end{proof}

Notice that Lemma \ref{CM:lem:TF} also hold for any $u^{(k)}$ satisfying
\begin{equation}\label{CM:eq:LTCu}
\frac{p}{1+p}\left(u^{(k)}-f^*\right)<u^{(k+1)}-f^*
<\frac{1}{1+q}\left(u^{(k)}-f^*\right),~~\textrm{where}~~pq<1,
\end{equation}
that is,
\begin{lem}\label{CM:lem:TFr}
Under Assumptions \ref{CM:ass:HLFD}, suppose $\{D^{(k)}\}$ is defined by Definition \ref{CM:defn:D} with \eqref{CM:eq:LTCu}, in which, $\chi^{(k)}$ is quasi-uniformly distributed over $D^{(k)}\subset\Omega$ with
\begin{equation*}
  q_{\chi^{(k)}}=\tau'\cdot C^{-\frac{1}{n}} 2^{-\frac{k+s}{n}}\pi/\rho
  ~~\textrm{and}~~
  h_{D,\chi^{(k)}}=\tau''\cdot C^{-\frac{1}{n}}2^{-\frac{k+s}{n}}\pi/\rho
  ~~\textrm{for}~~\tau'\leqslant1\leqslant\tau'',
\end{equation*}
where the constant $C$ is as in Lemma \ref{CM:lem:HLFDF&KI}. Then, for all $k\in\mathbb{N}_0$ and $\omega<q$, there exist a unique integer $s\geqslant1$ and kernel parameter $\sigma>2^{\frac{k+s+1}{n}}\rho$ such that Gaussian kernel interpolant $\mathcal{I}_{\chi^{(k)}} f$ satisfies the error bound condition
\begin{equation*}
 \|\mathcal{I}_{\chi^{(k)}} f-f\|_{L_\infty(D^{(k)})}
 <\omega\left(u^{(k)}-f^*\right)
\end{equation*}
with the strong convergence condition
\begin{equation*}
  u^{(k)}-f^*\leqslant\frac{1}{1+q}\max_{x\in D^{(k)}}[f(x)-f^*].
\end{equation*}
\end{lem}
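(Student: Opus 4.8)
The plan is to mirror the proof of Lemma \ref{CM:lem:TF} almost verbatim, replacing the percentile thresholds $M_{\Omega,f}^{(k)}$ by the abstract thresholds $u^{(k)}$ and replacing the tempered Assumption \ref{CM:ass:T} by the two-sided recursion \eqref{CM:eq:LTCu}. First I would record the structural role of the side condition $pq<1$: it is exactly what makes the admissible interval in \eqref{CM:eq:LTCu} nonempty, since $\frac{p}{1+p}<\frac{1}{1+q}$ holds iff $pq<1$, so such a sequence $\{u^{(k)}\}$ can indeed be chosen. The hypothesis $\omega<q$ plays no role in establishing the two conditions themselves; it is the downstream compatibility requirement that turns them into the linear factor $\frac{1+\omega}{1+q}<1$ when this lemma is fed into Theorem \ref{CM:thm:SConv}.

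For the error bound condition I would reproduce the three-step estimate. Because $\frac{p}{1+p}\in(0,1)$, the strictly decreasing sequence $9\bigl(\frac{p}{1+p}\bigr)^s\|\hat{f}\|_{L_1\cap L_2}$ crosses the level $\omega(u^{(0)}-f^*)$ at a unique integer $s\geqslant1$, giving the analogue of \eqref{CM:eq:TFC1}. Iterating the left inequality of \eqref{CM:eq:LTCu} yields $\bigl(\frac{p}{1+p}\bigr)^k(u^{(0)}-f^*)<u^{(k)}-f^*$, the analogue of \eqref{CM:eq:TFC2}. Since the prescribed sampling density $C2^{k+s}\rho^n/\pi^n$ corresponds to taking $j=k+s$ in Lemma \ref{CM:lem:HLFDF&KI}, the Gaussian interpolant obeys $\|\mathcal{I}_{\chi^{(k)}}f-f\|_{L_\infty(D^{(k)})}<9\bigl(\frac{p}{1+p}\bigr)^{k+s}\|\hat{f}\|_{L_1\cap L_2}$, the analogue of \eqref{CM:eq:TFC3}, and the required $\sigma>2^{(k+s+1)/n}\rho$ is precisely the one supplied by Lemma \ref{CM:lem:HLFDFinterpolating}. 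Chaining these three facts factors off $\bigl(\frac{p}{1+p}\bigr)^k$ and collapses the bound to $\omega(u^{(k)}-f^*)$, which is the claimed error bound condition; no kernel estimate needs to be rederived.

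The strong convergence condition is where a small structural input is needed, and I expect it to be the only delicate point. As in Lemma \ref{CM:lem:TF}, I would use the identity $\max_{x\in D^{(k)}}f(x)=u^{(k-1)}$, valid in the idealized setting of this section where $D^{(k)}$ coincides with the sublevel set cut out at the previous threshold, and combine it with the right inequality of \eqref{CM:eq:LTCu}, namely $u^{(k)}-f^*<\frac{1}{1+q}(u^{(k-1)}-f^*)$, to obtain $u^{(k)}-f^*\leqslant\frac{1}{1+q}\max_{x\in D^{(k)}}[f(x)-f^*]$. The one thing to verify carefully is exactly this identity (equivalently the inequality $\max_{x\in D^{(k)}}f(x)\geqslant u^{(k-1)}$); once granted, the condition follows immediately, and the remainder is a routine renaming of the argument for Lemma \ref{CM:lem:TF}. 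Accordingly I would present the proof as a short adaptation of Lemma \ref{CM:lem:TF} rather than a self-contained rederivation.
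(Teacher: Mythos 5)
Your proposal is correct and coincides with the paper's intended argument: the paper gives no standalone proof of this lemma, introducing it with the remark that Lemma \ref{CM:lem:TF} ``also holds'' for any $u^{(k)}$ satisfying \eqref{CM:eq:LTCu}, i.e., the proof is exactly the verbatim adaptation you describe (unique $s$ from the analogue of \eqref{CM:eq:TFC1}, iteration of the left inequality of \eqref{CM:eq:LTCu} as the analogue of \eqref{CM:eq:TFC2}, and Lemma \ref{CM:lem:HLFDF&KI} with $j=k+s$ as the analogue of \eqref{CM:eq:TFC3}). Even the delicate point you isolate — the identity $\max_{x\in D^{(k)}}f(x)=u^{(k-1)}$ used for the strong convergence condition — is handled at exactly the same level of rigor in the paper's own proof of Lemma \ref{CM:lem:TF}, which simply asserts $\max_{x\in D^{(k)}}f(x)=M_{\Omega,f}^{(k-1)}$.
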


Thus, an immediate corollary of Theorem \ref{CM:thm:LTC} is:
\begin{cor}\label{CM:cor:LTC}
Suppose there exist $\rho,p,q>0$ such that the problem \eqref{CM:eq:COP} satisfies Assumption \ref{CM:ass:HLFD} and $\{D^{(k)}\}$ is defined by Definition \ref{CM:defn:D} with \eqref{CM:eq:LTCu} and $\mu(D^{(k+1)})\leqslant\frac{1}{2}\mu(D^{(k)})$, in which, $\chi^{(k)}$ is quasi-uniformly distributed over $D^{(k)}\subset\Omega$ with
\begin{equation*}
  q_{\chi^{(k)}}=\tau'\cdot C^{-\frac{1}{n}} 2^{-\frac{k+s}{n}}\pi/\rho
  ~~\textrm{and}~~
  h_{D,\chi^{(k)}}=\tau''\cdot C^{-\frac{1}{n}}2^{-\frac{k+s}{n}}\pi/\rho
  ~~\textrm{for}~~\tau'\leqslant1\leqslant\tau'',
\end{equation*}
and $\mathcal{A}^{(k)}f$ is given by Gaussian kernel interpolant $\mathcal{I}_{\chi^{(k)}}f$ with parameter satisfying \eqref{CM:eq:kpara}. Then, for any $\omega<q$, all conclusions of Theorem \ref{CM:thm:LTC} also hold, where $N_{\Omega,f}=\mathcal{O}\big(2^s\rho^n/\pi^n\big)$ and $s$ is the unique integer such that
\begin{equation*}
9\left(\frac{p}{1+p}\right)^s\|\hat{f}\|_{L_1\cap L_2}
<\omega\Big(u^{(0)}-f^*\Big)\leqslant9
\left(\frac{p}{1+p}\right)^{s-1}\|\hat{f}\|_{L_1\cap L_2}.
\end{equation*}
\end{cor}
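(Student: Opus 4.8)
The plan is to read this corollary as the specialization of Theorem~\ref{CM:thm:LTC} in which the two structural ingredients previously extracted from Assumptions~\ref{CM:ass:T} and~\ref{CM:ass:CR} are now supplied directly by the hypotheses \eqref{CM:eq:LTCu} and $\mu(D^{(k+1)})\leqslant\frac12\mu(D^{(k)})$. Accordingly I would reproduce the proof of Theorem~\ref{CM:thm:LTC} almost verbatim, substituting Lemma~\ref{CM:lem:TFr} for Lemma~\ref{CM:lem:TF} and the explicit measure contraction for Lemma~\ref{CM:lem:muDB}.

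First I would invoke Lemma~\ref{CM:lem:TFr}. Since $f$ is a $(\rho,p)$-type HLFDF, the thresholds $\{u^{(k)}\}$ obey \eqref{CM:eq:LTCu} (so in particular $pq<1$ makes the two-sided bound consistent), and $\chi^{(k)}$ is quasi-uniform with sampling density $C2^{k+s}\rho^n/\pi^n$, the lemma yields a unique integer $s\geqslant1$---pinned exactly by $\omega(u^{(0)}-f^*)$ as in the statement---and a kernel parameter $\sigma>2^{(k+s+1)/n}\rho$ satisfying \eqref{CM:eq:kpara} for which the Gaussian interpolant $\mathcal{I}_{\chi^{(k)}}f$ meets both the error bound condition $\|\mathcal{I}_{\chi^{(k)}}f-f\|_{L_\infty(D^{(k)})}<\omega(u^{(k)}-f^*)$ and the strong convergence condition $u^{(k)}-f^*\leqslant\frac{1}{1+q}\max_{x\in D^{(k)}}[f(x)-f^*]$. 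Feeding these two conditions into Theorem~\ref{CM:thm:SConv} produces the linear rate
\[
 \max_{x\in D^{(k)}}[f(x)-f^*]\leqslant\left(\frac{1+\omega}{1+q}\right)^k\max_{x\in\Omega}[f(x)-f^*],
\]
and taking $K$ to be the least integer for which the right-hand side drops below $\epsilon$ gives both the $\epsilon$-accuracy after $K$ contractions and $K=\mathcal{O}\bigl(\log_{\frac{1+\omega}{1+q}}\epsilon\bigr)$.

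The one step that is genuinely new is the per-step sample-size bound, and this is where I would be most careful. Instead of routing through Lemma~\ref{CM:lem:muDB} (which used Assumption~\ref{CM:ass:CR}), I would iterate the hypothesis $\mu(D^{(k+1)})\leqslant\frac12\mu(D^{(k)})$ from $D^{(0)}=\Omega$ to obtain $\mu(D^{(k)})\leqslant2^{-k}\mu(\Omega)\leqslant2^{-k}$. Hence $N^{(k)}=C\mu(D^{(k)})2^{k+s}\rho^n/\pi^n\leqslant C2^s\rho^n/\pi^n$, so $N_{\Omega,f}=\mathcal{O}(2^s\rho^n/\pi^n)$. It is worth flagging the small bookkeeping difference from Theorem~\ref{CM:thm:LTC}: the sharper exponent $2^s$ here (versus $2^{s+1}$ there) comes precisely from the fact that the measure now decays by a full factor of two at every step starting from $\mu(\Omega)$, rather than with the one-step delay that Lemma~\ref{CM:lem:muDB} incurs.

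Finally I would sum over the $K$ contractions exactly as before. The total number of evaluations is $\sum_{k=0}^{K-1}N^{(k)}\leqslant KN_{\Omega,f}=\mathcal{O}\bigl(N_{\Omega,f}\log_{\frac{1+\omega}{1+q}}\epsilon\bigr)$, and since each Gaussian interpolant is computable by GMRES in $\mathcal{O}\bigl((N^{(k)})^2\bigr)$ iterations---so each contraction costs at most $\mathcal{O}(N_{\Omega,f}^3)$ even under incremental model updates---the total time complexity is $\mathcal{O}\bigl(N_{\Omega,f}^3\log_{\frac{1+\omega}{1+q}}\epsilon\bigr)$. In short, there is no real analytical obstacle: all the hard work sits inside Lemma~\ref{CM:lem:TFr}, and the corollary is a matter of confirming that its hypotheses coincide with those assumed here and then replaying the complexity count with the tighter measure bound.
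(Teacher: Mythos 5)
Your proposal is correct and matches the paper's intent exactly: the paper states this corollary as ``immediate'' from Theorem \ref{CM:thm:LTC} once Lemma \ref{CM:lem:TFr} is established, and your argument is precisely the unpacking of that claim---replacing Lemma \ref{CM:lem:TF} by Lemma \ref{CM:lem:TFr} and Lemma \ref{CM:lem:muDB} by the assumed measure contraction $\mu(D^{(k+1)})\leqslant\frac{1}{2}\mu(D^{(k)})$, then replaying the complexity count. Your observation that iterating the measure hypothesis from $\mu(\Omega)$ gives $\mu(D^{(k)})\leqslant 2^{-k}$ and hence the sharper bound $N_{\Omega,f}=\mathcal{O}(2^{s}\rho^n/\pi^n)$ (versus $2^{s+1}$ in the theorem, where Lemma \ref{CM:lem:muDB} only yields $2^{-(k-1)}$) is exactly the bookkeeping that justifies the corollary's statement.
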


We have already seen a typical logarithmic time contractible case in Figure \ref{CM:fig:A1}, here are some other examples including various types.

First, we show the comparison between the CM and the grid search (GS) for Lipschitz continuous and H\"{o}lder continuous objectives, as illustrated in Figures \ref{CM:fig:LTL} and \ref{CM:fig:LTH}, respectively. For fairness, the positions of these global minimizers are designed to avoid being directly covered by certain equally spaced grids. In the usual sense, they are not very ``good'' functions because they all have many local minima so that a solution is easily trapped in any one of the local minima; however, in the sense of contraction, they are still ``good'' functions since the contraction strategy could be performed very successfully. The GS with gradual refinements might perform better. We did not compare with it here because it has no guarantee of global convergence.

\begin{figure}[tbhp]
  \centering
  \subfigure{\includegraphics[width=0.325\textwidth]{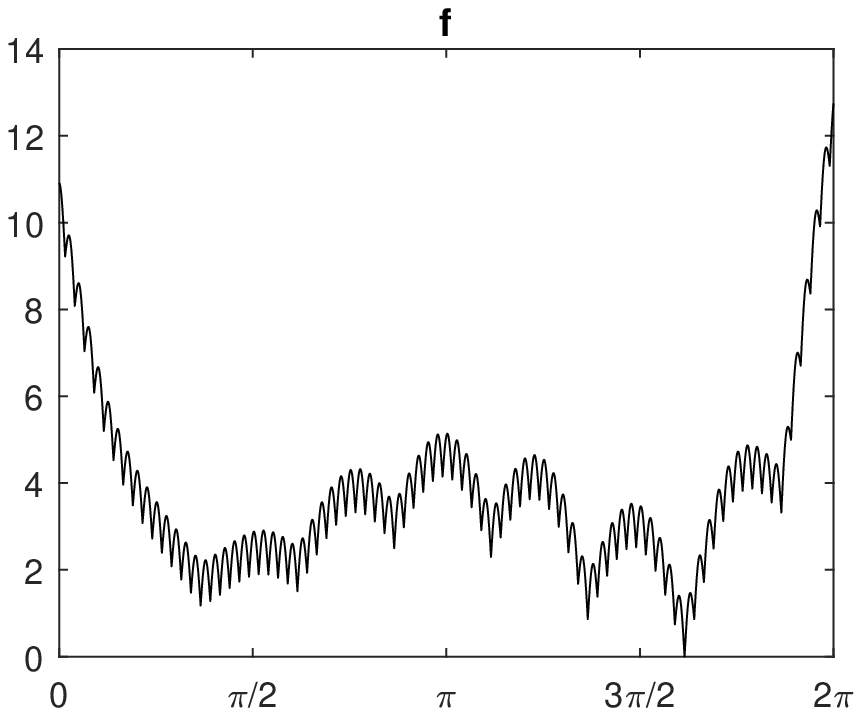}}
  \subfigure{\includegraphics[width=0.325\textwidth]{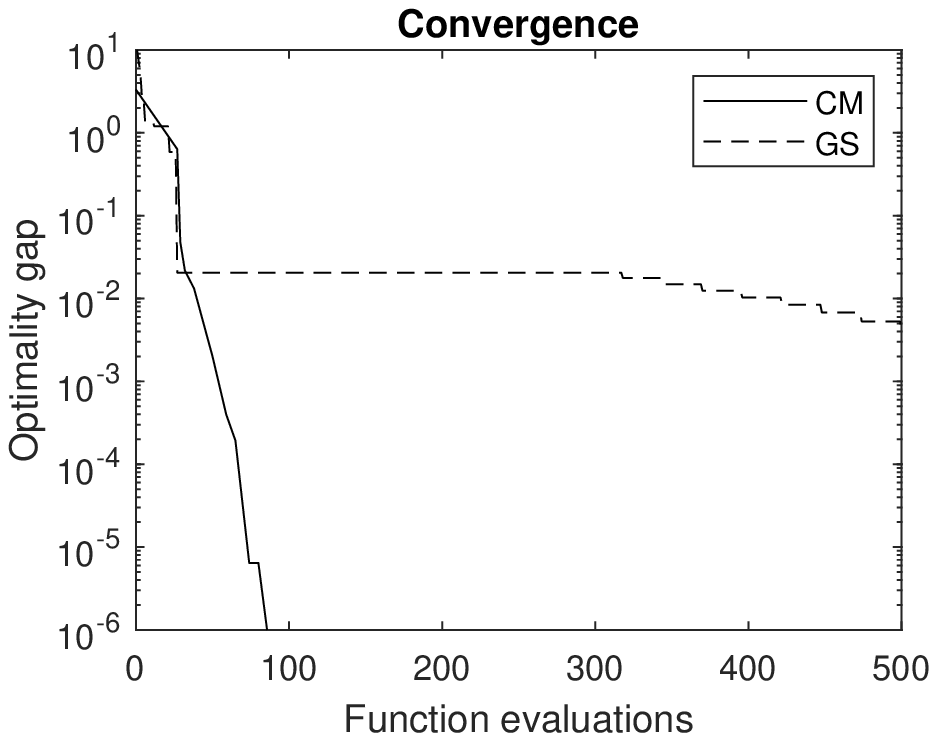}}
  \subfigure{\includegraphics[width=0.325\textwidth]{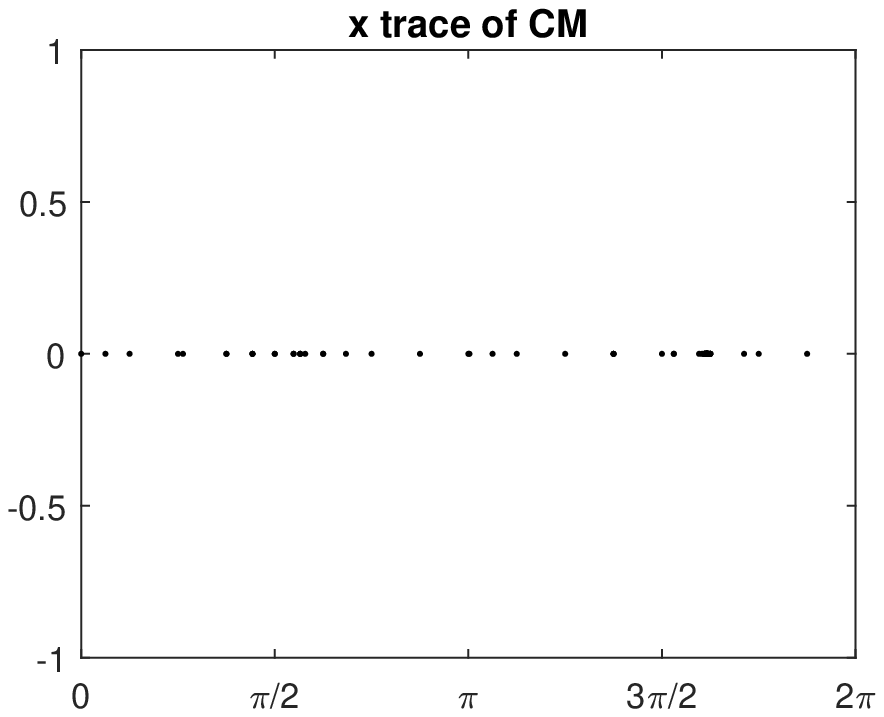}}
  \caption{An example of logarithmic time contractible and Lipschitz continuous.
  Left: the Lipschitz continuous objective $f(x)=0.2(x-1)(x-1.9)(x-4)(x-5.6)+ 0.5x|\sin(4(x-e))|+|\sin(40(x-e))|$ with the unique global minimum $0$ located at $x^*=5.074476$ on the domain $\Omega=[0,2\pi]$. And the position of the minimizer is designed to avoid being directly covered by certain equally spaced grids. Middle: the convergence behaviors of the CM and the GS, where CM (i.e., Algorithm \ref{CM:alg:CM}) is run with parameters $K=15,m=2,\textrm{minIterInner}=1,\omega=1$, $c_k=\bar{c}=50$ and $t_k=\bar{t}=1$ for $k=0,1,\cdots,K-1$; and the GS is based on equally spaced grids with different numbers of nodes on $\Omega$. The optimality gap is defined as $f_{\textrm{best}}^*-f^*$, where $f_{\textrm{best}}^*$ is the current best. Right: x trace of CM after all fifteen contractions.}
  \label{CM:fig:LTL}
\end{figure}

\begin{figure}[tbhp]
  \centering
  \subfigure{\includegraphics[width=0.325\textwidth]{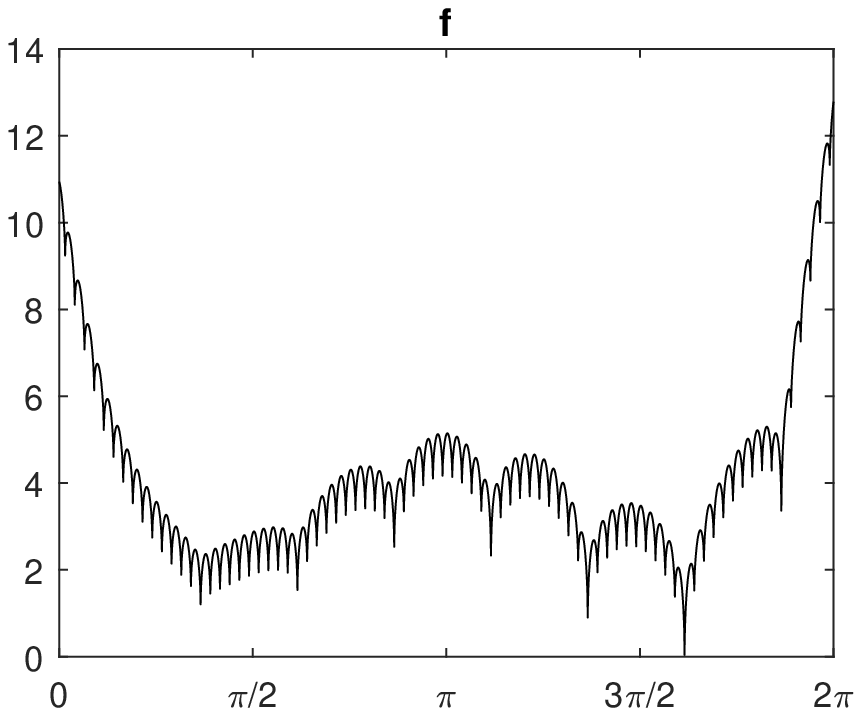}}
  \subfigure{\includegraphics[width=0.325\textwidth]{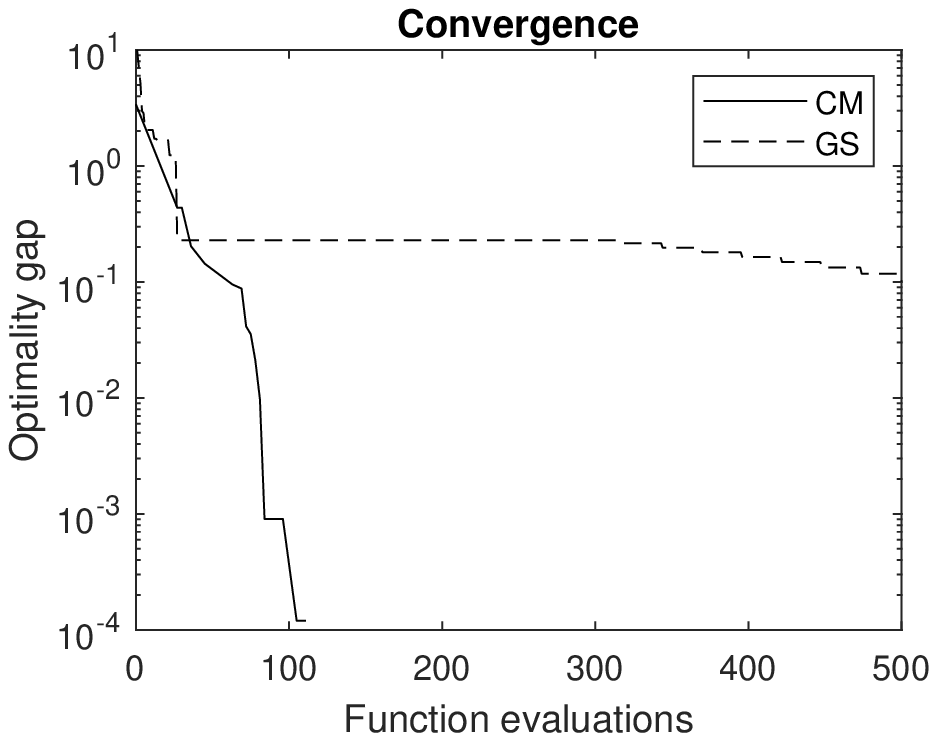}}
  \subfigure{\includegraphics[width=0.325\textwidth]{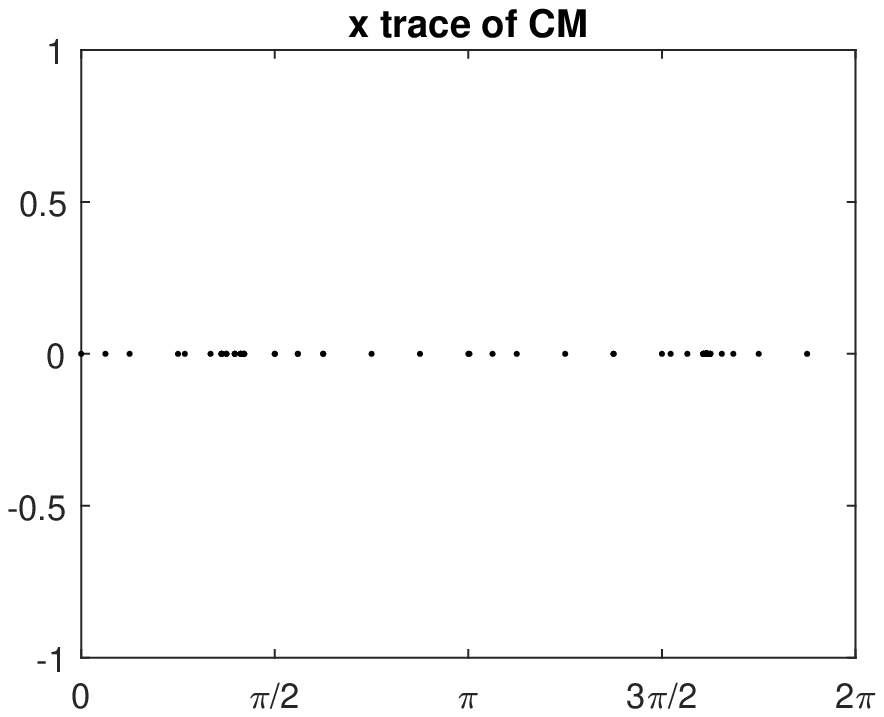}}
  \caption{An example of logarithmic time contractible and H\"{o}lder continuous.
  Left: the H\"{o}lder continuous objective function $f(x)=0.2(x-1)(x-1.9)(x-4)(x-5.6)+0.5x|\sin(4(x-e))|^{0.5}+|\sin(40(x-e))|^{0.5}$ with the unique global minimum $0$ located at $x^*=5.074476$ on the domain $\Omega=[0,2\pi]$. The position of the minimizer is also designed to avoid being directly covered by certain equally spaced grids. Middle: the convergence behaviors of the CM and the GS, where CM (Algorithm \ref{CM:alg:CM}) is run with parameters $K=25,m=2,\textrm{minIterInner}=1,\omega=1$, $c_k=\bar{c}=50$ and $t_k=\bar{t}=2$ for $k=0,1,\cdots,K-1$; and the GS is based on equally spaced grids with different numbers of nodes on $\Omega$. The optimality gap is defined as $f_{\textrm{best}}^*-f^*$, where $f_{\textrm{best}}^*$ is the current best. Right: x trace of CM after all twenty five contractions.}
  \label{CM:fig:LTH}
\end{figure}

Second, we also show the comparison between the CM the BO, and the gradient descent, for a quadratic objective illustrated in Figure \ref{CM:fig:LTQ}. One can see that the CM converges even faster than the gradient descent with the optimal step size, which verifies the established linear convergence and logarithmic complexity w.r.t. the number of function evaluations. Of course, the gradient descent with momentum will have better performance for this strongly convex function, while the CM is obviously designed for nonconvex problems. One may also notice that the CM converges a little bit slower than the BO algorithm in the initial stage. The reason is that the former does more space detection in the current domain to cover the minimizer with the next subdomain, while the latter focuses more on what the current model can predict.

\begin{figure}[tbhp]
  \centering
  \subfigure{\includegraphics[width=0.325\textwidth]{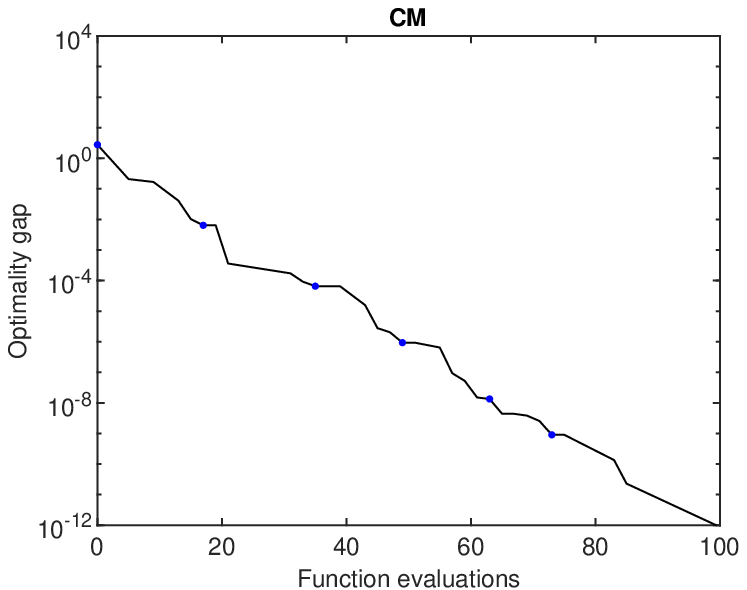}}
  \subfigure{\includegraphics[width=0.325\textwidth]{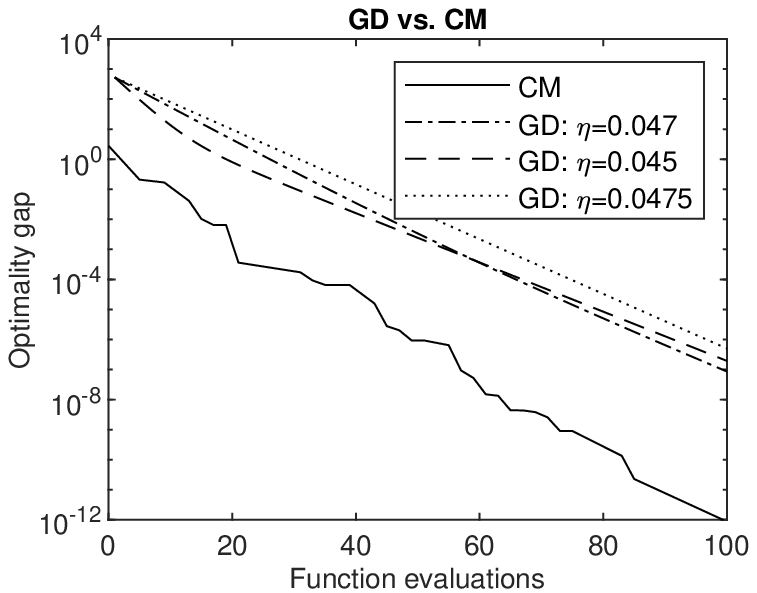}}
  \subfigure{\includegraphics[width=0.325\textwidth]{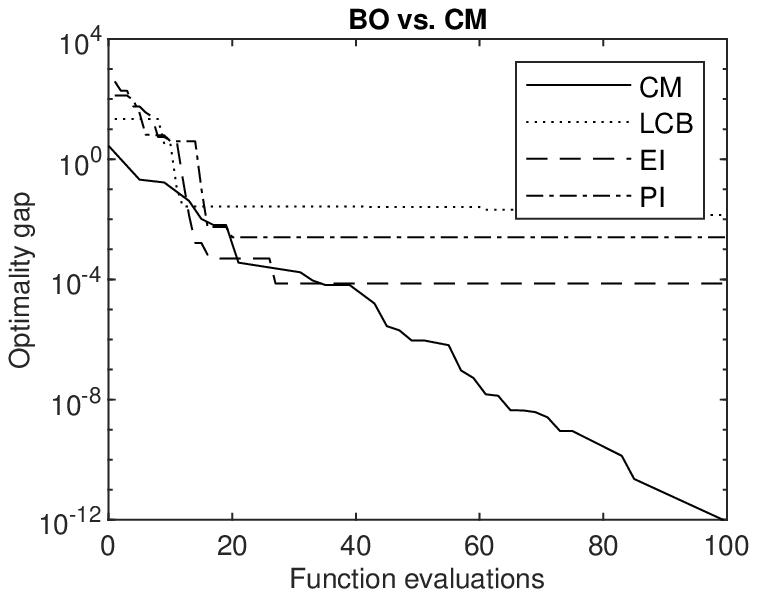}}\\
  \subfigure{\includegraphics[width=0.325\textwidth]{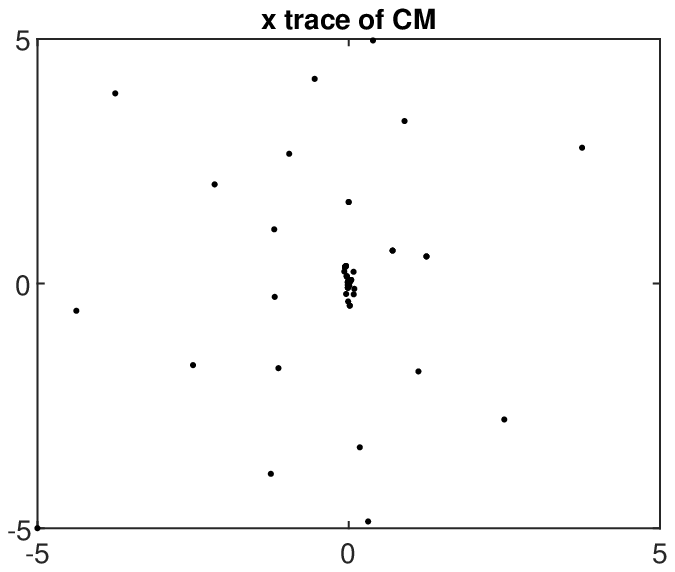}}
  \subfigure{\includegraphics[width=0.325\textwidth]{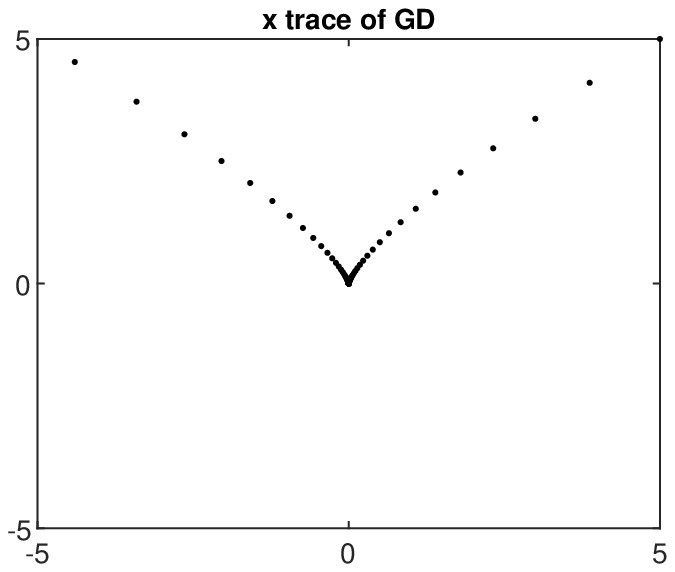}}
  \subfigure{\includegraphics[width=0.325\textwidth]{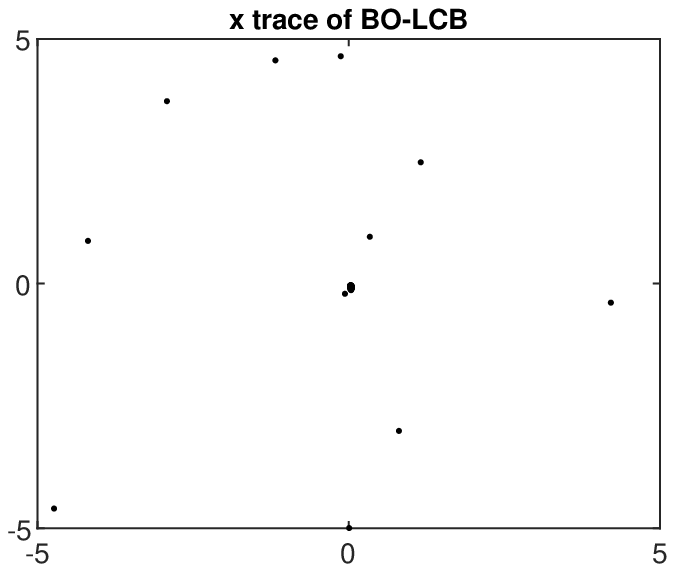}}
  \caption{A quadratic example of logarithmic time contractible. The objective $f(x_1,x_2)=20x_1^2+x_2^2, x_1\in[-5,5], x_2\in[-5,5]$ with the global minimum $0$ located at $(0,0)$. Upper left: the convergence behavior of CM (Algorithm \ref{CM:alg:CM}) with $K=10,m=1,\textrm{minIterInner}=1,\omega=1$, $c_k=\bar{c}=50$ and $t_k=\bar{t}=2$ for every $0\leqslant k\leqslant K-1$. Upper middle: the convergence behaviors of gradient descents with the initial point $(5,5)$ and different stepsizes $\eta=0.045,0.047$ and $0.0475$, where the optimal stepsize is obviously $\eta=0.047$ and the number of function evaluations refers to the gradient. Of course, this function is not well-conditioned because its condition number $\kappa=\frac{L}{l}=20$ with smooth parameter $L=40$ and strongly convex parameter $l=2$, then the gradient descent has a linear rate with contraction constant given by $\frac{\kappa-1}{\kappa+1}\approx0.90$ while the accelerated gradient with momentum has a contraction constant given by $\frac{\sqrt{\kappa}-1}{\sqrt{\kappa}+1}\approx0.63$. Therefore, the accelerated gradient methods will have a better performance, however, this example is only used to illustrate that the contraction algorithm can achieve the expected linear rate. Upper right: the convergence behaviors of Bayesian optimization with three types of acquisition function LCB, EI and PI. For fairness and good reproducibility, we use the MATLAB function \texttt{bayesopt} with default settings to implement these three methods, where the confidence parameter of the BO-LCB is also set as $2$ by default. Lower row: x traces of different algorithms.}
  \label{CM:fig:LTQ}
\end{figure}

Finally, let us look at a slightly more complicated example, which is formed by adding a Gaussian function to the above quadratic function. It has two local minimizers, the one near the origin is deceptive while the other far from the origin is the real global minimizer. As already mentioned, every exploitation on the basis of inadequate information may reduce efficiency or even cause trouble, now Figure \ref{CM:fig:LT2} shows possible convergence failures in some types of Bayesian optimization. So it seems that only when the sample density reaches a certain level, the relevant inference will become reliable, although the aggressive confidence parameter 2 might be the cause of convergence failure for BO-LCB.

\begin{figure}[tbhp]
  \centering
  \subfigure{\includegraphics[scale=0.67]{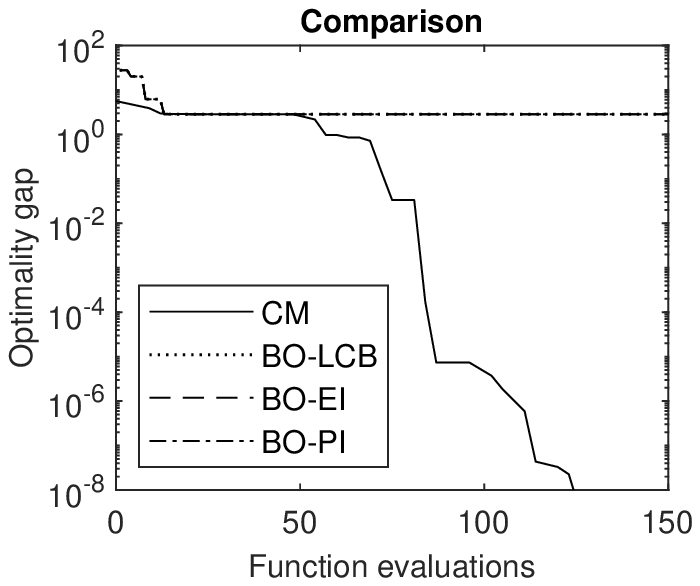}}
  \subfigure{\includegraphics[width=0.32\textwidth]{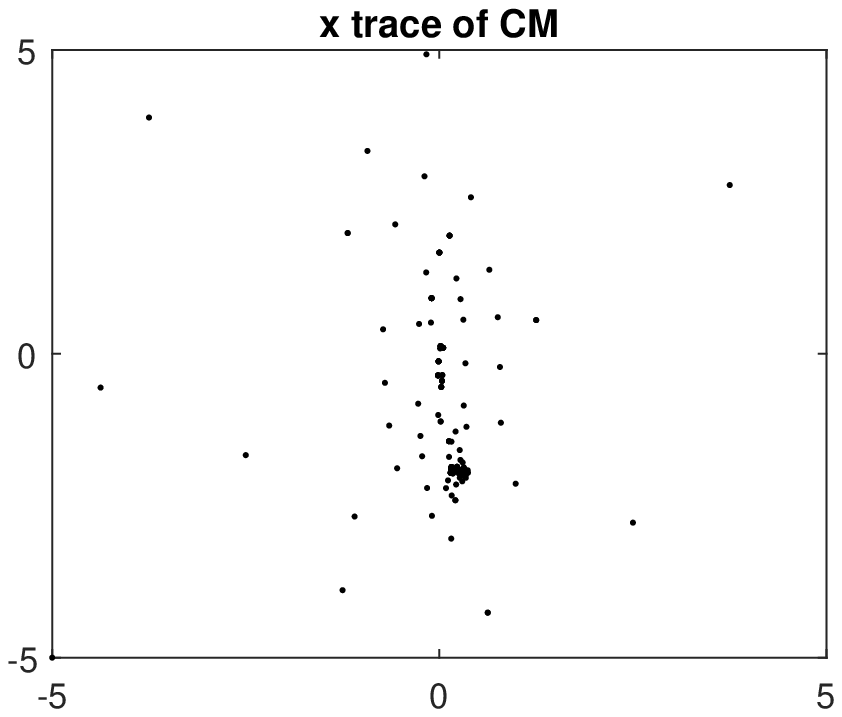}}
  \subfigure{\includegraphics[width=0.32\textwidth]{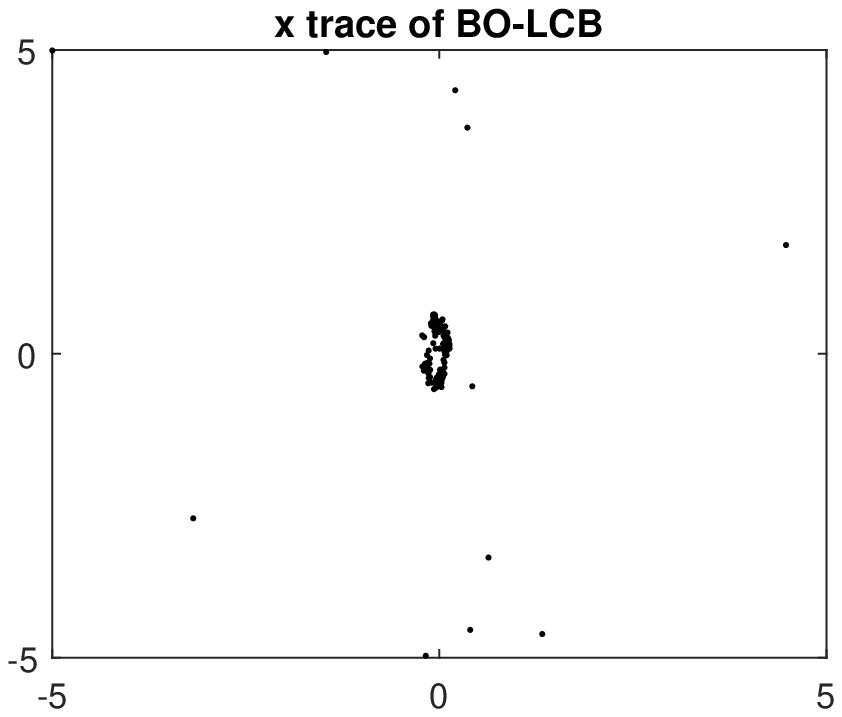}}
  \caption{A logarithmic time contractible example formed by a quadratic function plusing a Gaussian. The objective $f(x_1,x_2)=20x_1^2+x_2^2-10 \exp[-4(x_1-0.5)^2-4(x_2+2)^2]$ with $x_1\in[-5,5], x_2\in[-5,5]$. There are two local minima, one is $4.14\times10^{-7}$ located at $(4.62\times10^{-8},-3.30\times10^{-6})$ and the other, which is the global minima, is $-2.84835122$ located at $(0.31659395,1.9436997)$. Left: convergence comparison between Algorithm \ref{CM:alg:CM} and three different types of Bayesian optimization (BO). Algorithm \ref{CM:alg:CM} is run with $K=18,m=2,\textrm{minIterInner}=1,\omega=1$, $c_k=\bar{c}=50$ and $t_k=\bar{t}=2$ for every $0\leqslant k\leqslant K-1$. BO is implemented by the MATLAB function \texttt{bayesopt}, where the confidence parameter of BO-LCB is also set to $2$ by default. BO converges to the former local minima while the  contraction automatically increases the strength of space detection to ensure a full coverage. For BO-LCB, the aggressive confidence parameter $2$ might be the cause of convergence failure; of course, such an objective function may also be excluded from the convergence of probability. Middle: x trace of CM. Right: x trace of BO-LCB.}
  \label{CM:fig:LT2}
\end{figure}

According to the above examples from Lipschitz continuous, H\"{o}lder continuous, strong convex to smooth nonconvex objectives, it can be seen that one of the main advantages of the CM is to quickly concentrate the detection of the space while ensuring that all global minimizers are always included in the contraction sets, so that the expected logarithmic time efficiency can be observed. This supports that the concept of contractibility could provide a new perspective on what kind of continuous optimization problems can be effectively solved. Of course, since the contraction is performed stably, the computational cost of each modeling is controlled within a fixed amount.

\subsection{Polynomial time contractible}

\begin{defn}
 The problem \eqref{CM:eq:COP} is said to be polynomial time contractible if there exist three suitable parameters $\rho,p,q>0$ such that $f$ satisfies Assumptions \ref{CM:ass:HLFD} and \ref{CM:ass:Ta}.
\end{defn}

Unlike the logarithmic time contractible problems, there is no fixed upper bound for $N^{(k)}$; but it can often be controlled by $C2^{kl}N_{\Omega,f}$. So the CM is still effective because the contraction of $D^{(k)}$ can reduce the computational cost of the model $\mathcal{A}^{(k)}f$.
\begin{thm}\label{CM:thm:PTC}
Suppose the problem \eqref{CM:eq:COP} is polynomial time contractible with parameters $(\rho,p,q)$ and $\{D^{(k)}\}$ is defined by Definition \ref{CM:defn:D} with \eqref{CM:eq:SDMed}, in which, $\chi^{(k)}$ is quasi-uniformly distributed over $D^{(k)}\subset\Omega$ with
\begin{equation*}
  q_{\chi^{(k)}}=\tau'\cdot C^{-\frac{1}{n}} 2^{-\frac{kl+s}{n}}\pi/\rho
  ~~\textrm{and}~~
  h_{D,\chi^{(k)}}=\tau''\cdot C^{-\frac{1}{n}}2^{-\frac{kl+s}{n}}\pi/\rho
  ~~\textrm{for}~~\tau'\leqslant1\leqslant\tau'',
\end{equation*}
and $\mathcal{A}^{(k)}f$ is given by Gaussian kernel interpolant $\mathcal{I}_{\chi^{(k)}}f$ with parameter satisfying \eqref{CM:eq:kpara}. Then, for any $\omega<q$ and $\epsilon>0$, there exist $C>1$ and $K\geqslant1$ such that after $K$ contractions, it holds that the upper bound
\begin{equation*}
 \max_{x\in D^{(K)}}[f(x)-f^*]<\epsilon,
\end{equation*}
with the linear convergence rate
\begin{equation*}
 \max_{x\in D^{(k)}}[f(x)-f^*]<\left(\frac{1+\omega}{1+q}\right)^k
 \max_{x\in\Omega}[f(x)-f^*],
\end{equation*}
the total number of function evaluations $\mathcal{O}\big(N_{\Omega,f} \cdot2^{l\log_{\frac{1+\omega}{1+q}}\epsilon}\big)$, and the total time complexity $\mathcal{O}\big(N_{\Omega,f}^3\cdot8^{l\log_{\frac{1+\omega}{1+q}}\epsilon}\big)$, where $l$ is as in Lemma \ref{CM:lem:TFa}, $N_{\Omega,f}=\mathcal{O} \big(2^s\rho^n/\pi^n\big)$ and $s$ is the unique integer such that
\begin{equation*}
9\left(\frac{p}{1+p}\right)^s\|\hat{f}\|_{L_1\cap L_2}
<\omega\Big(\max_{x\in\Omega}f(x)-f^*\Big)\leqslant9
\left(\frac{p}{1+p}\right)^{s-1}\|\hat{f}\|_{L_1\cap L_2}.
\end{equation*}
\end{thm}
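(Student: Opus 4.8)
The plan is to reuse the architecture of the proof of Theorem~\ref{CM:thm:LTC} verbatim, with the single substitution of the weak-tempered analysis (Lemma~\ref{CM:lem:TFa}) for its tempered counterpart (Lemma~\ref{CM:lem:TF}); the only substantive change surfaces in the bookkeeping of the per-step sample size, which is no longer uniformly bounded. First I would invoke Lemma~\ref{CM:lem:TFa}: since $f$ is simultaneously a $(\rho,p)$-type HLFDF (Assumption~\ref{CM:ass:HLFD}) and a $q$-type weak tempered function (Assumption~\ref{CM:ass:Ta}) with $\omega<q<p$, for every $k\in\mathbb{N}_0$ there are unique $s$, $l>1$ and a kernel parameter $\sigma>2^{(kl+s+1)/n}\rho$ so that the Gaussian kernel interpolant built on the prescribed quasi-uniform set $\chi^{(k)}$ (of sampling density $C2^{kl+s}\rho^n/\pi^n$) obeys both the error-bound condition $\|\mathcal{I}_{\chi^{(k)}}f-f\|_{L_\infty(D^{(k)})}<\omega(M_{\Omega,f}^{(k)}-f^*)$ and the strong convergence condition $M_{\Omega,f}^{(k)}-f^*\leqslant\frac{1}{1+q}\max_{x\in D^{(k)}}[f(x)-f^*]$. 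Feeding these into Theorem~\ref{CM:thm:SConv} immediately yields the claimed linear rate
\[
\max_{x\in D^{(k)}}[f(x)-f^*]\leqslant\Big(\tfrac{1+\omega}{1+q}\Big)^k(f^{**}-f^*),
\]
and taking $K$ as the unique integer with $(\frac{1+\omega}{1+q})^K(f^{**}-f^*)<\epsilon\leqslant(\frac{1+\omega}{1+q})^{K-1}(f^{**}-f^*)$ gives $\max_{x\in D^{(K)}}[f(x)-f^*]<\epsilon$ after $K=\mathcal{O}(\log_{\frac{1+\omega}{1+q}}\epsilon)$ contractions.

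The crux is the sample-size estimate. Here I combine the measure decay furnished by Lemma~\ref{CM:lem:muDB}, namely $\mu(D^{(k)})\leqslant2^{-(k-1)}\mu(\Omega)\leqslant2^{-(k-1)}$, with the increased density $C2^{kl+s}\rho^n/\pi^n$ dictated by $q_{\chi^{(k)}}$, obtaining
\[
N^{(k)}=C\,\mu(D^{(k)})\,2^{kl+s}\rho^n/\pi^n\leqslant C\,2^{k(l-1)+s+1}\rho^n/\pi^n=\mathcal{O}\big(2^{kl}N_{\Omega,f}\big),
\]
with $N_{\Omega,f}=\mathcal{O}(2^s\rho^n/\pi^n)$ fixed by the threshold inequality for $s$. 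Unlike the logarithmic case, the halving of $\mu(D^{(k)})$ only partially offsets the $2^{kl}$ growth of the density, so $N^{(k)}$ itself grows geometrically rather than remaining bounded, and the dominant contribution to each sum below is the terminal term at $k=K-1$.

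Finally I would sum the costs. With $N^{(k)}=\mathcal{O}(2^{kl}N_{\Omega,f})$ the total number of function evaluations is a geometric series dominated by its last term,
\[
\sum_{k=0}^{K-1}N^{(k)}=\mathcal{O}\big(N_{\Omega,f}\,2^{Kl}\big)=\mathcal{O}\big(N_{\Omega,f}\cdot2^{l\log_{\frac{1+\omega}{1+q}}\epsilon}\big),
\]
and since each interpolant can be assembled by GMRES at cost $\mathcal{O}((N^{(k)})^3)$ even under incremental updating, the total time is
\[
\sum_{k=0}^{K-1}(N^{(k)})^3=\mathcal{O}\big(N_{\Omega,f}^3\,8^{Kl}\big)=\mathcal{O}\big(N_{\Omega,f}^3\cdot8^{l\log_{\frac{1+\omega}{1+q}}\epsilon}\big),
\]
which is exactly the asserted polynomial bound in $1/\epsilon$. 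I expect the main obstacle to be precisely this conversion step: one must justify that the geometric sums are governed by their terminal terms and then translate $2^{Kl}$ and $8^{Kl}$ into the stated $\epsilon$-exponentials, which hinges on $K=\mathcal{O}(\log_{\frac{1+\omega}{1+q}}\epsilon)$ holding up to the additive rounding built into the definition of $K$. A secondary point worth checking is the applicability of Lemma~\ref{CM:lem:muDB}, which nominally requires Assumption~\ref{CM:ass:CR}: either that assumption must be read into the definition of polynomial time contractibility, or the bound $\mu(D^{(k)})\leqslant2^{-(k-1)}$ must be re-derived from the weak tempered condition alone.
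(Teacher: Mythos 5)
Your proposal follows the paper's proof essentially step for step: Lemma \ref{CM:lem:TFa} supplies the error-bound and strong-convergence conditions for the Gaussian kernel interpolant, Theorem \ref{CM:thm:SConv} gives the linear rate, $K$ is fixed by the same bracketing inequality, and the geometric sums $\sum_{k=0}^{K-1}2^{kl}N_{\Omega,f}$ and $\sum_{k=0}^{K-1}8^{kl}N_{\Omega,f}^3$ yield the stated evaluation and time bounds. The one place you deviate is the sample-size step: you invoke Lemma \ref{CM:lem:muDB} to get $\mu(D^{(k)})\leqslant2^{-(k-1)}$, and you correctly flag that this lemma requires Assumption \ref{CM:ass:CR}, which is \emph{not} part of the definition of polynomial time contractibility (that definition only asks for Assumptions \ref{CM:ass:HLFD} and \ref{CM:ass:Ta}). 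The paper sidesteps this entirely: it bounds $\mu(D^{(k)})\leqslant\mu(\Omega)\leqslant1$, which already gives $N^{(k)}=C\mu(D^{(k)})2^{kl+s}\rho^n/\pi^n\leqslant C\,2^{kl}2^{s}\rho^n/\pi^n\leqslant2^{kl}N_{\Omega,f}$ --- exactly the estimate the rest of your argument consumes. (This is precisely why the paper uses the measure-decay lemma in Theorem \ref{CM:thm:LTC}, where the definition does include Assumption \ref{CM:ass:CR} and a uniformly bounded $N^{(k)}$ is needed, but not here, where a factor $2^{kl}$ of growth is tolerated anyway.) So your flagged concern dissolves: neither reading Assumption \ref{CM:ass:CR} into the definition nor re-deriving the decay bound is necessary; simply drop the appeal to Lemma \ref{CM:lem:muDB} and use the trivial measure bound, after which your proof coincides with the paper's.
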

\begin{proof}
From Lemma \ref{CM:lem:TFa}, for any $k\in\mathbb{N}_0$, since $\chi^{(k)}$ is quasi-uniformly distributed w.r.t. a sampling density of $C2^{\frac{kl+s}{n}}\rho^n/\pi^n$, there exist suitable kernel parameters as in \eqref{CM:eq:kpara} such that Gaussian kernel interpolant $\mathcal{I}_{\chi^{(k)}}f$ satisfies the error bound condition with the strong convergence condition. So it follows from the strong convergence Theorem \ref{CM:thm:SConv} that
\begin{equation*}
 \max_{x\in D^{(k)}}[f(x)-f^*]\leqslant
 \left(\frac{1+\omega}{1+q}\right)^k\max_{x\in\Omega}[f(x)-f^*].
\end{equation*}
There is $N_{\Omega,f}=\mathcal{O}(2^s\rho^n/\pi^n)$ such that
\begin{equation*}
N^{(k)}=C\mu(D^{(k)})2^{kl+s}\rho^n/\pi^n\leqslant
C2^{kl}2^s\rho^n/\pi^n\leqslant2^{kl}N_{\Omega,f}.
\end{equation*}
Then, for a fixed accuracy $\epsilon>0$, there exists a $K>0$ such that
\begin{equation*}
 \left(\frac{1+\omega}{1+q}\right)^K(f^{**}-f^*)<\epsilon
 \leqslant\left(\frac{1+\omega}{1+q}\right)^{K-1}(f^{**}-f^*),
\end{equation*}
hence, after $K$ contractions, one gets the approximate solution set $D^{(K)}$ with an error bound
\begin{equation*}
 \max_{x\in D^{(K)}}[f(x)-f^*]\leqslant\epsilon,
\end{equation*}
and the total number of function evaluations is less than
\begin{equation*}
 \sum_{k=0}^{K-1}N^{(k)}\leqslant\sum_{k=0}^{K-1}2^{kl}N_{\Omega,f}
 =\mathcal{O}\left(N_{\Omega,f}\cdot2^{l\log_{\frac{1+\omega}{1+q}}\epsilon}\right),
\end{equation*}
further, since the Gaussian kernel interpolant $\mathcal{I}_{\chi^{(k)}}f$ can be computed by GMRES \citep{Saad1986A_GMRES} in $\mathcal{O}(N^{(k)})^2$ iterations, even if the model is updated every time a sample is added, the complexity of the $k$th contraction still does not exceed $\mathcal{O}(8^{kl}N_{\Omega,f}^3)$, so the total time complexity is less than
\begin{equation*}
 \sum_{k=0}^{K-1}8^{kl}N^3_{\Omega,f}=\mathcal{O}
 \left(N^3_{\Omega,f}\cdot8^{l\log_{\frac{1+\omega}{1+q}}\epsilon}\right),
\end{equation*}
taking a polynomial time for any desired accuracy $\epsilon$.
\end{proof}

Notice that Lemma \ref{CM:lem:TFa} also hold for any $u^{(k)}$ satisfying
\begin{equation}\label{CM:eq:PTCu}
\frac{q}{1+q}\left(u^{(k)}-f^*\right)<u^{(k+1)}-f^*
<\frac{1}{1+q}\left(u^{(k)}-f^*\right),
\end{equation}
where $q<1$ (and at the same time, $q<p$, as mentioned in Remark
\ref{CM:rem:qp}); that is,
\renewcommand{\thelema}{8a}
\begin{lema}\label{CM:lem:TFra}
Under Assumptions \ref{CM:ass:HLFD}, suppose $\{D^{(k)}\}$ is defined by Definition \ref{CM:defn:D} with \eqref{CM:eq:PTCu}, in which, $\chi^{(k)}$ is quasi-uniformly distributed over $D^{(k)}\subset\Omega$ with
\begin{equation*}
  q_{\chi^{(k)}}=\tau'\cdot C^{-\frac{1}{n}} 2^{-\frac{kl+s}{n}}\pi/\rho
  ~~\textrm{and}~~
  h_{D,\chi^{(k)}}=\tau''\cdot C^{-\frac{1}{n}}2^{-\frac{kl+s}{n}}\pi/\rho
  ~~\textrm{for}~~\tau'\leqslant1\leqslant\tau'',
\end{equation*}
where the constant $C$ is as in Lemma \ref{CM:lem:HLFDF&KI}. Then, for all $k\in\mathbb{N}_0$ and $\omega<q<p$, there are unique natural numbers $s$, $l>1$ and kernel parameter $\sigma>2^{\frac{kl+s+1}{n}}\rho$ such that Gaussian kernel interpolant $\mathcal{I}_{\chi^{(k)}} f$ satisfies the error bound condition
\begin{equation*}
 \|\mathcal{I}_{\chi^{(k)}} f-f\|_{L_\infty(D^{(k)})}
 <\omega\left(u^{(k)}-f^*\right)
\end{equation*}
with the strong convergence condition
\begin{equation*}
  u^{(k)}-f^*\leqslant\frac{1}{1+q}\max_{x\in D^{(k)}}[f(x)-f^*].
\end{equation*}
\end{lema}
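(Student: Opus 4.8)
The plan is to follow the proof of Lemma \ref{CM:lem:TFa} almost verbatim, after observing that that argument used only two structural facts about the percentile sequence $M_{\Omega,f}^{(k)}$: the two-sided recursion supplied by Assumption \ref{CM:ass:Ta}, and the identity $\max_{x\in D^{(k)}}f(x)=M_{\Omega,f}^{(k-1)}$. Both are available for a general threshold sequence here — the former is precisely the hypothesis \eqref{CM:eq:PTCu}, and the latter becomes $\max_{x\in D^{(k)}}f(x)=u^{(k-1)}$ from the way $D^{(k)}$ is cut out in Definition \ref{CM:defn:D}. I would split the proof into the strong convergence condition and the error bound condition, treating them in that order.

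For the strong convergence condition I would combine $\max_{x\in D^{(k)}}f(x)=u^{(k-1)}$ with the right-hand inequality of \eqref{CM:eq:PTCu} taken at index $k-1$, namely $u^{(k)}-f^*<\frac{1}{1+q}(u^{(k-1)}-f^*)$. Substituting the former into the latter gives $u^{(k)}-f^*\leqslant\frac{1}{1+q}\max_{x\in D^{(k)}}[f(x)-f^*]$ at once (indeed with strict inequality), which is exactly the claimed condition; note that only the lower bound $u^{(k-1)}\leqslant\max_{x\in D^{(k)}}f(x)$ is actually needed.

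For the error bound condition, the existence and uniqueness of $s\geqslant1$ come from sandwiching $\omega(u^{(0)}-f^*)$ between consecutive powers of $\frac{p}{1+p}$ times $\|\hat f\|_{L_1\cap L_2}$, exactly as in \eqref{CM:eq:TFCa1} with $M_{\Omega,f}^{(0)}$ replaced by $u^{(0)}$; the existence and uniqueness of $l$, together with $l>1$, follow from $q<p$, which makes $\frac{q}{1+q}<\frac{p}{1+p}$, so the sandwich $(\frac{p}{1+p})^l\leqslant\frac{q}{1+q}<(\frac{p}{1+p})^{l-1}$ forces $l\geqslant2$. Iterating the left-hand inequality of \eqref{CM:eq:PTCu} yields the geometric lower bound $(\frac{q}{1+q})^k(u^{(0)}-f^*)<u^{(k)}-f^*$, and raising $(\frac{p}{1+p})^l\leqslant\frac{q}{1+q}$ to the $k$th power gives $(\frac{p}{1+p})^{kl}\leqslant(\frac{q}{1+q})^k$; chaining these produces $(\frac{p}{1+p})^{kl}(u^{(0)}-f^*)<u^{(k)}-f^*$, the analogue of \eqref{CM:eq:TFCa2}. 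Since $\chi^{(k)}$ has sampling density $C2^{kl+s}\rho^n/\pi^n$, Lemma \ref{CM:lem:HLFDF&KI} applied with index $j=kl+s$ bounds $\|\mathcal{I}_{\chi^{(k)}}f-f\|_{L_\infty(D^{(k)})}$ by $9(\frac{p}{1+p})^{kl+s}\|\hat f\|_{L_1\cap L_2}$, the analogue of \eqref{CM:eq:TFCa3}; factoring out $9(\frac{p}{1+p})^s\|\hat f\|_{L_1\cap L_2}<\omega(u^{(0)}-f^*)$ and inserting the two preceding displays collapses this to $\omega(u^{(k)}-f^*)$, as required, while the kernel parameter $\sigma>2^{(kl+s+1)/n}\rho$ is inherited from Lemmas \ref{CM:lem:HLFDFinterpolating} and \ref{CM:lem:HLFDF&KI} via $\sigma\geqslant\sigma_0>2^{(j+1)/n}\rho$.

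The step deserving the most care — and where I would expect a referee to push back — is the justification of $\max_{x\in D^{(k)}}f(x)=u^{(k-1)}$ for a \emph{prescribed} threshold sequence rather than a percentile-defined one, since $D^{(k)}$ is carved out by the approximant $\mathcal{A}^{(k-1)}f$ and not by $f$ itself. I would handle this exactly as Lemma \ref{CM:lem:TFa} handles $\max_{x\in D^{(k)}}f(x)=M_{\Omega,f}^{(k-1)}$: in the idealized setting of this section the level $u^{(k-1)}$ is the threshold at which $D^{(k)}$ is formed, hence is attained on $D^{(k)}$, and any residual slack from the model error is already absorbed into $\omega$ through the error bound condition, so it leaves the strong convergence inequality untouched because that inequality only requires $u^{(k-1)}\leqslant\max_{x\in D^{(k)}}f(x)$.
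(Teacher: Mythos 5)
Your proposal is correct and follows exactly the route the paper intends: the paper gives Lemma \ref{CM:lem:TFra} no separate proof, presenting it as the observation that the proof of Lemma \ref{CM:lem:TFa} carries over verbatim once $M_{\Omega,f}^{(k)}$ is replaced by a threshold sequence $u^{(k)}$ satisfying \eqref{CM:eq:PTCu}, and your reconstruction (the sandwich defining $s$, the integer $l>1$ from $q<p$, the chained geometric bounds, and Lemma \ref{CM:lem:HLFDF&KI} at index $j=kl+s$) is precisely that argument. Your cautionary remark about $\max_{x\in D^{(k)}}f(x)=u^{(k-1)}$ is also faithful to the paper, which asserts the analogous identity in Lemmas \ref{CM:lem:TF} and \ref{CM:lem:TFa} without further justification.
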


Thus, an immediate corollary of Theorem \ref{CM:thm:PTC} is:
\begin{cor}\label{CM:cor:PTC}
Suppose there exist $\rho,p,q>0$ such that the problem \eqref{CM:eq:COP} satisfies Assumption \ref{CM:ass:HLFD} and $\{D^{(k)}\}$ is defined by Definition \ref{CM:defn:D} with $u^{(k)}$ satisfying \eqref{CM:eq:PTCu}, in which, $\chi^{(k)}$ is quasi-uniformly distributed over $D^{(k)}\subset\Omega$ with
\begin{equation*}
  q_{\chi^{(k)}}=\tau'\cdot C^{-\frac{1}{n}} 2^{-\frac{kl+s}{n}}\pi/\rho
  ~~\textrm{and}~~
  h_{D,\chi^{(k)}}=\tau''\cdot C^{-\frac{1}{n}}2^{-\frac{kl+s}{n}}\pi/\rho
  ~~\textrm{for}~~\tau'\leqslant1\leqslant\tau'',
\end{equation*}
and $\mathcal{A}^{(k)}f$ is given by Gaussian kernel interpolant $\mathcal{I}_{\chi^{(k)}}f$ with parameter satisfying \eqref{CM:eq:kpara}. Then, for any $\omega<q$, all conclusions of Theorem \ref{CM:thm:PTC} also hold, where $N_{\Omega,f}=\mathcal{O}\big(2^s\rho^n/\pi^n\big)$ and $s$ is the unique integer such that
\begin{equation*}
9\left(\frac{p}{1+p}\right)^s\|\hat{f}\|_{L_1\cap L_2}
<\omega\Big(u^{(0)}-f^*\Big)\leqslant9
\left(\frac{p}{1+p}\right)^{s-1}\|\hat{f}\|_{L_1\cap L_2}.
\end{equation*}
\end{cor}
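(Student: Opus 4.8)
The plan is to follow the proof of Theorem \ref{CM:thm:PTC} almost verbatim, replacing the role played there by Lemma \ref{CM:lem:TFa} (which extracts the error-bound and strong-convergence conditions from the weak tempered Assumption \ref{CM:ass:Ta}) with Lemma \ref{CM:lem:TFra}, whose hypotheses are precisely Assumption \ref{CM:ass:HLFD} together with the direct condition \eqref{CM:eq:PTCu} on the chosen thresholds $\{u^{(k)}\}$. Concretely, first I would invoke Lemma \ref{CM:lem:TFra}: for the quasi-uniform sets $\chi^{(k)}$ of sampling density $C2^{kl+s}\rho^n/\pi^n$ and a kernel parameter $\sigma>2^{(kl+s+1)/n}\rho$ satisfying \eqref{CM:eq:kpara}, it furnishes natural numbers $s,\,l>1$ for which the Gaussian interpolant $\mathcal{I}_{\chi^{(k)}}f$ simultaneously obeys the error-bound condition $\|\mathcal{I}_{\chi^{(k)}}f-f\|_{L_\infty(D^{(k)})}<\omega\big(u^{(k)}-f^*\big)$ and the strong-convergence condition $u^{(k)}-f^*\leqslant\frac{1}{1+q}\max_{x\in D^{(k)}}[f(x)-f^*]$. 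These are exactly the two ingredients consumed by Theorem \ref{CM:thm:SConv}.

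With both conditions in hand, Theorem \ref{CM:thm:SConv} applies directly and, since $D^{(0)}=\Omega$, yields the linear rate
\[
 \max_{x\in D^{(k)}}[f(x)-f^*]\leqslant\Big(\tfrac{1+\omega}{1+q}\Big)^k\max_{x\in\Omega}[f(x)-f^*].
\]
Choosing $K$ minimal with $\big(\frac{1+\omega}{1+q}\big)^K\big(\max_{x\in\Omega}f(x)-f^*\big)<\epsilon$ then gives $\max_{x\in D^{(K)}}[f(x)-f^*]<\epsilon$ after $K=\mathcal{O}\big(\log_{(1+\omega)/(1+q)}\epsilon\big)$ contractions, reproducing the convergence statements of Theorem \ref{CM:thm:PTC} word for word.

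For the cost bounds, the key observation is that, unlike the logarithmic case, no volume-halving hypothesis is needed here: since $D^{(k)}\subseteq\Omega$ and $\mu(\Omega)\leqslant1$, the prescribed sampling density forces $N^{(k)}=C\mu(D^{(k)})2^{kl+s}\rho^n/\pi^n\leqslant C2^{kl+s}\rho^n/\pi^n\leqslant 2^{kl}N_{\Omega,f}$, where $N_{\Omega,f}=\mathcal{O}\big(2^s\rho^n/\pi^n\big)$ and $s$ is the unique integer pinned down by the displayed $u^{(0)}$-inequality. Summing the geometric series $\sum_{k=0}^{K-1}2^{kl}N_{\Omega,f}$ gives the total number of function evaluations $\mathcal{O}\big(N_{\Omega,f}\cdot 2^{\,l\log_{(1+\omega)/(1+q)}\epsilon}\big)$; bounding each Gaussian interpolation by $\mathcal{O}\big((N^{(k)})^2\big)$ GMRES iterations, hence $\mathcal{O}\big(8^{kl}N_{\Omega,f}^3\big)$ work per step, and summing yields the total time complexity $\mathcal{O}\big(N_{\Omega,f}^3\cdot 8^{\,l\log_{(1+\omega)/(1+q)}\epsilon}\big)$, exactly as in Theorem \ref{CM:thm:PTC}.

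The only genuine point to verify—rather than an obstacle—is that Lemma \ref{CM:lem:TFra} truly decouples the two assumptions of Theorem \ref{CM:thm:PTC}. Its proof should mirror that of Lemma \ref{CM:lem:TFa}, using the hypothesis \eqref{CM:eq:PTCu} (with $q<1$, hence $q<p$ as noted in Remark \ref{CM:rem:qp}) in place of the percentile estimate \eqref{CM:eq:TFCa2} to produce the integer $l>1$ with $\big(\frac{p}{1+p}\big)^l\leqslant\frac{q}{1+q}<\big(\frac{p}{1+p}\big)^{l-1}$, and defining $s$ from the user-supplied $u^{(0)}$ rather than from $M_{\Omega,f}^{(0)}$. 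Once that correspondence is established, every remaining step is an unchanged transcription of the Theorem \ref{CM:thm:PTC} argument, so the corollary follows immediately.
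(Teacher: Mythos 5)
Your proposal is correct and matches the paper's intent exactly: the paper states this corollary without a written proof, calling it ``immediate,'' precisely because the argument is the proof of Theorem \ref{CM:thm:PTC} with Lemma \ref{CM:lem:TFra} substituted for Lemma \ref{CM:lem:TFa}, which is what you carry out. Your key observations---that the bound $N^{(k)}\leqslant 2^{kl}N_{\Omega,f}$ needs only $\mu(D^{(k)})\leqslant\mu(\Omega)\leqslant1$ (no volume-halving hypothesis, unlike Corollary \ref{CM:cor:LTC}), and that $s$ is now pinned down by $u^{(0)}$ rather than $\max_{x\in\Omega}f(x)$---are exactly the points where the corollary differs from the theorem, and you handle both correctly.
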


A typical polynomial time contractible example is illustrated in Figure \ref{CM:fig:PT1}.

\begin{figure}[tbhp]
  \centering
  \subfigure{\includegraphics[width=0.32\textwidth]{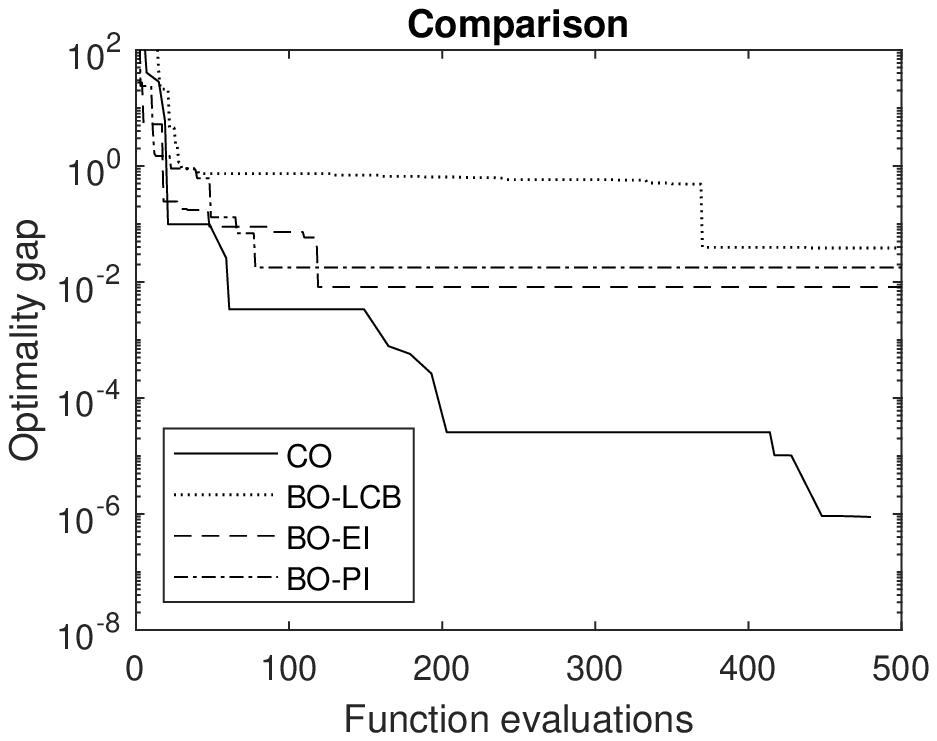}}
  \subfigure{\includegraphics[width=0.32\textwidth]{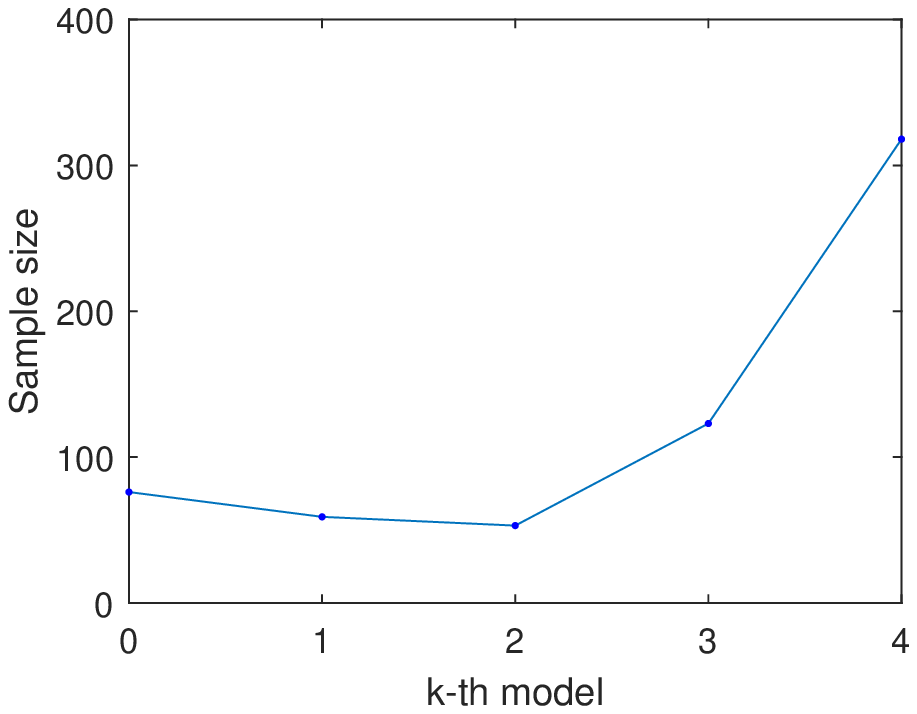}}
  \subfigure{\includegraphics[width=0.32\textwidth]{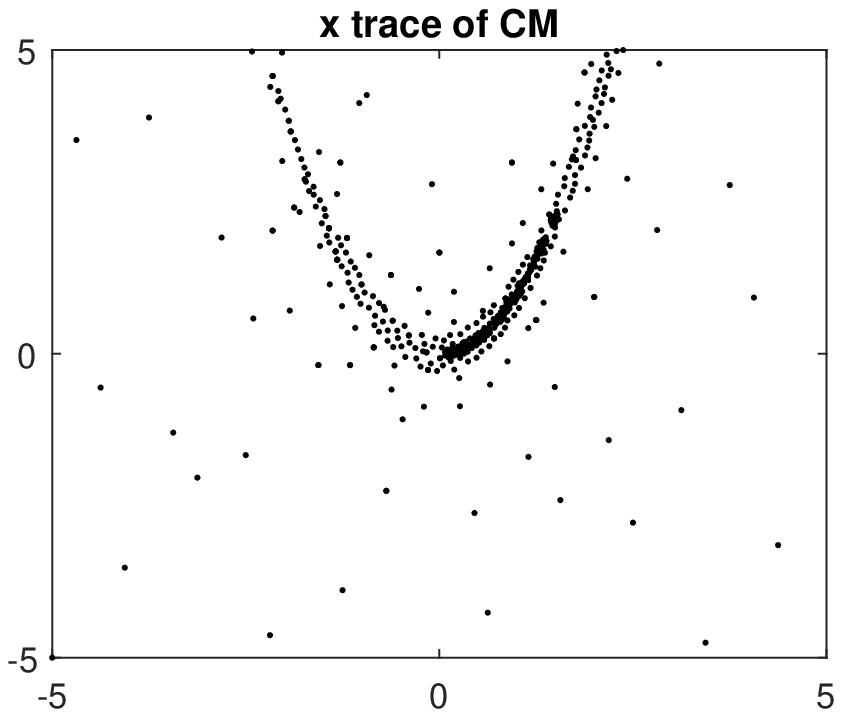}}
  \caption{A polynomial time contractible example: the objective $f(x_1,x_2)=100(x_2-x_1^2)^2+(x_1-1)^2, x_1\in[-5,5], x_2\in[-5,5]$ \citep{RosenbrockH1960M_RosenbrockFunction}. Its global minimum is $0$ located at $(1,1)$, which lies in a long, narrow and parabolic shaped flat valley. For the CM, the number of samples used by each model is increasing, which leads to the cost of polynomial time; however, the contraction strategy can still help reduce the total computational cost. Left: convergence comparison between Algorithm \ref{CM:alg:CM} and three different types of Bayesian optimization (BO). Algorithm \ref{CM:alg:CM} is run with $K=5,m=1,\textrm{minIterInner}=1, \omega=1$, $c_k=\bar{c}=50$ and $t_k=\bar{t}=2$ for every $0\leqslant k\leqslant K-1$. BO is implemented by the MATLAB function \texttt{bayesopt} with default settings. Middle: the sample size used in each model. Right: x trace of CM.}
  \label{CM:fig:PT1}
\end{figure}

\subsection{Noncontractible}

For some functions, the error bound condition cannot be satisfied until the global minima is reached, at this time the CM degenerates into a conventional model-based approach. Although the contractions cannot be applied, the corresponding optimization problem may still be effectively solved if the function is sufficient smooth.

\begin{figure}[tbhp]
  \centering
  \includegraphics[width=0.32\textwidth]{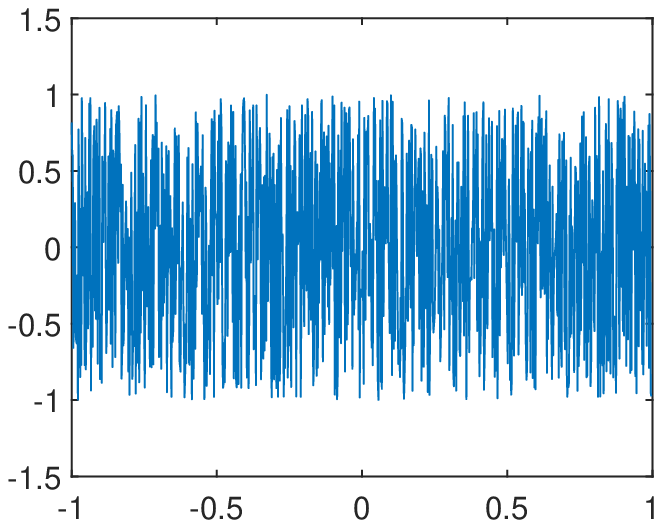}
  \includegraphics[width=0.32\textwidth]{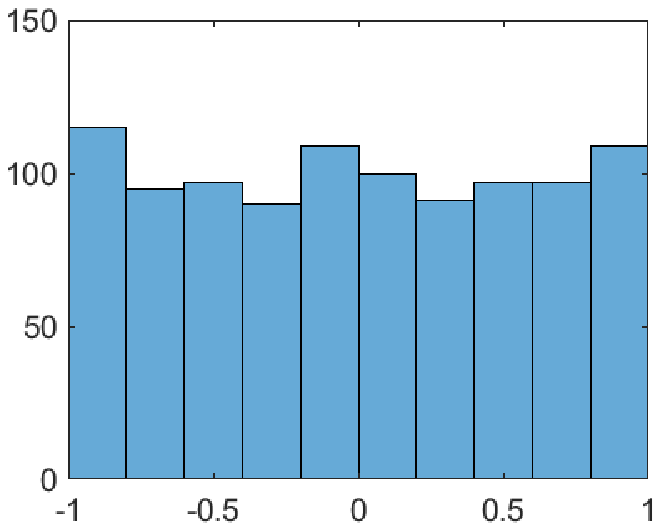}
  \includegraphics[width=0.32\textwidth]{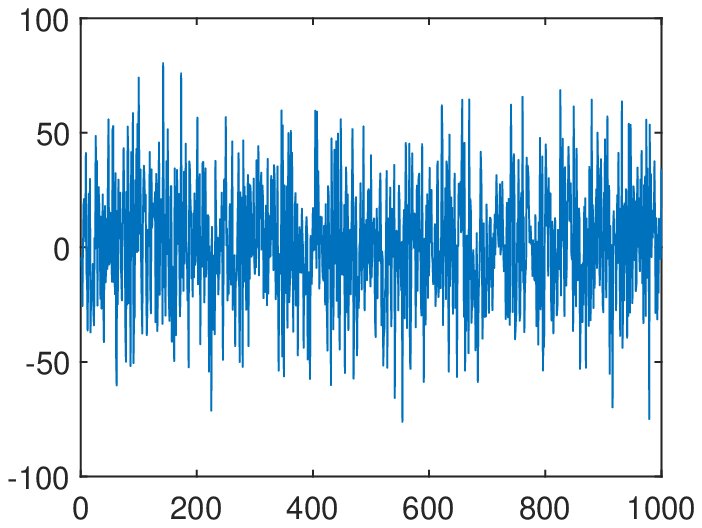}
  \includegraphics[width=0.32\textwidth]{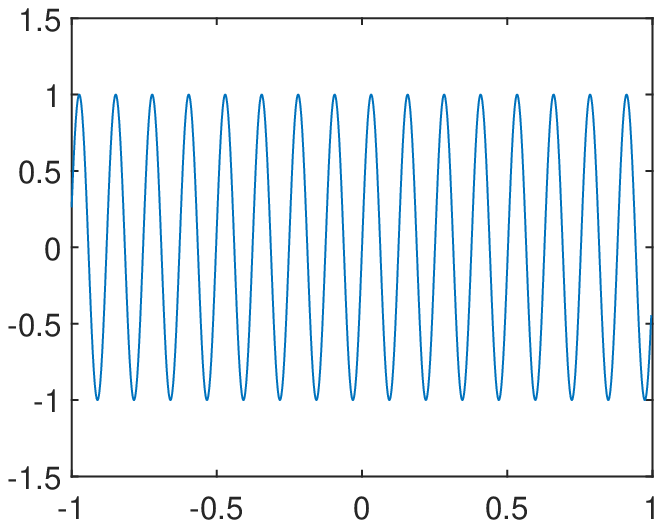}
  \includegraphics[width=0.32\textwidth]{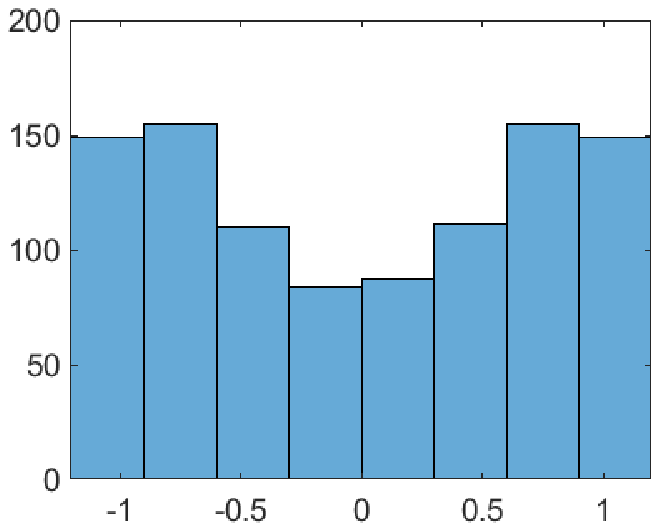}
  \includegraphics[width=0.32\textwidth]{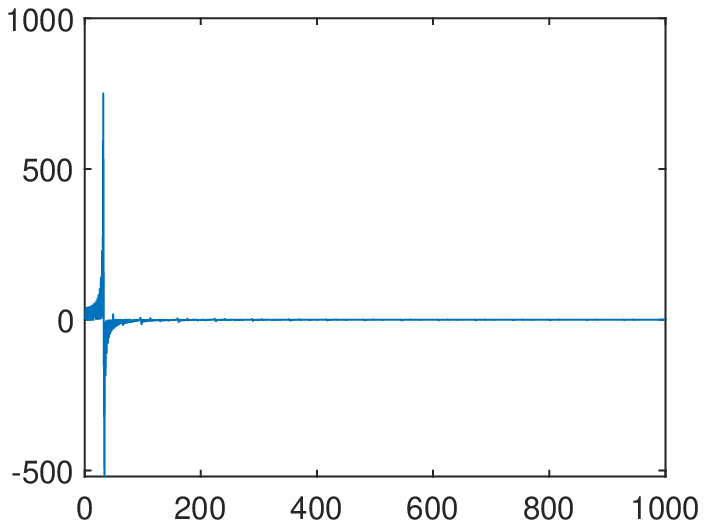}
  \includegraphics[width=0.32\textwidth]{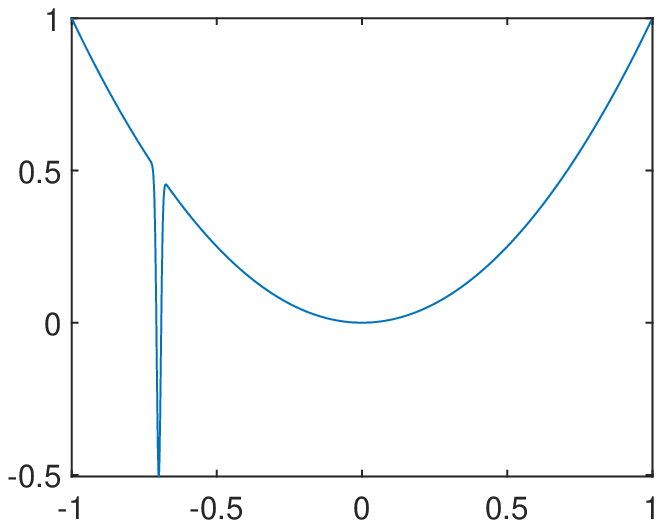}
  \includegraphics[width=0.32\textwidth]{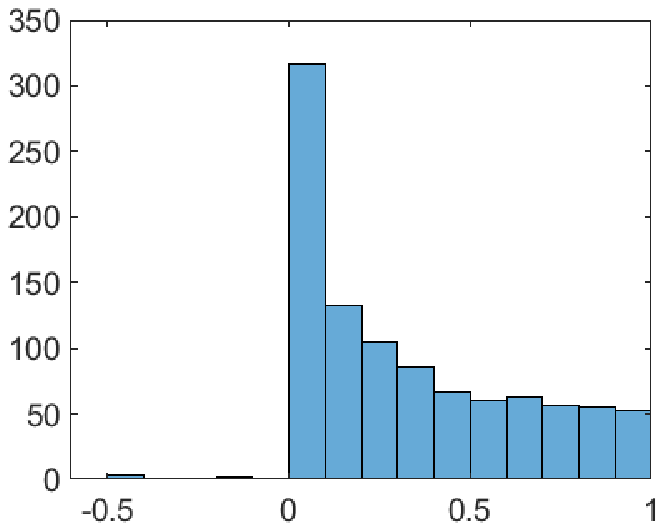}
  \includegraphics[width=0.32\textwidth]{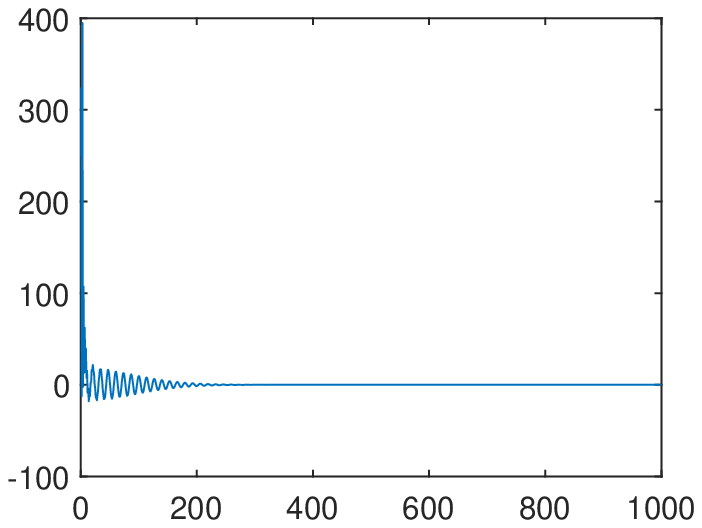}
  \caption{Three examples for noncontractible functions. Left column: a Gaussian noise function $f_1$ that is obviously not a HLFDF and the error bound condition will never be met; a bandlimited function, i.e., $f_2=\sin(50x)$, that does not have the essential multilevel characteristic of HLFDFs, so that the error bound condition cannot be met until the global minima is reached; and a quadratic function with a deep trap, i.e., $f_3(x)=x^2-e^{-10000*(x+0.7)^2}$, that is a HLFDF but does not satisfy the conclusion of Lemma \ref{CM:lem:TF}, at this time, the contractions will still be carried out with a large probability and eventually lead to convergence failure, due to the failure of model error bound estimations. Middle column: histograms of function value distribution for $f_1,f_2$ and $f_3$, respectively. Right column: discrete cosine transforms of $f_1,f_2$ and $f_3$, respectively.}
  \label{CM:fig:non}
\end{figure}

The term ``non-contractible'' is used to describe functions for which the corresponding contraction cannot be consistently guaranteed. This does not always mean that, although it does exist, the algorithms does not perform any contraction throughout the process. See Figure \ref{CM:fig:non} for different non-contractible examples. Some of these functions are not reasonable in the sense of optimization, such as white noise; but it cannot be ruled out that they contain some functions with practical significance.

\begin{defn}
 The problem \eqref{CM:eq:COP} is said to be non-contractible if there are no parameters $\rho,p,q>0$ such that $f$ satisfies Assumptions \ref{CM:ass:HLFD} or \ref{CM:ass:Ta}.
\end{defn}

We will summarize some conclusions based on smoothness for non-contractible problems. Suppose that $\chi$ are quasi-uniformly distributed over $\Omega$ with the sample size $N$, the relevant data values $f_\chi=\{f(\chi_i)\}_{i=1}^N$ and $\mathcal{I}_\chi f$ interpolates $f$ on $\chi$. For a sample set $\chi$ over $\Omega$ and any $\epsilon>0$, there exists $C_\epsilon>0$ such that the fill distance satisfies
\begin{equation}\label{CM:eq:hNG}
P\left(h_{\Omega,\chi}>C_\epsilon(\log N/N)^{1/n}\right)
=\mathcal{O}(N^{-\epsilon}),
\end{equation}
or, $h_{\Omega,\chi}=\mathcal{O}(N^{(\gamma-1)/n})$, where $0<\gamma\ll1$; see Lemma 12 of \citet{BullA2011T_ConvergenceRatesOfBO}.

According to \eqref{CM:eq:hNG} and Lemma 3.9 in \citet{WendlandH2005A_SobolevBoundsOnFunctionsWithScatteredZeros}, there is a bandlimited interpolant $\mathcal{I}_\chi f$ such that
\begin{equation}\label{CM:eq:sSmooth}
\|f-\mathcal{I}_\chi f\|_{L_\infty(\Omega)}\leqslant Ch_{\Omega,\chi}^{s-n/2}
\|f\|_{C^s(\Omega)}=\mathcal{O}\left(N^{-(s-n/2)(1-\gamma)/n}\right),
\end{equation}
which implies the following conclusion:
\begin{thm}\label{CM:thm:sSmooth}
If $f\in C^s(\Omega)$ with $s>n/2$ on a bounded domain $\Omega\subset\mathbb{R}^n$, then there exists a bandlimited interpolant $\mathcal{I}_\chi f$ such that for any $\epsilon>0$, it holds that
\begin{equation*}
 \Big|\min_{x\in\Omega}\mathcal{I}_\chi f(x)-f^*\Big|<\epsilon
\end{equation*}
and the time complexity of $\mathcal{I}_\chi f$ is
\begin{equation*}
 \mathcal{O}(N^2)=\mathcal{O}\left(2^{\frac{2n}{(s-n/2)(1-\gamma)}
 \log_{\frac{1}{2}}\epsilon}\right),
\end{equation*}
where $\chi$, $\gamma$ and $N$ are consistent with \eqref{CM:eq:hNG}, and $\|f\|_{C^s(\Omega)}=\sum_{|\alpha| \leqslant s}\sup_{x\in\Omega}|D^\alpha f|$.
\end{thm}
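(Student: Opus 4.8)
The plan is to reduce the claim about minima to a sup-norm interpolation error, which the cited estimates already control. First I would record the elementary fact that the minima of two continuous functions on $\Omega$ differ by at most their uniform distance. Writing $\delta:=\|\mathcal{I}_\chi f-f\|_{L_\infty(\Omega)}$, for every $x\in\Omega$ one has $\mathcal{I}_\chi f(x)\geqslant f(x)-\delta\geqslant f^*-\delta$, so $\min_{x\in\Omega}\mathcal{I}_\chi f(x)\geqslant f^*-\delta$; conversely, evaluating at a global minimizer $x^*\in X^*$ gives $\min_{x\in\Omega}\mathcal{I}_\chi f(x)\leqslant\mathcal{I}_\chi f(x^*)\leqslant f(x^*)+\delta=f^*+\delta$. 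Hence $\big|\min_{x\in\Omega}\mathcal{I}_\chi f(x)-f^*\big|\leqslant\delta$, so it remains only to drive $\delta$ below $\epsilon$ and then count the cost. Here $f^*$ is attained because $f$ is continuous on the bounded closed domain $\Omega$.

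Next I would invoke the interpolation error estimate \eqref{CM:eq:sSmooth}, which combines Lemma 3.9 of \citet{WendlandH2005A_SobolevBoundsOnFunctionsWithScatteredZeros} with the fill-distance control \eqref{CM:eq:hNG} to furnish a bandlimited interpolant $\mathcal{I}_\chi f$ with $\delta\leqslant C h_{\Omega,\chi}^{s-n/2}\|f\|_{C^s(\Omega)}=\mathcal{O}\big(N^{-(s-n/2)(1-\gamma)/n}\big)$. The hypothesis $s>n/2$ is exactly what makes the exponent $(s-n/2)(1-\gamma)/n$ positive, so $\delta\to 0$ as $N\to\infty$; choosing $N$ large enough that this bound is at most $\epsilon$ yields $\big|\min_{x\in\Omega}\mathcal{I}_\chi f(x)-f^*\big|<\epsilon$, which is the first assertion.

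Finally I would invert the rate to express the complexity. Setting $\mathcal{O}\big(N^{-(s-n/2)(1-\gamma)/n}\big)\asymp\epsilon$ and solving gives $N\asymp\epsilon^{-n/((s-n/2)(1-\gamma))}$. Since the interpolant is obtained by solving the associated $N\times N$ kernel system, its construction costs $\mathcal{O}(N^2)$ (e.g. via GMRES, as already used in Theorems \ref{CM:thm:LTC} and \ref{CM:thm:PTC}). Substituting and rewriting $\epsilon^{-2n/((s-n/2)(1-\gamma))}=2^{\frac{2n}{(s-n/2)(1-\gamma)}\log_{\frac{1}{2}}\epsilon}$, using $\log_{\frac12}\epsilon=-\log_2\epsilon$, produces the stated bound $\mathcal{O}(N^2)=\mathcal{O}\big(2^{\frac{2n}{(s-n/2)(1-\gamma)}\log_{\frac{1}{2}}\epsilon}\big)$.

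The main obstacle is less the algebra than the probabilistic and asymptotic content of \eqref{CM:eq:hNG}: the fill-distance bound, hence $\delta$, holds only with high probability and up to the unspecified slack $0<\gamma\ll1$, so the conclusion is properly read as a high-probability guarantee with the $\mathcal{O}(\cdot)$ absorbing $\gamma$. I would also check carefully that \eqref{CM:eq:sSmooth} genuinely supplies a bandlimited interpolant matching $f$ on $\chi$ under the quasi-uniform sampling inequality, which is precisely where the smoothness $f\in C^s(\Omega)$ enters. Note that no contraction structure (neither Assumption \ref{CM:ass:HLFD} nor \ref{CM:ass:Ta}) is invoked, consistent with the noncontractible setting: the theorem certifies convergence of a single global model rather than of a contraction sequence.
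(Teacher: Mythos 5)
Your proposal is correct and takes essentially the same route as the paper's own proof: both reduce the deviation of the minima to the uniform interpolation error (the paper bounds $\mathcal{I}_\chi f(s^*)$ from below by $f(x^*)-\epsilon$ and from above by $f(x^*)+\epsilon$, exactly as you do with $\delta$), and then invoke \eqref{CM:eq:sSmooth} to make that error smaller than $\epsilon$. Your explicit inversion of the rate $N^{-(s-n/2)(1-\gamma)/n}\asymp\epsilon$ to justify the $\mathcal{O}(N^2)$ complexity expression, and your remark that \eqref{CM:eq:hNG} makes the guarantee a high-probability one, are steps the paper leaves implicit but are consistent with its intended reading.
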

\begin{rem}
It is worth noting that, compared with the above conclusion, for the complexity given in Theorem \ref{CM:thm:PTC} and Corollary \ref{CM:cor:PTC}, i.e., $\mathcal{O}\big(N_{\Omega,f}^3\cdot 8^{l\log_{\frac{1+\omega}{1+q}}\epsilon}\big)$, the dimension-related part $N_{\Omega,f}=\mathcal{O} \big(2^s\rho^n/\pi^n\big)$ is actually independent of the polynomial growth term $8^{l\log_{\frac{1+\omega}{1+q}}\epsilon}$, where $l$ is as in Lemma 6a and independent of the dimension $n$.
\end{rem}
\begin{proof}
Assume that $s^*=\arg\min_{x\in\Omega}\mathcal{I}_\chi f(x)$ and $x^*=\arg\min_{x\in\Omega}f(x)$. For any
 $x\in\Omega$, if $|\mathcal{I}_\chi f(x)-f(x)|<\epsilon$, then
\begin{equation*}
  \mathcal{I}_\chi f(s^*)>f(s^*)-\epsilon>f(x^*)-\epsilon,
\end{equation*}
and
\begin{equation*}
  \mathcal{I}_\chi f(s^*)<\mathcal{I}_\chi f(x^*)<f(x^*)+\epsilon,
\end{equation*}
that is, $|\min_{x\in\Omega}\mathcal{I}_\chi f(x)-f^*|<\epsilon$, together with \eqref{CM:eq:sSmooth}, the desired result is obtained.
\end{proof}

This is similar to but slightly weaker than that obtained for Bayesian Optimization in \citet{BullA2011T_ConvergenceRatesOfBO}. For such a function, the problem \eqref{CM:eq:COP} can be solved in polynomial time; in fact, if $f$ is H\"{o}lder continuous, the problem \eqref{CM:eq:COP} can also be completed in polynomial time.

$f$ is called $\alpha$-H\"{o}lder continuous function if there exist $C>0$ and $0<\alpha\leqslant1$ such that
\begin{equation}\label{CM:eq:HC}
|f(x')-f(x'')|<C\|x'-x''\|_2^\alpha,~~\forall x',x''\in\Omega.
\end{equation}
The Lipschitz continuous function is obviously a special case of H\"{o}lder continuous function. For a $\alpha$-H\"{o}lder continuous function, then there exists a nearest-neighbor interpolant $\mathcal{I}_\chi f$, which is closely related to the Voronoi diagram of $\chi$ \citep{AurenhammerF1991R_VoronoiDiagrams}, such that for any $x\in\Omega$, it holds that
\begin{equation*}
|f(x)-\mathcal{I}_\chi f(x)|\leqslant Ch_{\Omega,\chi}^\alpha
=\mathcal{O}\left(N^{-\alpha(1-\gamma)/n}\right).
\end{equation*}
Similar to Theorem \ref{CM:thm:sSmooth}, we have the following theorem:
\begin{thm}\label{CM:thm:HC}
If $f$ satisfies a $\alpha$-H\"{o}lder condition on a bounded domain $\Omega\subset\mathbb{R}^n$, then there exists a nearest-neighbor interpolant $\mathcal{I}_\chi f$ such that for any $\epsilon>0$, it holds that
\begin{equation*}
 \Big|\min_{x\in\Omega}\mathcal{I}_\chi f(x)-f^*\Big|<\epsilon
\end{equation*}
and the time complexity of $\mathcal{I}_\chi f$ is
\begin{equation*}
 \mathcal{O}(N^a)=\mathcal{O}
 \left(2^{\frac{na}{\alpha(1-\gamma)}\log_{\frac{1}{2}}\epsilon}\right),
\end{equation*}
where $\chi,\gamma$ and $\alpha$ are consistent with \eqref{CM:eq:HC}, the time cost of $\mathcal{I}_\chi f$ is assumed to be $\mathcal{O}(N^a)$.
\end{thm}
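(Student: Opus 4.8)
The plan is to follow the proof of Theorem~\ref{CM:thm:sSmooth} essentially line for line, the only change being that the smooth sampling-inequality rate \eqref{CM:eq:sSmooth} is replaced by the nearest-neighbor rate recorded immediately above the statement, namely $\|f-\mathcal{I}_\chi f\|_{L_\infty(\Omega)}\leqslant Ch_{\Omega,\chi}^\alpha=\mathcal{O}(N^{-\alpha(1-\gamma)/n})$. First I would isolate the purely analytic fact that a uniform approximation bound forces a comparably small optimality gap. Writing $s^*=\arg\min_{x\in\Omega}\mathcal{I}_\chi f(x)$ and $x^*=\arg\min_{x\in\Omega}f(x)$, and assuming $\|f-\mathcal{I}_\chi f\|_{L_\infty(\Omega)}<\epsilon$, the two-sided estimate
\begin{equation*}
 f^*-\epsilon=f(x^*)-\epsilon\leqslant f(s^*)-\epsilon<\mathcal{I}_\chi f(s^*)\leqslant\mathcal{I}_\chi f(x^*)<f(x^*)+\epsilon=f^*+\epsilon
\end{equation*}
gives $\big|\min_{x\in\Omega}\mathcal{I}_\chi f(x)-f^*\big|<\epsilon$. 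This step is identical to the one used for Theorem~\ref{CM:thm:sSmooth}, since it exploits only the sup-norm bound and the defining optimality of $s^*$ and $x^*$, not any smoothness of $f$ or structure of the interpolant.

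Next I would convert the accuracy target into a sample-size requirement and then into a time bound. Imposing $Ch_{\Omega,\chi}^\alpha<\epsilon$ and inserting the fill-distance estimate $h_{\Omega,\chi}=\mathcal{O}(N^{(\gamma-1)/n})$ from \eqref{CM:eq:hNG} yields $N^{-\alpha(1-\gamma)/n}=\mathcal{O}(\epsilon)$, hence $N=\mathcal{O}\big(\epsilon^{-n/(\alpha(1-\gamma))}\big)$. Raising this to the assumed construction-cost exponent $a$ of the nearest-neighbor interpolant gives $\mathcal{O}(N^a)=\mathcal{O}\big(\epsilon^{-na/(\alpha(1-\gamma))}\big)$, and finally rewriting $\epsilon^{-1}=2^{-\log_2\epsilon}=2^{\log_{\frac{1}{2}}\epsilon}$ recasts the exponent into the stated form $\mathcal{O}\big(2^{\frac{na}{\alpha(1-\gamma)}\log_{\frac{1}{2}}\epsilon}\big)$, completing the argument.

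I do not expect a genuine obstacle: the entire theorem reduces to the rate $Ch_{\Omega,\chi}^\alpha$ for nearest-neighbor (Voronoi) interpolation of a H\"{o}lder function together with the elementary bookkeeping above. The only point that deserves explicit care is that \eqref{CM:eq:hNG} is a high-probability tail bound rather than a deterministic guarantee, so the fill-distance rate, and hence the whole complexity estimate, should be understood in the same high-probability sense already adopted throughout the noncontractible regime; I would flag this so that the bound is rigorously tied to the random sampling model of \eqref{CM:eq:hNG}.
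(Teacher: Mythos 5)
Your proposal is correct and is essentially the paper's own argument: the paper gives no separate proof for this theorem, stating only that it is ``similar to Theorem \ref{CM:thm:sSmooth}'', and your proof is precisely that intended adaptation --- the two-sided sup-norm estimate at $s^*$ and $x^*$ reused verbatim, with the smooth rate \eqref{CM:eq:sSmooth} replaced by the nearest-neighbor H\"{o}lder bound $Ch_{\Omega,\chi}^\alpha=\mathcal{O}(N^{-\alpha(1-\gamma)/n})$ and the $\epsilon$-to-$N$-to-$\mathcal{O}(N^a)$ bookkeeping carried out correctly. Your closing remark that the fill-distance estimate \eqref{CM:eq:hNG} is a high-probability bound, so the complexity claim should be read in that sense, is a fair point of rigor that the paper itself glosses over.
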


Actually, the accuracy bound of GS on an $n$-cube is reduced by a factor of $1/2$ when the number of function evaluations is increased by $2^n$ times, thus, the complexity bound of GS is
\begin{equation*}
  \mathcal{O}\left(2^{\frac{n}{\alpha}\log_{\frac{1}{2}}\epsilon}\right),
\end{equation*}
which also implies the piecewise constant interpolation. However, GS has no advantage in efficiency in most cases, because it does not use known information at all.

Based on the above conclusions, when the objective function is sufficiently smooth, its total computational complexity is much better than the GS and random search methods. In addition, as we emphasized, even if the objective is nonsmooth, the CM also enjoys good efficiency for any contractible case.

Further, if $f$ does not satisfy any H\"{o}lder condition, then the algorithm may not be done in polynomial time, for example, if $|f(x')-f(x'')|<C\log(1+\|x'-x''\|)$, then the time complexity of a nearest-neighbor interpolant $\mathcal{I}_\chi f$ will be
\begin{equation*}
\mathcal{O}\left((e^\epsilon-1)^{-\frac{na}{1-\gamma}}\right),
\end{equation*}
when the absolute error bound $\sup_{x\in\Omega}|\mathcal{I}_\chi f(x)-f(x)|$ is less than any given $\epsilon>0$.

\section{Quasi-uniform samples and spatial discretization}
\label{CM:s5}

The CM needs to expand the original quasi-uniform sample set into a larger one in a certain bounded domain. In this section, we will discuss the theoretical basis for a method of expansion, and it is also related to the spatial discretization for approximating the H\"{o}lder functions as well as the functions in a certain RKHS.

\subsection{Generation of quasi-uniform sequence}

The basic idea is selecting the point farthest from the known point set in a given domain.
\begin{defn}\label{CM:defn:QU}
Suppose $D\subset\mathbb{R}^n$ is a bounded domain and $\chi=\{\chi_1\}$ is a one point set in $D$. A point sequence $\chi=\{\chi_i\}_{i=1}^\infty$ is defined recursively by $\chi=\chi\cup\chi_{new}$ and
\begin{equation}\label{CM:eq:QU}
\chi_{new}=\arg\max_{x\in D}\left(\min_{\chi_i\in\chi}\|x-\chi_i\|_2\right).
\end{equation}
\end{defn}

\begin{thm}\label{CM:thm:QU}
If $\chi$ is constructed as Definition \ref{CM:defn:QU}, then there is $N\in\mathbb{N}$ such that when the size of $\chi$ is larger than $N$, $\chi$ is quasi-uniform over $D$ with uniformity constant $4$, i.e.,
\begin{equation*}
  \frac{1}{4}q_\chi\leqslant h_{D,\chi}\leqslant4q_\chi,
\end{equation*}
where $h_{D,\chi}$ is defined as \eqref{CM:eq:h} and $q_\chi$ is defined as \eqref{CM:eq:q}.
\end{thm}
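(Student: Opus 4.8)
The plan is to track the fill and separation distances as explicit functions of the prefix length and to reduce the whole statement to a single one-step decay estimate for the fill distance. Write $P_m=\{\chi_1,\dots,\chi_m\}$ for the first $m$ points and $h_m:=h_{D,P_m}=\sup_{x\in D}\min_{i\le m}\|x-\chi_i\|_2$ for the fill distance at that stage. Since $D$ is closed and bounded, hence compact, the supremum in each greedy step is attained, and the defining property of $\chi_m$ gives the key identity $\min_{i\le m-1}\|\chi_m-\chi_i\|_2=\max_{x\in D}\min_{i\le m-1}\|x-\chi_i\|_2=h_{m-1}$. The sequence $\{h_m\}$ is nonincreasing, and it tends to $0$: if instead $h_m\ge h_*>0$ for all $m$, infinitely many points of the bounded set $D$ would be pairwise at distance $\ge h_*$, contradicting compactness.

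First I would pin down the separation distance exactly. For any $i<j\le m$, at the moment $\chi_j$ was inserted it satisfied $\|\chi_i-\chi_j\|_2\ge\min_{l<j}\|\chi_j-\chi_l\|_2=h_{j-1}\ge h_{m-1}$, so every pairwise distance in $P_m$ is at least $h_{m-1}$; on the other hand the pair formed by $\chi_m$ and its nearest predecessor realises distance exactly $h_{m-1}$. Hence $\min_{\chi_i\ne\chi_j}\|\chi_i-\chi_j\|_2=h_{m-1}$ and therefore $q_{P_m}=\tfrac12 h_{m-1}$. This already gives one half of quasi-uniformity for free: since $h_m\le h_{m-1}=2q_{P_m}$, we have $h_{D,P_m}\le 4\,q_{P_m}$ for every $m$.

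The remaining inequality $q_{P_m}\le 4h_m$ is equivalent to the one-step decay bound $h_{m-1}\le 8h_m$, and this is where the real work sits. I would obtain it from a packing-versus-covering volume comparison (writing $B(x,r)$ for the closed Euclidean ball of radius $r$ about $x$, and $V_n$ for the volume of the unit ball). On the covering side, the balls $B(\chi_i,h_m)$, $i\le m$, cover $D$, so $\mu(D)\le m\,V_n h_m^{\,n}$. On the packing side, the separation $q_{P_m}=\tfrac12 h_{m-1}$ forces the balls $B(\chi_i,\tfrac12 h_{m-1})$ to have disjoint interiors while all lying inside the thickening $D\oplus B(0,\tfrac12 h_{m-1})$, giving $m\,V_n(\tfrac12 h_{m-1})^{\,n}\le\mu\big(D\oplus B(0,\tfrac12 h_{m-1})\big)$. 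Eliminating $m$ between the two estimates yields
\begin{equation*}
 \left(\frac{h_{m-1}}{2h_m}\right)^{n}\le
 \frac{\mu\big(D\oplus B(0,\tfrac12 h_{m-1})\big)}{\mu(D)}=:\gamma_m .
\end{equation*}
Since $D$ is closed and bounded with $\mu(D)>0$, the thickenings decrease to $\bigcap_{\varepsilon>0}\big(D\oplus B(0,\varepsilon)\big)=D$ as $\varepsilon\downarrow0$, so continuity of measure together with $h_{m-1}\to0$ gives $\gamma_m\to1$. Hence there is an $N$ with $\gamma_m\le 2^{\,n}$ for all $m>N$, whence $h_{m-1}\le 4h_m$ and in particular $q_{P_m}=\tfrac12 h_{m-1}\le 2h_m\le 4h_m$. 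Combined with the free bound, this gives $\tfrac14 q_{P_m}\le h_{D,P_m}\le 4\,q_{P_m}$ for every prefix of size $m>N$, as claimed.

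The crux — and the only genuinely nontrivial step — is the decay estimate $h_{m-1}\le 8h_m$: a priori the fill distance could collapse when a single new point is dropped into a large hole. The packing/covering comparison tames this by converting it into the statement that thickening $D$ by a vanishing collar costs asymptotically no volume, which is exactly where compactness of $D$ and $\mu(D)>0$ enter and why the uniformity constant can be taken as the clean value $4$ once $m$ is large; the threshold $N$ is precisely the stage beyond which $\gamma_m$ has settled near $1$. Without such regularity one still gets quasi-uniformity, but with a larger constant reflecting a merely bounded (rather than unit) collar-volume ratio.
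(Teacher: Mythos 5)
Your proof is correct, and it reaches the stated conclusion by a genuinely different route from the paper. The paper's own proof is a dyadic phase argument with no measure theory at all: it watches the first time the union of balls $\mathcal{B}(\chi,2^{-k})$ covers $D$, notes that greedy insertion prevents any new point from landing inside the current union before coverage is complete, and concludes that between the coverage at radius $2^{-k}$ and the coverage at radius $2^{-(k+1)}$ both $q_\chi$ and $h_{D,\chi}$ are trapped in $[2^{-(k+2)},2^{-k}]$, so their ratio never leaves $[\tfrac14,4]$; this works for an arbitrary compact $D$, with no assumption on its Lebesgue measure. You instead use the exact farthest-point-sampling identity $q_{P_m}=\tfrac12 h_{m-1}$ to reduce everything to the one-step decay bound $h_{m-1}\le 8h_m$, and you obtain that bound from a packing/covering volume comparison plus continuity of Lebesgue measure under vanishing collars. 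The price is the hypothesis $\mu(D)>0$ (harmless for the bounded domain of Definition \ref{CM:defn:QU}, but your argument would not cover degenerate, measure-zero sets, which the paper's purely metric argument handles), and a threshold $N$ governed by the collar-volume ratio $\gamma_m$ rather than by a covering time. What you gain is a sharper asymptotic statement: beyond your $N$ you in fact have $h_{m-1}\le 4h_m$, hence $\tfrac12 q_\chi\le h_{D,\chi}\le 2q_\chi$, i.e., uniformity constant $2$, which the paper only recovers in the limit $\kappa\to1$ of Corollary \ref{CM:cor:QU}. Both proofs share the same two elementary observations (greedy insertion pins the separation distance to the previous fill distance, and fill distances tend to zero by compactness); the difference lies entirely in how the lower bound $\tfrac14 q_\chi\le h_{D,\chi}$ is secured.
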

\begin{proof}
For a given radius $r>0$, define the union of open balls associated with $\chi$ as
\begin{equation*}
  \mathcal{B}(\chi,r):=\bigcup_{\chi_i\in\chi}B(\chi_i,r),
\end{equation*}
where $B(\chi_i,r)=\{x\in\mathbb{R}^n:\|x-\chi_i\|_2<r\}$ is an open ball of radius $r$ centered at $\chi_i$.

Assume, without loss of generality, that $\max_{x,y\in D}\|x-y\|_2=1$. First, we take $r=\frac{1}{2}$. Originally, $\chi$ is a single point set in $D$ and $\max_{x,y\in D}\|x-y\|_2=1$, then $\mathcal{B}(\chi,\frac{1}{2})$ cannot cover $D$, therefore, according to the rules for adding points \eqref{CM:eq:QU}, before $\mathcal{B}(\chi,\frac{1}{2})$ covers $D$, there will be no points falling into $\mathcal{B}(\chi,\frac{1}{2})$, and at this time, we have
\begin{equation*}
  \frac{1}{4}\leqslant q_\chi\leqslant1~~\textrm{and}~~
  \frac{1}{2}\leqslant h_{D,\chi}\leqslant1,
\end{equation*}
that is, $\frac{1}{8}\leqslant\frac{1}{2}q_\chi\leqslant h_{D,\chi}\leqslant4q_\chi\leqslant4$.
Assume that after a new point is added, $\mathcal{B}(\chi,\frac{1}{2})$ just covers $D$, i.e., at this time there is no point $t\in D$ such that $\min_{x\in\chi}\|t-x\|_2>\frac{1}{2}$, so from now on, we have $h_{D,\chi}\leqslant\frac{1}{2}$, and further, $q_\chi\leqslant\frac{1}{2}$ if one more point is added.

Now we take $r=\frac{1}{4}$ and add one point making $q_\chi\leqslant\frac{1}{2}$. Before $\mathcal{B}(\chi,\frac{1}{4})$ covers $D$, there will be no points falling into $\mathcal{B}(\chi,\frac{1}{4})$, and at this time, we have
\begin{equation*}
  \frac{1}{8}\leqslant q_\chi\leqslant\frac{1}{2}~~\textrm{and}~~
  \frac{1}{8}\leqslant h_{D,\chi}\leqslant\frac{1}{2},
\end{equation*}
that is, $\frac{1}{32}\leqslant\frac{1}{4}q_\chi\leqslant h_{D,\chi}\leqslant4q_\chi\leqslant2$.

Generally, assume that after a certain new point is added, $\mathcal{B}(\chi,\frac{1}{2^k})$ just covers $D$, i.e., there is no point $t\in D$ such that $\min_{x\in\chi}\|t-x\|_2>\frac{1}{2^k}$, so from now on, we have $h_{D,\chi}\leqslant\frac{1}{2^k}$, and further, $q_\chi\leqslant\frac{1}{2^k}$ if one more point is added. Similarly, we take $r=\frac{1}{2^{k+1}}$ and add one point making $q_\chi\leqslant\frac{1}{2^k}$. Before $\mathcal{B}(\chi,\frac{1}{2^{k+1}})$ covers $D$, it follows that
\begin{equation*}
  \frac{1}{2^{k+2}}\leqslant q_\chi\leqslant\frac{1}{2^k}~~\textrm{and}~~
  \frac{1}{2^{k+2}}\leqslant h_{D,\chi}\leqslant\frac{1}{2^k},
\end{equation*}
that is, $\frac{1}{2^{k+4}}\leqslant\frac{1}{4}q_\chi\leqslant h_{D,\chi}\leqslant4q_\chi\leqslant\frac{1}{2^{k-2}}$ for all $k\in\mathbb{N}$, and the proof is complete.
\end{proof}

Our proof relies on the open balls whose radius is gradually reduced by a factor of $\frac{1}{2}$. And it can be also seen from the proof that both $q_\chi$ and $h_{D,\chi}$ will not change too much before each coverage is completed. Obviously, the above conclusion still holds for any factor $0<\frac{1}{\kappa}<1$, or equivalently, $1<\kappa<\infty$,. So an immediate corollary of Theorem \ref{CM:thm:QU} is:
\begin{cor}\label{CM:cor:QU}
Under the assumptions of Theorem \ref{CM:thm:QU}, for any $1<\kappa<\infty$, there is an integer $N_\kappa$ such that when the size of $\chi$ is larger than $N_\kappa$, $\chi$ is quasi-uniform over $\Omega$ with uniformity constant $2\kappa$, i.e.,
\begin{equation*}
  \frac{1}{2\kappa}q_\chi\leqslant h_{D,\chi}\leqslant2\kappa q_\chi,
\end{equation*}
where $h_{D,\chi}$ and $q_\chi$ are defined as \eqref{CM:eq:h} and \eqref{CM:eq:q}, respectively.
\end{cor}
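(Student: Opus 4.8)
The plan is to replay the argument of Theorem \ref{CM:thm:QU} essentially verbatim, replacing the dyadic schedule of radii $2^{-k}$ by the geometric schedule $\kappa^{-k}$. As before I would normalize so that $\max_{x,y\in D}\|x-y\|_2=1$, set $\mathcal{B}(\chi,r):=\bigcup_{\chi_i\in\chi}B(\chi_i,r)$, and track the separation distance $q_\chi$ from \eqref{CM:eq:q} and the fill distance $h_{D,\chi}$ from \eqref{CM:eq:h} as points are inserted by the greedy rule \eqref{CM:eq:QU}. A useful sanity check is that substituting $\kappa=2$ should return the uniformity constant $4$, and indeed $2\kappa=4$ there.

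First I would establish the regime structure together with the separation invariant. Since $D$ is bounded and \eqref{CM:eq:QU} always inserts the point realizing the current fill distance, the fill gaps at successive insertions are non-increasing, and for each $k$ the union $\mathcal{B}(\chi,\kappa^{-k})$ covers $D$ after finitely many insertions. Fix the first moment at which $\mathcal{B}(\chi,\kappa^{-k})$ just covers $D$; from then on $h_{D,\chi}\leqslant\kappa^{-k}$, and after one further insertion $q_\chi\leqslant\kappa^{-k}$ as well. Shrinking to $r=\kappa^{-(k+1)}$, I would then examine the window in which $\mathcal{B}(\chi,\kappa^{-(k+1)})$ has not yet covered $D$: non-coverage forces the current fill distance to exceed $\kappa^{-(k+1)}$, so every point is inserted at a gap larger than $\kappa^{-(k+1)}$; because the gaps are non-increasing, the minimal pairwise distance of the whole set equals the most recent gap and so remains above $\kappa^{-(k+1)}$, giving $q_\chi\geqslant\tfrac{1}{2}\kappa^{-(k+1)}$, while simultaneously $h_{D,\chi}>\kappa^{-(k+1)}$.

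Combining these with the standing upper bounds $q_\chi,h_{D,\chi}\leqslant\kappa^{-k}$, I obtain, throughout the $k$-th window,
\begin{equation*}
 \tfrac{1}{2}\kappa^{-(k+1)}\leqslant q_\chi\leqslant\kappa^{-k}
 \quad\textrm{and}\quad
 \tfrac{1}{2}\kappa^{-(k+1)}\leqslant h_{D,\chi}\leqslant\kappa^{-k},
\end{equation*}
whence $h_{D,\chi}/q_\chi\leqslant\kappa^{-k}/\big(\tfrac{1}{2}\kappa^{-(k+1)}\big)=2\kappa$ and, symmetrically, $q_\chi/h_{D,\chi}\leqslant2\kappa$, i.e. $\tfrac{1}{2\kappa}q_\chi\leqslant h_{D,\chi}\leqslant2\kappa q_\chi$. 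Taking $N_\kappa$ to be the number of points present at the onset of the first such controlled window then makes the estimate valid for every size exceeding $N_\kappa$. The factor $2$ relative to the naive ratio $\kappa$ of consecutive radii is exactly the $\tfrac{1}{2}$ built into the definition \eqref{CM:eq:q} of $q_\chi$.

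The hard part, just as in Theorem \ref{CM:thm:QU}, will not be any individual inequality but the bookkeeping at the regime boundaries: I must verify that the windows for consecutive values of $k$ abut, so that the two-sided bound persists for \emph{all} sample sizes beyond $N_\kappa$ rather than only inside one isolated window. Concretely this amounts to checking that the insertion which first forces $q_\chi\leqslant\kappa^{-k}$ has already occurred before $\mathcal{B}(\chi,\kappa^{-(k+1)})$ completes its coverage, so that no gap in the argument opens up during the handover from the $\kappa^{-k}$ regime to the $\kappa^{-(k+1)}$ regime; the slack furnished by the lower bound $q_\chi\geqslant\tfrac{1}{2}\kappa^{-(k+1)}$ is what absorbs these transitions.
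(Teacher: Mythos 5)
Your proposal is correct and is exactly the paper's route: the paper derives this corollary by observing that the proof of Theorem \ref{CM:thm:QU} goes through verbatim when the dyadic radii $2^{-k}$ are replaced by $\kappa^{-k}$, which is precisely the replay you carry out (your window bounds $\tfrac{1}{2}\kappa^{-(k+1)}\leqslant q_\chi,h_{D,\chi}\leqslant\kappa^{-k}$ mirror the paper's $2^{-(k+2)}\leqslant q_\chi,h_{D,\chi}\leqslant 2^{-k}$ and yield the constant $2\kappa$). If anything, your explicit attention to the separation invariant (minimal pairwise distance equals the most recent insertion gap) and to the handover between consecutive windows is more careful than the paper's own sketch.
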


Thus, when $\kappa$ tends to $1$, it follows that $\frac{1}{2}q_\chi\leqslant h_{D,\chi}\leqslant2q_\chi$, asymptotically. This shows that every sequence constructed by Definition \ref{CM:defn:QU} has an excellent uniformity, especially, independent of the dimensionality of the space in which the domain is located. Of course, it is difficult to solve \eqref{CM:eq:QU} in an continuous domain, hence, a practical algorithm will be further considered in Subsection \ref{CM:sec:sampling}.

\subsection{Spatial discretization for H\"{o}lder continuous functions}

For a $\alpha$-H\"{o}lder function $f$, if $\mathcal{I}_\chi f$ is the nearest-neighbor interpolant w.r.t. a sample set $\chi\subset D$, then we have
\begin{equation*}
|f(x)-\mathcal{I}_\chi f(x)|\leqslant Ch_{D,\chi}^\alpha.
\end{equation*}
As we mentioned above, when the size of $\chi$ is fixed, the smaller uniformity constant $\tau$ means a better fill distance $h_{D,\chi}$, and also means a better approximation accuracy. Notice that for $n$-dimensional grid nodes, the uniformity constant $\tau=\sqrt{n}$ depending on $n$, therefore, together with Corollary \ref{CM:cor:QU}, if $n>4$ and the sample size is the same, then the interpolation error corresponding to the quasi-uniform nodes constructed by Definition \ref{CM:defn:QU} will be better than that of the grid nodes.

Correspondingly, the GS method for global optimization is suitable for H\"{o}lder continuous functions in space less than $4$ dimensions. When the dimension of the space is greater than $4$, even the direct search method based on a quasi-uniform sequence constructed by Definition \ref{CM:defn:QU} will be better than the GS method.

It is also worth noting that, for the $(\alpha,\infty)$-H\"{o}lder continuous functions, that is, there exist $C>0$ and $0<\alpha\leqslant1$ such that
\begin{equation}\label{CM:eq:Hinf}
|f(x')-f(x'')|<C\|x'-x''\|_\infty^\alpha,~~\forall x',x''\in D,
\end{equation}
the standard grid nodes are appropriate for their approximation or global search. Because in the sense of infinite norm, the fill distance $h_{D,\chi,\infty}$ and the separation distance $q_{\chi,\infty}$ are often equal for a standard grid nodes $\chi$ over a cube $D$, where
\begin{equation*}
  h_{D,\chi,\infty}:=\max_{x\in D}\min_{\chi_i\in\chi}\|x-\chi_i\|_\infty
  ~~\textrm{and}~~
  q_{\chi,\infty}:=\frac{1}{2}\min_{\chi_i\neq\chi_j}\|\chi_i-\chi_j\|_\infty.
\end{equation*}
However, since the infinite norm is equal to the largest absolute value of the components, the above condition \eqref{CM:eq:Hinf} even excludes the superposition effect between different variables of a multivariate function, therefore, such functions are generally rare in practice, especially in high-dimensional spaces.

\section{Practical algorithms}
\label{CM:s6}

On the basis of the proposed framework, three essential parts for establishing a practical algorithm are (i) the method for modeling a given data pairs $(\chi^{(k)},f_{\chi^{(k)}})$ on $D^{(k)}$, (ii) the sampling strategy for further generating quasi-uniform samples $\chi^{(k)}$ over $D^{(k)}$ according to some known interior points $\chi^{(k-1)}\cap D^{(k)}$, and (iii) the method for estimating the error bounds for the model, i.e., $\max_{x\in D^{(k)}}|\mathcal{A}^{(k)}f(x)-f(x)|$. In the following, we shall first discuss these parts separately, then combine them into a whole and provide insights into its behaviors by establishing two high probability bounds for convergence rate and complexity.

\subsection{Kernel-based modeling}

According to Lemmas \ref{CM:lem:TF} and \ref{CM:lem:TFa}, there exists a Gaussian kernel interpolant $\mathcal{I}_{\chi^{(k)}}f$ on $D^{(k)}$ satisfying the expected error condition, so it is naturally to adopt kernel interpolantions for modeling; however, it is often recommended to use a more stable regression model in practice. As an extension of Gaussian kernel interpolation, we mainly consider the Gaussian process (GP) regression \citep{RasmussenC2006M_GaussianProcesses} with a covariance function
\begin{equation*}
  \psi(x,x')=\phi(x,x')+\sigma_n^2\omega_{x,x'}
  =\sigma_f^2e^{-\sigma^2\|x-x'\|_2^2}+\sigma_n^2\omega_{x,x'}
\end{equation*}
and a prior $\mathcal{GP}(0,\psi(x,x'))$ over a \emph{bandlimited function} $g$, where $g$ interpolates $f$ on $\chi^{(k)}$, $\theta=(\sigma_f^2,\sigma^2,\sigma_n^2)$ is a vector containing all the hyperparameters, and $\omega_{x,x'}$ is the Kronecker delta which is one if and only if $x=x'$ and zero otherwise. In other words, we actually regard this bandlimited $g$ as a function composed of an element from a Gaussian reproducing-kernel Hilbert space (RKHS) plus independent and identically distributed Gaussian noises. Notice that Lemma \ref{CM:lem:HLFDFinterpolating} guarantees the existence of $g$ if $f$ is an HLFDF. Moreover, we do not need to assume that $f$ has a bounded RKHS norm, but consider $f$ as the sum of a bandlimited function and an acceptable (possibly non-differentiable) residual on each $D^{(k)}$.

Then, for a given dataset $(\chi,f_\chi)=\big\{(\chi_i,f_{\chi_i})\big\}_{i=1}^N$ over $D$, the predictive distribution for any $x\in D$ becomes
\begin{equation}\label{CM:eq:GP}
  g(x)|x,\chi,f_\chi~\thicksim~\mathcal{N}
  \Big(\mathbb{E}[g(x)],\mathbb{V}[g(x)]\Big),
\end{equation}
where
\begin{align*}
  \mathbb{E}[g(x)]=&~\Phi(x,\chi)\Psi^{-1}(\chi,\chi)f_\chi, \\
  \mathbb{V}[g(x)]=&~\phi(x,x)-\Phi(x,\chi)\Psi^{-1}(\chi,\chi)\Phi(\chi,x),
\end{align*}
here, $\Phi(x,\chi)=(\phi(x,\chi_1),\cdots,\phi(x,\chi_N))$, $\Phi(\chi,x)=(\Phi(x,\chi))^\mathrm{T}$, $f_\chi=(f(\chi_1),\cdots,f(\chi_N))^\mathrm{T}$, and $\Psi(\chi,\chi)= [\psi(\chi_i,\chi_j)]_{1\leqslant i\leqslant N,1\leqslant j\leqslant N}$. Therefore, the regression model $\mathcal{A}_\chi f$ is given by the mean prediction $\mathbb{E}[g(x)]$ and the logarithmic marginal likelihood can be expressed as
\begin{equation*}
  \log p(f_\chi|\chi,\theta)=
  -\frac{1}{2}f_\chi^\mathrm{T}\Psi^{-1}(\chi,\chi)f_\chi -\frac{1}{2}\log|\Psi(\chi,\chi)|-\frac{N}{2}\log 2\pi,
\end{equation*}
then the hyperparameters can be automatically set by maximizing the marginal likelihood in time $\mathcal{O}(N^3)$, see \citep{RasmussenC2006M_GaussianProcesses} for details. So we have
\begin{lem}\label{CM:lem:GP}
For a dataset $(\chi,f_\chi)=\big\{(\chi_i,f_{\chi_i})\big\}_{i=1}^N$ over $D$, the computational complexity of modeling $f$ by the GP regression, i.e., \eqref{CM:eq:GP}, can be bounded by $\mathcal{O}(N^3)$ and the computational cost of calling this model is $\mathcal{O}(N)$.
\end{lem}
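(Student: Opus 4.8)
The plan is to separate the claim into two independent cost accountings---the one-time cost of building the regression model and the per-query cost of evaluating it---and to pin down in each the dominant linear-algebra operation. First I would isolate the expensive object: the Gram matrix $\Psi(\chi,\chi)=[\psi(\chi_i,\chi_j)]$ is $N\times N$, and every formula in \eqref{CM:eq:GP} together with the log marginal likelihood is expressed through $\Psi^{-1}(\chi,\chi)$ and $\log|\Psi(\chi,\chi)|$. Since $\psi$ is symmetric and the noise term $\sigma_n^2\omega_{x,x'}$ renders the diagonal strictly positive, $\Psi(\chi,\chi)$ is symmetric positive definite, so the natural tool is a single Cholesky factorization $\Psi(\chi,\chi)=LL^{\mathrm{T}}$ that can be reused across all of the needed quantities.

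For the modeling cost I would argue as follows. Assembling $\Psi(\chi,\chi)$ requires $\mathcal{O}(N^2)$ kernel evaluations. The Cholesky factorization costs $\mathcal{O}(N^3)$; once $L$ is available, the weight vector $\alpha=\Psi^{-1}(\chi,\chi)f_\chi$ is obtained by two triangular solves in $\mathcal{O}(N^2)$, and the log determinant follows from $\log|\Psi(\chi,\chi)|=2\sum_i\log L_{ii}$ in $\mathcal{O}(N)$. Thus one evaluation of $\log p(f_\chi|\chi,\theta)$ costs $\mathcal{O}(N^3)$, dominated by the factorization. Maximizing it over the fixed-dimensional hyperparameter vector $\theta=(\sigma_f^2,\sigma^2,\sigma_n^2)$ takes a number of optimizer iterations independent of $N$, each re-factorizing at $\mathcal{O}(N^3)$, so the total training cost remains $\mathcal{O}(N^3)$.

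For the cost of calling the model I would exploit that the regression output $\mathcal{A}_\chi f$ is the mean prediction $\mathbb{E}[g(x)]=\Phi(x,\chi)\alpha$, and that $\alpha$ has already been cached during the modeling phase. For a fresh $x\in D$, forming $\Phi(x,\chi)=(\phi(x,\chi_1),\dots,\phi(x,\chi_N))$ needs $N$ kernel evaluations, i.e.\ $\mathcal{O}(N)$, and the inner product $\Phi(x,\chi)\alpha$ is another $\mathcal{O}(N)$; hence each query is $\mathcal{O}(N)$, as claimed.

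The point that deserves care---and the only place where the two bounds could appear to clash---is the bookkeeping of what is paid at modeling time versus at query time. The $\mathcal{O}(N)$ per-call bound hinges on precomputing $\alpha$ once (an $\mathcal{O}(N^3)$ step folded into the modeling phase) rather than re-solving $\Psi(\chi,\chi)\alpha=f_\chi$ at each query. I would also state explicitly that this linear per-call bound is for the mean predictor that defines $\mathcal{A}_\chi f$; returning the posterior variance $\mathbb{V}[g(x)]$ would instead cost $\mathcal{O}(N^2)$ per query, since it requires a triangular solve against $\Phi(\chi,x)$, which is precisely why the lemma is phrased for the model given by $\mathbb{E}[g(x)]$.
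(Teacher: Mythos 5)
Your proposal is correct and follows essentially the same route as the paper, which states the lemma directly from the preceding display of $\mathbb{E}[g(x)]$ and the log marginal likelihood and defers the $\mathcal{O}(N^3)$ training bound to the standard reference \citep{RasmussenC2006M_GaussianProcesses}; your Cholesky-based accounting (one $\mathcal{O}(N^3)$ factorization, cached weight vector $\alpha$, $\mathcal{O}(N)$ inner product per mean query) is exactly the argument that citation supplies. The only implicit assumption you share with the paper is that the number of likelihood-maximization iterations over the fixed-dimensional $\theta$ is independent of $N$, and your remark that the $\mathcal{O}(N)$ call cost applies to the mean predictor (not the variance) correctly reflects how the paper defines $\mathcal{A}_\chi f$.
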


Since the adaptation of hyperparameters is adopted, $\mathcal{A}_\chi f$ degenerates to the Gaussian kernel interpolant $\mathcal{I}_{\chi}f$ under the conditions of Lemmas \ref{CM:lem:TF} and \ref{CM:lem:TFa} because of the adequacy of samples and the existence of bandlimited interpolant. On the other hand, if the sample set $\chi$ does not meet the assumption of sampling density in Lemmas \ref{CM:lem:TF} and \ref{CM:lem:TFa}, $\mathcal{A}_\chi f$ is more stable than $\mathcal{I}_{\chi}f$ due to the smoothing effect.

\subsection{Quasi-uniform sampling based on existing interior points}
\label{CM:sec:sampling}

Our sampling strategy is divided into two steps. First, according to the existing interior point set $\chi$, we use a reflected random walk to generate the candidate point set $\mathcal{T}$ in $D\subset\Omega$. Then, by using a method similar to \eqref{CM:eq:QU}, we recursively select a supplementary sample set $\chi'$ from $\mathcal{T}$, where the size of $\chi'$ should be much smaller than $\mathcal{T}$.

Suppose that $D:=\{x\in D':c(x)\leqslant0\}$, where $c$ is a continuous function on $\Omega$ and, clearly, $c(x)=\mathcal{A}_{\chi^{(k)}}f(x)-u^{(k)}$ when $D'=D^{(k)}$. We begin with the reflected random walk (RRW) in $D$ from a point $\chi_i$. For a given variance $\sigma^2$ and a fixed reduction factor $a\in(0,1)$, the RRW $\{Z_t\}_{t\in\mathbb{N}_0}$ is defined as $Z_0=\chi_i$ and
\begin{align}\label{CM:eq:RRW}
  Z_{t+1}=\left\{\begin{array}{cl}
    Z_t+\sigma W_t, & \textrm{if}~~c(Z_t+\sigma W_t)\leqslant0, \\
    Z_t-(-a)^j\sigma W_t+a^{j+1}\sigma W'_t, & \textrm{if}~~c(Z_t+\sigma W_t)>0;
  \end{array}\right.
\end{align}
where $j$ is the smallest integer such that $c(Z_t-(-a)^j\sigma W_t+a^{j+1}\sigma W'_t)\leqslant0$, $W_t$ and $W'_t$ are $n$-dimensional standard normal random variables. When $D=\mathbb{R}^n$, $Z_t$ has a normal distribution with mean $\chi_i$ and variance $\sigma^2t$; when $D$ is a closed domain, the reflection ensures that $Z_t\in D$ for all $t\in\mathbb{N}_0$. Moreover, our reflection is also suitable for general nonconvex domains, even domains with a H\"{o}lder boundary, see Figure \ref{CM:fig:S1a} for an example.

Suppose that $\chi=\{\chi_i\}_{i=1}^N$ and define
\begin{equation}\label{CM:eq:d}
  d_\chi:=\max_{1\leqslant i\leqslant N}\min_{j\neq i}\|\chi_i-\chi_j\|_2,
\end{equation}
then we describe the algorithm for generating a candidate point set $\mathcal{T}$ from $\chi$ as follows:
\begin{algorithm}
\caption{Generation of a candidate point set with size $n_cN$}
\label{CM:alg:CPS}
\begin{algorithmic}[1]
\STATE{Preset parameters $n_c>0$ and $T\in\mathbb{N}$.}
\STATE{$\mathcal{T}:=\varnothing$.}
\FOR{$i=1,2,\cdots,N$}
\STATE{Generate $n_c$ points by the RRW from $\chi_i$ with $\sigma=\frac{d_\chi}{\sqrt{T}}$ and $T$ steps, denoted by $\mathcal{T}_i$.}
\STATE{Set $\mathcal{T}=\mathcal{T}\cup\mathcal{T}_i$.}
\ENDFOR
\end{algorithmic}
\end{algorithm}

Note that the candidate set $\mathcal{T}(n_c,T)$ contains $n_cN$ points and each $\mathcal{T}_i$ contains $n_c$ points that are independent and identically distributed with mean $\chi_i$ and variance $d_\chi^2$. And it is clear that $d_\chi\geqslant2q_\chi$, so according to the three-sigma rule of thumb, if $\chi$ is quasi-uniform over $D$ with uniformity constant less than $6$, $\mathcal{T}(n_c,T)$ basically covers the entire domain $D$.

On the basis of $\mathcal{T}$, new points can be recursively added as follows:
\begin{align}\label{CM:eq:SP}
 \left\{\begin{array}{ll}
  \chi_{new}&\!\!=~\arg\max_{x\in\mathcal{T}}
  \Big(\min_{\chi_i\in\chi}\|x-\chi_i\|_2\Big),\\
  \mathcal{T}&\!\!=~\mathcal{T}-\chi_{new},
 \end{array}\right.
\end{align}
where $\mathcal{T}$ is a discretization for $D$ and the number of supplementary samples should be much smaller than $n_cN$ to ensure uniformity. The difference between the ideal strategy \eqref{CM:eq:QU} and the actual strategy \eqref{CM:eq:SP} is that the continuous domain $D$ is replaced by a discretization $\mathcal{T}$. Therefore, the corresponding quasi-uniformity can be guaranteed when the density of $\mathcal{T}$ is significantly greater than that of $\chi$.

More specifically, during the process where $\mathcal{B}(\chi,\frac{1}{2^k})$ covers $D$ but $\mathcal{B}(\chi,\frac{1}{2^{k+1}})$ has not yet covered $D$, if there are always points of $\mathcal{T}$ in $D-\mathcal{B}(\chi,\frac{1}{2^{k+1}})$, then before $\mathcal{B}(\chi,\frac{1}{2^{k+1}})$ covers $D$, it follows that
\begin{equation*}
  \frac{1}{2^{k+2}}\leqslant q_\chi\leqslant\frac{1}{2^k}~~\textrm{and}~~
  \frac{1}{2^{k+2}}\leqslant h_{D,\chi}\leqslant\frac{1}{2^k},
\end{equation*}
that is, the results of Theorem \ref{CM:thm:QU} and Corollary \ref{CM:cor:QU} also hold for the strategy \eqref{CM:eq:SP}. In other words, as long as there is any point of $\mathcal{T}$ in $D-\mathcal{B}(\chi,q_\chi)$, one can use this strategy to add at least one point without changing the uniformity constant. Hence, by using the above step recursively, one can generate quasi-uniform samples in any nonconvex domain $D$. See Figures \ref{CM:fig:S1} and \ref{CM:fig:S1a} for examples to illustrate how the strategy is performed.

At a first glance, the computational complexity of generating $s(\ll n_cN)$ new points by the strategy \eqref{CM:eq:SP} is $\mathcal{O}(sn_cN^2)$; actually, this process can also be done in $\mathcal{O}(sn_cN)$ time with $\mathcal{O}(n_cN)$ extra storage. For an implementation, one can refer to our MATLAB code, and the corresponding time cost is given as follows:
\begin{lem}\label{CM:lem:SP}
For a fixed existing interior set $\chi\in D$ with size $N$, parameters $0\leqslant n_c<\infty$ and $0\leqslant T<\infty$, the computational complexity of generating $s(\ll n_cN)$ new points by the strategy \eqref{CM:eq:SP} can be bounded by $\mathcal{O}(N^2)$.
\end{lem}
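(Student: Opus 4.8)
The plan is to exhibit an incremental implementation of the selection rule \eqref{CM:eq:SP} that maintains, for every candidate point in $\mathcal{T}$, a running record of its distance to the current set $\chi$, and then to observe that inserting a single new sample updates all these records at the cost of one comparison each. First I would pin down the sizes involved: the candidate set produced by Algorithm \ref{CM:alg:CPS} satisfies $|\mathcal{T}|=n_cN$, while $\chi$ starts at size $N$ and grows by exactly one point per application of \eqref{CM:eq:SP}. A literal reading of \eqref{CM:eq:SP} recomputes, at each of the $s$ iterations, the inner minimum $\min_{\chi_i\in\chi}\|x-\chi_i\|_2$ for every $x\in\mathcal{T}$ from scratch, costing $\mathcal{O}(|\mathcal{T}|\cdot|\chi|)=\mathcal{O}(n_cN^2)$ per step and hence $\mathcal{O}(sn_cN^2)$ overall; the content of the lemma is precisely to shave off that extra factor.

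The key step is to keep an auxiliary array $\delta(\cdot)$ of length $|\mathcal{T}|$ defined by $\delta(x)=\min_{\chi_i\in\chi}\|x-\chi_i\|_2$. I would initialise it once against the original $\chi$ of size $N$, at cost $\mathcal{O}(|\mathcal{T}|\cdot N)=\mathcal{O}(n_cN^2)$ and with $\mathcal{O}(n_cN)$ extra storage. Each subsequent application of \eqref{CM:eq:SP} then reduces to two linear scans of $\mathcal{T}$: selecting $\chi_{new}=\arg\max_{x\in\mathcal{T}}\delta(x)$ costs $\mathcal{O}(|\mathcal{T}|)$, and after moving $\chi_{new}$ from $\mathcal{T}$ into $\chi$ the crucial observation is that inserting one point into $\chi$ can only decrease each nearest-neighbour distance, so the refresh $\delta(x)\leftarrow\min\big(\delta(x),\|x-\chi_{new}\|_2\big)$ over the remaining $x\in\mathcal{T}$ is both correct and again only $\mathcal{O}(|\mathcal{T}|)=\mathcal{O}(n_cN)$. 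Thus every one of the $s$ iterations runs in $\mathcal{O}(n_cN)$ time, for a total of $\mathcal{O}(n_cN^2+sn_cN)$ including the one-time initialisation, which matches the $\mathcal{O}(sn_cN)$ incremental claim stated in the text.

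Finally I would collapse the big-$\mathcal{O}$ under the hypotheses of the lemma: $n_c$ and $T$ are fixed finite parameters, so they are absorbed into the constant, and $s\ll n_cN$ forces $s=\mathcal{O}(N)$, whence $sn_cN=\mathcal{O}(N^2)$. Both the initialisation term and the $s$ incremental updates are therefore $\mathcal{O}(N^2)$, giving the claimed bound. I expect the only genuine content to be the monotonicity observation that justifies updating each candidate's record with a single comparison; everything else is initialisation accounting and the reduction $s=\mathcal{O}(N)$ from $s\ll n_cN$. If one also wishes to include the generation of $\mathcal{T}$ by Algorithm \ref{CM:alg:CPS}, the $n_cNT$ random-walk steps, each evaluating the constraint $c=\mathcal{A}^{(k)}f-u^{(k)}$ at $\mathcal{O}(N)$ cost per call by Lemma \ref{CM:lem:GP}, contribute a further $\mathcal{O}(n_cTN^2)=\mathcal{O}(N^2)$ term that leaves the bound unchanged.
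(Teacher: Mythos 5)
Your proposal is correct and follows essentially the same route the paper intends: the paper sketches exactly this incremental implementation (maintaining nearest-neighbour distances with $\mathcal{O}(n_cN)$ extra storage so that each of the $s$ selections costs $\mathcal{O}(n_cN)$, deferring details to the accompanying MATLAB code), and your monotone update $\delta(x)\leftarrow\min\bigl(\delta(x),\|x-\chi_{new}\|_2\bigr)$ is precisely the idea that makes it work. Your accounting is in fact slightly more careful than the paper's inline claim of $\mathcal{O}(sn_cN)$ time, since you correctly include the one-time $\mathcal{O}(n_cN^2)$ initialisation of the distance array, which is harmless for the stated $\mathcal{O}(N^2)$ bound because $n_c$ and $T$ are fixed constants.
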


\begin{figure}[tbhp]
  \centering
  \subfigure{\includegraphics[width=0.32\textwidth]{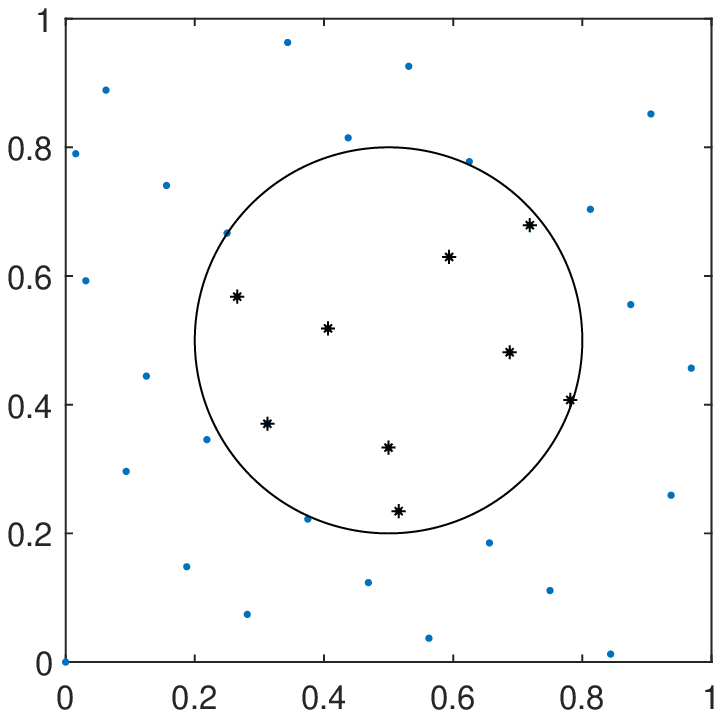}}
  \subfigure{\includegraphics[width=0.32\textwidth]{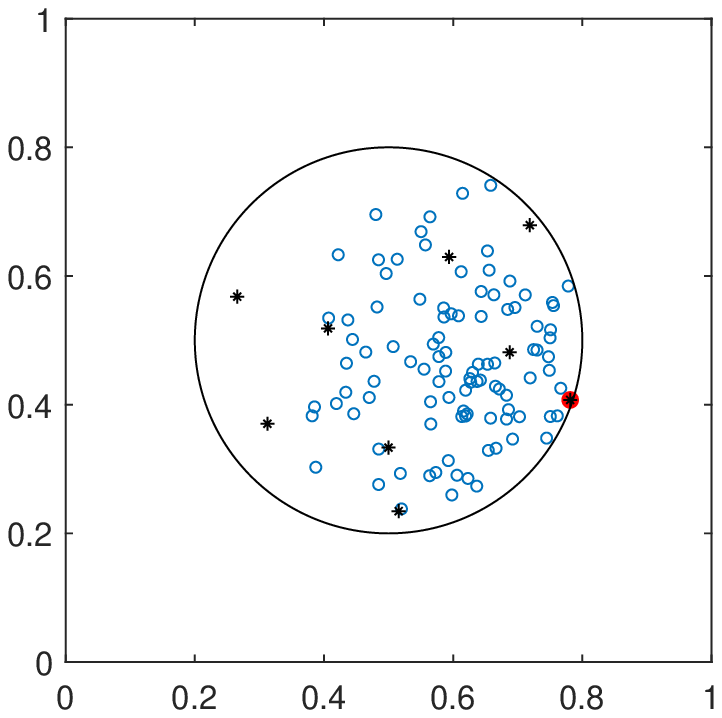}}
  \subfigure{\includegraphics[width=0.32\textwidth]{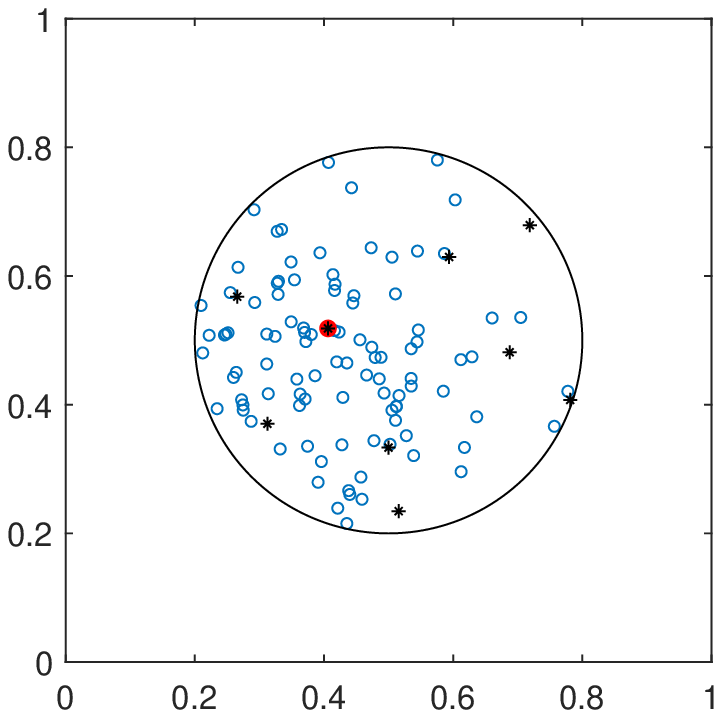}}\\
  \subfigure{\includegraphics[width=0.32\textwidth]{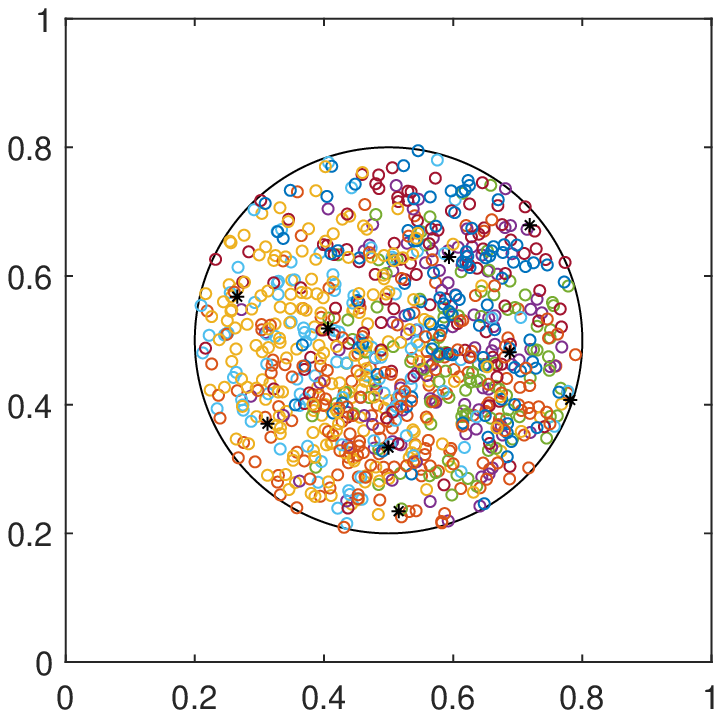}}
  \subfigure{\includegraphics[width=0.32\textwidth]{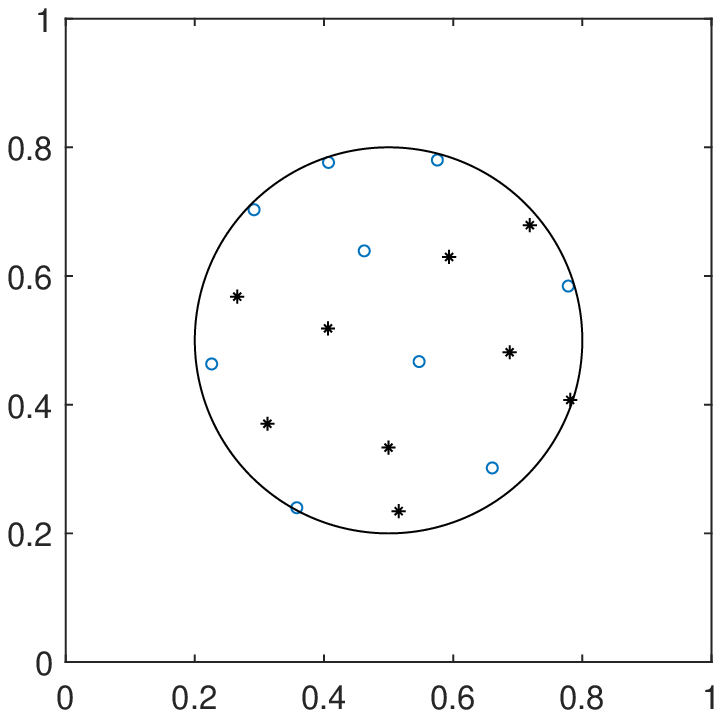}}
  \subfigure{\includegraphics[width=0.32\textwidth]{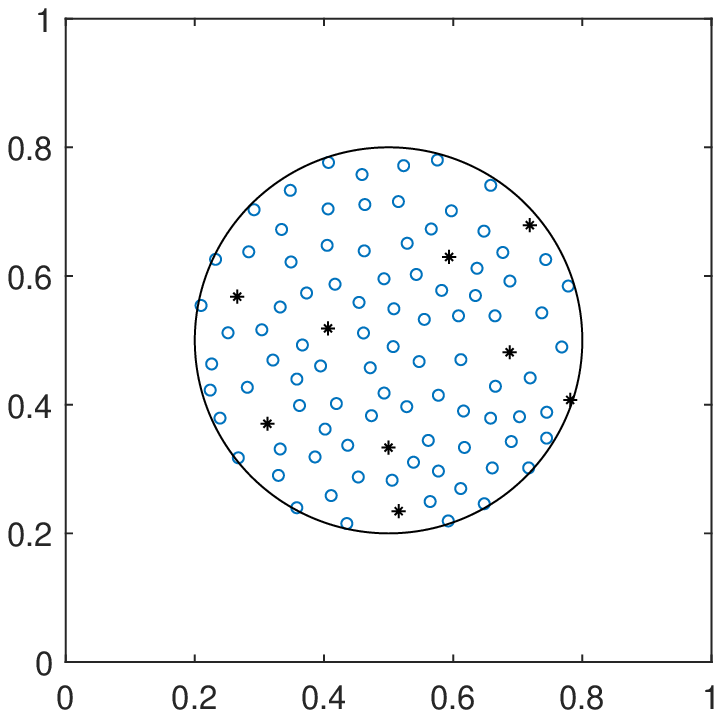}}
  \caption{A $2$-dimensional illustration of the performance of the recursive algorithm given in \eqref{CM:eq:SP}. Upper left: the domain $D=\{(x_1,x_2)\in [0,1]^2:g(x_1,x_2):=(x_1-0.5)^2+(x_2-0.5)^2-0.3^2\leqslant0\}$ is shown as the interior of the circle, the quasi-uniform samples in $[0,1]^2$, that is the first $35$ points of the $2$-dimensional halton sequence, are visible as dots in blue, and $9$ points falling into $D$, denoted by $\chi$, are shown as asterisks in black. Upper middle: $100$ points generated by the RRW are visible as circledots in blue, where the starting point $\chi_4$ is shown as dot in red, the step number $T=10$ and $\sigma=\frac{d_\chi}{\sqrt{10}}$. Upper right: $100$ points generated by the RRW are visible as circledots in blue, where the starting point $\chi_5$ is shown as dot in red, $T=10$ and $\sigma=\frac{d_\chi}{\sqrt{10}}$. Lower left: the candidate set $\mathcal{T}(100,10)$, which is a union of sample sets generated by the RRW from each $\chi_i$, is visible as circledots. Lower middle: $9$ samples added recursively from $\mathcal{T}(100,10)$ is visible as circledots in blue. Lower right: $90$ samples added recursively from $\mathcal{T}(100,10)$ is visible as circledots in blue and the expected quasi-uniformity can be easily observed. Here $n_c$ is selected as $100$ to generate $90$ new points.}
  \label{CM:fig:S1}
\end{figure}

\begin{figure}[tbhp]
  \centering
  \subfigure{\includegraphics[width=0.32\textwidth]{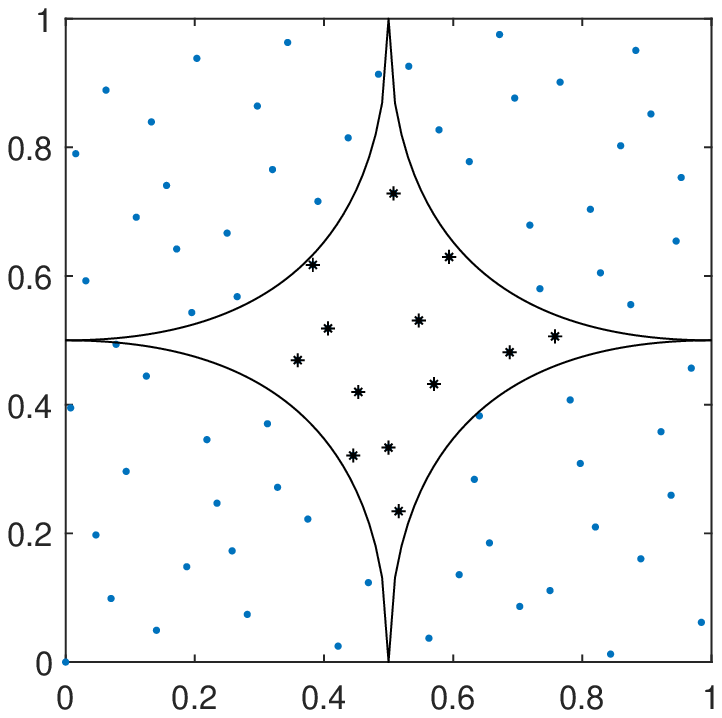}}
  \subfigure{\includegraphics[width=0.32\textwidth]{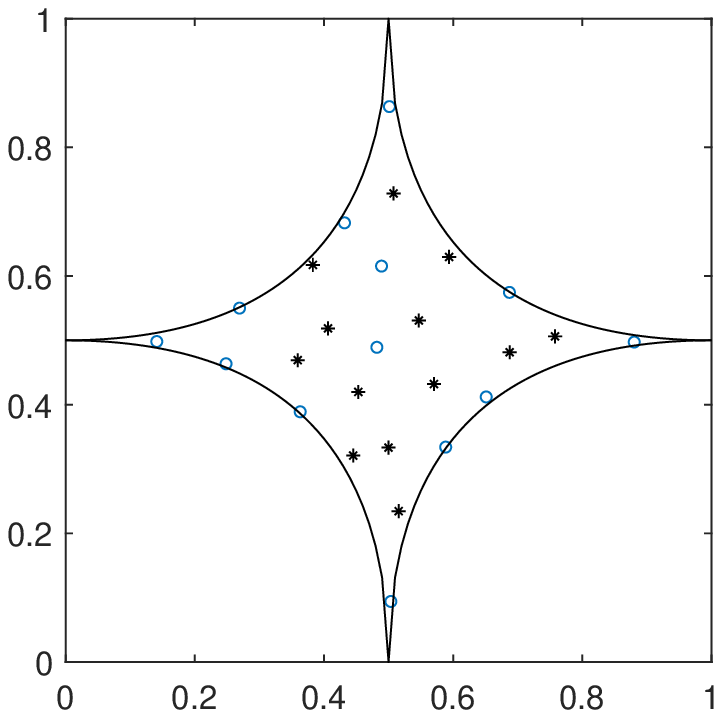}}
  \subfigure{\includegraphics[width=0.32\textwidth]{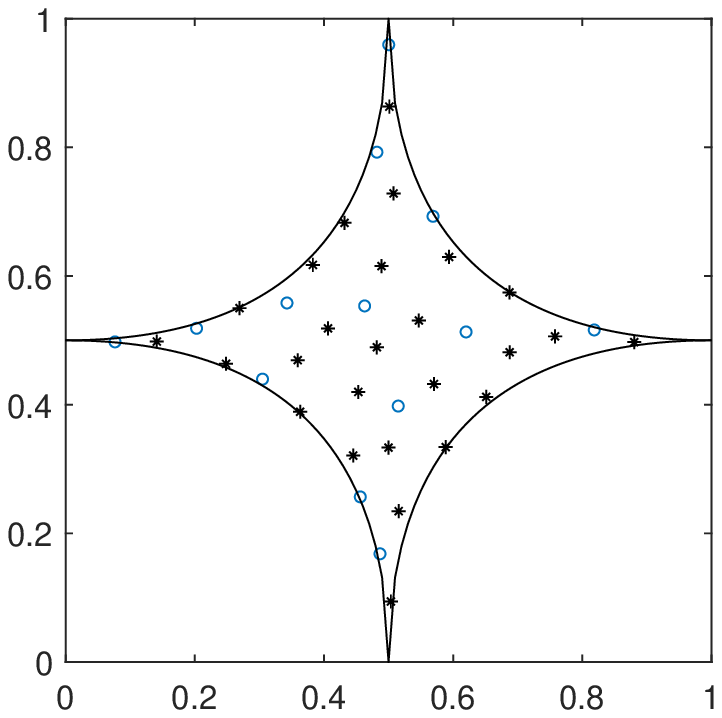}}
  \caption{A $2$-dimensional illustration of the performance of the recursive algorithm given in \eqref{CM:eq:SP} for a nonconvex four pointed star domain with a H\"{o}lder boundary. Left: the domain $D=\{(x_1,x_2)\in [0,1]^2:g(x_1,x_2):=|x_1-0.5|^{0.5}+|x_2-0.5|^{0.5}-0.5^{0.5}\leqslant0\}$ is shown as the interior of the four pointed star, the quasi-uniform samples in $[0,1]^2$, that is the first $80$ points of the $2$-dimensional halton sequence, are visible as dots in blue, and $13$ samples falling into $D$, denoted by $\chi$, are shown as asterisks in black. Middle: $13$ new points are generated based on the $13$ existing interior points given in the left plot, where the existing points are shown as asterisks in black, the new points are visible as circledots in blue, and the candidate set is $\mathcal{T}(100,10)$. Right: $13$ new points are generated based on the $26$ existing interior points given in the middle plot, where the existing points are shown as asterisks in black, the new points are visible as circledots in blue, and the candidate set is $\mathcal{T}(100,10)$. The expected quasi-uniformity can also be observed although the given domain is really tough even in the nonconvex cases, especially on the sharp corners of the four pointed star. In fact, notice that each model we established is smooth due to the Gaussian kernel, so the boundary of $D$ must be also smooth inside $D'$. Therefore, even if there is a nonconvex situation, it is often difficult to find a nonsmooth boundary, for example, a Lipschitz continuous boundary or even a H\"{o}lder continuous boundary, unless at the junction of the boundary of $D$ and the boundary of $D'$. This illustrates that our sampling method constituted by the RRW \eqref{CM:eq:RRW} and the strategy \eqref{CM:eq:SP} is sufficient to deal with all possible situations in the running of a contraction algorithm. In addition, it also provides a theoretically guaranteed and effective approach to generate quasi-uniform samples on general nonconvex domains with an appropriate uniformity constant, even in high-dimensional spaces.}
  \label{CM:fig:S1a}
\end{figure}

\subsection{Estimating model error}
\label{CM:s6:EME}

For the contraction algorithms, the sampling procedure on each subdomain $D^{(k)}$ embodies the exploration of unknown information and the approximate model $\mathcal{A}^{(k)}f$ reflects the exploitation of prior information, while the error bound condition is one of the key factors for ensuring a sufficient exploration and issuing a judgment on the conversion of exploration to exploitation. Although some error estimates of the approximate model can be established in the deterministic sense, they are often not satisfactory enough because of the existence of unknown constants. Hence, from a practical point of view, we will use statistical methods to estimate these errors in a sense of probability.

Since the GP regression is used in modeling, a direct idea for estimating the model error bound on each $D^{(k)}$ seems to maximize the variance in \eqref{CM:eq:GP} for establishing a confidence bound. However, our practical experience shows that, such confidence bounds are usually too conservative to effectively reduce the number of function evaluations on each $D^{(k)}$. One of the potential reasons for this issue is that the assumption, that the noise has Gaussian distributions, is not always satisfied in reality.

Actually, we can estimate the model error bound under a mild assumption. For a given sample set $\chi^{(k)}=\{\chi_i^{(k)}\}_{i=1}^N$ on $D^{(k)}$ and a GP regression prediction $\mathcal{A}^{(k)}f$ w.r.t. the data set $(\chi^{(k)},f_{\chi^{(k)}})$ with known hyperparameter vector $\theta^{(k)}$, assume that the error
\begin{equation*}
  \varepsilon^{(k)}=\mathcal{A}^{(k)}f-f
\end{equation*}
are independent, identically distributed random variables with mean $\mu_\varepsilon^{(k)}$ and standard deviation $\sigma_\varepsilon^{(k)}$ on $\mathbb{R}$, then $\varepsilon^{(k)}$ can be estimated by the $s$-fold cross validation (CV) without adding any new function evaluation \citep{GeisserS1975M_CV,ArlotS2010R_CV}. Specifically, in an $s$-fold CV procedure, the data set is randomly partitioned into $s$ folds, that is, $s$ subsets of equal size. For each fold, the regression model is built on the union of the other folds with the known hyperparameter vector $\theta^{(k)}$, then the error of its output is estimated using the fold. Thus, the mean $\mu_\varepsilon^{(k)}$ and standard deviation $\sigma_\varepsilon^{(k)}$ of the true error can be estimated by all these errors. Specially, the case $s=N$ is called leave-one-out (LOO).

Finally, according to the Chebyshev's inequality, a probabilistic bound for the residual $\varepsilon^{(k)}$ on $D^{(k)}$ can be given as
\begin{equation}\label{CM:eq:PBEI}
P\left(|\varepsilon^{(k)}-\mu_\varepsilon^{(k)}|\leqslant t_k
\sigma_\varepsilon^{(k)}\right)
\geqslant1-\frac{1}{t_k^2}\geqslant e^{-1.2t_k^{-2}},
\end{equation}
where the last inequality holds for every confidence parameter $t_k\geqslant2$. And the model error bound can be regarded as
\begin{equation}\label{CM:eq:PBE}
  \max_{x\in D^{(k)}}|\varepsilon^{(k)}(x)|
  \leqslant\varepsilon^{(k)}_{\textrm{CV}}
  =|\mu_\varepsilon^{(k)}|+t_k\sigma_\varepsilon^{(k)}
\end{equation}
with probability at least $1-t_k^{-2}$. Notice that there is no need to make any prior assumption about the distribution of $\varepsilon^{(k)}$. Obviously, if $\varepsilon^{(k)}$ further follows an independent, identically distributed Gaussian distribution, then the bound above can be rewritten as
\begin{equation*}
P\left(|\varepsilon^{(k)}-\mu_\varepsilon^{(k)}|\leqslant t_k
\sigma_\varepsilon^{(k)}\right)=2\varPhi(t_k)-1,
\end{equation*}
where $\varPhi(t)$ is the one-dimensional standard normal density function; in particular, for $t_k=3$ it follows that the three-sigma rule of thumb $P(|\varepsilon^{(k)}- \mu_\varepsilon^{(k)}|\leqslant3\sigma_\varepsilon^{(k)})=0.9973$.

Since the known hyperparameter vector is used in the modeling of the CV procedure, this process can be completed in $\mathcal{O}(sN)^2$ time, and the corresponding time cost for estimating the model error bound is given as follows:
\begin{lem}\label{CM:lem:PBE}
For a fixed sample set $\chi^{(k)}=\{\chi_i^{(k)}\}_{i=1}^N$ on $D^{(k)}$, let $\mathcal{A}^{(k)}f$ is a GP regression prediction w.r.t. the data set $(\chi^{(k)},f_{\chi^{(k)}})$ with known hyperparameter vector $\theta^{(k)}$, then the computational complexity of estimating the model error bound by the $s$-fold CV procedure can be bounded by $\mathcal{O}(sN^2)$.
\end{lem}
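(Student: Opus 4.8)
The plan is to prove the bound by a straightforward accounting of the $s$ rounds of the cross-validation loop, using Lemma \ref{CM:lem:GP} for the per-round costs and crucially exploiting that the hyperparameter vector $\theta^{(k)}$ is already fixed. First I would fix the partition of $\chi^{(k)}=\{\chi_i^{(k)}\}_{i=1}^N$ into $s$ folds, each of size $N/s$, and observe that the procedure consists of exactly $s$ rounds: in round $j$ one builds a GP prediction on the union of the other $s-1$ folds (a training set of size at most $N$) and then evaluates it on the held-out $j$th fold (of size $N/s$). The entire cost is therefore $\sum_{j=1}^{s}(B_j+P_j)$ plus a negligible aggregation step, where $B_j$ is the cost of building the $j$th fold model and $P_j$ is the cost of its predictions.

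Next I would bound the two contributions separately. For the predictions, each of the $N/s$ held-out points costs $\mathcal{O}(N)$ by the calling cost in Lemma \ref{CM:lem:GP}, so $P_j=\mathcal{O}(N^2/s)$ and $\sum_j P_j=\mathcal{O}(N^2)$, which is already absorbed into the claimed bound. For the model-building cost $B_j$ this is where the hypothesis that $\theta^{(k)}$ is known does all the work: unlike Lemma \ref{CM:lem:GP}, no maximization of the marginal likelihood is performed, so building the $j$th fold model reduces to a single linear solve for the coefficients $\Psi^{-1}(\chi,\chi)f_\chi$ defining the mean prediction in \eqref{CM:eq:GP}. Using the same iterative (GMRES) accounting already adopted in the proofs of Theorems \ref{CM:thm:LTC} and \ref{CM:thm:PTC}, a single such solve on at most $N$ points costs $\mathcal{O}(N^2)$, whence $B_j=\mathcal{O}(N^2)$ and $\sum_{j=1}^{s}B_j=\mathcal{O}(sN^2)$.

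Finally I would collect the per-round bounds and dispatch the remaining bookkeeping. Summing gives $\sum_{j=1}^s(B_j+P_j)=\mathcal{O}(sN^2)+\mathcal{O}(N^2)=\mathcal{O}(sN^2)$. The concluding statistical step is cheap: the $s$-fold CV yields $N$ residual values, from which the estimates $\mu_\varepsilon^{(k)}$ and $\sigma_\varepsilon^{(k)}$, and hence the bound $\varepsilon^{(k)}_{\textrm{CV}}=|\mu_\varepsilon^{(k)}|+t_k\sigma_\varepsilon^{(k)}$ of \eqref{CM:eq:PBE}, are computed in $\mathcal{O}(N)$ time, which is subsumed into $\mathcal{O}(sN^2)$. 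This establishes the claimed bound.

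I expect the main obstacle to be justifying the per-fold bound $B_j=\mathcal{O}(N^2)$ rather than the naive $\mathcal{O}(N^3)$ of a dense Cholesky solve, since any argument that pays a full dense $\mathcal{O}(N^3)$ factorization — even a single global one, reused across folds — would already violate the target bound whenever $s<N$. The delicate point is therefore that fixing $\theta^{(k)}$ removes the marginal-likelihood optimization (the genuine $\mathcal{O}(N^3)$ bottleneck of Lemma \ref{CM:lem:GP}) and leaves a single solve whose cost is taken to be $\mathcal{O}(N^2)$ under the paper's iterative-solver cost model. One must be careful to invoke exactly this model, and not reintroduce a global $\mathcal{O}(N^3)$ step, so that the $s$ fold-solves aggregate to $\mathcal{O}(sN^2)$ rather than $\mathcal{O}(N^3)$.
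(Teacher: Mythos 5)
Your proposal is correct and takes essentially the same route as the paper, which states this lemma without a formal proof and justifies it only by the preceding remark that, because the hyperparameter vector $\theta^{(k)}$ is known, each of the $s$ fold-models needs no marginal-likelihood maximization and reduces to a single kernel solve at the paper's iterative-solver (GMRES) cost of $\mathcal{O}(N^2)$, giving $\mathcal{O}(sN^2)$ overall. Your accounting of the prediction costs and the final $\mathcal{O}(N)$ mean/standard-deviation step simply fills in the details of that same argument.
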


Obviously, even when using the LOO procedure, the corresponding computational cost is $\mathcal{O}(N^3)$. In our MATLAB code, a $10$-fold CV is applied for the error estimate.

\subsection{Algorithms with high probability bounds}

Now we describe the CM as Algorithm \ref{CM:alg:CM}, where $K$ is the total number of contractions, $m$ determines the number of samples added in each detection, $\textrm{minIterInner}$ determines the minimum number of detections per contraction, $\omega$ determines the error bound constant, $\{c_k\}_{k=0}^{K-1}$ is the percentage sequence and $\{t_k\}_{k=0}^{K-1}$ is the confidence parameter sequence.

\begin{algorithm}
\caption{Contraction Method}
\label{CM:alg:CM}
\begin{algorithmic}[1]
\STATE{Preset $K,m,\textrm{minIterInner}\in\mathbb{N}$, $\omega\in(0,1]$, $\{c_k\}_{k=1}^K$ and $\{t_k\}_{k=1}^K$.}
\STATE{Initialize $D^{(0)}:=\Omega\subset\mathbb{R}^n$, $\chi=\varnothing$, and $N_\textrm{total}=0$.}
\FOR{$k=0,1,\cdots,K-1$}
\STATE{Set $\textrm{iterInner}=0$.}
\WHILE{$1$}
\STATE{Update $\textrm{iterInner}=\textrm{iterInner}+1$.}
\STATE{Add $m$ points $X$ (i.e., $X\in\mathbb{R}^{m\times n}$) on $D^{(k)}$ from $\chi$ by the RRW based strategy.}
\STATE{Evaluate $Y=f(X)$ and update $(\chi,f_\chi)=(\chi,f_\chi)\cup(X,Y)$.}
\STATE{Build a GP regression model $\mathcal{A}f$ w.r.t. $(\chi,f_\chi)$.}
\STATE{Estimate $\mu_\varepsilon$ and $\sigma_\varepsilon$ of the error of $\mathcal{A}f$ by the CV procedure.}
\STATE{Find $a^*=\arg\min_{x\in D^{(k)}}\mathcal{A}f(x)$ and update $(\chi,f_\chi)=(\chi,f_\chi)\cup(a^*,f(a^*))$.}
\STATE{Update $u^{(k)}=\textrm{prctile}(f_\chi,c_k)$, $N_\textrm{total}=N_\textrm{total}+m+1$ and $f_{\textrm{best}}^*=f^*_\chi$.}
\IF{$|\mu_\varepsilon|+t_k\sigma_\varepsilon\leqslant\omega(u^{(k)}-f^*_\chi)$ and $\textrm{iterInner}\geqslant \textrm{minIterInner}$}
\STATE{Define $D^{(k+1)}:=\{x\in D^{(k)}:\mathcal{A}f(x)\leqslant u^{(k)}\}$.}
\STATE{Update $\chi=\chi\cap D^{(k+1)}$ and the type parameter.}
\STATE{Break the \textbf{while} loop.}
\ENDIF
\ENDWHILE
\ENDFOR
\end{algorithmic}
\end{algorithm}

The following conclusion establishes the conditions for the logarithmic time complexity of Algorithm \ref{CM:alg:CM} with high probability.
\begin{thm}[Logarithmic time contractible, high probability]\label{CM:thm:LTC2}
Suppose there exist three constants $\rho,p,q>0$ such that the problem \eqref{CM:eq:COP} satisfies Assumption \ref{CM:ass:HLFD} and Algorithm \ref{CM:alg:CM} is run with a sequence $\{u^{(k)}\}$ and parameters $\omega<q$, $t_k\geqslant2$ so that $\sum_{k=0}^{K-1}t_k^{-2}\leqslant\frac{5}{6}\log\frac{1}{1-\delta}$, $\mu(D^{(k+1)})\leqslant\frac{1}{2}\mu(D^{(k)})$ and
\begin{equation*}
\frac{p}{1+p}\left(u^{(k)}-f^*\right)<u^{(k+1)}-f^*
<\frac{1}{1+q}\left(u^{(k)}-f^*\right),~~\textrm{where}~~pq<1.
\end{equation*}
Then, after $K$ contractions, the upper bound
\begin{equation*}
 \max_{x\in D^{(K)}}[f(x)-f^*]<\left(\frac{1+\omega}{1+q}\right)^K
 \max_{x\in\Omega}[f(x)-f^*]
\end{equation*}
holds with probability at least $1-\delta$, there is a fixed $N_{\Omega,f}\in\mathbb{N}$ such that the number of function evaluations per contraction does not exceed $N_{\Omega,f}$, the total number of function evaluations does not exceed $\mathcal{O}\big(KN_{\Omega,f}\big)$, and the total time complexity does not exceed $\mathcal{O}\big(KN_{\Omega,f}^4\big)$.
\end{thm}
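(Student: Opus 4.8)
The plan is to reduce the statement to the deterministic machinery already in hand --- chiefly Corollary \ref{CM:cor:LTC}, which packages Theorem \ref{CM:thm:LTC} precisely under the hypotheses in force here ($u^{(k)}$ obeying \eqref{CM:eq:LTCu} and $\mu(D^{(k+1)})\leqslant\tfrac12\mu(D^{(k)})$) --- and then to layer the probabilistic error estimate \eqref{CM:eq:PBE} on top of it. The only place Algorithm \ref{CM:alg:CM} departs from the idealized construction of Definition \ref{CM:defn:D} is that the error bound condition \eqref{CM:eq:DC} is never checked exactly; instead the inner while loop stops once the $s$-fold CV surrogate $\varepsilon^{(k)}_{\textrm{CV}}=|\mu^{(k)}_\varepsilon|+t_k\sigma^{(k)}_\varepsilon$ drops below $\omega(u^{(k)}-f^*_\chi)$. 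So the whole argument hinges on controlling, with high probability, the gap between this surrogate and the true quantity $\max_{x\in D^{(k)}}|\varepsilon^{(k)}(x)|$.

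For the convergence claim I would define, for each $k=0,\dots,K-1$, the event $A_k=\{\max_{x\in D^{(k)}}|\varepsilon^{(k)}(x)|\leqslant\varepsilon^{(k)}_{\textrm{CV}}\}$, which by \eqref{CM:eq:PBE} satisfies $P(A_k)\geqslant 1-t_k^{-2}$. On $A_k$ the termination test $\varepsilon^{(k)}_{\textrm{CV}}\leqslant\omega(u^{(k)}-f^*_\chi)$ forces the genuine error bound condition $\max_{x\in D^{(k)}}|\varepsilon^{(k)}(x)|\leqslant\omega(u^{(k)}-f^*_{\chi^{(k)}})$, so the $k$th step is a bona fide contraction in the sense of Definition \ref{CM:defn:D}. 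Since the hypotheses match those of Lemma \ref{CM:lem:TFr} and Corollary \ref{CM:cor:LTC}, Theorem \ref{CM:thm:SConv} then supplies the one-step factor $\tfrac{1+\omega}{1+q}<1$. Intersecting over $k$ and using $1-t_k^{-2}\geqslant e^{-1.2t_k^{-2}}$ (the second inequality of \eqref{CM:eq:PBEI}) together with the hypothesis $\sum_{k=0}^{K-1}t_k^{-2}\leqslant\tfrac56\log\tfrac1{1-\delta}$ yields
\[
P\Big(\bigcap_{k=0}^{K-1}A_k\Big)\geqslant\prod_{k=0}^{K-1}e^{-1.2t_k^{-2}}=\exp\Big(-1.2\sum_{k=0}^{K-1}t_k^{-2}\Big)\geqslant\exp\Big(-\log\tfrac1{1-\delta}\Big)=1-\delta,
\]
since $1.2\times\tfrac56=1$. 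On this event the linear factor is applied $K$ times, giving the advertised bound on $\max_{x\in D^{(K)}}[f(x)-f^*]$.

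For the complexity I would lean directly on the deterministic count. Corollary \ref{CM:cor:LTC} gives $N_{\Omega,f}=\mathcal{O}(2^s\rho^n/\pi^n)$, and combining $\mu(D^{(k)})\leqslant 2^{-(k-1)}\mu(\Omega)$ with the prescribed density $C2^{k+s}\rho^n/\pi^n$ shows the density doubling is exactly cancelled by the halving of $\mu(D^{(k)})$, so the sample size the kernel interpolant requires --- hence the number of new evaluations the inner loop must accumulate before the true error and, on $A_k$, the surrogate fall below threshold --- never exceeds $N_{\Omega,f}$; this gives the per-contraction bound and, summing, the $\mathcal{O}(KN_{\Omega,f})$ total. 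The one genuinely new cost is the jump to $N_{\Omega,f}^4$: unlike the GMRES interpolant bound used in Theorem \ref{CM:thm:LTC}, Algorithm \ref{CM:alg:CM} rebuilds a full GP regression with hyperparameter fitting (cost $\mathcal{O}(N^3)$ by Lemma \ref{CM:lem:GP}) and runs a CV estimate (cost $\mathcal{O}(sN^2)$ by Lemma \ref{CM:lem:PBE}) at each of the $\mathcal{O}(N_{\Omega,f})$ inner iterations, so one contraction costs $\mathcal{O}(N_{\Omega,f})\cdot\mathcal{O}(N_{\Omega,f}^3)=\mathcal{O}(N_{\Omega,f}^4)$ and $K$ of them cost $\mathcal{O}(KN_{\Omega,f}^4)$.

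The delicate part is showing that, on $A_k$, the inner loop actually terminates within $N_{\Omega,f}$ evaluations --- that is, that the \emph{surrogate}, not merely the true error, eventually drops below $\omega(u^{(k)}-f^*_\chi)$. Here I would invoke the remark following Lemma \ref{CM:lem:GP}: once the sampling density meets the hypothesis of Lemma \ref{CM:lem:TFr}, the hyperparameter-adapted GP regression degenerates to the Gaussian kernel interpolant $\mathcal{I}_{\chi^{(k)}}f$, whose true error is controlled by Lemma \ref{CM:lem:HLFDF&KI} and is already below threshold, so the CV surrogate estimates a quantity that is small and termination follows up to the event $A_k$. Cleanly separating \emph{the loop stops} from \emph{the loop stops at the right place} is the crux, because a premature stop is exactly the complement of $A_k$ and is therefore charged to the $t_k^{-2}$ failure budget rather than threatening the convergence bound.
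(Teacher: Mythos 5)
Your proposal is correct and follows essentially the same route as the paper's own proof: reduce to the deterministic contraction machinery (Lemma \ref{CM:lem:TFr}/Corollary \ref{CM:cor:LTC} plus Theorem \ref{CM:thm:SConv}), attach the Chebyshev-based CV bound \eqref{CM:eq:PBE} to each contraction, multiply the per-step bounds $1-t_k^{-2}\geqslant e^{-1.2t_k^{-2}}$ against the budget $\sum_k t_k^{-2}\leqslant\frac{5}{6}\log\frac{1}{1-\delta}$, and obtain the sample and time complexity from the measure-halving/density-doubling cancellation together with Lemmas \ref{CM:lem:GP}, \ref{CM:lem:SP} and \ref{CM:lem:PBE}. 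Even your ``delicate part'' (inner-loop termination via degeneration of the hyperparameter-adapted GP to the kernel interpolant) is the same device the paper uses, which it formalizes only slightly more by asserting a two-sided constant $C_t$ relating the CV surrogate to the true maximum error and absorbing $C_t$ into the sampling density.
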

\begin{rem}\label{CM:rem:N}
It is worth noting that, as shown in Theorem \ref{CM:thm:LTC}, $N_{\Omega,f}= \mathcal{O}\big(2^{s+1}\rho^n/\pi^n\big)$, where $s$ is the unique integer such that
\begin{equation*}
9\left(\frac{p}{1+p}\right)^s\|\hat{f}\|_{L_1\cap L_2}
<\omega\Big(\max_{x\in\Omega}f(x)-f^*\Big)\leqslant9
\left(\frac{p}{1+p}\right)^{s-1}\|\hat{f}\|_{L_1\cap L_2}.
\end{equation*}
Thus, when $\rho\leqslant\pi$, $N_{\Omega,f}$ is independent of the dimension $n$, or in other words, this type of HLFDFs has a good approximation property that does not depend on dimensionality; when $\rho>\pi$, $N_{\Omega,f}$ depends exponentially on the dimension $n$, but this is the same old story since the curse of dimensionality in high-dimensional problems exists in a general sense. Of course, with the aid of conditions such as smoothness, this dependence could be further reduced or even released.
\end{rem}
\begin{proof}
Due to the adaptability of hyperparameters, if $N^{(k)}$ is large enough, then the GP regression $\mathcal{A}_{\chi^{(k)}}f$ will degenerate to the corresponding interpolant $\mathcal{I}_{\chi^{(k)}}f$ because of the existence of Gaussian kernel interpolant, i.e., Lemmas \ref{CM:lem:HLFDFinterpolating} and \ref{CM:lem:BLS&RKHS}. Let $t^{**}=\max_{0\leqslant k\leqslant K-1}t_k$, then there is a $C_t>0$ such that
\begin{equation*}
  \frac{1}{C_t}\max|\varepsilon^{(k)}|\leqslant|\mu_\varepsilon|
  +t^{**}\sigma_\varepsilon\leqslant C_t\max|\varepsilon^{(k)}|,
\end{equation*}
and then, according to Lemma \ref{CM:lem:TFr} and $\mu(\Omega)\leqslant1$, there is $N_{\Omega,f}=\mathcal{O}(2^s\rho^n/\pi^n)$ such that
\begin{equation*}
N^{(k)}=C_tC\mu(D^{(k)})2^{k+s}\rho^n/\pi^n\leqslant
C_tC2^s\rho^n/\pi^n\leqslant N_{\Omega,f}
\end{equation*}
and
\begin{equation*}
  \mathcal{A}_{\chi^{(k)}}f=\mathcal{I}_{\chi^{(k)}}f,
\end{equation*}
where $C$ is as in Theorem \ref{CM:thm:LTC}. From Lemma \ref{CM:lem:TFr}, for any $k=0,1,\cdots,K-1$, since $\chi^{(k)}$ is quasi-uniformly distributed w.r.t. a sampling density of $C_tC2^{k+s}\rho^n/\pi^n$, the model $\mathcal{A}_{\chi^{(k)}}f$ satisfies the error bound condition
\begin{equation*}
\|\mathcal{A}_{\chi^{(k)}}f-f\|_{L_\infty(D^{(k)})}<\omega\big(u^{(k)}-f^*\big)
\end{equation*}
with the strong convergence condition
\begin{equation*}
  u^{(k)}-f^*\leqslant\frac{1}{1+q}\max_{x\in D^{(k)}}[f(x)-f^*].
\end{equation*}
Since the estimation of the model error bound, i.e.,  $|\mu_\varepsilon|+t_k\sigma_\varepsilon$, holds with probability at least
\begin{equation*}
  1-t_k^{-2}\geqslant e^{-1.2t_k^{-2}},~~\textrm{for all}~~k\geqslant2,
\end{equation*}
it follows from Theorem \ref{CM:thm:SConv} that the error upper bound
\begin{equation*}
 \max_{x\in D^{(k+1)}}[f(x)-f^*]\leqslant
 \left(\frac{1+\omega}{1+q}\right)\max_{x\in D^{(k)}}[f(x)-f^*]
\end{equation*}
also holds with probability at least $e^{-1.2t_k^{-2}}$.

Thus, when $\sum_{k=0}^{K-1}t_k^{-2}\leqslant\frac{5}{6}\log\frac{1}{1-\delta}$, after $K$ contractions, the upper bound
\begin{equation*}
 \max_{x\in D^{(K)}}[f(x)-f^*]<\left(\frac{1+\omega}{1+q}\right)^K
 \max_{x\in\Omega}[f(x)-f^*]
\end{equation*}
holds with probability at least
\begin{equation*}
  \prod_{k=0}^{K-1}e^{-1.2t_k^{-2}}
  =e^{-1.2\sum_{k=0}^{K-1}t_k^{-2}}\geqslant1-\delta,
\end{equation*}
as claimed.

Finally, according to Lemmas \ref{CM:lem:GP}, \ref{CM:lem:SP} and \ref{CM:lem:PBE}, even when $m=1$, that is, after adding two points (including the point that minimizes the current model), the model needs to be updated, the complexity required for each contraction also does not exceed $\mathcal{O}\big(N_{\Omega,f}^4\big)$, in other words, the total time complexity of all $K$ contractions does not exceed $\mathcal{O}\big(KN_{\Omega,f}^4\big)$.
\end{proof}

The following conclusion establishes the conditions for the polynomial time complexity of Algorithm \ref{CM:alg:CM} with high probability.
\begin{thm}[Polynomial time contractible, high probability]\label{CM:thm:PTC2}
~Suppose there exist three constants $\rho,p,q>0$ such that (i) the problem \eqref{CM:eq:COP} satisfies Assumption \ref{CM:ass:HLFD} and (ii) Algorithm \ref{CM:alg:CM} is run with a sequence $\{u^{(k)}\}$ and relevant parameters $\omega<q$, $t_k\geqslant2$ so that $\sum_{k=0}^{K-1}t_k^{-2}\leqslant\frac{5}{6}\log\frac{1}{1-\delta}$ and
\begin{equation}\label{CM:eq:PTCu2}
\frac{q}{1+q}\left(u^{(k)}-f^*\right)<u^{(k+1)}-f^*
<\frac{1}{1+q}\left(u^{(k)}-f^*\right),~~\textrm{where}~~p<q<1.
\end{equation}
Then, after $K$ contractions, the upper bound
\begin{equation*}
 \max_{x\in D^{(K)}}[f(x)-f^*]<\left(\frac{1+\omega}{1+q}\right)^K
 \max_{x\in\Omega}[f(x)-f^*]
\end{equation*}
holds with probability at least $1-\delta$, and there exists a fixed $N_{\Omega,f}\in\mathbb{N}$ such that the number of function evaluations used by each model does not exceed $2^{kl}N_{\Omega,f}$, that is, the total number of function evaluations does not exceed $\mathcal{O}\big(\frac{2^{Kl}-1}{2^l-1}N_{\Omega,f}\big)$ and the total time complexity does not exceed $\mathcal{O}\big(\frac{2^{4Kl}-1}{2^{4l}-1} N_{\Omega,f}^4\big)$, where $l$ is the unique integer such that
\begin{equation*}
  \left(\frac{p}{1+p}\right)^l\leqslant\frac{q}{1+q}
  <\left(\frac{p}{1+p}\right)^{l-1}.
\end{equation*}
\end{thm}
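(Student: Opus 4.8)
The plan is to follow the template of Theorem~\ref{CM:thm:LTC2} almost verbatim, substituting the polynomial-time interpolation lemma (Lemma~\ref{CM:lem:TFra}) for its logarithmic-time counterpart and then carrying the per-step sample size $N^{(k)}$, which now grows like $2^{kl}$, through two finite geometric series. The probabilistic core---converting a deterministic per-step contraction into a high-probability statement by a union bound over the $K$ contractions---is identical to that of Theorem~\ref{CM:thm:LTC2}.

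First I would argue that, on each $D^{(k)}$, once the inner loop of Algorithm~\ref{CM:alg:CM} has driven the sample density up to $C2^{kl+s}\rho^n/\pi^n$, the adapted GP regression $\mathcal{A}_{\chi^{(k)}}f$ coincides with the Gaussian kernel interpolant $\mathcal{I}_{\chi^{(k)}}f$; this uses the adaptability of the hyperparameters together with the existence of a bandlimited interpolant from Lemmas~\ref{CM:lem:HLFDFinterpolating} and~\ref{CM:lem:BLS&RKHS}. Since we are in the regime where the integer $l$ determined by $(\tfrac{p}{1+p})^l\leqslant\tfrac{q}{1+q}<(\tfrac{p}{1+p})^{l-1}$ satisfies $l>1$, the sequence $\{u^{(k)}\}$ of~\eqref{CM:eq:PTCu2} is exactly of the form required by Lemma~\ref{CM:lem:TFra}. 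Invoking that lemma gives, for every $k$, both the error bound condition $\|\mathcal{I}_{\chi^{(k)}}f-f\|_{L_\infty(D^{(k)})}<\omega(u^{(k)}-f^*)$ and the strong convergence condition $u^{(k)}-f^*\leqslant\tfrac{1}{1+q}\max_{x\in D^{(k)}}[f(x)-f^*]$.

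Next I would promote these deterministic facts to a per-step linear contraction holding with controlled probability. By the Chebyshev-based estimate~\eqref{CM:eq:PBE}, the CV output $|\mu_\varepsilon^{(k)}|+t_k\sigma_\varepsilon^{(k)}$ dominates $\max_{x\in D^{(k)}}|\varepsilon^{(k)}(x)|$ with probability at least $1-t_k^{-2}\geqslant e^{-1.2t_k^{-2}}$, the latter inequality being~\eqref{CM:eq:PBEI} for $t_k\geqslant2$. On that event the stopping test of Algorithm~\ref{CM:alg:CM} certifies the error bound condition, so Theorem~\ref{CM:thm:SConv} (with $\omega<q$) yields $\max_{x\in D^{(k+1)}}[f(x)-f^*]\leqslant\tfrac{1+\omega}{1+q}\max_{x\in D^{(k)}}[f(x)-f^*]$ with probability at least $e^{-1.2t_k^{-2}}$. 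Multiplying over $k=0,\dots,K-1$ and using $\sum_{k}t_k^{-2}\leqslant\tfrac{5}{6}\log\tfrac{1}{1-\delta}$ gives $\prod_k e^{-1.2t_k^{-2}}=e^{-1.2\sum_k t_k^{-2}}\geqslant1-\delta$, which after telescoping is the claimed high-probability bound $\max_{x\in D^{(K)}}[f(x)-f^*]<(\tfrac{1+\omega}{1+q})^K\max_{x\in\Omega}[f(x)-f^*]$.

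Finally I would read off the complexity. Using only $\mu(D^{(k)})\leqslant\mu(\Omega)\leqslant1$ together with the density $C2^{kl+s}\rho^n/\pi^n$, the per-model size satisfies $N^{(k)}=C\mu(D^{(k)})2^{kl+s}\rho^n/\pi^n\leqslant2^{kl}N_{\Omega,f}$ with $N_{\Omega,f}=\mathcal{O}(2^s\rho^n/\pi^n)$; summing gives the total $\sum_{k=0}^{K-1}2^{kl}N_{\Omega,f}=\tfrac{2^{Kl}-1}{2^l-1}N_{\Omega,f}$ function evaluations. For the running time, Lemmas~\ref{CM:lem:GP},~\ref{CM:lem:SP} and~\ref{CM:lem:PBE} bound each contraction---even when the model is rebuilt after every single added point---by $\mathcal{O}((N^{(k)})^4)=\mathcal{O}(2^{4kl}N_{\Omega,f}^4)$, and a second geometric sum yields $\mathcal{O}(\tfrac{2^{4Kl}-1}{2^{4l}-1}N_{\Omega,f}^4)$. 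The only genuinely new bookkeeping relative to Theorem~\ref{CM:thm:LTC2} is that $N^{(k)}$ is no longer uniformly bounded; the point to get right is that its growth factor $2^{kl}$ comes entirely from the sampling exponent $l$ and not from the domain measure, so both sums remain convergent geometric series with ratios $2^l$ and $2^{4l}$, and I expect this---rather than anything in the probability argument---to be the only place demanding care.
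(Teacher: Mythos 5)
Your proposal matches the paper's own proof essentially step for step: the same reduction to Lemma \ref{CM:lem:TFra} (via hyperparameter adaptability forcing $\mathcal{A}_{\chi^{(k)}}f=\mathcal{I}_{\chi^{(k)}}f$), the same Chebyshev-based per-step bound $1-t_k^{-2}\geqslant e^{-1.2t_k^{-2}}$ multiplied over $K$ contractions, and the same two geometric sums using only $\mu(D^{(k)})\leqslant 1$ to get $N^{(k)}\leqslant 2^{kl}N_{\Omega,f}$ and per-contraction cost $\mathcal{O}(2^{4kl}N_{\Omega,f}^4)$. The only cosmetic difference is that the paper also introduces a constant $C_t$ (via $t^{**}=\max_k t_k$) to tie the CV estimate to the true sup-norm error when fixing the sampling density, which your argument absorbs implicitly; this does not change the substance.
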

\begin{rem}
See Remark \ref{CM:rem:N} for the dependence of $N_{\Omega,f}$ on the dimension $n$.
\end{rem}
\begin{rem}
Regarding the restriction on $u^{(k)}$, i.e., \eqref{CM:eq:PTCu2}, Proposition \ref{CM:prop:TF} actually establishes its generality for all nonconstant continuous functions in the sense of probability.
\end{rem}
\begin{proof}
This proof is similar to Theorem \ref{CM:thm:LTC2}. First, let $t^{**}=\max_{0\leqslant k\leqslant K-1}t_k$, then there is a $C_t>0$ such that
\begin{equation*}
  \frac{1}{C_t}\max|\varepsilon^{(k)}|\leqslant|\mu_\varepsilon|
  +t^{**}\sigma_\varepsilon\leqslant C_t\max|\varepsilon^{(k)}|,
\end{equation*}
and then, according to Lemma \ref{CM:lem:TFra}, $\mu(\Omega)\leqslant1$ and the adaptability of hyperparameters for the GP regression, there is $N_{\Omega,f}=\mathcal{O}(2^s\rho^n/\pi^n)$ such that
\begin{equation*}
N^{(k)}=C_tC\mu(D^{(k)})2^{kl+s}\rho^n/\pi^n\leqslant
C2^{kl}2^s\rho^n/\pi^n\leqslant2^{kl}N_{\Omega,f},
\end{equation*}
and
\begin{equation*}
  \mathcal{A}_{\chi^{(k)}}f=\mathcal{I}_{\chi^{(k)}}f,
\end{equation*}
where $C$ is as in Theorem \ref{CM:thm:LTC}. From Lemma \ref{CM:lem:TFra}, for any $k=0,1,\cdots,K-1$, since $\chi^{(k)}$ is quasi-uniformly distributed w.r.t. a sampling density of $C_tC2^{kl+s}\rho^n/\pi^n$, the model $\mathcal{A}_{\chi^{(k)}}f$ satisfies the error bound condition and strong convergence condition. Since the estimation of the model error bound, i.e.,  $|\mu_\varepsilon|+t_k\sigma_\varepsilon$, holds with probability at least
\begin{equation*}
  1-t_k^{-2}\geqslant e^{-1.2t_k^{-2}},~~\textrm{for all}~~k\geqslant2,
\end{equation*}
it follows from Theorem \ref{CM:thm:SConv} that the error upper bound
\begin{equation*}
 \max_{x\in D^{(k+1)}}[f(x)-f^*]\leqslant
 \left(\frac{1+\omega}{1+q}\right)\max_{x\in D^{(k)}}[f(x)-f^*]
\end{equation*}
also holds with probability at least $e^{-1.2t_k^{-2}}$. Thus, when $\sum_{k=0}^{K-1}t_k^{-2}\leqslant\frac{5}{6}\log\frac{1}{1-\delta}$, after $K$ contractions, the upper bound
\begin{equation*}
 \max_{x\in D^{(K)}}[f(x)-f^*]<\left(\frac{1+\omega}{1+q}\right)^K
 \max_{x\in\Omega}[f(x)-f^*]
\end{equation*}
holds with probability at least
\begin{equation*}
  \prod_{k=0}^{K-1}e^{-1.2t_k^{-2}}= e^{-1.2\sum_{k=0}^{K-1}t_k^{-2}}\geqslant1-\delta.
\end{equation*}
Finally, for all $K$ contractions, the total number of function evaluations is
\begin{equation*}
  \sum_{k=0}^{K-1}N^{(k)}\leqslant\sum_{k=0}^{K-1}2^{kl}N_{\Omega,f}
  =\mathcal{O}\bigg(\frac{2^{Kl}-1}{2^l-1}N_{\Omega,f}\bigg),
\end{equation*}
and similarly, according to Lemmas \ref{CM:lem:GP}, \ref{CM:lem:SP} and \ref{CM:lem:PBE}, even if $m=1$, that is, after adding two points (including the point that minimizes the current model), the model needs to be updated, then, the complexity required for the $k$th contraction does not exceed $\mathcal{O}\big(2^{4kl}N_{\Omega,f}^4\big)$, in other words, the total time complexity of all $K$ contractions does not exceed \begin{equation*}
  \sum_{k=0}^{K-1}2^{4kl}N_{\Omega,f}^4
  =\mathcal{O}\bigg(\frac{2^{4Kl}-1}{2^{4l}-1}N_{\Omega,f}^4\bigg),
\end{equation*}
and the proof is complete.
\end{proof}

\section{Numerical experiments}
\label{CM:s7}

We first compare the proposed algorithm with various global methods such as Bayesian optimization (BO), particle swarm optimization (PSO), genetic algorithm (GA), simulated annealing (SA) and differential evolution (DE) for several typical benchmark functions, then also consider a real world application: Lennard-Jones molecular conformation.

\subsection{Comparison with popular global methods}

Here we chose to include the Branin, SIN2, Ackley, and Rosenbrock functions as listed:
\begin{enumerate}
  \item Branin function: $f(x) = a(x_{2}-b x_{1}^{2}+cx_{1}-r)^2 +s(1-t)\cos(x_{1})+s$ with $x_{1}\in[-5,10]$, $x_2\in[0,15]$, where $a=1$, $b=5.1/(4\pi^2)$, $c=5/\pi$, $r=6$, $s=10$ and  $t=1/(8\pi)$. The Branin function has three global minima located at $x^*=(-\pi, 12.275), (\pi, 2.275)$ and $(3\pi,2.475)$, $f(x^*)=0.397887$.
  \item SIN2 function: $f(x)=1+\sin^2(x_{1})+\sin^2(x_{2}) -0.1\exp(-x_{1}^{2}-x_{2}^{2})$ with $x_{i}\in[-5,5]$ for $i=1,2$. Its global minimum is $f(x^*)=0.9$ at $x^*=(0,0)$.
  \item Ackley function:
  \begin{equation*}
    f(x)=-a\exp\left(-b\sqrt{\frac{1}{n}
         \sum_{i=1}^{n}x_{i}^{2}}\right)-\exp\left(\frac{1}{n}
         \sum_{i=1}^{n}\cos(cx_{i})\right)+a+\exp(1),
  \end{equation*}
  where $a=20$, $b=0.2$, $c=2\pi$ and $x_{i}\in [-32.768,32.768]$ for  $i=1,2,\cdots,n$. And the global minimum is $f(x^*)=0$ at $x^*=(0,\cdots,0)$.
  \item Rosenbrock function:
  \begin{equation*}
    f(x) = \sum_{i=1}^{n-1}\left[100(x_{i+1}-x_{i}^2)^2
         +(x_{i}-1)^2\right],
  \end{equation*}
  where $x_{i}\in [-2.048,2.048]$ for $i=1,2,\cdots,n$. The function is unimodal with the global minimum $f(x^*)=0$ at $x^*=(1,\cdots,1)$, which lies in a narrow, parabolic valley.
\end{enumerate}

\begin{table}[!htb]
\begin{center}
\caption{Properties of each function.}
\begin{tabular}{c|l|l|c}
  \hline
  No. & Name & Dimensionality & Feature \\
  \hline
  1 	& Branin 		& $2$			 & Three global minima\\
  2 	& SIN2 			& $2$			 & Many local minima\\
  3 	& Ackley 		& $4,6,10$		 & Many local minima\\
  4 	& Rosenbrock 	& $4,6,10$		 & Long and narrow valley\\
  \hline
\end{tabular}
\label{CM:tab:1}
\end{center}	
\end{table}

A summary of the properties of each objective function can be found in Table \ref{CM:tab:1} above. We have seen the Branin function as an illustrative example in Figures \ref{CM:fig:A1} and \ref{CM:fig:A2}, as well as the $2$D Rosenbrock function in Figure \ref{CM:fig:PT1}. The newly added functions SIN2 and Ackley both have many local minima. The Ackley and Rosenbrock functions are considered with $n=4,6$, and $10$, as they are defined with arbitrary dimensionality $n$. The empirical comparison with various global methods are shown in Figures \ref{CM:fig:Branin}-\ref{CM:fig:Rosenbrock10D}.

\begin{figure}[tbhp]
  \centering
  \subfigure{\includegraphics[width=0.32\textwidth]{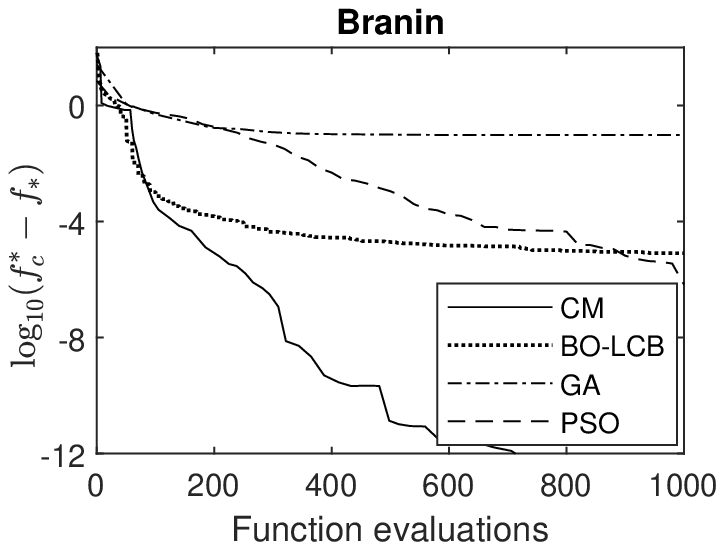}}
  \subfigure{\includegraphics[width=0.32\textwidth]{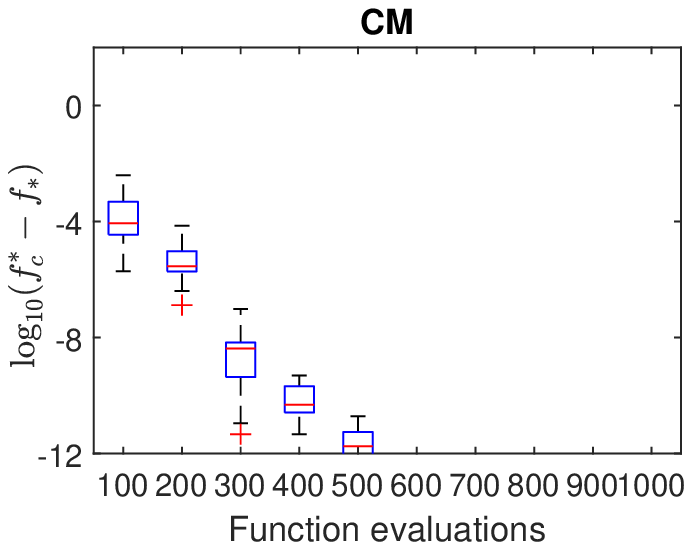}}
  \subfigure{\includegraphics[width=0.32\textwidth]{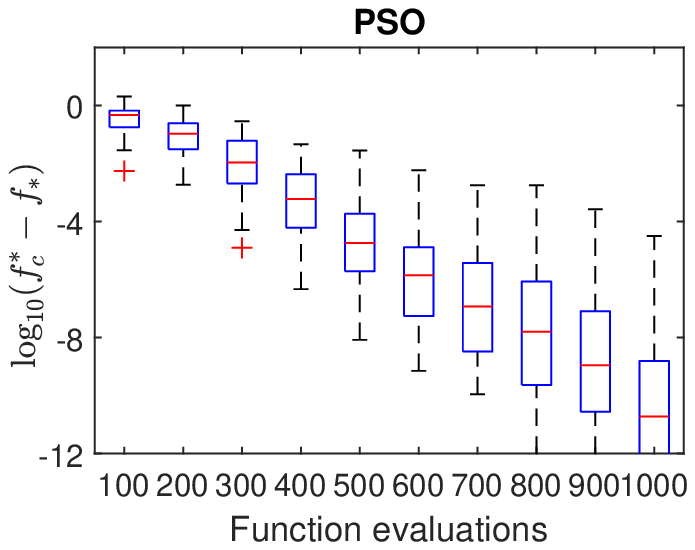}}\\
  \subfigure{\includegraphics[width=0.32\textwidth]{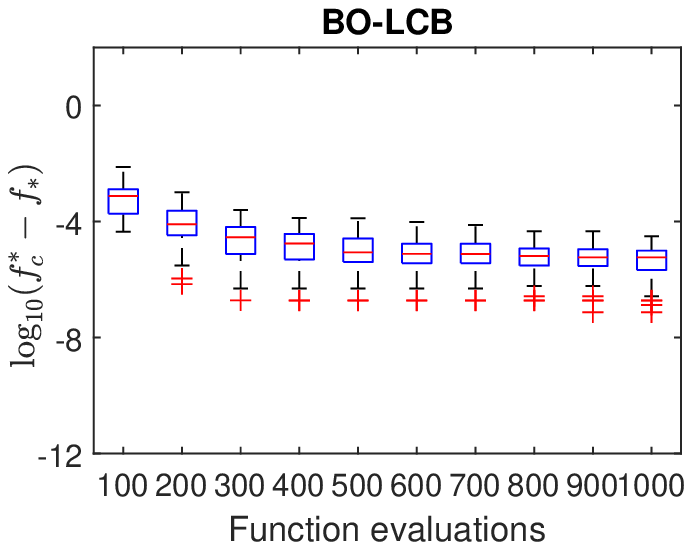}}
  \subfigure{\includegraphics[width=0.32\textwidth]{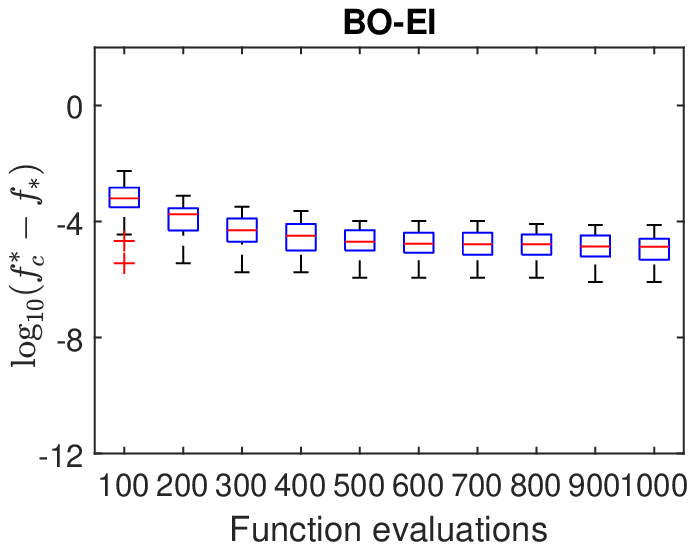}}
  \subfigure{\includegraphics[width=0.32\textwidth]{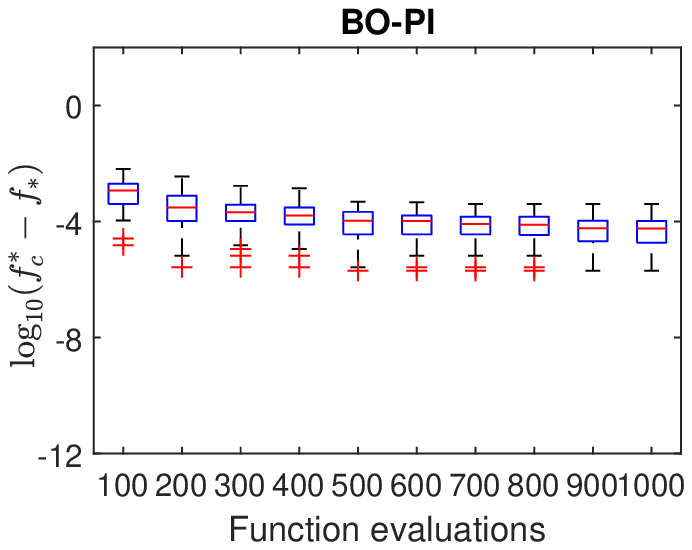}}\\
  \subfigure{\includegraphics[width=0.32\textwidth]{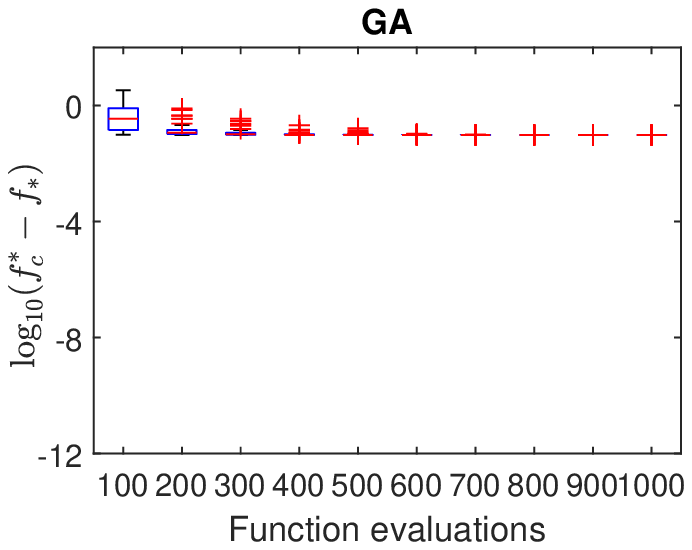}}
  \subfigure{\includegraphics[width=0.32\textwidth]{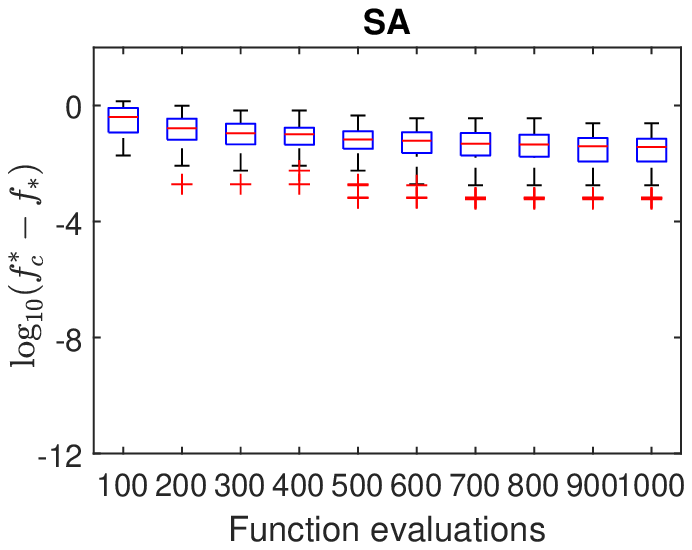}}
  \subfigure{\includegraphics[width=0.32\textwidth]{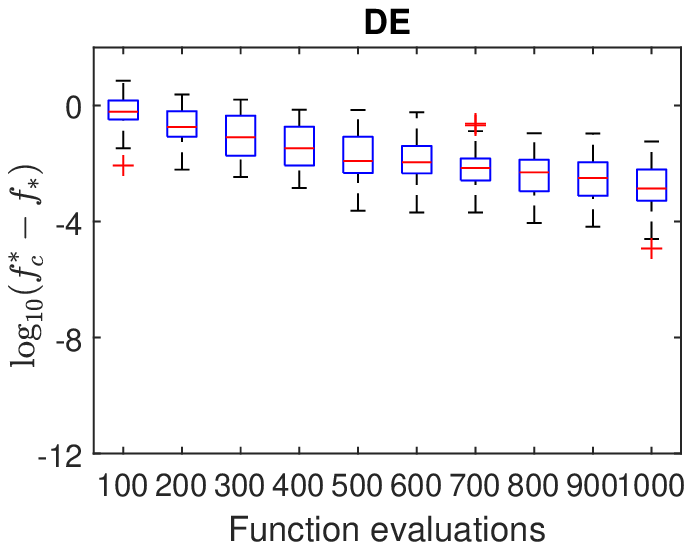}}
  \caption{Comparison for the Branin function. Upper left: the four curves correspond to the four most representative methods for this example, and each curve shows the averaged optimality gap over $50$ independent runs. Upper middle: the box plot shows the results of multiple runs for CM, with parameter setting $K=10$, $m=1$, $\textrm{minIterInner}=1$, $\omega=1$, $c_k=\bar{c}=50$ and $t_k=\bar{t}=2$. Upper right: multiple runs for the current most powerful competitor, i.e., PSO, with swarm size $20$ and default other parameters in MATLAB 2020b. Centre row: the results of multiple runs for three different types of BO with default parameter setting in MATLAB 2020b. Lower left: the results of multiple runs for GA with population size $50$ and default other parameters in MATLAB 2020b. Lower middle: the results of multiple runs for SA with default parameter setting in MATLAB 2020b. Lower right: the results of multiple runs for DE/rand/1/bin with parameter setting $N=10$, $F=0.85$ and $Cr=0.5$. The choice of those parameters follows the existing experiences.}
  \label{CM:fig:Branin}
\end{figure}

\begin{figure}[tbhp]
  \centering
  \subfigure{\includegraphics[width=0.32\textwidth]{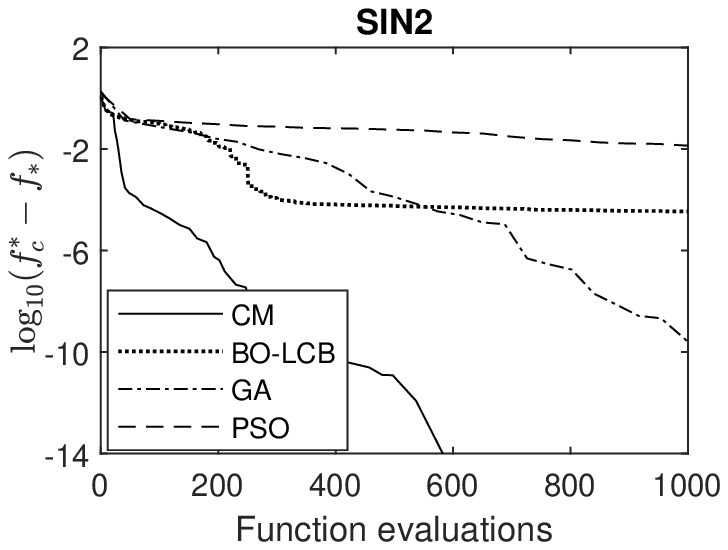}}
  \subfigure{\includegraphics[width=0.32\textwidth]{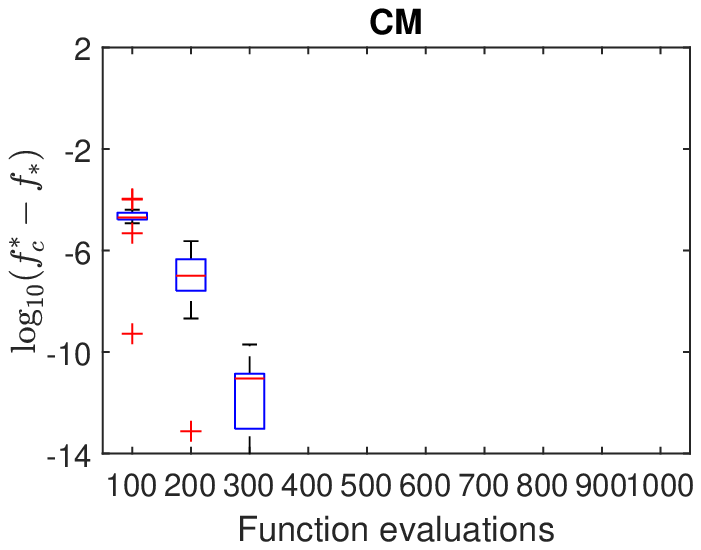}}
  \subfigure{\includegraphics[width=0.32\textwidth]{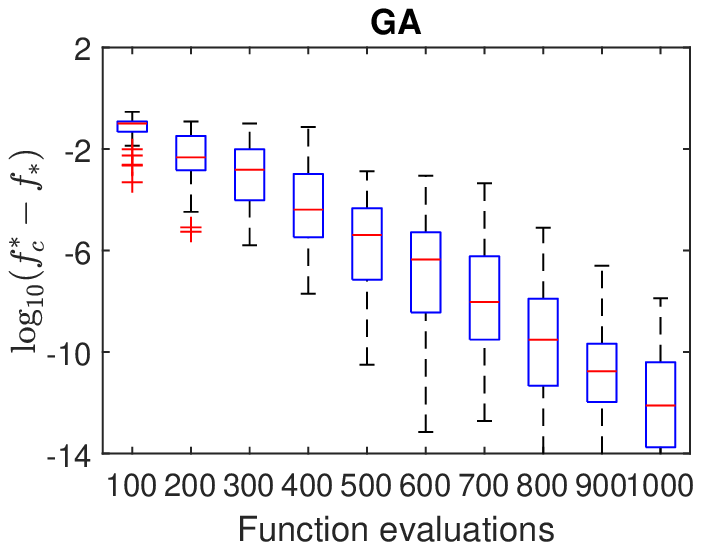}}\\
  \subfigure{\includegraphics[width=0.32\textwidth]{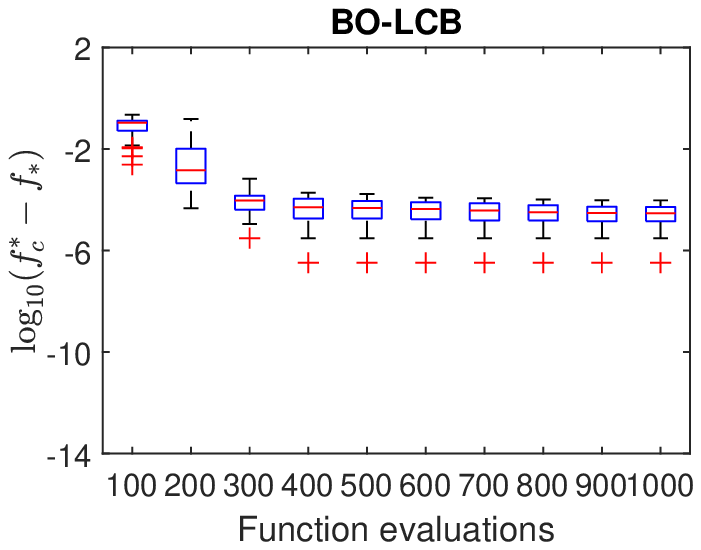}}
  \subfigure{\includegraphics[width=0.32\textwidth]{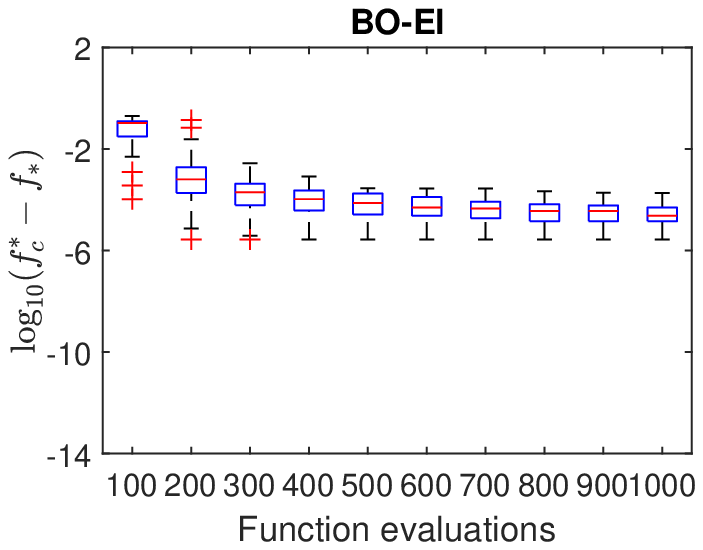}}
  \subfigure{\includegraphics[width=0.32\textwidth]{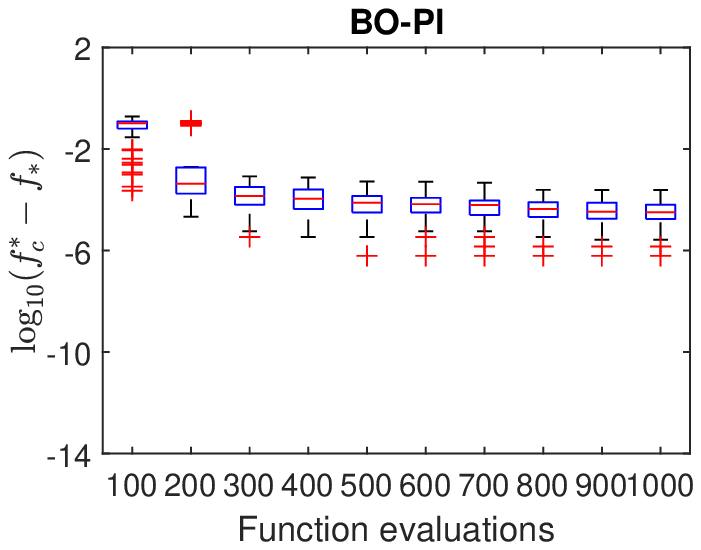}}\\
  \subfigure{\includegraphics[width=0.32\textwidth]{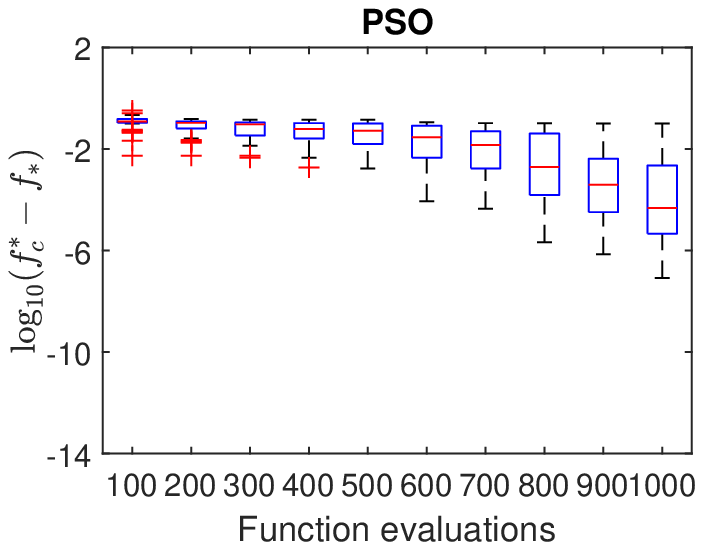}}
  \subfigure{\includegraphics[width=0.32\textwidth]{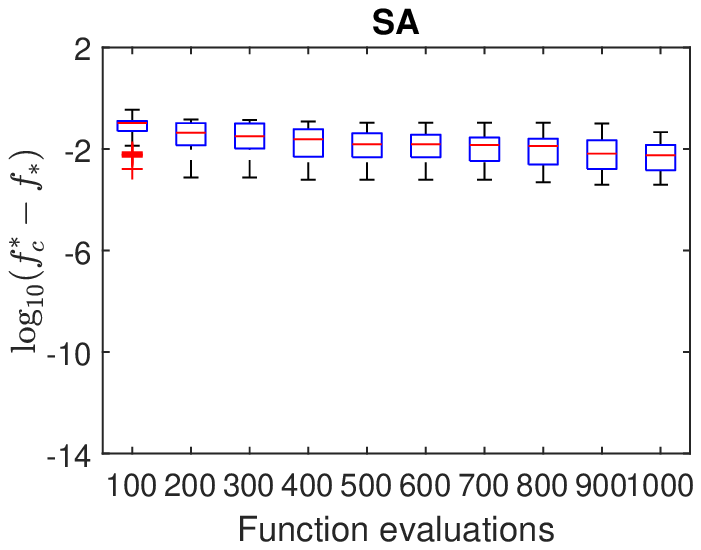}}
  \subfigure{\includegraphics[width=0.32\textwidth]{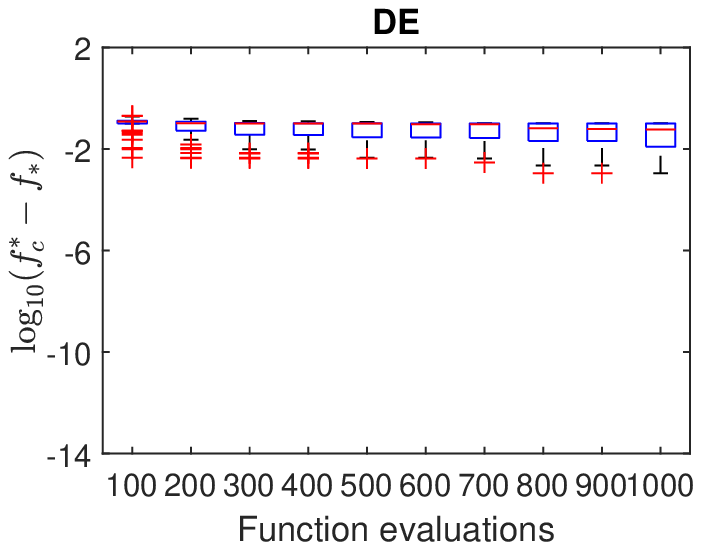}}
  \caption{Comparison for the SIN2 function. Upper left: the four curves correspond to the four most representative methods for this example, and each curve shows the averaged optimality gap over $50$ independent runs. Upper middle: the box plot shows the results of multiple runs for CM, with parameter setting $K=10$, $m=2$, $\textrm{minIterInner}=1$, $\omega=1$, $c_k=\bar{c}=50$ and $t_k=\bar{t}=2.5$. Upper right: multiple runs for the current most powerful competitor, i.e., GA, with population size $40$ and default other parameters in MATLAB 2020b. Centre row: the results of multiple runs for three different types of BO with default parameter setting in MATLAB 2020b. Lower left: the results of multiple runs for PSO, with swarm size $50$ and default other parameters in MATLAB 2020b. Lower middle: the results of multiple runs for SA with default parameter setting in MATLAB 2020b. Lower right: the results of multiple runs for DE/rand/1/bin with parameter setting $N=20$, $F=0.85$ and $Cr=0.5$. The choice of those parameters follows the existing experiences.}
  \label{CM:fig:SIN2}
\end{figure}

\begin{figure}[tbhp]
  \centering
  \subfigure{\includegraphics[width=0.32\textwidth]{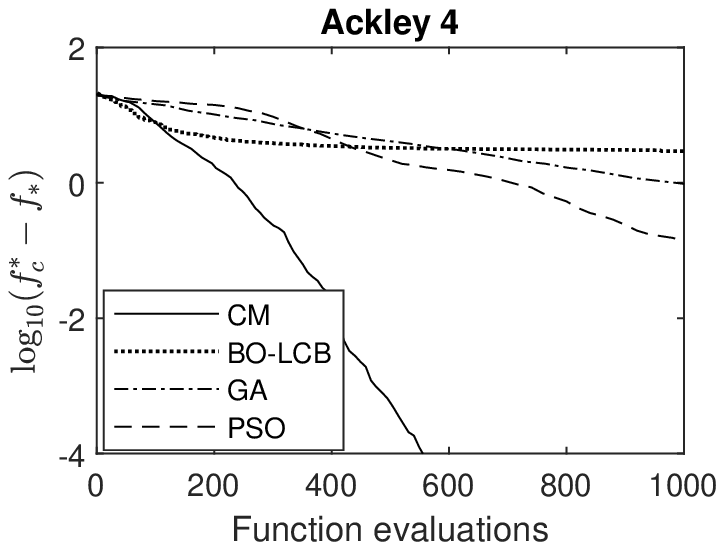}}
  \subfigure{\includegraphics[width=0.32\textwidth]{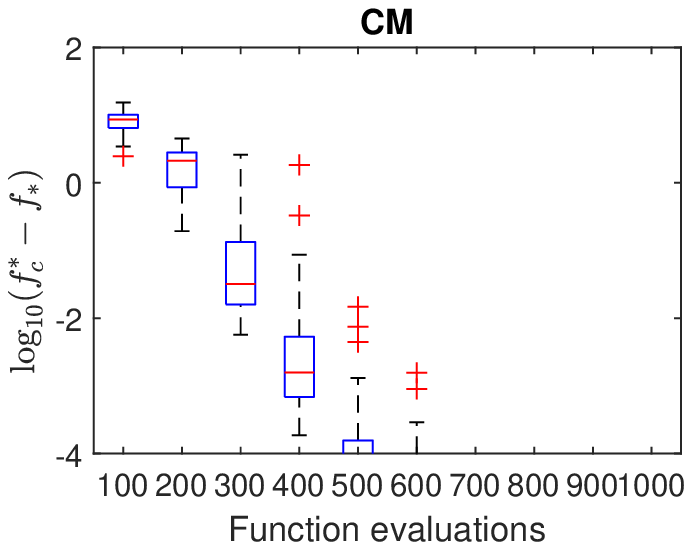}}
  \subfigure{\includegraphics[width=0.32\textwidth]{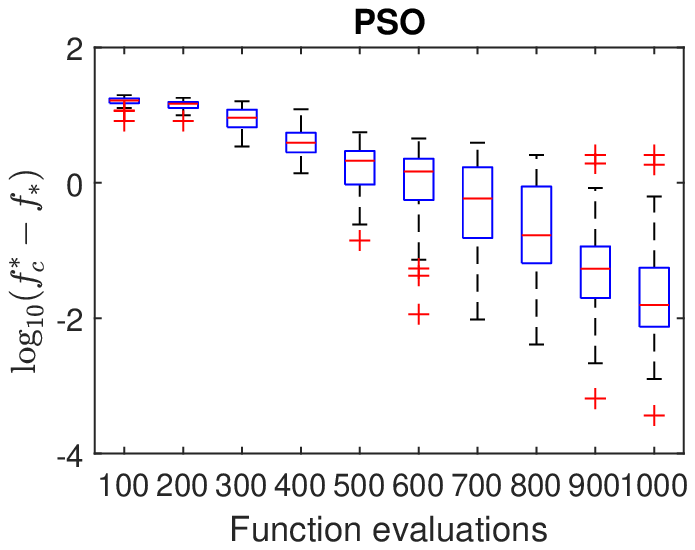}}\\
  \subfigure{\includegraphics[width=0.32\textwidth]{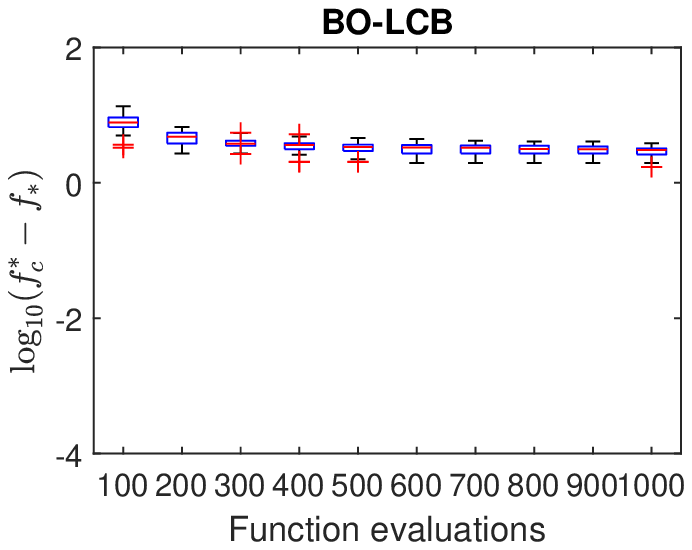}}
  \subfigure{\includegraphics[width=0.32\textwidth]{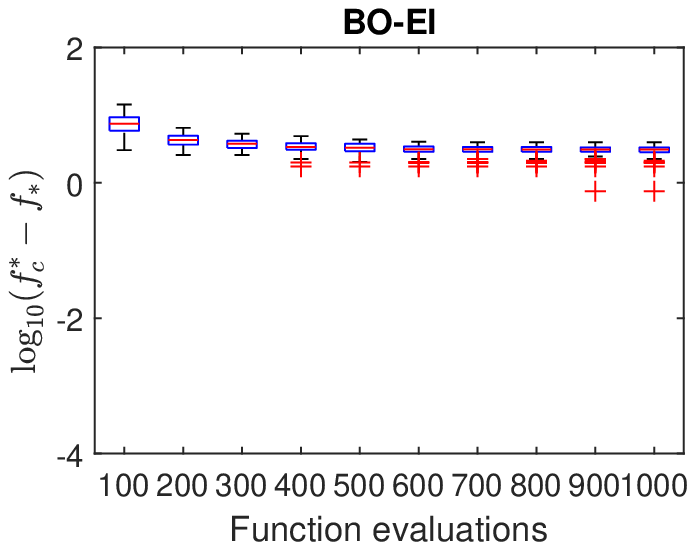}}
  \subfigure{\includegraphics[width=0.32\textwidth]{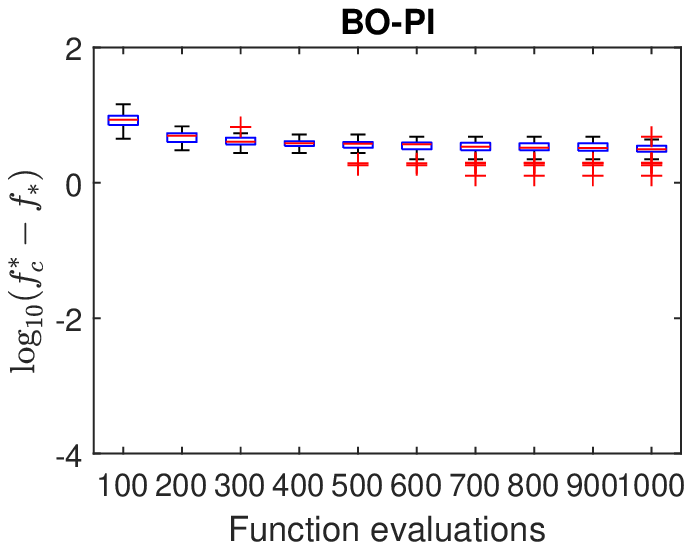}}\\
  \subfigure{\includegraphics[width=0.32\textwidth]{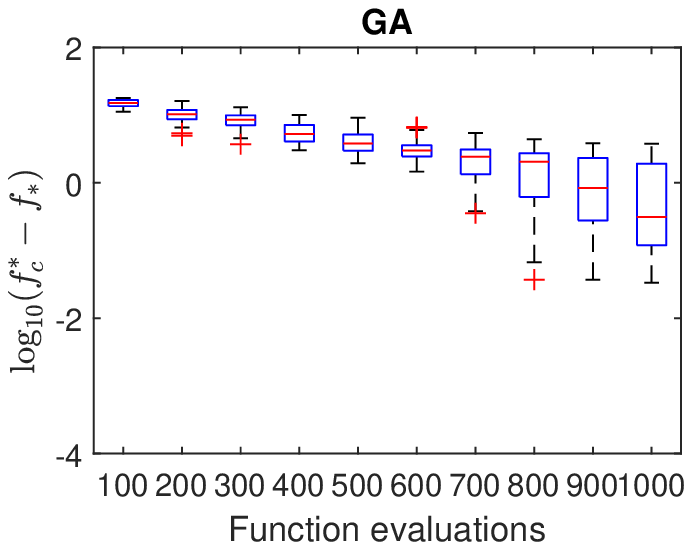}}
  \subfigure{\includegraphics[width=0.32\textwidth]{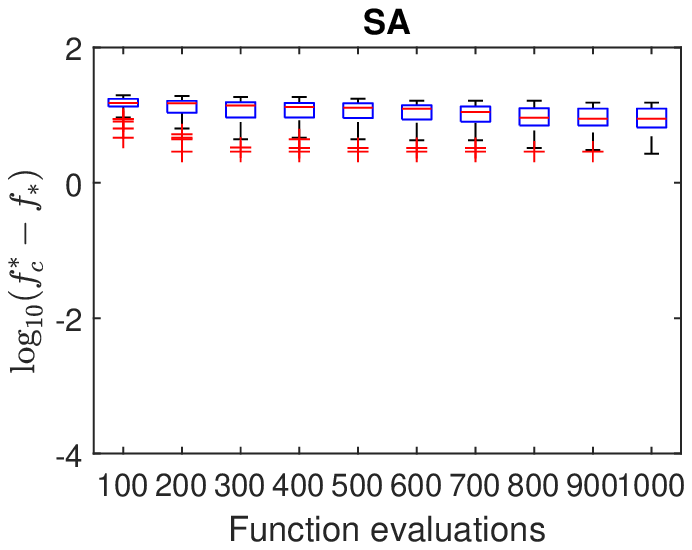}}
  \subfigure{\includegraphics[width=0.32\textwidth]{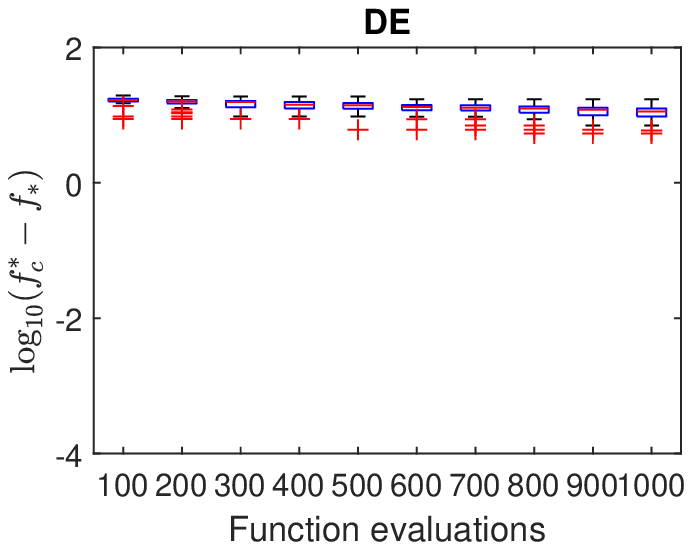}}
  \caption{Comparison for the Ackley function in $4$ dimension. Upper left: the four curves correspond to the four most representative methods for this example, each curve shows the averaged optimality gap over $50$ independent runs. Upper middle: the box plot shows the results of multiple runs for CM, with parameter setting $K=50$, $m=4$, $\textrm{minIterInner}=3$, $\omega=1$, $c_k=\bar{c}=50$ and $t_k=\bar{t}=4$. Upper right: multiple runs for the current most powerful competitor, i.e., PSO, with swarm size $20$ and default other parameters in MATLAB 2020b. Centre row: the results of multiple runs for three different types of BO with default parameter setting in MATLAB 2020b. Lower left: the results of multiple runs for GA with population size $40$ and default other parameters in MATLAB 2020b. Lower middle: the results of multiple runs for SA with default parameter setting in MATLAB 2020b. Lower right: the results of multiple runs for DE/rand/1/bin with parameter setting $N=40$, $F=0.8$ and $Cr=0.5$. The choice of those parameters follows the existing experiences.}
  \label{CM:fig:Ackley4D}
\end{figure}

\begin{figure}[tbhp]
  \centering
  \subfigure{\includegraphics[width=0.32\textwidth]{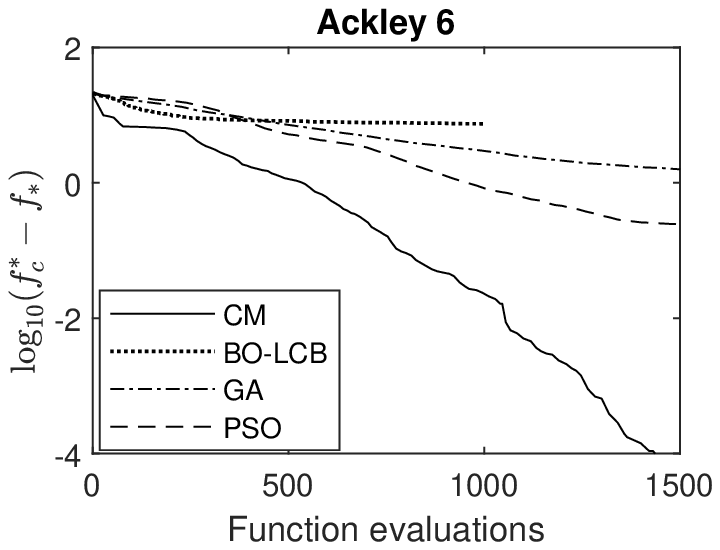}}
  \subfigure{\includegraphics[width=0.32\textwidth]{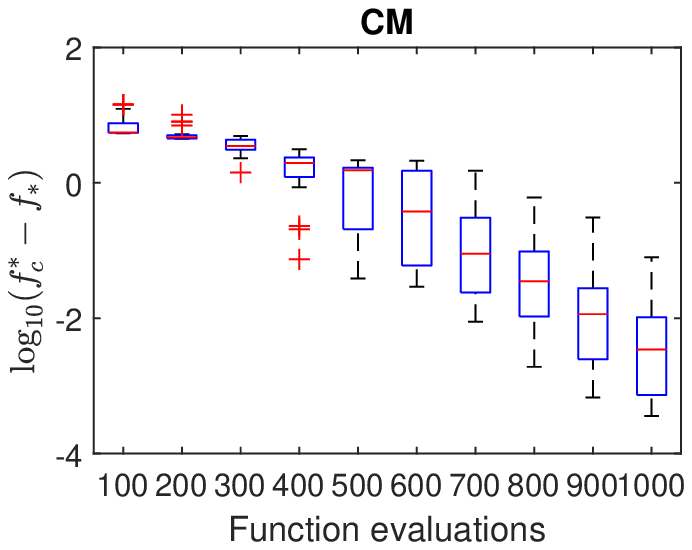}}
  \subfigure{\includegraphics[width=0.32\textwidth]{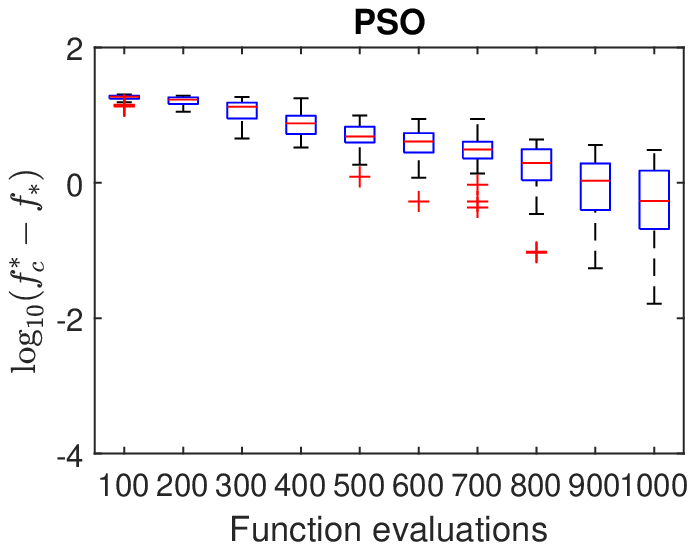}}\\
  \subfigure{\includegraphics[width=0.32\textwidth]{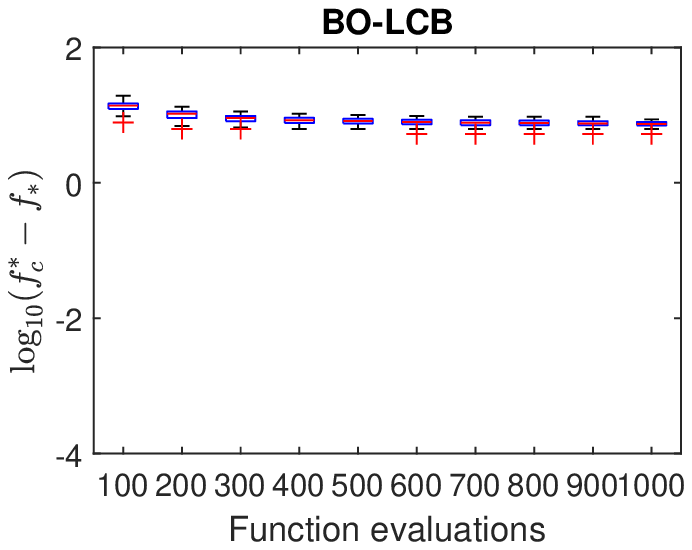}}
  \subfigure{\includegraphics[width=0.32\textwidth]{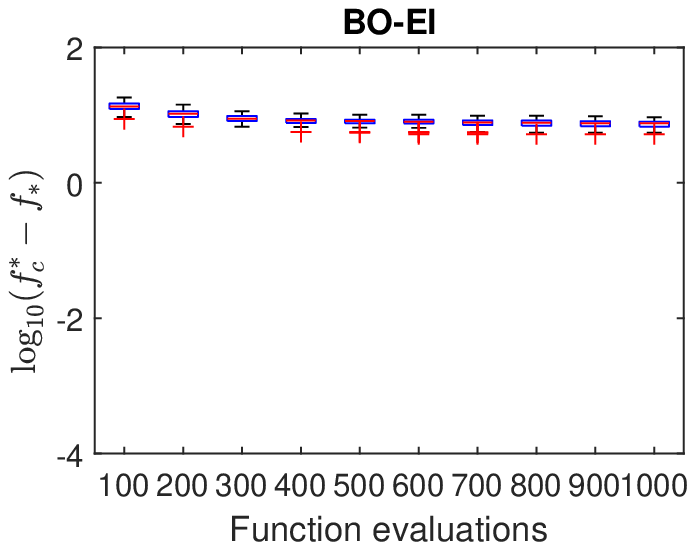}}
  \subfigure{\includegraphics[width=0.32\textwidth]{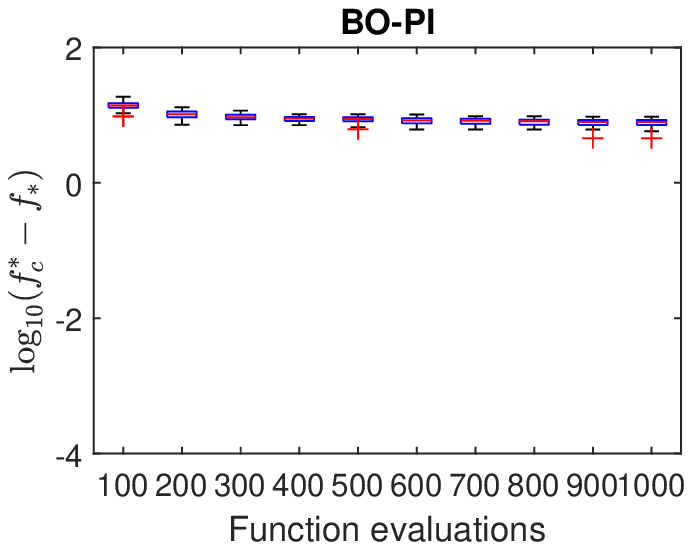}}\\
  \subfigure{\includegraphics[width=0.32\textwidth]{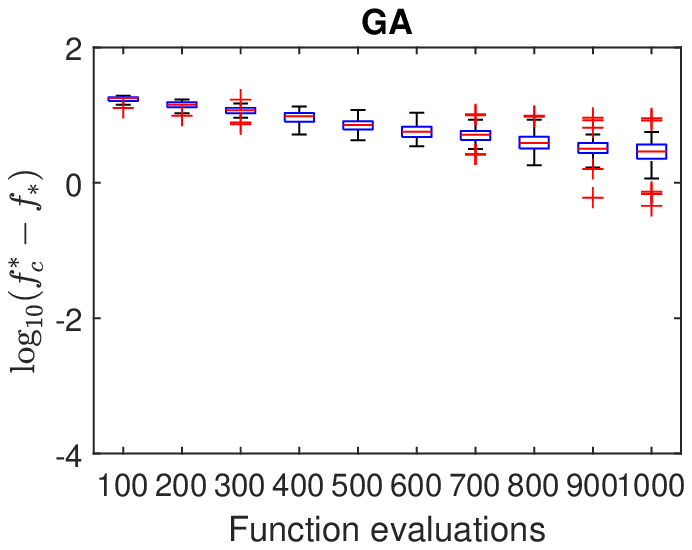}}
  \subfigure{\includegraphics[width=0.32\textwidth]{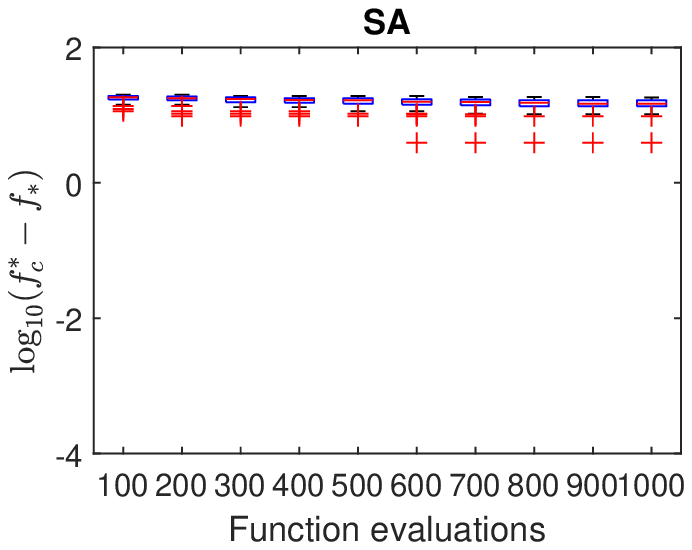}}
  \subfigure{\includegraphics[width=0.32\textwidth]{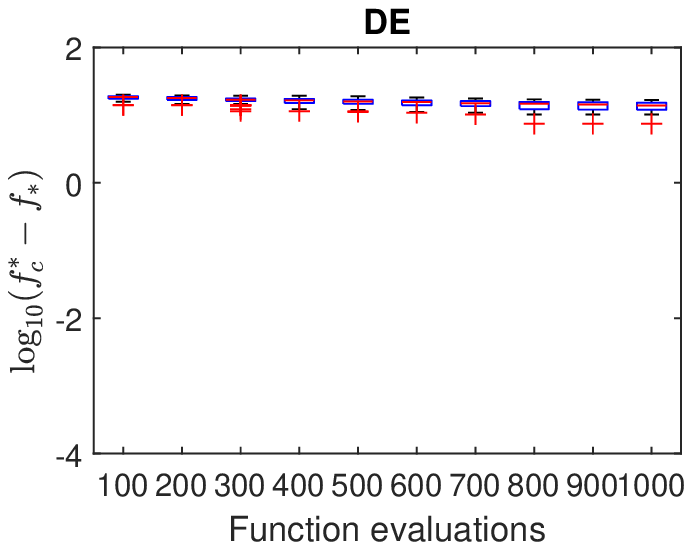}}
  \caption{Comparison for the Ackley function in $6$ dimension. Upper left: the four curves correspond to the four most representative methods for this example, each curve shows the averaged optimality gap over $50$ independent runs. Upper middle: the box plot shows the results of multiple runs for CM, with parameter setting $K=50$, $m=6$, $\textrm{minIterInner}=5$, $\omega=1$, $c_k=\bar{c}=50$ and $t_k=\bar{t}=4.5$. Upper right: multiple runs for the current most powerful competitor, i.e., PSO, with swarm size $20$ and default other parameters in MATLAB 2020b. Centre row: the results of multiple runs for three different types of BO with default parameter setting in MATLAB 2020b. Lower left: the results of multiple runs for GA with population size $40$ and default other parameters in MATLAB 2020b. Lower middle: the results of multiple runs for SA with default parameter setting in MATLAB 2020b. Lower right: the results of multiple runs for DE/rand/1/bin with parameter setting $N=50$, $F=0.8$ and $Cr=0.5$. The choice of those parameters follows the existing experiences.}
  \label{CM:fig:Ackley6D}
\end{figure}

\begin{figure}[tbhp]
  \centering
  \subfigure{\includegraphics[width=0.32\textwidth]{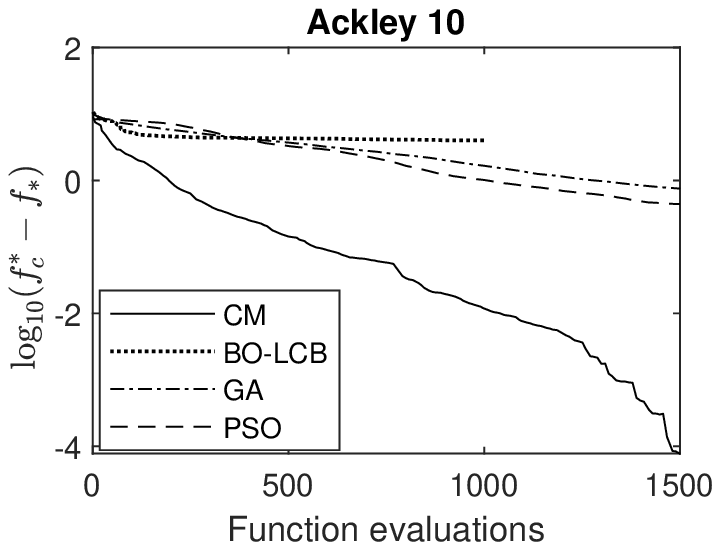}}
  \subfigure{\includegraphics[width=0.32\textwidth]{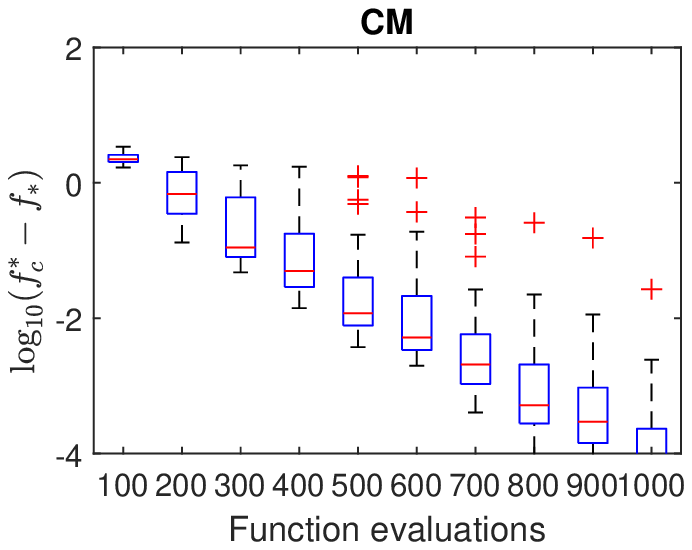}}
  \subfigure{\includegraphics[width=0.32\textwidth]{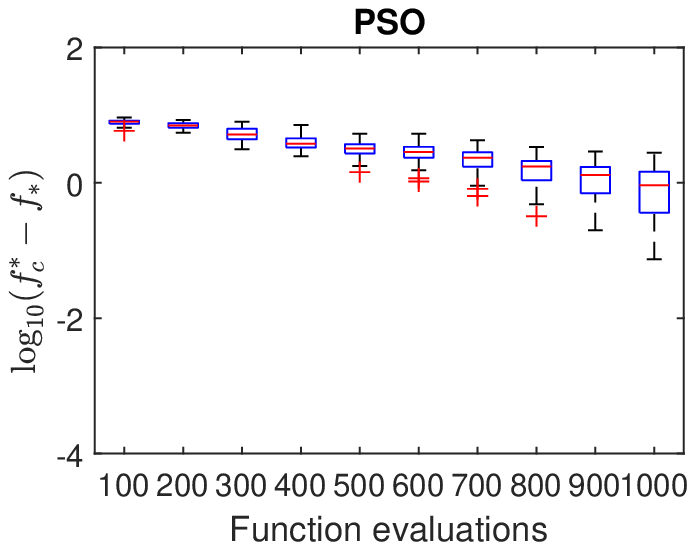}}\\
  \subfigure{\includegraphics[width=0.32\textwidth]{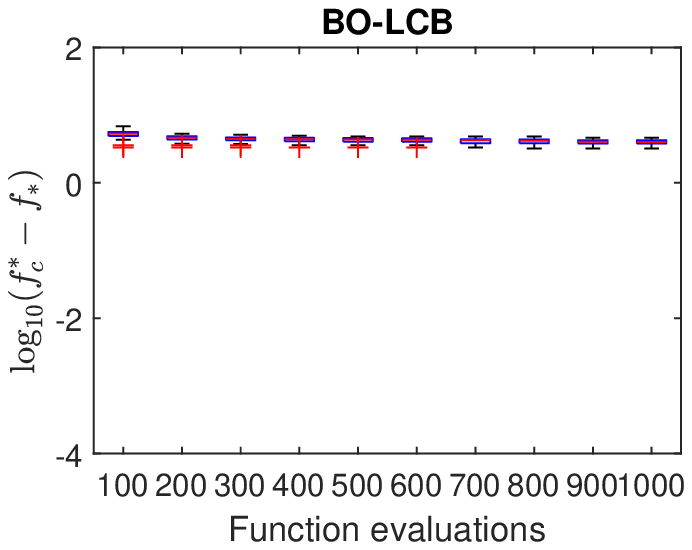}}
  \subfigure{\includegraphics[width=0.32\textwidth]{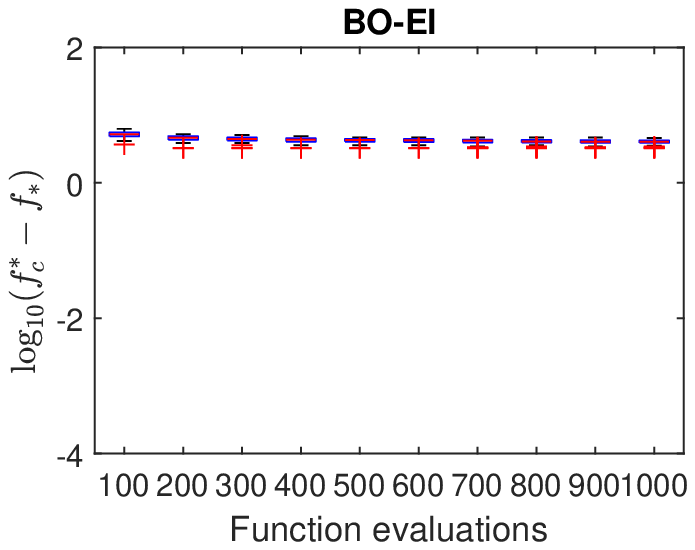}}
  \subfigure{\includegraphics[width=0.32\textwidth]{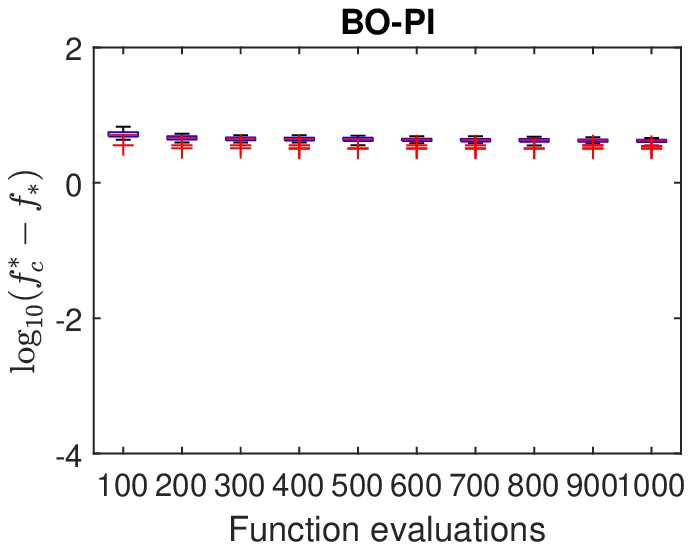}}\\
  \subfigure{\includegraphics[width=0.32\textwidth]{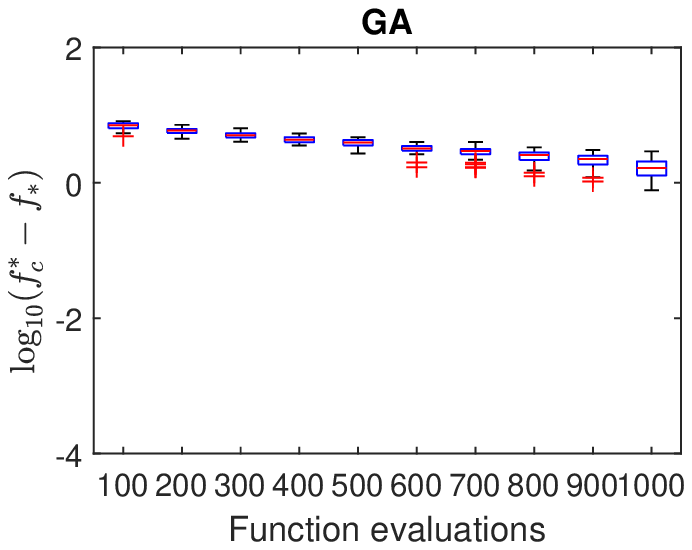}}
  \subfigure{\includegraphics[width=0.32\textwidth]{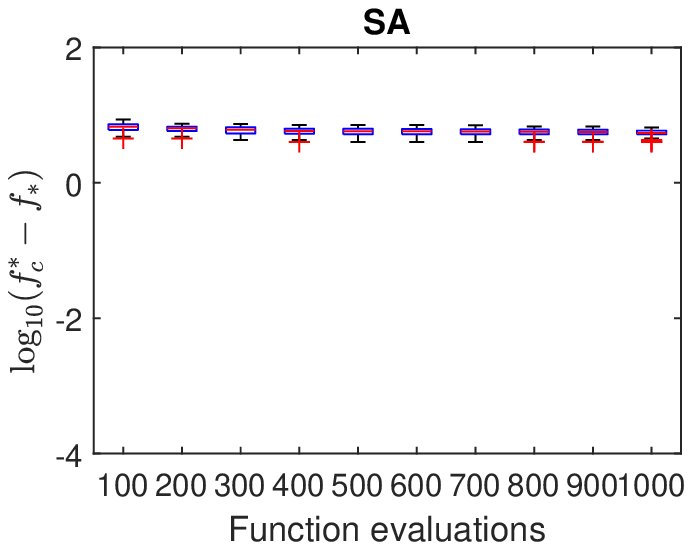}}
  \subfigure{\includegraphics[width=0.32\textwidth]{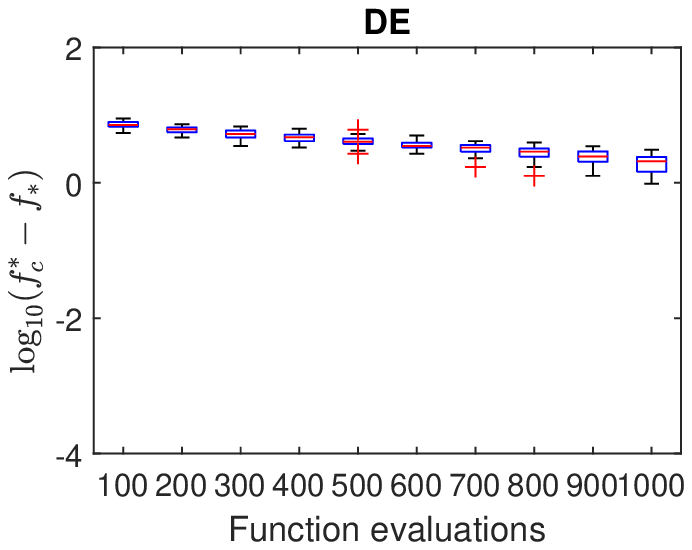}}
  \caption{Comparison for the Ackley function in $10$ dimension, especially, in order to make the performance of the algorithms easier to observe, $x_i$ is limited in $[-5,5]$ for $i=1,\cdots,10$. Upper left: the four curves correspond to the four most representative methods for this example, and each curve shows the averaged optimality gap over $50$ independent runs. Upper middle: the box plot shows the results of multiple runs for CM, with parameter setting $K=100$, $m=2$, $\textrm{minIterInner}=25$, $\omega=1$, $c_k=\bar{c}=50$ and $t_k=4.5$. Upper right: multiple runs for the current most powerful competitor, i.e., PSO, with swarm size $20$ and default other parameters in MATLAB 2020b. Centre row: the results of multiple runs for three different types of BO with default parameter setting in MATLAB 2020b. Lower left: the results of multiple runs for GA with population size $50$ and default other parameters in MATLAB 2020b. Lower middle: the results of multiple runs for SA with default parameter setting in MATLAB 2020b. Lower right: the results of multiple runs for DE/rand/1/bin with parameter setting $N=10$, $F=0.8$ and $Cr=0.5$. The choice of those parameters follows the existing experiences.}
  \label{CM:fig:Ackley10D}
\end{figure}

\begin{figure}[tbhp]
  \centering
  \subfigure{\includegraphics[width=0.32\textwidth]{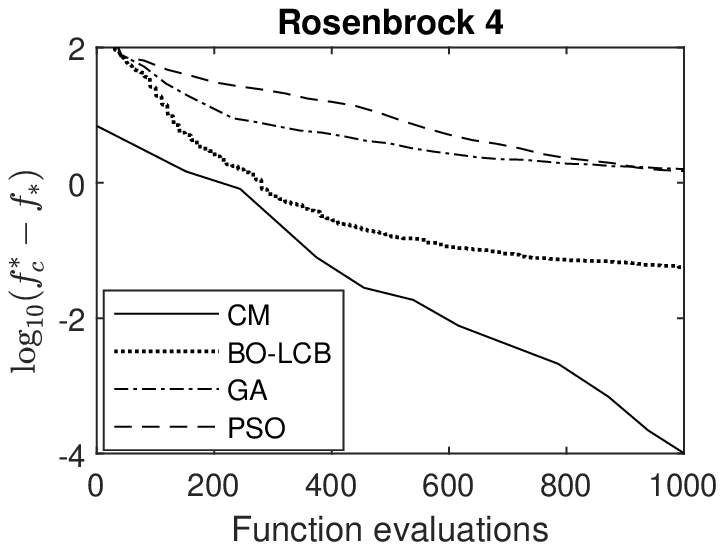}}
  \subfigure{\includegraphics[width=0.32\textwidth]{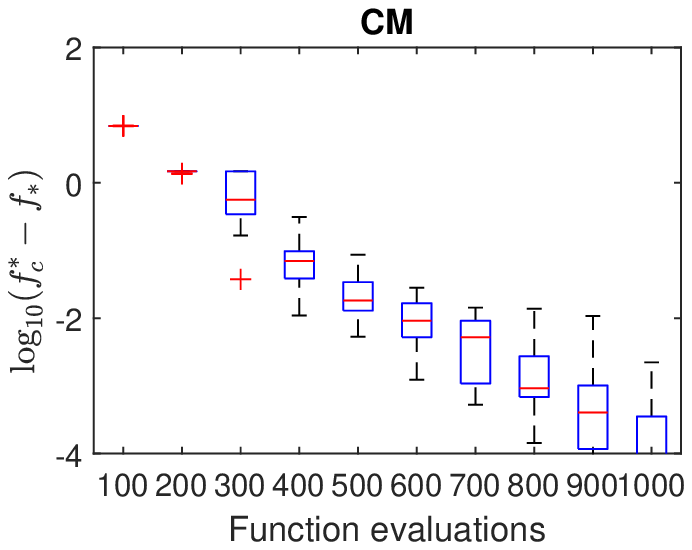}}
  \subfigure{\includegraphics[width=0.32\textwidth]{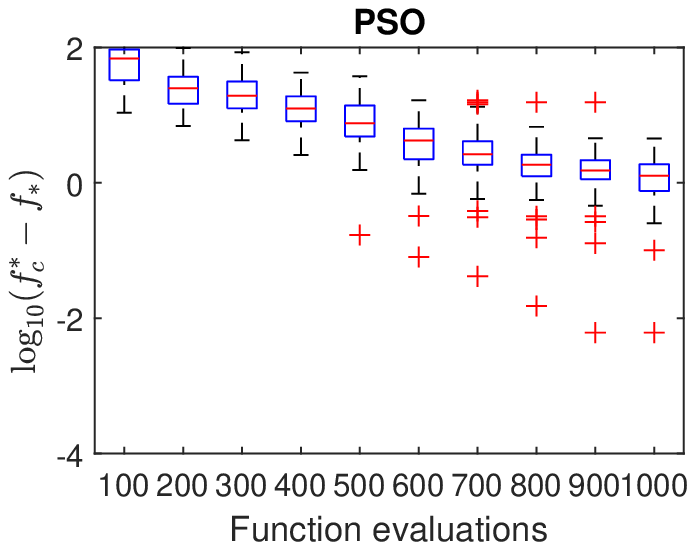}}\\
  \subfigure{\includegraphics[width=0.32\textwidth]{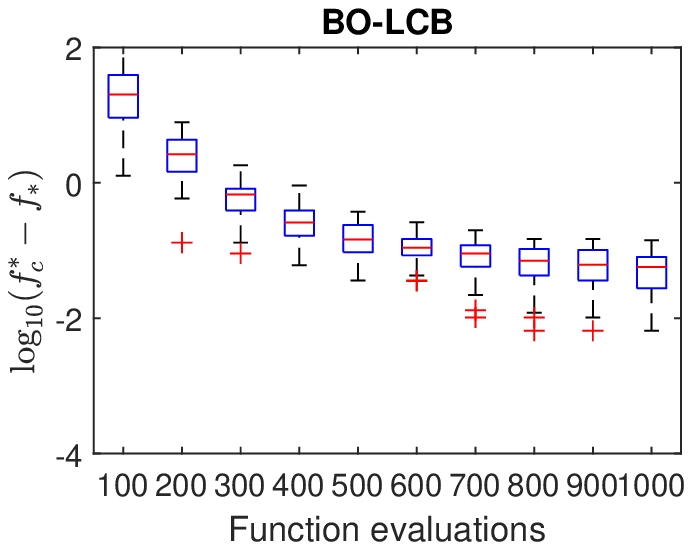}}
  \subfigure{\includegraphics[width=0.32\textwidth]{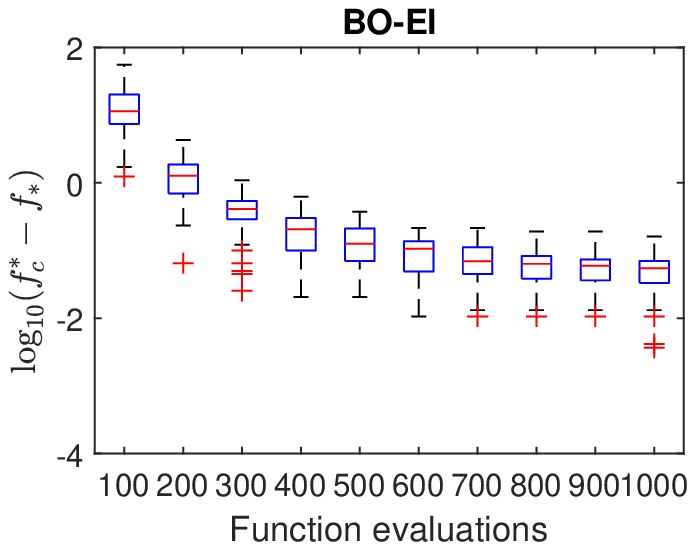}}
  \subfigure{\includegraphics[width=0.32\textwidth]{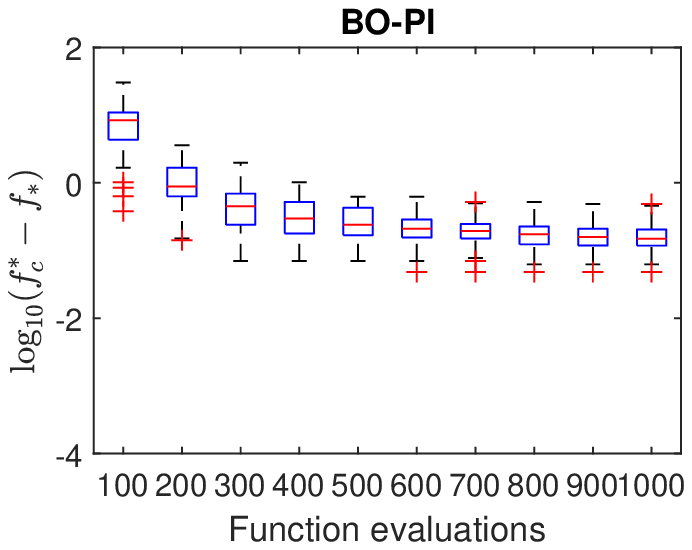}}\\
  \subfigure{\includegraphics[width=0.32\textwidth]{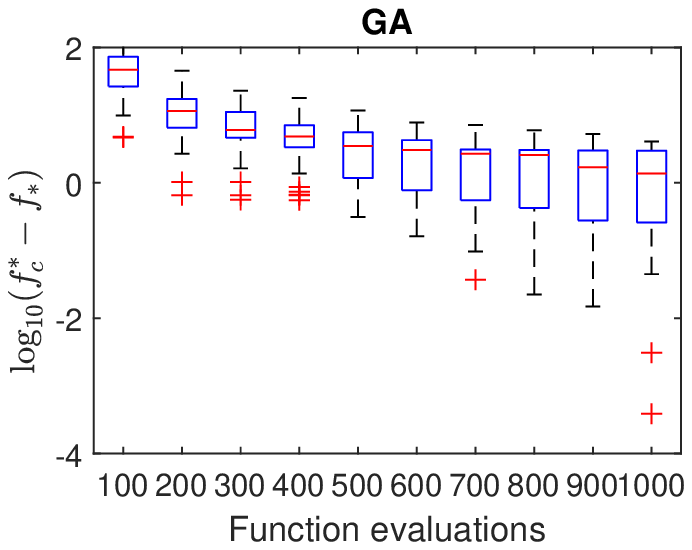}}
  \subfigure{\includegraphics[width=0.32\textwidth]{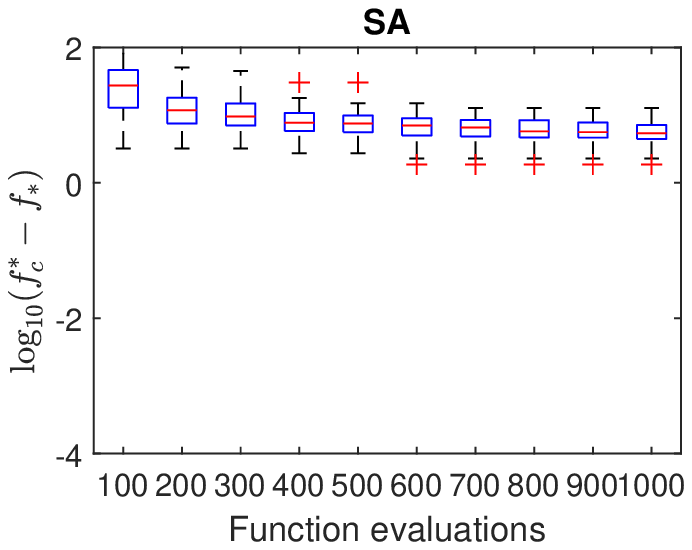}}
  \subfigure{\includegraphics[width=0.32\textwidth]{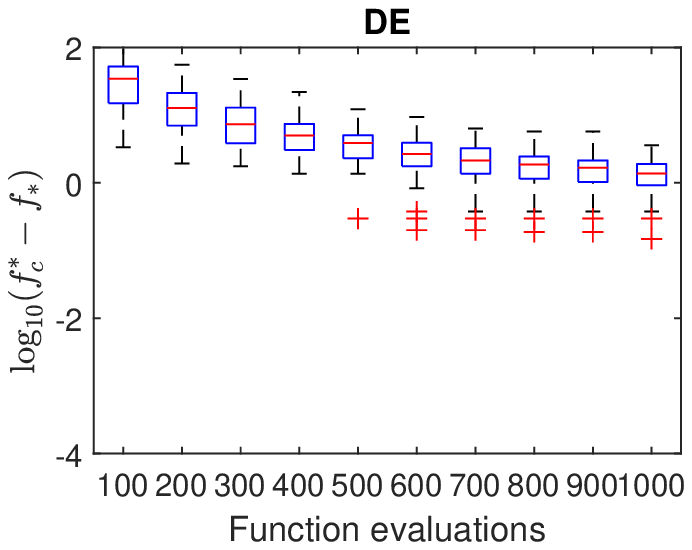}}
  \caption{Comparison for the Rosenbrock function in $4$ dimension. Upper left: the four curves correspond to the four most representative methods for this example, and each curve shows the averaged optimality gap over $50$ independent runs. Upper middle: the box plot shows the results of multiple runs for CM, with parameter setting $K=50$, $m=2$, $\textrm{minIterInner}=8$, $\omega=1$, $c_k=\bar{c}=50$ and $t_k=3.5\frac{k}{K}$. Upper right: multiple runs for the current most powerful competitor, i.e., PSO, with swarm size $40$ and default other parameters in MATLAB 2020b. Centre row: the results of multiple runs for three different types of BO with default parameter setting in MATLAB 2020b. Lower left: the results of multiple runs for GA with population size $40$ and default other parameters in MATLAB 2020b. Lower middle: the results of multiple runs for SA with default parameter setting in MATLAB 2020b. Lower right: the results of multiple runs for DE/rand/1/bin with parameter setting $N=10$, $F=0.85$ and $Cr=0.5$. The choice of those parameters follows the existing experiences.}
  \label{CM:fig:Rosenbrock4D}
\end{figure}

\begin{figure}[tbhp]
  \centering
  \subfigure{\includegraphics[width=0.32\textwidth]{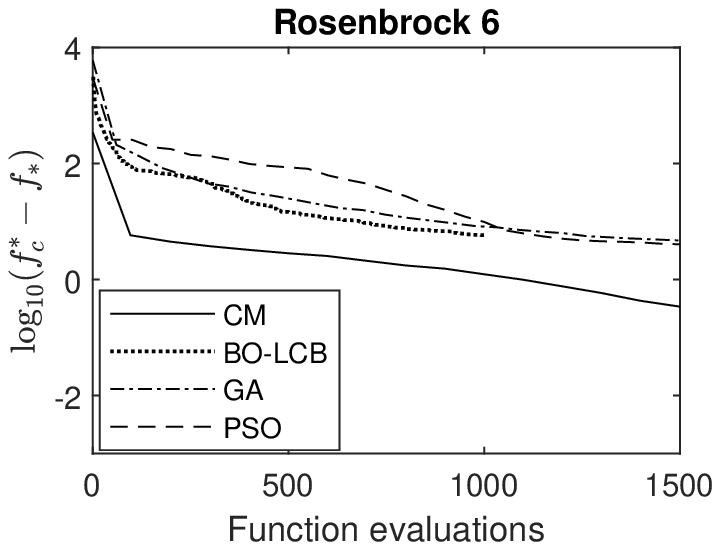}}
  \subfigure{\includegraphics[width=0.32\textwidth]{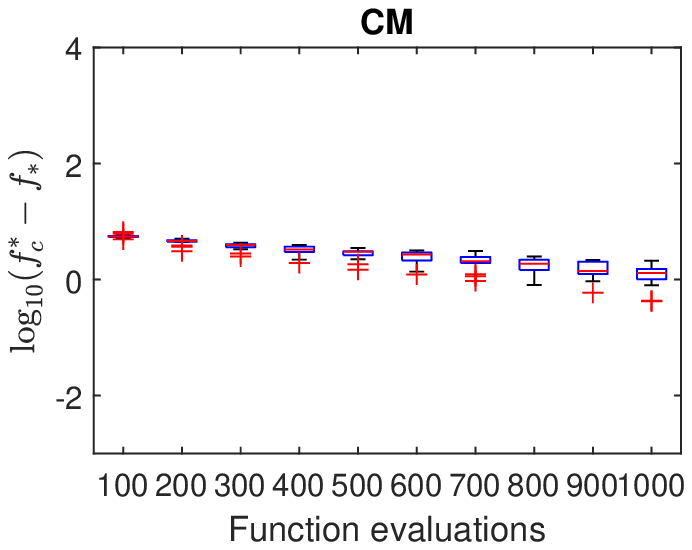}}
  \subfigure{\includegraphics[width=0.32\textwidth]{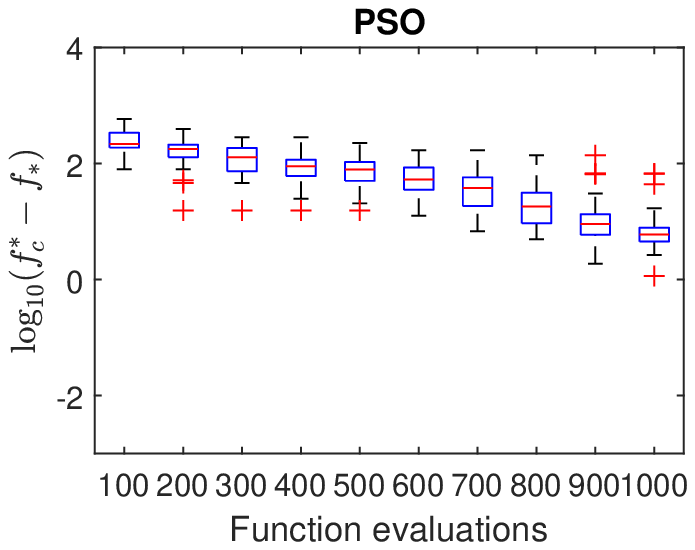}}\\
  \subfigure{\includegraphics[width=0.32\textwidth]{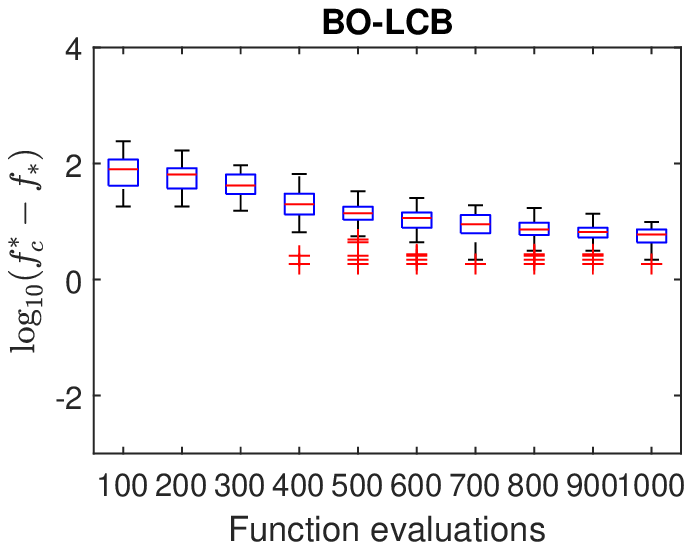}}
  \subfigure{\includegraphics[width=0.32\textwidth]{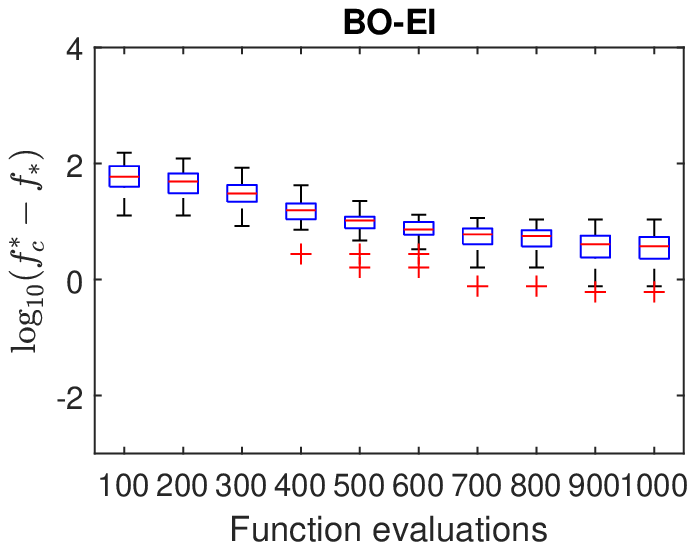}}
  \subfigure{\includegraphics[width=0.32\textwidth]{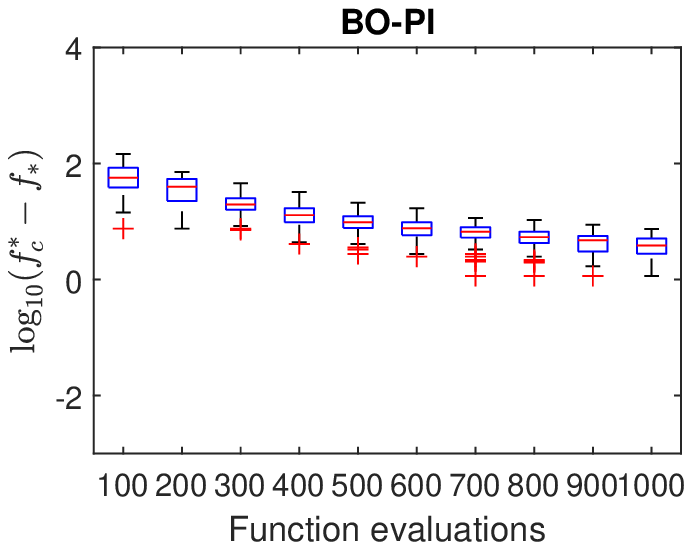}}\\
  \subfigure{\includegraphics[width=0.32\textwidth]{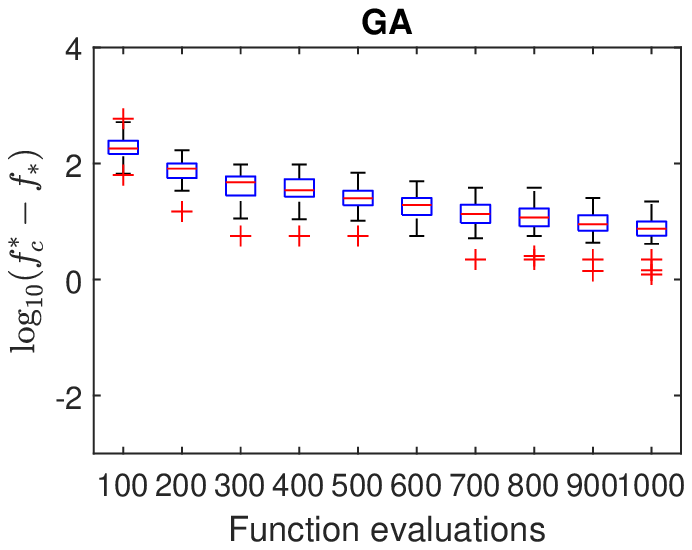}}
  \subfigure{\includegraphics[width=0.32\textwidth]{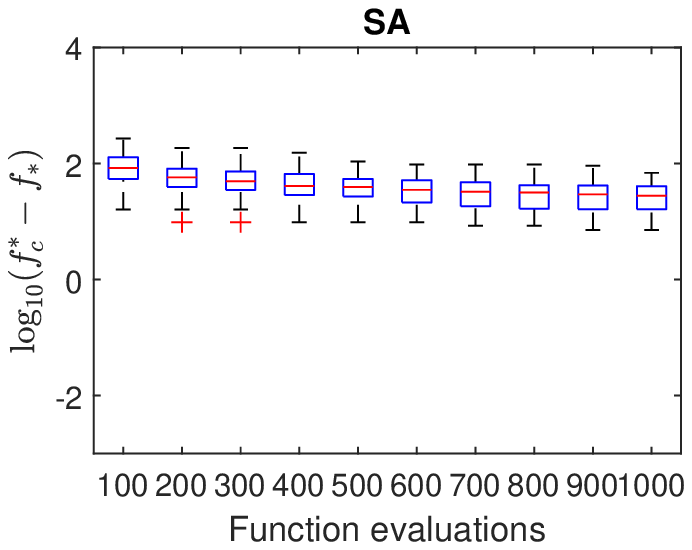}}
  \subfigure{\includegraphics[width=0.32\textwidth]{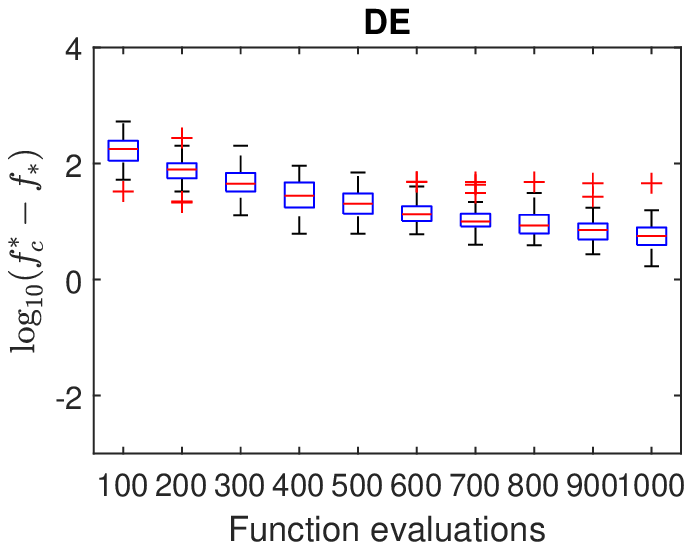}}
  \caption{Comparison for the Rosenbrock function in $6$ dimension. Upper left: the four curves correspond to the four most representative methods for this example, and each curve shows the averaged optimality gap over $50$ independent runs. Upper middle: the box plot shows the results of multiple runs for CM, with parameter setting $K=50$, $m=2$, $\textrm{minIterInner}=10$, $\omega=1$, $c_k=\bar{c}=50$ and $t_k=3.5\frac{k}{K}$. Upper right: multiple runs for the current most powerful competitor, i.e., PSO, with swarm size $50$ and default other parameters in MATLAB 2020b. Centre row: the results of multiple runs for three different types of BO with default parameter setting in MATLAB 2020b. Lower left: the results of multiple runs for GA with population size $60$ and default other parameters in MATLAB 2020b. Lower middle: the results of multiple runs for SA with default parameter setting in MATLAB 2020b. Lower right: the results of multiple runs for DE/rand/1/bin with parameter setting $N=10$, $F=0.85$ and $Cr=0.5$. The choice of those parameters follows the existing experiences.}
  \label{CM:fig:Rosenbrock6D}
\end{figure}

\begin{figure}[tbhp]
  \centering
  \subfigure{\includegraphics[width=0.32\textwidth]{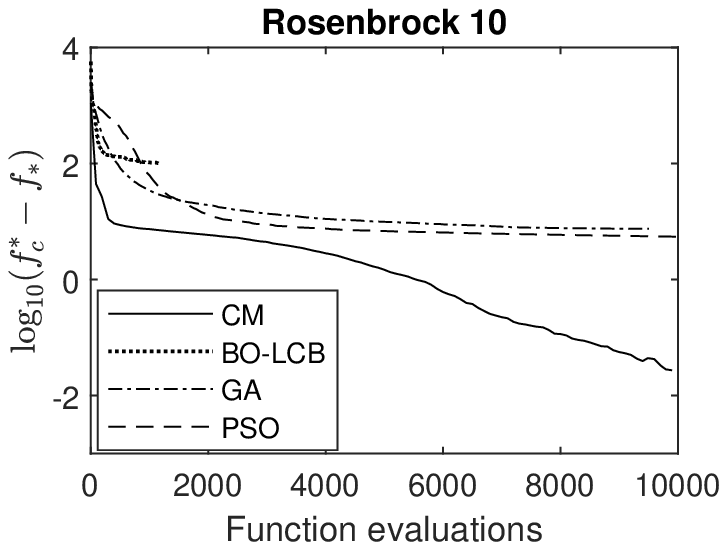}}
  \subfigure{\includegraphics[width=0.32\textwidth]{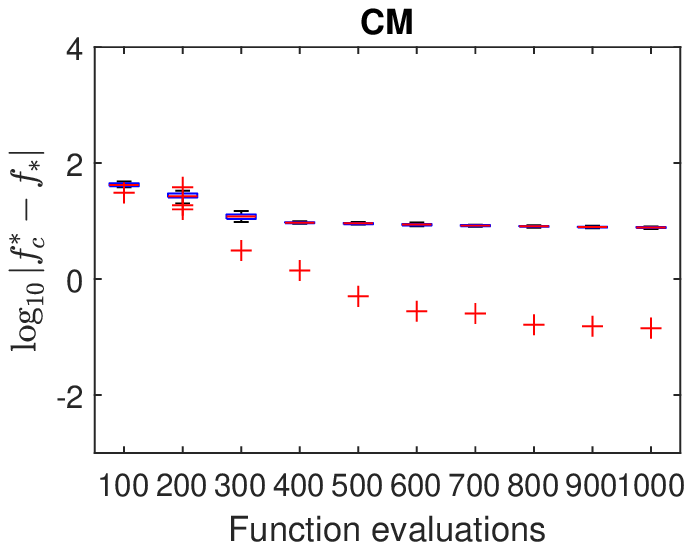}}
  \subfigure{\includegraphics[width=0.32\textwidth]{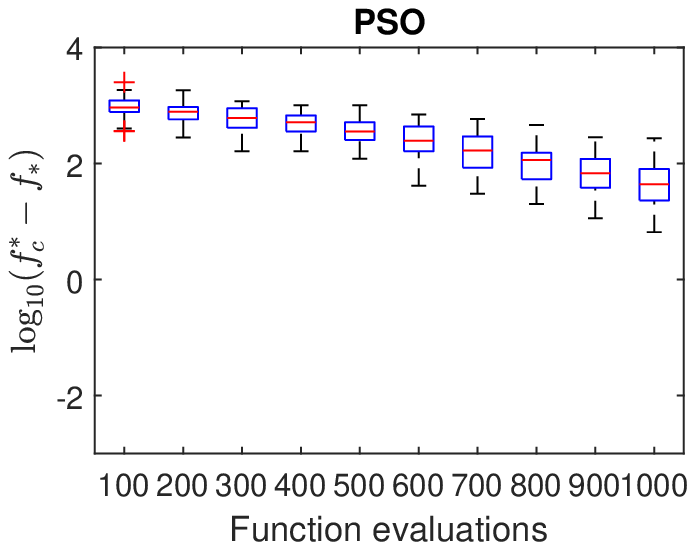}}\\
  \subfigure{\includegraphics[width=0.32\textwidth]{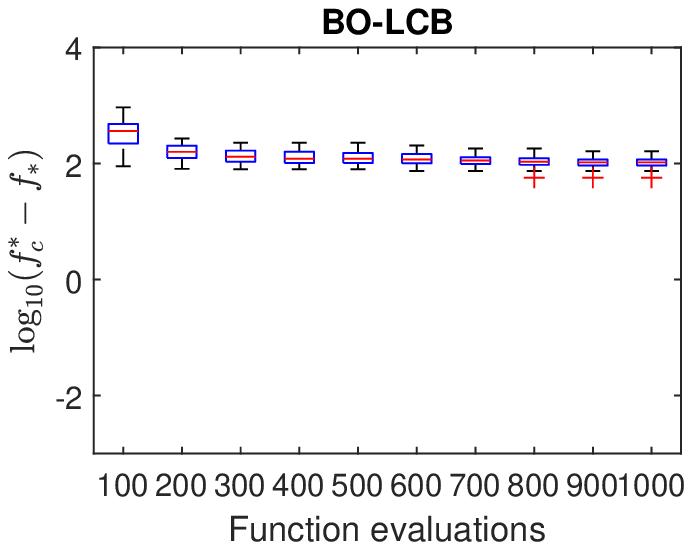}}
  \subfigure{\includegraphics[width=0.32\textwidth]{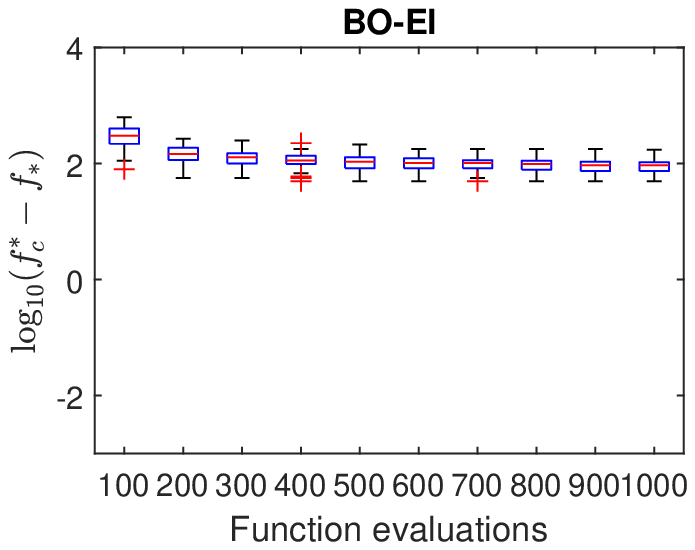}}
  \subfigure{\includegraphics[width=0.32\textwidth]{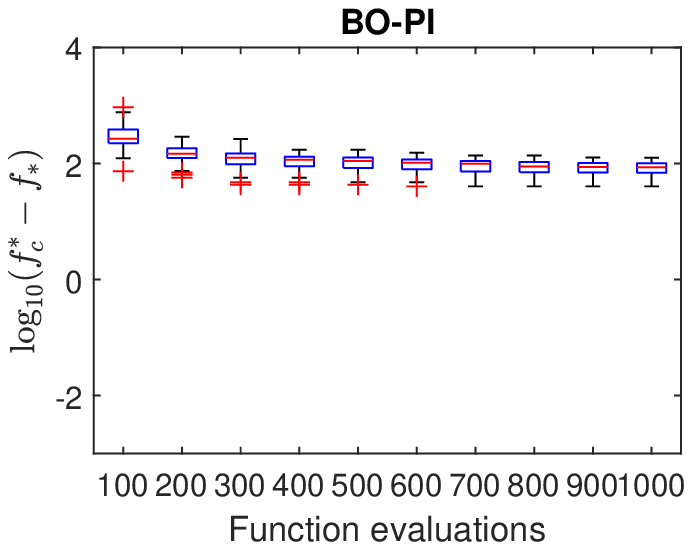}}\\
  \subfigure{\includegraphics[width=0.32\textwidth]{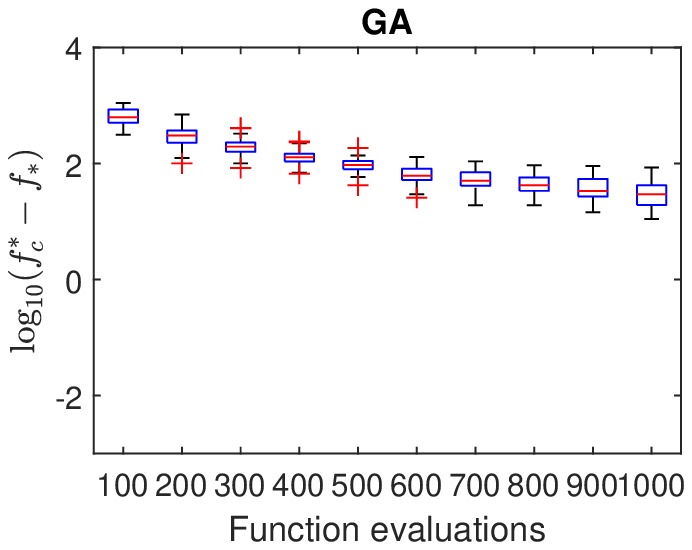}}
  \subfigure{\includegraphics[width=0.32\textwidth]{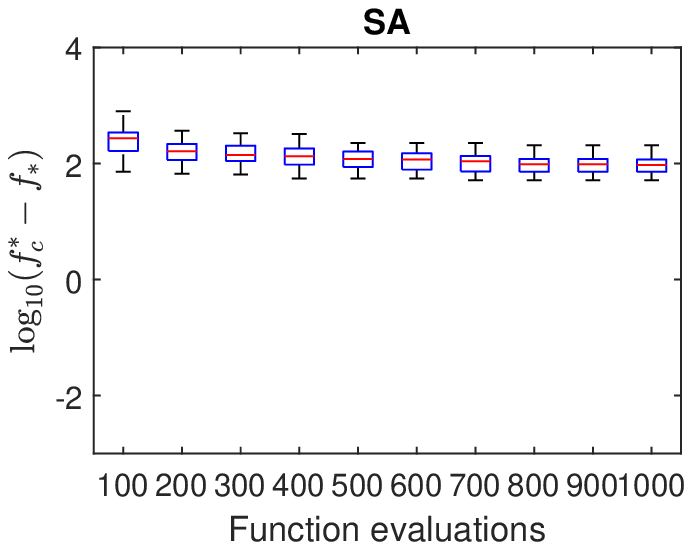}}
  \subfigure{\includegraphics[width=0.32\textwidth]{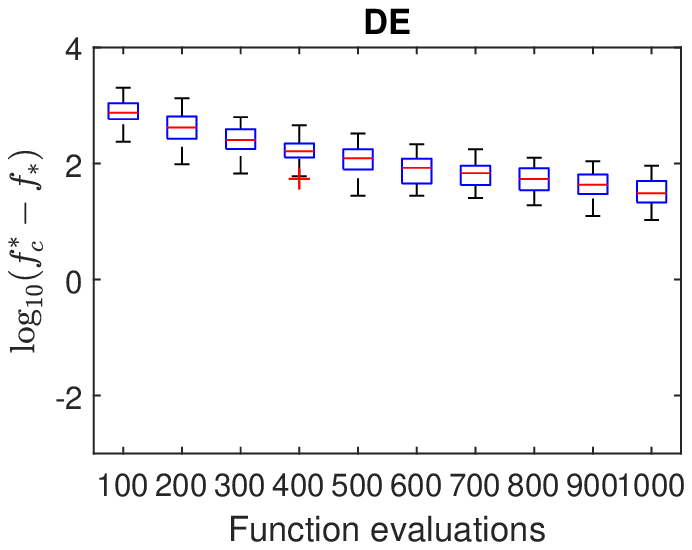}}
  \caption{Comparison for the Rosenbrock function in $10$ dimension. Upper left: the four curves correspond to the four most representative methods for this example, and each curve shows the averaged optimality gap over $50$ independent runs. Upper middle: the box plot shows the results of multiple runs for CM, with parameter setting $K=200$, $m=2$, $\textrm{minIterInner}=30$, $\omega=1$, $c_k=\bar{c}=50$ and $t_k=3.5\frac{k}{K}$. Upper right: multiple runs for the current most powerful competitor, i.e., PSO, with swarm size $50$ and default other parameters in MATLAB 2020b. Centre row: the results of multiple runs for three different types of BO with default parameter setting in MATLAB 2020b. Lower left: the results of multiple runs for GA with population size $50$ and default other parameters in MATLAB 2020b. Lower middle: the results of multiple runs for SA with default parameter setting in MATLAB 2020b. Lower right: the results of multiple runs for DE/rand/1/bin with parameter setting $N=10$, $F=0.85$ and $Cr=0.5$. The choice of those parameters follows the existing experiences.}
  \label{CM:fig:Rosenbrock10D}
\end{figure}

\subsection{Real world application: Lennard-Jones molecular conformation}

Now we consider a real world application about Lennard-Jones (LJ) microclusters which perhaps is the most intensely studied molecular conformation problem. LJ conformations of a cluster of $s$ identical neutral atoms interacting pairwise via the LJ potential. And a conformation is actually a point in the $3s$-dimensional Euclidean space of coordinates of atomic centers. For a single pair of atoms, the LJ potential in reduced units is given by
\begin{equation*}
  u(r)=r^{-12}-2r^{-6}
\end{equation*}
where $r$ is the Euclidean interatomic distance; then the total potential energy
\begin{equation*}
  U_s=\sum_{i=1}^{s-1} \sum_{j=i+1}^s u(r_{ij}),
\end{equation*}
where $r_{ij}$ is the distance between atoms $i$ and $j$ in reduced units. The putative global minima are $U^*_4=-6$ and $U^*_5=-9.103852$ for $s=4$ and $s=5$, respectively \citep{LearyR1997M_LennardJones}. Figure \ref{CM:fig:3} shows the relevant performances of Algorithm \ref{CM:alg:CM} that accurately finds the lowest energy conformations.

\begin{figure}[tbhp]
  \centering
  \subfigure{\includegraphics[width=0.32\textwidth]{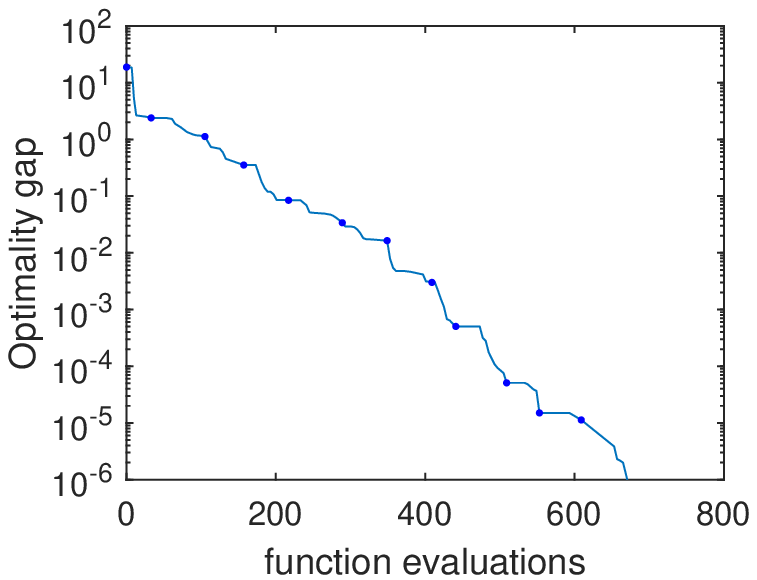}}
  \subfigure{\includegraphics[width=0.32\textwidth]{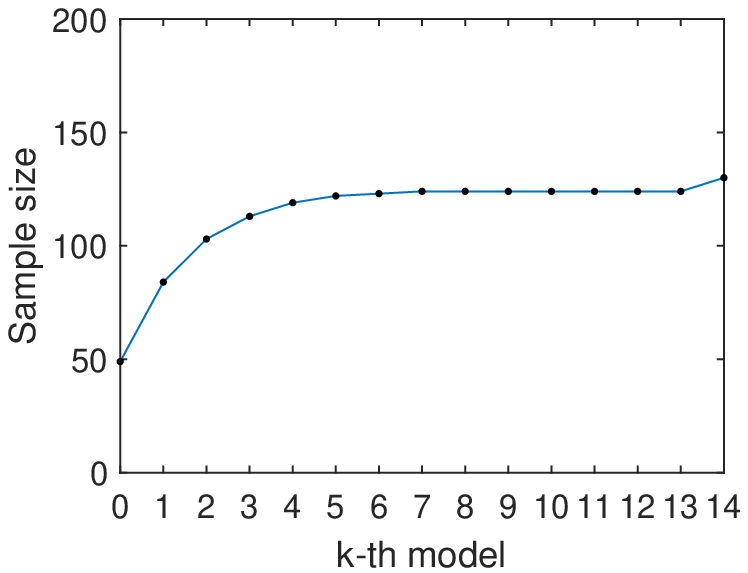}}
  \subfigure{\includegraphics[width=0.32\textwidth]{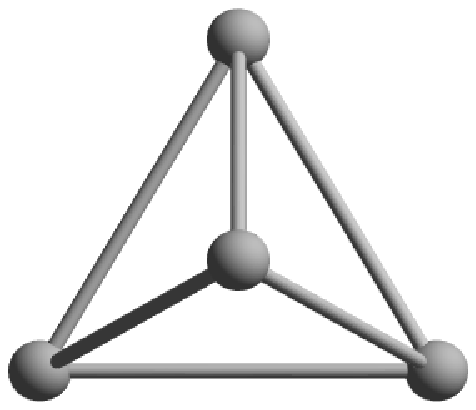}}\\
  \subfigure{\includegraphics[width=0.32\textwidth]{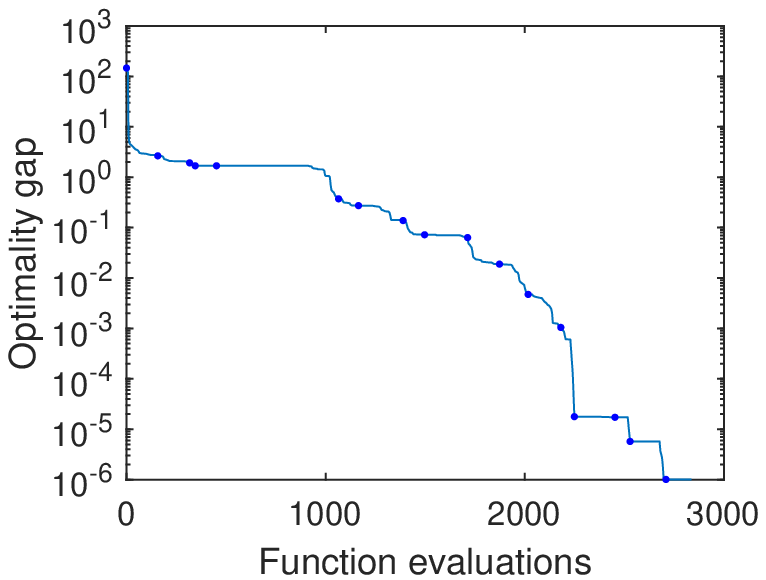}}
  \subfigure{\includegraphics[width=0.32\textwidth]{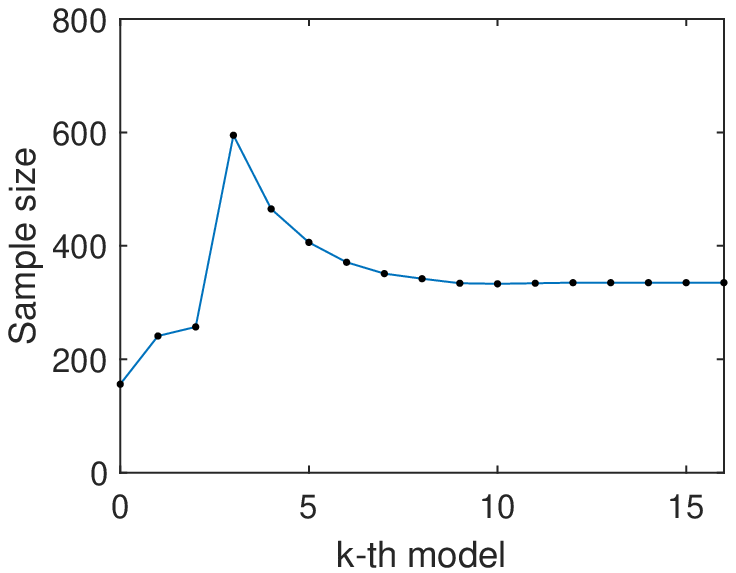}}
  \subfigure{\includegraphics[width=0.32\textwidth]{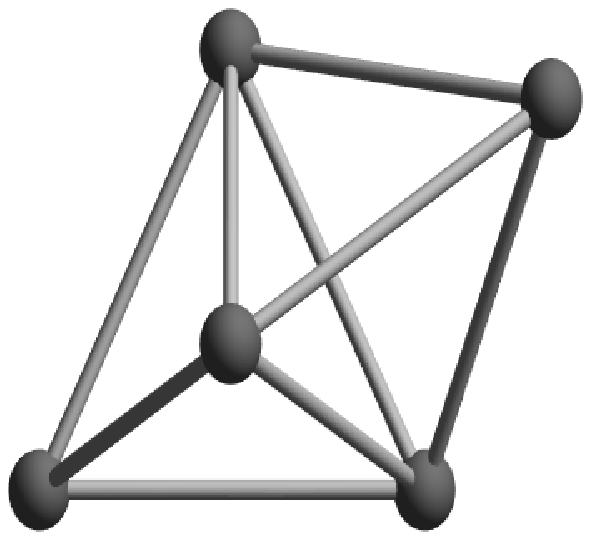}}
  \caption{Minimize the LJ potentials for $s=4$ and $s=5$ by Algorithm \ref{CM:alg:CM}. Upper row: the convergence plot, the sample size used in each model, and the global minima conformation at $s=4$. The parameter setting is $K=15$, $m=3$, $\textrm{minIterInner}=15$, $\omega=1$, $c_k=\bar{c}=50$ and $t_k=\bar{t}=2$. Lower row: the convergence plot, the sample size used in each model, and the global minima conformation at $s=5$. The parameter setting is $K=50$, $m=3$, $\textrm{minIterInner}=40$, $\omega=1$, $c_k=\bar{c}=50$ and $t_k=\bar{t}=2$.}
  \label{CM:fig:3}
\end{figure}

\section{Conclusions}
\label{CM:s8}

For a fairly general class of problems, it is often impossible to find a universal method that performs very well on all possible situations, which is the important connotation of the no free lunch theorems \citep{WolpertD1997T_NFL,MacreadyW1996T_NFL,WolpertD1996T_NFL}. This is not due to some kind of curse, but a lack of common features. Actually, a certain commonality is the premise of efficiency. And when there is no such a premise, it is always wise to find a subclass that has enough in common and maintains a proper level of generality. Although the consequent loss of generality is what we have to pay, the corresponding discriminant conditions may help us understand the problem better.

In this work, we described the concept of contractibility and then proposed a class of contraction algorithms. From the efficiency of algorithms, we have classified all possible continuous problems. Experiments with various categories of examples show that these categories seem to be reasonable. To ensure the existence of efficient optimization algorithms \citep{WolpertD1997T_NFL}, we mainly impose a class of hierarchical low-frequency dominant condition on the problems. And now, we knew that a sufficiently smooth or contractible problem can be effectively predicted using a priori information. Hence, the contractibility might be viewed as a complement to smoothness.

The algorithm is implemented in MATLAB. The source codes for the implementation of the algorithm and all examples is available at https://github.com/xiaopengluo/contropt.

Future research is currently being conducted in several areas. One of the attempts is to create possible time complexities that is not exponentially related to dimensions for some certain function classes. We hope that the expected results could provide valuable suggestions for efficiency of high-dimensional continuous optimization. Secondly, we are also considering how to establish an adaptive contraction condition to achieve the optimal efficiency for various different problems. A successful achievement will be very helpful in practice. Thirdly, we hope that some certain difficult problems can be translated into relevant easy ones by applying some preconditioning and postconditioning steps before and after each contraction. Moreover, this requires us to further distinguish which problems are inherently difficult to solve, and which are only seemingly intractable.

\vskip 0.2in
\bibliographystyle{siamplain}
\bibliography{MReferences}

\end{document}